\providecommand\@dotsep{5}
\renewcommand{\listoftodos}[1][\@todonotes@todolistname]{%
  \@starttoc{tdo}{#1}}
\newtheorem{Lem}{Lemma}[section]
\newtheorem{Prop}{Proposition}[section]
\newtheorem{Def}{Definition}[section]
\newtheorem{teorema}{Theorem}
\theoremstyle{plain}
\newtheorem{Thm}{Theorem}[section]
\newtheorem{Cor}{Corollary}[section]
\newtheorem{Prob}{Problem}[section]
\newtheorem{Ex}{Example}[section]
\newtheorem{Rem}{Remark}[section]
{\unskip\nobreak\hskip 1em plus 1fil\nobreak$\Box$
\parfillskip=0pt%
\endtrivlist}
\newcommand{\Hom}{\text{\textnormal{Hom}}}
\newcommand{\End}{\text{\textnormal{End}}}
\newcommand{\coker}{\text{\textnormal{coker}}}
\newcommand{\Aut}{\text{\textnormal{Aut}}}
\newcommand{\AUT}{\text{\textnormal{AUT}}}
\newcommand{\git}{/\!\!/}
\newcommand{\Tr}{\mathbb{T}\text{\textnormal{r}}}
\DeclareMathOperator{\im}{im}
\mathchardef\mhyphen="2D
\newcommand{\Ind}{\text{\textnormal{Ind}}}
\newcommand{\HInd}{\text{\textnormal{HInd}}}
\newcommand{\RInd}{\text{\textnormal{RInd}}}
\newcommand{\Res}{\text{\textnormal{Res}}}
\newcommand{\Vect}{\mathsf{Vect}}
\newcommand{\gen}{\text{\textnormal{gen}}}
\newcommand{\opp}{\text{\textnormal{op}}}
\newcommand{\co}{\text{\textnormal{co}}}
\newcommand{\ev}{\text{\textnormal{ev}}}
\newcommand{\Rep}{\mathsf{Rep}}
\newcommand{\AALG}{\mathcal{A}\mathsf{LG}}
\newcommand{\RRep}{\mathsf{RRep}}
\newcommand{\id}{\text{\textnormal{id}}}
\newcommand{\MMod}{\mathcal{M}\mathsf{od}}
\newcommand{\AAlg}{\mathcal{A}\mathsf{lg}}
\newcommand{\fd}{\text{\textnormal{fd}}}
\newcommand{\refl}{\text{\textnormal{ref}}}
\newcommand{\Stab}{\text{\textnormal{Stab}}}
\newcommand{\rep}{\text{\textnormal{rep}}}
\DeclareSymbolFont{extraup}{U}{zavm}{m}{n}
\DeclareMathSymbol{\varheart}{\mathalpha}{extraup}{86}
\DeclareMathSymbol{\vardiamond}{\mathalpha}{extraup}{87}
\newcommand{\Ad}{\text{\textnormal{Ad}}}
\newcommand{\bK}{{\mathbb K}}
\newcommand{\bZ}{{\mathbb Z}}
\newcommand{\bS}{{\mathbb S}}
\newcommand{\sT}{{\mathcal T}}
\newcommand{\sX}{{\mathcal X}}
\newcommand{\cG}{{\mathfrak G}} 
\renewcommand{\cH}{{\mathfrak H}}
\newcommand{\cK}{{\mathfrak K}}
\newcommand{\tcG}{{\widetilde{\mathfrak G}}}
\newcommand{\tcH}{{\widetilde{\mathfrak H}}}
\newcommand{\tcK}{{\widetilde{\mathfrak K}}}
\newcommand{\tG}{{\mathcal G}}
\newcommand{\ZTwo}{{\bZ_2}}
\newcommand{\Sym}{{\bS_t}} 
\newcommand{\Fld}{{\bK}} 
\newcommand{\spcl}{{\mathfrak{h}}} 
\newcommand{\vx}{\ensuremath{\mathbf{x}}\xspace}
\newcommand{\Mod}{\mbox{-}{\mathrm M}{\mathrm o}{\mathrm d}}
\newcommand{\Mfd}{\mbox{-}{\mathrm M}{\mathrm o}{\mathrm d}_{fd}}
\newcommand\righttwoarrow{%
        \mathrel{\vcenter{\mathsurround0pt
                \ialign{##\crcr
                        \noalign{\nointerlineskip}$\rightarrow$\crcr
                        \noalign{\nointerlineskip}$\rightarrow$\crcr
                }%
        }}%
}
\newcommand{\xRrightarrow}[2][]{\ext@arrow 0359\Rrightarrowfill@{#1}{#2}}
\newcommand{\Rrightarrowfill@}{\arrowfill@\equiv\equiv\Rrightarrow}
\newglossaryentry{glALG}{sort=use,
name={\ensuremath{\mathsf{ALG}_{\Fld}}},
description={the category of unital $\Fld$-algebras and their unital algebra homomorphisms}
}
\newglossaryentry{glAALG}{sort=use,
name={\ensuremath{\AALG_{\Fld}}},
description={the Morita bicategory of $\Fld$-algebras, bimodules and intertwiners}
}
\newglossaryentry{glAAlg}{sort=use,
name={\ensuremath{\AAlg_{\Fld}}},
description={the subbicategory of $\AALG_{\Fld}$ of finite dimensional $\Fld$-algebras, finite dimensional bimodules and intertwiners}
}
\newglossaryentry{glAAlgfd}{sort=use,
name={\ensuremath{\AAlg_{\Fld}^{\fd}}},
description={the fully dualizable subbicategory of $\AALG_{\Fld}$}
}
\newglossaryentry{glAut}{sort=use,
name={\ensuremath{\Aut(A)}},
description={the group of automorphisms of an algebra $A$}
}
\newglossaryentry{glAutg}{sort=use,
name={\ensuremath{\Aut^{\gen}(A)}},
description={the group of automorphisms and antiautomorphisms of an algebra $A$}
}
\newglossaryentry{glAUT}{sort=use,
name={\ensuremath{\AUT(A)}},
description={the crossed module of automorphisms of an algebra $A$}
}
\newglossaryentry{glAUTg}{sort=use,
name={\ensuremath{\AUT^{\gen}(A)}},
description={the crossed module of generalized automorphisms of an algebra $A$}
}
\newglossaryentry{glcG}{sort=use,
name={\ensuremath{\cG= (G_2 \xrightarrow[]{\partial} G_1)}},
description={(or $\cH$, $\cK$) a crossed module with a $\ZTwo$-grading}
}
\newglossaryentry{gltcG}{sort=use,
name={\ensuremath{\tcG}},
description={(or $\tcH$, $\tcK$) the strict small 2-group associated to a crossed module $\cG$}
}
\newglossaryentry{gltG}{sort=use,
name={\ensuremath{\tG}},
description={an abstract weak 2-group}
}
\newglossaryentry{glGAAlgfd}{sort=use,
name={\ensuremath{\cG \mhyphen N\AAlg_{\Fld}^{\fd}}},
description={the Morita bicategory of  separable N-weak $\cG$-algebras}
}
\newglossaryentry{glInd}{sort=use,
  name={\ensuremath{\Ind_{\cH}^{\cG}}}, 
  description={induction pseudofunctor from $\cH \mhyphen N\AAlg_{\Fld}$ to $\cG \mhyphen N\AAlg_{\Fld}$}
  }
\newglossaryentry{glFld}{sort=use,
name={\ensuremath{\Fld}},
description={a field, usually assumed to be separably closed}
}
\newglossaryentry{glLG}{sort=use,
name={\ensuremath{\mathcal{L} \cG}},
description={a crossed module in groupoids which models the homotopy $2$-type of the free loop space of a crossed module $\cG$}
}
\newglossaryentry{glLGr}{sort=use,
name={\ensuremath{\mathcal{L}^{\refl}  \cG}},
description={a $\mathbb{Z}_2$-graded crossed module in groupoids modelling the homotopy $2$-type of a $\ZTwo$-quotient of the free loop space of the ungraded crossed module of $\cG$}
}
\newglossaryentry{glRep}{sort=use,
name={\ensuremath{\Rep_{\mathcal{V}}(\tG)}},
description={the bicategory of $2$-representations of a $2$-group $\tG$ on a bicategory $\mathcal{V}$}
}
\newglossaryentry{glRRep}{sort=use,
name={\ensuremath{\RRep_{\mathcal{V}}(\tG)}},
description={the bicategory of Real $2$-representations of a $\ZTwo$-graded $2$-group $\tG$ on a bicategory $\mathcal{V}$ with a weak duality}}
\newglossaryentry{glTr}{sort=use,
name={\ensuremath{\Tr_{\rho}}},
description={the Real categorical character of a Real $2$-representation $\rho$}
}
\newglossaryentry{gl2Ve}{sort=use,
name={\ensuremath{2 \Vect_\Fld}},
description={the bicategory of Kapranov--Voevodsky $2$-vector spaces over $\Fld$}
}
\newglossaryentry{glZTwo}{sort=use,
name={\ensuremath{\ZTwo}},
description={the multiplicative group $\{+1, -1\}$}
}
\newglossaryentry{glTh}{sort=use,
name={\ensuremath{\Theta_{A}}},
description={the Real $2$-module associated to an N-weak $\cG$-algebra $A$}
}
\newglossaryentry{glChi}{sort=use,
name={\ensuremath{\chi_{\rho}}},
description={the Real $2$-character of a Real $2$-representation $\rho$}
}
\newglossaryentry{glCi}{sort=use,
name={\ensuremath{(-)^{\circ}}},
description={a weak duality pseudofunctor on a bicategory $\mathcal{V}$}
}
\begin{document}

\title[Real Burnside rings]{Burnside rings for Real $2$-representation theory: The linear theory}

\author[D. Rumynin]{Dmitriy Rumynin}
\address{Department of Mathematics\\
University of Warwick\\
Coventry, CV4 7AL, U.K.
\newline
\hspace*{0.31cm}  Associated member of Laboratory of Algebraic Geometry\\
National Research University Higher School of Economics\\
Russia}
\email{d.rumynin@warwick.ac.uk}

\author[M.\,B. Young]{Matthew B. Young}
\address{Max Planck Institute for Mathematics\\
Vivatsgasse 7\\
53111 Bonn, Germany}
\email{myoung@mpim-bonn.mpg.de}

\date{\today}

\keywords{Categorical representation theory; $2$-group; character theory; 2-category; Real structure.}
\subjclass[2010]{Primary: 20J99; Secondary 18D05}

\begin{abstract}
This paper is a fundamental study of the Real $2$-representation theory of $2$-groups. It also contains many new results in the ordinary (non-Real) case. Our framework relies on a $2$-equivariant Morita bicategory, where a novel construction of induction is introduced. We identify the Grothendieck ring of Real $2$-representations as a Real variant of the Burnside ring of the fundamental group of the $2$-group and study the Real categorical character theory. This paper unifies two previous lines of inquiry, the approach to $2$-representation theory via Morita theory and Burnside rings, initiated by the first author and Wendland, and the Real $2$-representation theory of $2$-groups, as studied by the second author.
\end{abstract}

\maketitle

\tableofcontents

\section{Introduction}

The notion of a $2$-representation of a $2$-group is introduced and studied by Barrett and Mackaay \cite{barrett2006}, Crane and Yetter \cite{crane2005} and Elgueta \cite{elgueta2007}, amongst many others. In this setting, a $2$-group acts coherently by autoequivalences of a category or, more generally, of an object of a target bicategory. An important target bicategory is
\gls{gl2Ve},
the bicategory of Kapranov--Voevodsky $2$-vector spaces over a field
\gls{glFld},
a natural categorification of the category of finite dimensional vector spaces. In this example, or in more general linear settings, there is a character theory of $2$-representations, discovered independently by Bartlett \cite{bartlett2011} and Ganter and Kapranov \cite{ganter2008}. This character theory can be seen as a concrete instance of the theory of secondary traces, as studied by T\"{o}en and Vezzosi \cite{toen2009, toen2015} and Ben-Zvi and Nadler \cite{benzvi2013}. The theory of $2$-representations, with its character theory, appears naturally in many areas of mathematics, including topological gauge theory \cite{bartlett2011, willerton2008} and equivariant elliptic cohomology \cite{hopkins2000, ganter2008, lurie2009b}. It is indispensable in traditional representation theory, for example, through its relation to conjectures of Lusztig \cite{bezrukavnikov2012} and McKay \cite{Isaacs2007}, as explained in \cite{rumynin2018}, or via the topological field theoretic approach to representations of algebraic groups \cite{benzvi2009}.

One weakness of $2$-character theory is that, in general, it cannot distinguish equivalence classes of $2$-representations. This issue is resolved by Rumynin and Wendland \cite{rumynin2018} who, under mild assumptions, describe the Grothendieck ring of $2$-representations of an essentially finite $2$-group on $2 \Vect_\Fld$ in terms of a generalized Burnside ring \cite{gunnells2012}. The mark homomorphisms of this Burnside ring not only distinguish equivalence classes of $2$-representations but also recover the $2$-characters. The perspective taken in \cite{rumynin2018} is that of Morita theory, so that $2$-groups are represented on the Morita bicategory of algebras, bimodules and intertwiners, instead of on $2 \Vect_\Fld$. This perspective is amenable to explicit calculations and constructions.

In this paper we develop a Morita-theoretic approach to Real $2$-representation theory. A number of our results are new and interesting already for ordinary $2$-representations. The word {\em Real} is capitalized, following Atiyah \cite{atiyah1968,atiyah1966}, where he distinguishes ``real'' (objects defined over ${\mathbb R}$) and ``Real'' (objects with an involution). For instance, a Real vector space is a complex vector space together with an anti-linear involution.

The Real $2$-representation theory of $2$-groups is introduced and investigated in \cite{mbyoung2018c} as a categorification of the Real representation theory of groups, as studied by Atiyah and Segal \cite{atiyah1969} and Karoubi \cite{karoubi1970} in the form of equivariant $KR$-theory. There are two distinct notions of a Real $2$-representation. In this paper we focus on {\em linear} Real $2$-representations, in which the target bicategory is endowed with an involution which is contravariant on $2$-morphisms. A second notion of an {\em anti-linear} Real $2$-representation, related to Hermitian Morita theory \cite{hahn1985}, requires the target bicategory to be linear and endowed with an involution which is fully covariant but anti-linear on $2$-morphisms. We hope to treat the anti-linear theory in consequent work. The character theory of (projective) Real $2$-representations of finite groups is also studied in \cite{mbyoung2018c}. Real $2$-representations, and their characters, appear naturally in unoriented topological field theory and orientifold string and $M$-theory \cite{mbyoung2019} and, conjecturally, in Real variants of equivariant elliptic cohomology \cite{mbyoung2018c}.

Let us now describe assiduously the content of the present paper, stating concisely the main theorems.

In Chapter~ \ref{sec:background} we introduce notation and set out our vision of the subject. Let
\gls{glcG}
be a crossed module with a $\ZTwo$-grading, that is, a group homomorphism $\pi: G_1 \rightarrow \ZTwo$ which satisfies $\im(\partial) \leq \ker(\pi)$. We allow $\pi$ to be trivial: 
in this case our results belong to the ordinary (non-Real) theory.
Associated to $\cG$ is a $\ZTwo$-graded $2$-group
\gls{gltcG}
whose action on bicategories is our primary interest.
For the introduction we assume that $G_1$ is finite since our results are cleanest under this assumption.

We start Chapter~ \ref{sec:weakcGAlg} by defining weak $\cG$-algebras, the central notion of this paper. This is an associative ${\Fld}$-algebra $A$ together with a projective action of $G_1$ by algebra automorphisms and anti-automorphisms, according to the grading $\pi$, and a projective group homomorphism $G_2 \rightarrow A^{\times}$ which satisfy a number of coherence conditions. Compactly, it is an instance of Noohi's weak crossed module homomorphisms \cite{noohi2007}, special cases of which play a key role in \cite{rumynin2018}. A weak $\cG$-algebra in which the projective homomorphisms are, in fact, genuine homomorphisms is called strict. In Section~\ref{sec:moritaBicat} we construct various Morita bicategories which are $2$-equivariant for $\tcG$ and fit into the following diagram of subbicategories:
\[
\begin{tikzpicture}[baseline= (a).base]
\node[scale=1] (a) at (0,0){
\begin{tikzcd}[column sep=4.0em, row sep=1.5em]
\cG \mhyphen \AAlg_{\Fld} \arrow[hook]{r} & \cG \mhyphen RW\AAlg_{\Fld} \arrow[hook]{r} & \cG \mhyphen N\AAlg_{\Fld}\\
\cG \mhyphen \AAlg^{\fd}_{\Fld} \arrow[hook]{r} \arrow[hook]{u}& \cG \mhyphen RW\AAlg^{\fd}_{\Fld} \arrow[hook]{r} \arrow[hook]{u} & \cG \mhyphen N\AAlg^{\fd}_{\Fld} . \arrow[hook]{u}
\end{tikzcd}
};
\end{tikzpicture}
\]
The strictness of the $\cG$-algebras decreases from left to right and the superscript ``$\fd$'' indicated the fully dualizable subbicategory. A key technical notion, realizability of a twisted $2$-cocycle for the group $G_1$, is introduced in Section~\ref{sec:realizability}. The following {\em strictification} theorem is the main result of Chapter~\ref{sec:weakcGAlg}.
\begin{teorema}[{Theorem~\ref{thm:strictcGAlg} and Proposition~\ref{prop:strictification}}] Let $A\in\cG \mhyphen N\AAlg_{\Fld}$ be an N-weak $\cG$-algebra.
  \begin{enumerate}[label=(\roman*)]
  \item If $A$ is split semisimple, then there exists a strict (split semisimple) $\cG$-algebra $B\in\cG \mhyphen \AAlg^{\fd}_{\Fld}$ which is $\cG$-Morita equivalent to $A$.
  
      \item In general, there exist a $\ZTwo$-graded crossed module $\cH$ and a strict $\cH$-algebra $B\in\cH \mhyphen \AAlg_{\Fld}$ such that $\tcH \simeq \tcG$ as $\ZTwo$-graded $2$-groups and $B$ is $\cH$-Morita equivalent to $A$.
    \end{enumerate}
\end{teorema}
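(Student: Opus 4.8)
The plan is to reformulate an N-weak $\cG$-algebra $A$ as a weak morphism of $\ZTwo$-graded $2$-groups $\tcG \to \widetilde{\AUT^{\gen}(A)}$, the target being the strict $2$-group associated to the crossed module $\AUT^{\gen}(A) = (A^\times \xrightarrow{\Ad} \Aut^{\gen}(A))$, graded by automorphisms versus antiautomorphisms. Choosing lifts of the underlying $G_1$-action from $\Aut^{\gen}(A)$ and recording the failure of multiplicativity extracts a $\pi$-twisted $2$-cocycle $\omega$ of $G_1$, which together with the compatibility constraints on $\psi \colon G_2 \to A^\times$ measures the defect from strictness. Both parts of the theorem then amount to trivializing $\omega$ after an admissible modification: of the algebra, via $\cG$-Morita equivalence, in part (i); of the algebra and the crossed module in part (ii).

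For (i) I would first decompose $A$ along the induced $G_1$-action on the finite set of Wedderburn components of the split semisimple algebra $A$. Inner automorphisms fix each component, so $\phi(\partial x)$ does, hence $\partial(G_2)$ stabilizes every component; thus each $G_1$-orbit of components produces a $\ZTwo$-graded sub-crossed-module $\cG' = (G_2 \xrightarrow{\partial} G_1')$, with $G_1'$ the stabilizer, and a weak $\cG'$-algebra supported on a single component. Using that $\cG$-Morita equivalence is compatible with finite products and with the induction pseudofunctor $\Ind_{\cG'}^{\cG}$, this reduces the problem to $A \cong M_n(\Fld)$. There, Skolem--Noether makes every automorphism inner and places all antiautomorphisms in a single outer class, so choosing linear maps $V_0 \to V_0$ implementing the automorphisms and nondegenerate forms $V_0 \to V_0^{*}$ implementing the antiautomorphisms puts $\omega$ in $Z^2_\pi(G_1, \Fld^\times)$; crucially, these choices exhibit $\omega$ as a \emph{realizable} twisted $2$-cocycle in the sense of Section~\ref{sec:realizability}, realized by a graded projective representation $W$. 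Passing to the $\cG$-Morita equivalent algebra $\End(W)$ converts the weak action into a genuine homomorphism $G_1 \to \Aut^{\gen}(\End(W))$ (projectivity being killed on passing to $\Aut^{\gen}$), transporting $\psi$ through the same construction makes it a genuine homomorphism $G_2 \to \End(W)^\times$, and reassembling the orbits via $\Ind_{\cG'}^{\cG}$ produces the required strict split semisimple $B \in \cG \mhyphen \AAlg^{\fd}_{\Fld}$.

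For (ii), when $A$ is not semisimple the cocycle $\omega$ involves genuinely outer automorphisms and cannot in general be absorbed by a Morita modification of $A$ alone, so instead I would resolve the source. The weak $\cG$-algebra $A$ determines, as in \cite{noohi2007}, a fraction $\cG \xleftarrow{\sim} \cH \longrightarrow \AUT^{\gen}(A)$ whose right leg is strict, with $\cH = (H_2 \xrightarrow{\widehat\partial} H_1)$ having $H_1$ the extension of $G_1$ prescribed by the projective $G_1$-action and $H_2$ correspondingly enlarged so that $\ker \widehat\partial \cong \ker \partial$ and $\coker \widehat\partial \cong \coker \partial$ with the same $k$-invariant. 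Checking that $\pi$ lifts to a $\ZTwo$-grading $\widehat\pi$ on $\cH$ with $\im \widehat\partial \le \ker \widehat\pi$ gives $\tcH \simeq \tcG$ as $\ZTwo$-graded $2$-groups; over $\cH$ the pulled-back twisted $2$-cocycle is realizable by construction, so the realizability argument of part (i) --- which, once realizability is granted, does not use semisimplicity --- yields a strict $\cH$-algebra $B \in \cH \mhyphen \AAlg_{\Fld}$ that is $\cH$-Morita equivalent to $A$, the latter viewed as a weak $\cH$-algebra via $\cH \to \cG$.

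The hard part is realizability. In the Real setting a twisted $2$-cocycle of $G_1$ need not be realizable --- this is the categorified analogue of the real-versus-quaternionic alternative for antiautomorphisms, detected by the symmetry type of the implementing forms --- so the crux of (i) is that the cocycle \emph{extracted from an actual N-weak algebra} is realizable, the algebra itself supplying the forms, and the crux of (ii) is building $\cH$ large enough to force realizability while keeping $\tcH \simeq \tcG$. The remaining work --- coherently transporting the antiautomorphism contributions and the datum $\psi$ through the Morita modification and through induction, and verifying the coherence axioms defining a strict $\cG$- or $\cH$-algebra --- is lengthy but essentially routine.
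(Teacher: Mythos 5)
Your high-level framing (extract a twisted $2$-cocycle measuring the failure of strictness, then trivialize it by a Morita modification in (i) or by passing to a larger crossed module in (ii)) is consistent with the paper. However, there are several genuine gaps.

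\textbf{The realizability claim is false.} You assert that "the cocycle extracted from an actual N-weak algebra is realizable, the algebra itself supplying the forms." This is not how the paper argues, and it is not correct. For the block $A_n = M_n(\Fld)^{\oplus t_n}$, the natural module $V = (\Fld^n)^{\oplus t_n}$ equipped with the lifts $\Lambda(g)$ realizes the cocycle $\mu_\Lambda \cdot \omega_{A,3}$, not $\mu_\Lambda^{-1}$ --- this is exactly the defining relation
$\Lambda(g_2 g_1) = \mu_\Lambda(g_2,g_1)\,\omega_{A,3}(g_2,g_1)\,\Lambda(g_2)\,{}^{\pi(g_2)}\Lambda(g_1)^{\pi(g_2)}$.
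What the strictification proof of Theorem~\ref{thm:strictcGAlg} needs is a \emph{separate} module $U$ realizing $\mu_\Lambda^{-1}$; the tensor product $V \otimes U$ then carries the right equivariant structure precisely because the $\mu_\Lambda$-factors cancel. In the paper, realizability is a genuine hypothesis on $[\mu_n]$, discharged when $G_1$ is finite by a skew-group-algebra construction (Lemma~\ref{lem:finiteRealizable}), not by the algebra $A$ itself. Your sketch has no argument producing $U$.

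\textbf{The orbit/induction reduction in (i) is circular.} You propose restricting a block to the stabilizer $G_1'$ of one Wedderburn component and then reassembling via $\Ind_{\cG'}^{\cG}$. But in the paper $\Ind_{\cH}^{\cG}$ is defined only on \emph{strict} algebras (Chapter~\ref{sec:induction}), and the statement that an indecomposable weak $\cG$-algebra is $\cG$-Morita equivalent to an induced one from a stabilizer is essentially Theorem~\ref{thm:ksDecomp}, which is proved \emph{after} strictification and depends on it. The paper sidesteps this entirely: it treats a whole isotype block $M_n(\Fld)^{\oplus t_n}$ at once, extracting the permutation $\sigma\colon G_1\to\Sym$ and the twisted cocycle $\mu_\Lambda\in Z^2(G_1,(\Fld^\times)^{t_n}_\pi)$ through the $\Sym$-factor of $\Aut^{\gen}$, without ever passing to a single component.

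\textbf{Part (ii) does not use realizability at all, and (i)'s argument does not extend to non-semisimple $A$.} The paper's proof of Proposition~\ref{prop:strictification} is entirely constructive and elementary: $H_1,H_2$ are extensions of $G_1,G_2$ by $A^\times$ via explicit cocycle formulas, the strict $\cH$-structure $\psi$ is defined on the \emph{same} underlying algebra $A$ by $\psi_1(a,g)=\Ad_A(a)\,\omega_1(g)$ and $\psi_2(a,x)=a\,\omega_2(x)$, and the $\cH$-Morita equivalence $(A,\omega\varphi)\simeq(A,\psi)$ is exhibited by the identity bimodule ${}_AA_A$ with the twisted equivariant structure $\theta_M(a,g)(m)=a\,\omega_1(g)(m)$. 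No realization module is invoked. Your claim that the realizability argument of (i) "does not use semisimplicity" is also wrong: the extraction of $\mu_\Lambda$ relies on the Artin--Wedderburn decomposition, the explicit form of $\Aut^{\gen}(M_n(\Fld)^{\oplus t})$, and the lifting along $GL_n(\Fld)^{\times t}\rtimes\Sym\to PGL_n(\Fld)^{\times t}\rtimes\Sym$, none of which is available for general $A$. Your instinct that $\cH$ should be an extension of $\cG$ by $A^\times$ is correct, but the mechanism you propose for exploiting it is not the one that works.
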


%
%

In Chapter~\ref{sec:induction} we define and study induction pseudofunctors. Given a crossed submodule $\cH$ of $\cG$, it is natural to expect the existence of an induction pseudofunctor
\[
\mbox{\gls{glInd}}
: \cH \mhyphen N\AAlg_{\Fld} \rightarrow \cG \mhyphen N\AAlg_{\Fld}.
\]
Using our strictification result, it suffices to construct $\Ind_{\cH}^{\cG}$ on the subbicategories $\cH \mhyphen \AAlg_{\Fld}$ and $\cG \mhyphen \AAlg_{\Fld}$ of strict algebras. There are three flavours of $\Ind_{\cH}^{\cG}$, depending on the $\ZTwo$-gradings of $\cH$ and $\cG$. We can now state the main result of Chapter ~\ref{sec:induction}.

\begin{teorema}[{Theorems \ref{thm:IndFunctor} and \ref{thm:2IndReal}}]
\leavevmode
  \begin{enumerate}[label=(\roman*)]
  \item There exists an induction pseudofunctor
    $\Ind_{\cH}^{\cG} : \cH \mhyphen \AAlg_{\Fld} \rightarrow \cG \mhyphen \AAlg_{\Fld}$.
  \item
    If the index $|G_2:H_2|$ is finite and the characteristic of $\Fld$ does not divide $|G_2:H_2|$, then the above pseudofunctor restricts to a pseudofunctor
    $\Ind_{\cH}^{\cG} : \cH \mhyphen \AAlg_{\Fld}^{\fd} \rightarrow \cG \mhyphen \AAlg_{\Fld}^{\fd}$.
    \end{enumerate}
\end{teorema}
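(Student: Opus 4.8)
The plan is to construct $\Ind_{\cH}^{\cG}$ explicitly on objects, $1$-morphisms and $2$-morphisms of the strict equivariant Morita bicategories and then to check the pseudofunctor coherences; the reduction to strict algebras is already available by the strictification theorem. On objects, a natural candidate for $\Ind_{\cH}^{\cG}(B)$, for a strict $\cH$-algebra $B$, is the algebra whose underlying vector space is a coset-wise direct sum of twisted copies of $B$ — one copy for each relevant coset of $H_1$ in $G_1$ and of $H_2$ in $G_2$, twisted by fixed choices of coset representatives — assembled so that the $G_1/H_1$-direction contributes mutually orthogonal summands (a finite product of algebras, using that $G_1$, hence $G_1/H_1$, is finite under the running hypothesis) while the $H_2 \hookrightarrow G_2$ direction contributes a crossed-product/twisted-group-algebra type enlargement. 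The strict $\cG$-action is the one under which $G_1$ permutes the $G_1/H_1$-summands through its action on $G_1/H_1$ (correcting by the $\cH$-action of the element $h(g_1,\bar g) \in H_1$ forced by the chosen representatives), $G_2$ acts through the enlarged $G_2$-data, and, in the graded cases, an element $g_1$ with $\pi(g_1) = -1$ acts by an antiautomorphism. This last point is where the three flavours — both gradings trivial, $H_1 \subseteq \ker\pi$ inside a graded $\cG$, and both graded nontrivially — demand separate but entirely parallel bookkeeping. That the recipe yields a genuine strict $\cG$-algebra reduces to the cocycle identities for the coset representatives and the strictness of $B$.

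On $1$- and $2$-morphisms I would apply the same recipe: a $\cH$-equivariant bimodule $M$ is sent to the corresponding coset-wise direct sum of twisted copies ${}^{\bar g}\!M$, made into an $(\Ind B_1, \Ind B_2)$-bimodule with the matching permuted, twisted and (for $\pi(g_1) \neq 1$) side-swapped equivariant structure, and an intertwiner is sent to the direct sum of its twisted copies. The compositor and unitor $2$-cells of the pseudofunctor are assembled, summand by summand, from the canonical isomorphisms ${}^{\bar g}(M \otimes_{B_2} N) \cong {}^{\bar g}\!M \otimes_{{}^{\bar g}\!B_2} {}^{\bar g}\!N$, the identification of ${}^{\bar g} B$ with the identity bimodule, and the standard comparison maps for the $G_2$-enlargement. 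Verifying the pseudofunctor axioms — compatibility of the compositor with the associators on both sides and with the unitors — is the lengthy but routine core of part (i); the one delicate point is that the correction elements $h(g_1,\bar g)$ and the $G_2$-twisting cocycle thread consistently through iterated tensor products, which in the end follows from associativity of the $G_1$-action on $G_1/H_1$ and the coherence already present in $B$.

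For part (ii) the additional content is that $\Ind_{\cH}^{\cG}$ preserves full dualizability, i.e. separability of algebras and finite dimensionality together with finite generation and projectivity of bimodules. The finiteness statements are automatic: a finite direct sum of finite dimensional (resp. finitely generated projective) bimodules is again such, and the enlargement along $H_2 \hookrightarrow G_2$ multiplies dimensions by the finite index $|G_2:H_2|$. The real point is that $\Ind_{\cH}^{\cG}(B)$ is separable. One must exhibit a separability idempotent in $\Ind(B) \otimes_{\Fld} \Ind(B)^{\opp}$, and the natural candidate is built by averaging the separability idempotent of $B$ over a transversal of $H_2$ in $G_2$ and spreading it over the $G_1/H_1$-summands; this average is well defined exactly when $|G_2:H_2|$ is finite and invertible in $\Fld$ — a categorified, relative Maschke/Higman argument, the $G_1/H_1$-direction contributing no obstruction since there the construction is of commutative-function-algebra type. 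I expect this separability step to be the main obstacle: one has to isolate precisely which twisted group algebra is created along $H_2 \hookrightarrow G_2$, check Higman's criterion for $\Ind(B)$ relative to $B$, and confirm that the resulting idempotent is compatible with the full $\cG$-action, so that $\Ind(B)$ genuinely lies in $\cG \mhyphen \AAlg_{\Fld}^{\fd}$. Granting this together with the bimodule finiteness, part (ii) follows, since full dualizability in a Morita bicategory is detected on the underlying algebras and bimodules, and the pseudofunctor data of part (i) restrict verbatim.
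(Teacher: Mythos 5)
Your overall strategy — an explicit coset‑indexed construction on strict algebras, extended to bimodules and intertwiners, with a Maschke/Higman averaging argument for separability — does line up with the paper's. However, your description of the algebra structure on $\Ind_{\cH}^{\cG}(B)$ contains a genuine error that would derail the verification.

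You describe the construction as one in which ``the $G_1/H_1$-direction contributes mutually orthogonal summands (a finite product of algebras)'' while the $H_2\hookrightarrow G_2$ direction contributes a twisted group algebra enlargement. The first clause is wrong in general: the two directions do \emph{not} decouple. In the paper's formula the underlying vector space is $\widetilde{A}=\prod_{t\in\sT}\Fld G_{2,t}\otimes_{\Fld H_2}A$, but the multiplication
\[
[x_2\otimes a_2]_{t_2}\cdot[x_1\otimes a_1]_{t_1}
=
\delta_{\partial(x_1^{-1})t_2,\,t_1}\,[x_2x_1\otimes\omega_{A,1}(t_1^{-1}\partial(x_1^{-1})t_2)(a_2)\,a_1]_{t_1}
\]
carries a $\delta$-function that connects \emph{different} $t$-components whenever $\partial(x_1)\notin t_1H_1t_1^{-1}$, which happens precisely when $\partial(G_2)\not\leq H_1$. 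The structure is that of a groupoid/block-convolution algebra over the cosets, not a direct product; only the subalgebras $[e_{G_2}\otimes A]_t$ are mutually orthogonal. This is forced, not incidental: the strict $\cG$-algebra axiom $\omega_{\widetilde{A},1}\circ\partial=\partial\circ\omega_{\widetilde{A},2}$ requires the inner automorphism $\partial(\omega_{\widetilde{A},2}(x))$ to agree with $\omega_{\widetilde{A},1}(\partial x)$, which permutes the $t$-components. Conjugation inside a genuine direct product of algebras can never permute factors, so your candidate cannot be made into a strict $\cG$-algebra once $\partial(G_2)\not\leq H_1$. Once you write down the correct multiplication, the verifications of the group‑homomorphism and equivariance axioms are exactly as entangled as the formula suggests, and the pseudofunctor data on bimodules/intertwiners must carry the same $\delta$-coupling.

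Your sketch of part (ii) lands on the right conclusion but with a similarly off heuristic: the separability idempotent is obtained by summing $\sum_j a_j\otimes b_j$ over \emph{both} a transversal $\sX$ of $H_2$ in $G_2$ and a transversal $\sT$ of $H_1$ in $G_1$, and then dividing only by $|G_2:H_2|$; the $\sT$-sum telescopes to $1_{\widetilde A}$ because $t\mapsto t_x$ (defined by $t\partial(x)\in t_xH_1$) is a bijection of $\sT$ for fixed $x$, not because the $t$-direction is a commutative function algebra. The characteristic assumption on $|G_2:H_2|$ alone then suffices, matching the theorem. Finally, a small scoping remark: the theorem as boxed is a statement about \emph{strict} algebras, so strictification is background motivation (as the introduction explains) rather than a step inside this proof; and Theorem~\ref{thm:IndFunctor}(i) in the body constructs the pseudofunctor between the $\AALG$ bicategories without any finiteness, while landing in $\AAlg$ additionally requires $|G_1:H_1|<\infty$ and $|G_2:H_2|<\infty$, not finiteness of $G_1$ per se.
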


Our construction of $\Ind_{\cH}^{\cG}$ is direct and explicit, illustrating the power of the Morita-theoretic approach to $2$-representation theory. An important technical result is a Maschke-type theorem for induced $\cG$-algebras, Proposition~\ref{thm:maschke}, asserting that $\Ind_{\cH}^{\cG}$ preserves separability. When $\cG$ is trivially graded, we prove that, under certain assumptions, $\Ind_{\cH}^{\cG}$ is both left and right biadjoint to the restriction pseudofunctor $\Res_{\cH}^{\cG}$ (Proposition ~\ref{thm:indBiadjunct}). The analogous question in the $\ZTwo$-graded setting is more subtle (Problem~\ref{prob:adjunction}).
In Section~\ref{sec:indRealKAlg} we describe the monoidal behaviour of $\Ind_{\cH}^{\cG}$.

In Chapter~\ref{sec:classReal2Reps} we return to Real $2$-representations on $2\Vect_{\Fld}$. We construct a local biequivalence $\cG \mhyphen \AAlg_{\Fld}^{\fd}\rightarrow \RRep_{2\Vect_{\Fld}}(\tcG)$ (Proposition ~\ref{thm:weakAlgTo2ModBicat}) that enables us to prove the first of the two main results of the paper.

\begin{teorema}[{Theorem~\ref{thm:moritaModel}}]
  If ${\Fld}$ is separably closed, then $\RRep_{2 \Vect_{\Fld}}(\tcG)$ is biequivalent to $\cG \mhyphen \AAlg^{\fd}_{\Fld}$.
\end{teorema}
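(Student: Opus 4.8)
The plan is to upgrade the local biequivalence $\cG\mhyphen\AAlg^{\fd}_{\Fld}\to\RRep_{2\Vect_{\Fld}}(\tcG)$, $A\mapsto\Theta_A$, of Proposition~\ref{thm:weakAlgTo2ModBicat} to a genuine biequivalence. By the bicategorical Whitehead theorem, a pseudofunctor which induces an equivalence on every hom-category and is essentially surjective on objects, in the sense that every object of the target is equivalent to one in the image, is a biequivalence. Proposition~\ref{thm:weakAlgTo2ModBicat} already supplies the first property, so it remains only to prove essential surjectivity: every Real $2$-representation $\rho$ of $\tcG$ on $2\Vect_{\Fld}$ is equivalent to $\Theta_B$ for some strict $\cG$-algebra $B\in\cG\mhyphen\AAlg^{\fd}_{\Fld}$.

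First I would pass from $2\Vect_{\Fld}$ to algebras. Since $\Fld$ is separably closed, the standard dictionary identifies $2\Vect_{\Fld}$ with the full subbicategory of $\AALG_{\Fld}$ on finite-dimensional separable (equivalently, split semisimple) $\Fld$-algebras, with its weak duality $(-)^{\circ}$ corresponding to the opposite-algebra involution on objects, to componentwise linear duality on $1$-morphisms, and to the resulting transpose operation on intertwiners, which is contravariant as required. Transporting $\rho$ across this biequivalence yields a Real $2$-representation of $\tcG$ valued in the Morita bicategory of separable algebras whose underlying object is a single finite-dimensional split semisimple algebra $A$, automatically fully dualizable.

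The next and main step is a recognition principle: unwinding the data of such a Real $2$-representation --- the coherent $\tcG$-action, its associativity and unit constraints, together with the Real/duality structure --- reproduces exactly the axioms of an N-weak $\cG$-algebra structure on $A$ from Chapter~\ref{sec:weakcGAlg}, so that $\rho\simeq\Theta_A$ with $A\in\cG\mhyphen N\AAlg^{\fd}_{\Fld}$. Here the $1$-morphism data of $\rho$ gives the projective action of $G_1$ by invertible $A$-bimodules; composition with $(-)^{\circ}$ along the $\ZTwo$-grading forces these bimodules to be realized by algebra automorphisms on $\ker(\pi)$ and by anti-automorphisms on its complement; the $2$-morphism data yields the projective homomorphism $G_2\to A^{\times}$; and the associativity, unit and Real coherences of $\rho$ become precisely the twisted $2$-cocycle and compatibility conditions of a weak $\cG$-algebra. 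Because $A$ is split semisimple, part~(i) of Theorem~\ref{thm:strictcGAlg} then produces a strict $\cG$-algebra $B\in\cG\mhyphen\AAlg^{\fd}_{\Fld}$ which is $\cG$-Morita equivalent to $A$; as $\Theta$ carries $\cG$-Morita equivalences to equivalences of Real $2$-representations, $\rho\simeq\Theta_A\simeq\Theta_B$, establishing essential surjectivity and hence the theorem.

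The hard part is the recognition principle of the third step: one must match, coherence by coherence, the definition of a Real $2$-representation on the semisimple part of $\AALG_{\Fld}$ with the definition of an N-weak $\cG$-algebra, carefully tracking how the involution $(-)^{\circ}$, contravariant on $2$-morphisms, becomes algebra anti-automorphisms and bimodule transposes, and how the weakness of both $\tcG$ and the Morita bicategory makes the relevant homomorphisms projective rather than genuine. Once this translation is in place --- essentially a bookkeeping comparison of the axiom lists in Chapters~\ref{sec:background} and~\ref{sec:weakcGAlg} --- the remaining assembly is formal, combining the local biequivalence of Proposition~\ref{thm:weakAlgTo2ModBicat} with the strictification theorem.
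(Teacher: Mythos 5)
Your proof takes essentially the same approach as the paper: the paper's one-line proof likewise combines the local biequivalence of Proposition~\ref{thm:weakAlgTo2ModBicat} with essential surjectivity established via Proposition~\ref{thm:RealizeCocycle} (which performs exactly the ``recognition'' you describe, via Proposition~\ref{prop:RealizeCocycle_1}, Proposition~\ref{prop:2RepFromRep} and Lemma~\ref{lem:redToGroups}) and Theorem~\ref{thm:strictcGAlg}. One point worth making explicit, which you leave implicit: the realizability hypotheses of Proposition~\ref{thm:RealizeCocycle} and Theorem~\ref{thm:strictcGAlg} are discharged by the introduction's standing assumption that $G_1$ is finite, via Lemma~\ref{lem:finiteRealizable}; without it the theorem only holds with $\RRep_{2\Vect_\Fld}(\tcG)$ replaced by its realizable subbicategory $\RRep^r_{2\Vect_\Fld}(\tcG)$, as in the body's Theorem~\ref{thm:moritaModel}.
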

  
As a consequence, we give a Morita-theoretic classification of Real $2$-representations of $\tcG$ (Corollary~\ref{thm:Real2RepIsoClass}). The resulting structure theorem for Real $2$-representations (Theorem~\ref{thm:ksDecomp}) yields a Real generalization of known results \cite{ostrik2003,elgueta2007,ganter2008}.

In Chapter~\ref{sec:GrothGrpRRep} we describe the Grothendieck ring of $\RRep_{2\Vect_{\Fld}}(\tcG)$, proving the second main result of the paper.

\begin{teorema}[{Theorem~\ref{thm:grothGroupDescr}}]
  The Grothendieck ring $K_0(\RRep_{2 \Vect_{\Fld}}(\tcG))$ is isomorphic to the generalized Burnside ring $\mathbb{B}^{\Phi}_{\mathbb{Z}}(\cG)$, where $\Phi$ is the functor of ``Real one dimensional $2$-representations''.
\end{teorema}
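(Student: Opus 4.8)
The plan is to move the computation to the Morita side via Theorem~\ref{thm:moritaModel}, reducing the problem to computing $K_0(\cG\mhyphen\AAlg^{\fd}_{\Fld})$, and then to use the structure theory of Real $2$-representations together with the induction pseudofunctor of Chapter~\ref{sec:induction} to identify this Grothendieck ring, first as an abelian group and then as a ring, with $\mathbb{B}^{\Phi}_{\mathbb{Z}}(\cG)$. Throughout, write $\bar{G} = \coker(\partial) = \pi_0(\tcG)$ for the fundamental group, a finite group since $G_1$ is finite, carrying the $\ZTwo$-grading $\bar\pi$ descended from $\pi$; for a subgroup $H\leq\bar{G}$ let $\cH$ denote the corresponding crossed submodule of $\cG$ with its (possibly trivial) induced grading, and let $\Phi(H)$ be the monoid of isomorphism classes of one-dimensional Real $2$-representations attached to $H$ — concretely, of $\Fld$ with a graded projective $\cH$-algebra structure, taken up to $\cH$-Morita equivalence together with the Real duality identification.

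\emph{Additive structure.} First I would invoke the structure theorem for Real $2$-representations, Theorem~\ref{thm:ksDecomp} (equivalently the classification in Corollary~\ref{thm:Real2RepIsoClass}): every object of $\cG\mhyphen\AAlg^{\fd}_{\Fld}$ is $\cG$-Morita equivalent to a finite direct sum of indecomposables, and each indecomposable is $\cG$-Morita equivalent to $\Ind_{\cH}^{\cG}(L)$ for a well-defined conjugacy class of subgroups $H\leq\bar{G}$ and a well-defined $L\in\Phi(H)$. Combined with the Krull--Schmidt property of these decompositions, the classes $[\Ind_{\cH}^{\cG}(L)]$, as $[H]$ runs over conjugacy classes of subgroups and $L$ over $\Phi(H)$, form a $\mathbb{Z}$-basis of $K_0(\RRep_{2\Vect_{\Fld}}(\tcG))$, yielding the underlying abelian group isomorphism $K_0\cong\bigoplus_{[H]}\mathbb{Z}[\Phi(H)] = \mathbb{B}^{\Phi}_{\mathbb{Z}}(\cG)$. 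Along the way one must record that $\Phi$ is genuinely the Mackey-type coefficient system required to even define $\mathbb{B}^{\Phi}_{\mathbb{Z}}(\cG)$: restriction maps $\Phi(H)\to\Phi(H')$ for $H'\leq H$, induced by restricting the $\cG$-algebra structure, conjugation maps, and their functoriality; each $\Phi(H)$ is moreover a finite abelian group, being classified by $\bar\pi$-twisted group cohomology of $H$ with values in $\Fld^{\times}$, which is finite since $\Fld$ is separably closed and $H$ is finite.

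\emph{Multiplicative structure.} The ring multiplication on $K_0(\RRep_{2\Vect_{\Fld}}(\tcG))$ is induced by the tensor product of Real $2$-representations, which on the Morita side is $-\otimes_{\Fld}-$ of $\cG$-algebras equipped with the diagonal (anti)action and the product of twisting data. The heart of the proof is a Mackey-type double coset formula in $\cG\mhyphen\AAlg^{\fd}_{\Fld}$,
\[
\Ind_{\cH}^{\cG}(L)\otimes_{\Fld}\Ind_{\cK}^{\cG}(M)\;\simeq\;\bigoplus_{g\in H\backslash\bar{G}/K}\Ind_{\cH\cap{}^{g}\cK}^{\cG}\!\big(\Res(L)\otimes_{\Fld}{}^{g}\!\Res(M)\big),
\]
where the sum runs over a set of $H$-$K$ double coset representatives in $\bar{G}$, ${}^{g}\cK$ is the $g$-conjugate crossed submodule, $\cH\cap{}^{g}\cK$ its intersection with $\cH$, and $\Res$, ${}^{g}(-)$ denote restriction to $\cH\cap{}^{g}\cK$ and $g$-conjugation. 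I would prove this directly from the explicit construction of $\Ind$ in Chapter~\ref{sec:induction}: the algebra $\Ind_{\cH}^{\cG}(L)$ is built on $|\bar{G}/H|$ copies of $L$, so the tensor product is built on $\bar{G}/H\times\bar{G}/K\cong\coprod_{g}\bar{G}/(H\cap{}^{g}K)$, and a bookkeeping of the automorphism/anti-automorphism cocycles — using $\operatorname{char}\Fld\nmid|G_2:H_2|$ and the idempotents provided by the Maschke-type Proposition~\ref{thm:maschke} — identifies the $g$-block with the asserted induced algebra. Since $L$ and $M$ are one-dimensional, so is each tensor factor inside the sum, whence every summand is again one of the basis indecomposables. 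Applying $K_0$ turns the formula into precisely the identity that defines the product of two basis elements of $\mathbb{B}^{\Phi}_{\mathbb{Z}}(\cG)$ in the sense of \cite{gunnells2012, rumynin2018}; since $[\Ind_{\cG}^{\cG}(\Fld)]=[\Fld]$ is the unit on both sides, the additive isomorphism above is a ring isomorphism.

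\emph{Main obstacle.} I expect the delicate point to be the Mackey double coset formula in the $\ZTwo$-graded setting. Unlike the classical case, the double cosets split into two types according to whether a representative $g$ lies in $\ker\bar\pi$; when it does not, the twisting data of $L$ and $M$ on $\cH\cap{}^{g}\cK$ acquires a contravariant, genuinely Real component, so the corresponding summand must be interpreted as a Real one-dimensional $2$-representation of a $\ZTwo$-graded $\cH\cap{}^{g}\cK$, equipped with an induced duality pairing. Tracking the resulting signs and pairings, and checking that they match on the nose both the definition of the coefficient system $\Phi$ — down to the extension and cocycle data classifying $\Phi(H)$ — and the product in $\mathbb{B}^{\Phi}_{\mathbb{Z}}(\cG)$, is where the real work lies; the trivially graded case, which recovers the Grothendieck ring of ordinary $2$-representations, is a strictly simpler instance of the same argument.
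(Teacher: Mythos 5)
Your additive argument matches the paper's: both deduce the $\mathbb{A}$-module isomorphism from Theorem~\ref{thm:ksDecomp} together with realizability (Lemma~\ref{lem:finiteRealizable}). For the multiplicative structure, however, your route is genuinely different from the paper's, and it is also not complete.

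The paper does \emph{not} prove a $\ZTwo$-graded Mackey double-coset formula directly. Instead it bootstraps from the known ungraded isomorphism $\mathbb{B}^{\Phi}_{\mathbb{A}}(\cG_0)\simeq K_0(\Rep_{2\Vect_\Fld}(\tcG_0))\otimes\mathbb{A}$ of \cite[Proposition~4.2]{rumynin2018}. Concretely, it splits a set of $P\backslash\pi_1(\cG)/Q$ representatives along the subset lying in $\pi_1(\cG)_0$, interprets the resulting right-hand side of the Burnside product as an element of the ungraded ring $\mathbb{B}^{\Phi}_{\mathbb{A}}(\cG_0)$, identifies it there with a restriction $\Res_{\tcG_0}^{\tcG}$ of a tensor product of hyperbolically induced things, and then applies $\HInd_{\tcG_0}^{\tcG}$ using the projection formula \eqref{eq:hypModFunctor}, namely $B \boxtimes \HInd_{\cG_0}^{\cG} A \simeq \HInd_{\cG_0}^{\cG}(\Res_{\cG_0}^{\cG}B \boxtimes A)$, and the isomorphism of Lemma~\ref{lem:HIndSymmetry}. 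This delegates all the genuinely Mackey-theoretic content to the already-established ungraded result, and the only new graded input is a comparatively cheap projection formula.

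Your proposal instead asserts a direct Mackey formula
\[
\Ind_{\cH}^{\cG}(L)\boxtimes\Ind_{\cK}^{\cG}(M)\;\simeq\;\bigoplus_{g\in H\backslash\bar{G}/K}\Ind_{\cH\cap{}^{g}\cK}^{\cG}\bigl(\Res(L)\boxtimes{}^{g}\Res(M)\bigr)
\]
at the level of $\cG$-algebras and proposes to deduce the ring isomorphism by applying $K_0$. This would buy a self-contained proof that does not appeal to the ungraded case, and would match the classical group-theoretic template. But the formula is stated without proof, and your own discussion of the ``main obstacle'' shows this is exactly where the work is: when $g\notin\ker\bar\pi$, the conjugation $\gamma_{g^{-1}}$ dualizes (compare the term $\dot{\spcl}\cdot\theta^{\vee}$ in the paper's Example after the Burnside product formula), and the right-hand summand must be interpreted using the maps $\Phi(\gamma_e)$ and $\Phi(\gamma_{g^{-1}})$ of Section~\ref{sec:K0RRep}, which themselves case-split on the gradings of $P$, $Q$ and $P\cap gQg^{-1}$ and involve the forgetful map $\mathfrak{F}$. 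You flag this as delicate but do not resolve it, so as written the proposal has a gap precisely at its central step. Two smaller remarks: since you take $H_2 = G_2$, the index $|G_2:H_2|$ is $1$, so the characteristic hypothesis and Proposition~\ref{thm:maschke} are vacuous here; and your claim that each $\Phi(H)$ is finite is not needed for the argument (and the monoid structure, not finiteness, is what the coefficient-system formalism requires). The gradings of $\cH\cap{}^g\cK$ also need care: with $H,K$ trivially graded this intersection stays trivially graded even for odd $g$, and the Real content enters through the conjugation map, not through a grading on the intersection — your phrasing suggests otherwise.
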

The isomorphism is explicit and compatible with the corresponding ordinary result \cite{rumynin2018}.

Finally, in Chapter~\ref{sec:RealChar} we turn to the character theory of Real $2$-representations. We define the categorical character and $2$-character of a Real $2$-representation of an arbitrary $\ZTwo$-graded $2$-group $\tcG$. Our approach is geometric, being formulated in terms of various kinds of loop spaces of $\cG$. One feature of our approach is that it works directly with the $2$-group (or its crossed module model) and applies uniformly to the ordinary and Real cases. Moreover, its form immediately suggests a generalization to the $n$-categorical and projective cases. In the finite case, we relate Real $2$-characters to mark homomorphisms of the generalized Burnside ring in Corollary~\ref{cor:Real2Char}. At its core, this is a result about $2$-characters of certain induced Real $2$-representations (cf. Theorem~\ref{thm:inducedCatChar} and Corollary~\ref{thm:induced2Char}). This provides a $2$-group generalization of the corresponding result for (Real projective) $2$-representations of finite groups \cite{ganter2008, mbyoung2018c}. We expect these results to be representation theoretic analogues of Hopkins--Kuhn--Ravenel character theory \cite{hopkins2000, lurie2019} of $2$-equivariant elliptic cohomology (see Problem~\ref{prob:HKR}).

\addtocontents{toc}{\protect\setcounter{tocdepth}{1}}

\subsection*{Acknowledgements}
Both authors are grateful to the Max Planck Institute for Mathematics where they met and started this project. The first author would like to thank the University of Zurich, whose hospitality he enjoyed.
The second author would like to thank Catharina Stroppel for discussions. Both authors would like to thank the referee for their careful reading and their many helpful comments and suggestions. The first author was partially supported by the Russian Academic Excellence Project `5--100'.



\section{Crossed modules and $2$-groups}
\label{sec:background}

\subsection{\texorpdfstring{$\ZTwo$}{}-graded crossed modules}

A crossed module $\cG = (G_2 \xrightarrow[]{\partial} G_1)$ consists of groups $G_1$ and $G_2$, an action $G_1 \times G_2 \rightarrow G_2$, $(g,x) \mapsto {^g}x$, of $G_1$ on $G_2$ by group automorphisms and a homomorphism $\partial: G_2 \rightarrow G_1$. This data is required to satisfy
\[
\partial ({^g}x) = g \partial(x) g^{-1},
\qquad \qquad
{^{\partial(y)}} x = y x y^{-1}
\]
for all $g,x,y$. Here, and for the rest of the paper, we use letters $f,g,h$ for elements of $G_1$, $x,y,z$ for elements of $G_2$ and $a,b,c$ for elements of some algebra $A$. 

A group $G$ defines a crossed module $( \{e\} \xrightarrow[]{e} G)$, which we continue to denote by $G$.

Denote by
\gls{glZTwo}
the multiplicative group $\{+1, -1\}$.

\begin{Def}
A $\ZTwo$-graded crossed module is a crossed module morphism $\pi: \cG \rightarrow \ZTwo$.
\end{Def}

Explicitly, $\pi$ is the data of a group homomorphism $\pi: G_1 \rightarrow \ZTwo$ which satisfies $\im (\partial) \leq \ker (\pi)$. 
The kernel of $\pi: \cG \rightarrow \ZTwo$ is the crossed submodule $\cG_0 = (G_2 \xrightarrow[]{\partial} G_0 )$, where $G_0 = \ker(\pi: G_1 \rightarrow \ZTwo)$. We call $\cG_0$ the ungraded crossed module of $\cG$. We say that $\cG$ is trivially graded if $\cG_0=\cG$.

\subsection{Crossed modules of generalized automorphisms}

Fix a ground field ${\Fld}$. Let $A$ be a ${\Fld}$-algebra, always assumed to be associative and unital. The group of units of $A$ is $A^{\times}$. The centre of $A$ is $Z(A)$. The assignment of an algebra to its opposite extends to an involution
$(-)^{\opp} :
\mbox{\gls{glALG}}
\rightarrow \mathsf{ALG}_{\Fld}$ of the category of ${\Fld}$-algebras and unital algebra morphisms. Given $\epsilon \in \ZTwo$, define ${^{\epsilon}}(-):\mathsf{ALG}_{\Fld} \rightarrow \mathsf{ALG}_{\Fld}$ so that ${^{+1}}(-) = \id_{\mathsf{ALG}_{\Fld}}$ and ${^{-1}}(-) = (-)^{\opp}$.

Let
\gls{glAutg}
be the set of all algebra isomorphisms of the form $A \rightarrow A$ or $A^{\opp} \rightarrow A$. Define $\pi: \Aut^{\gen}(A) \rightarrow \ZTwo$ so that $g \in \Aut^{\gen}(A)$ is an algebra homomorphism ${^{\pi(g)}}A \rightarrow A$, where we have used the functor ${^{\epsilon}}(-)$ defined in the previous paragraph. We consider $\Aut^{\gen}(A)$ as a group with multiplication
\[
g \cdot h = g \circ {^{\pi(g)}} h.
\]
Since $(-)^{\opp}$ acts trivially on morphisms (viewed as set maps), this is the usual composition of morphisms. The map $\pi$ makes $\Aut^{\gen}(A)$ into a $\ZTwo$-graded group with ungraded subgroup
\gls{glAut}.

\begin{Def}
The crossed module of generalized automorphisms of $A$ is
\[
\mbox{\gls{glAUTg}}
= \big( A^{\times} \xrightarrow[]{\partial} \Aut^{\gen}(A) \big),
\]
where $\Aut^{\gen}(A)$ acts on $A^{\times}$ by ${^{g}}x = g(x^{\pi(g)})$ and $\partial(x)$ is the inner automorphism $a \mapsto x a x^{-1}$.
\end{Def}

To see that, for example, the first axiom of a crossed module holds, note that $\partial({^{g}} x)(a) = g(x^{\pi(g)}) a g(x^{-\pi(g)})$ while
\[
(g \partial(x) g^{-1})(a)
=
g(x^{\pi(g)} g^{-1}(a)x^{-\pi(g)})
=
g(x^{\pi(g)}) a g(x^{-\pi(g)}),
\]
as required. The $\ZTwo$-grading of $\Aut^{\gen}(A)$ induces a $\ZTwo$-grading of $\AUT^{\gen}(A)$, the ungraded crossed module of which is
$\mbox{\gls{glAUT}} \coloneqq (A^{\times} \xrightarrow[]{\partial} \Aut(A))$.

\subsection{\texorpdfstring{$2$}{}-groups}
\label{sec:2Grps}

For categorical background, we refer the reader to B\'{e}nabou \cite{benabou1967}.
For a detailed introduction to $2$-groups, see Baez and Lauda \cite{baez2004}.

In this paper, we use the term $2$-group for what is called a weak $2$-group in \cite{baez2004}, namely, a bicategory
\gls{gltG}
$\tG$
with a single object in which all $1$-morphisms are equivalences and all $2$-morphisms are isomorphisms. Morphisms of $2$-groups are pseudofunctors. Note that $\tG$ can be seen as a monoidal groupoid in which each endofunctor $g \otimes - : \tG \rightarrow \tG$, $g \in \tG$, is an equivalence. We switch freely between these two perspectives. A $2$-group is called strict if its underlying bicategory is a (strict) $2$-category. Every $2$-group is equivalent to a strict $2$-group. A group can be thought of as a groupoid and, hence, as a $2$-group in a canonical way.

There is a well-known equivalence $\widetilde{(-)}$ from the category of crossed modules and strict crossed module morphisms to the category of strict, small $2$-groups and $2$-functors \cite{brown1976}, \cite[\S 3.3]{noohi2007}. This functor assigns to a crossed module $\cG$ the $2$-group $\tcG$ with object $\star$, with $1$-morphisms $1\End_{\tcG}(\star) = G_1$ and $2$-morphisms
\[
2\Hom_{{\tcG}}(g_1,g_2) = \{ x \in G_2 \mid \partial(x) g_1 = g_2\}.
\]
The vertical composition law in $\tcG$ is illustrated by the diagram
\[
(g_2 \xRightarrow[]{x_2} g_3) \circ (g_1 \xRightarrow[]{x_1} g_2)
=
\begin{tikzpicture}[baseline= (a).base]
\node[scale=0.9] (a) at (0,0){
\begin{tikzcd}[column sep=5em]
\star
  \arrow[bend left=65]{r}[name=U,below]{}{g_3} 
  \arrow[]{r}[name=M]{}[above left]{g_2}
  \arrow[bend right=65]{r}[name=D,]{}[swap]{g_1}
& 
\star
     \arrow[Rightarrow,to path={(M) -- node[label=right:\footnotesize \scriptsize $x_2$] {} (U)}]{}
     \arrow[shorten <=3pt,shorten >=3pt,Rightarrow,to path={(D) -- node[label=right:\footnotesize \scriptsize $x_1$] {} (M)}]{}
\end{tikzcd}
};
\end{tikzpicture}
=
\begin{tikzpicture}[baseline= (a).base]
\node[scale=0.9] (a) at (0,0){
\begin{tikzcd}[column sep=5em]
\star
  \arrow[bend left=35]{r}[name=U,below]{}{g_3} 
  \arrow[bend right=35]{r}[name=D]{}[swap]{g_1}
& 
\star.
     \arrow[Rightarrow,to path={(D) -- node[label=right:\footnotesize \scriptsize $x_2 x_1$] {} (U)}]{}
\end{tikzcd}
};
\end{tikzpicture}
\]
The horizontal composition law is illustrated by 
\[
(g_1 \xRightarrow[]{x} g_2) \diamond (g_1^{\prime} \xRightarrow[]{x^{\prime}} g_2^{\prime})
=
\begin{tikzpicture}[baseline= (a).base]
\node[scale=0.9] (a) at (0,0){
\begin{tikzcd}[column sep=5em]
\star
  \arrow[bend left=25]{r}[name=U,below]{}{g_2} 
  \arrow[bend right=25]{r}[name=D]{}[swap]{g_1}
& 
\star
  \arrow[bend left=25]{r}[name=U2,below]{}{g_2^{\prime}} 
   \arrow[bend right=25]{r}[name=D2]{}[swap]{g_1^{\prime}}
&
\star
     \arrow[Rightarrow,to path={(D) -- node[label=right:\footnotesize \scriptsize $x$] {} (U)}]{}
      \arrow[Rightarrow,to path={(D2) -- node[label=right:\footnotesize \scriptsize $x^{\prime}$] {} (U2)}]{}
\end{tikzcd}
};
\end{tikzpicture}
=
\begin{tikzpicture}[baseline= (a).base]
\node[scale=0.9] (a) at (0,0){
\begin{tikzcd}[column sep=6em]
\star
  \arrow[bend left=25]{r}[name=U,below]{}{g_2 g_2^{\prime}} 
  \arrow[bend right=25]{r}[name=D]{}[swap]{g_1 g_1^{\prime}}
& 
\star.
     \arrow[Rightarrow,to path={(D) -- node[label=right:\footnotesize \scriptsize $x \cdot {^{g_1}}x^{\prime}$] {} (U)}]{}
\end{tikzcd}
};
\end{tikzpicture}
\]

The equivalence class of the $2$-group $\tcG$ is determined by the quadruple $(\pi_1 (\cG), \pi_2 (\cG), \alpha, [\theta])$ where $\alpha$ is an action of $\pi_1 (\cG)\coloneqq \coker (\partial)$ on $\pi_2 (\cG)\coloneqq \ker (\partial)$ and $[\theta] \in H^3(\pi_1 (\cG), \pi_2 (\cG))$ is the Sinh cohomology class \cite[Theorem 8.3.7]{baez2004} (cf. \cite{elgueta2007,rumynin2019}).

A $\ZTwo$-graded $2$-group is a $2$-group morphism $\pi: \tG \rightarrow \ZTwo$. The ungraded $2$-group of $\tG$ is the locally full subbicategory $\tG_0$ on $1$-morphisms in $\ker (\pi)$. We also write $\widetilde{(-)}$ for the induced equivalence between the categories of $\ZTwo$-graded crossed modules and $\ZTwo$-graded groups.

\subsection{Generalized automorphism \texorpdfstring{$2$}{}-groups}
\label{sec:genAut2Grp}

Given a bicategory $\mathcal{V}$, denote by $\mathcal{V}^{\co}$ the bicategory obtained from $\mathcal{V}$ by reversing its $2$-cells.

We recall a construction of \cite[\S 3.3]{mbyoung2018c}. Let $\mathcal{V}$ be a bicategory with weak duality involution, in the sense of Shulman \cite[\S 2]{shulman2018}. This is the data of a pseudofunctor
$\mbox{\gls{glCi}}
: \mathcal{V}^{\co} \rightarrow \mathcal{V}$, a pseudonatural adjoint equivalence $\mu : 1_{\mathcal{V}} \Rightarrow  (-)^{\circ} \circ ((-)^{\circ})^{\co}$ and additional higher coherence data which we do not recall here. Let $V \in \mathcal{V}$. The collection of all equivalences of the form $V \rightarrow V$ or $V^{\circ} \rightarrow V$, together with the $2$-isomorphisms between them, assembles to a $\ZTwo$-graded $2$-group $1\Aut^{\gen}_{\mathcal{V}}(V)$, called the generalized automorphism $2$-group of $V$. The monoidal structure $\otimes$ is defined on objects by
\[
f_2 \otimes f_1 = f_2 \circ ({^{\pi(f_2)}}f_1 \circ \mu_V^{\delta_{\pi(f_2), \pi(f_1), -1}}),
\]
where $\pi(f) \in \ZTwo$ is such that $f: {^{\pi(f)}} V \rightarrow V$, the symbol ${^{\pi(f)}}(-)$ determines the application of $(-)^{\circ}$ and
$$
\delta_{\pi(f_2), \pi(f_1), -1} = 
\begin{cases}
  +1 & \mbox{ if } \pi(f_2) = \pi(f_1)=-1, \\
  \; 0 & \mbox{ otherwise,}
  \end{cases} 
$$
while $\mu_V^0$ means that the map $\mu_V$ is omitted. The definition of $\otimes$ on morphisms is similar. We omit the definition of the associator, which uses the higher coherence data. The ungraded $2$-group of $1\Aut^{\gen}_{\mathcal{V}}(V)$ is $1\Aut_{\mathcal{V}}(V)$, the weak automorphism $2$-group of $V$, as defined in \cite[\S 8]{baez2004}.

\subsection{Real $2$-representation theory}
\label{sec:Real2Rep}

Following \cite[\S 3]{mbyoung2018c}, we recall the basic definitions of the Real $2$-representation theory of $2$-groups.

The bicategory of $2$-representations of a $2$-group $\tG$ on a bicategory $\mathcal{V}$ is
\[
\mbox{\gls{glRep}}
= 1\Hom_{\mathsf{Bicat}}(\tG, \mathcal{V}),
\]
consisting of pseudofunctors, pseudonatural transformations and modifications. A $2$-representation of $\tG$ is, thus, the datum of an object $V \in \mathcal{V}$ and a $2$-group morphism $\tG \rightarrow 1 \Aut_{\mathcal{V}}(V)$. For detailed studies of $2$-representation theory, the reader is referred to \cite{barrett2006, elgueta2007, bartlett2011}.

Now let $\tG$ be a $\ZTwo$-graded $2$-group. Let $\mathcal{V}$ be a bicategory with weak duality involution. The bicategory of Real $2$-representations of $\tG$ on $\mathcal{V}$ is
\[
\mbox{\gls{glRRep}}
= 1\Hom_{\mathsf{Bicat}_{\mathsf{con}}}(\tG, \mathcal{V}).
\]
Here $\mathsf{Bicat}_{\mathsf{con}}$ is the tricategory of bicategories with contravariance. We regard $\tG$ and $\mathcal{V}$ as bicategories with contravariance \cite[\S \S 1.2, 3.3]{mbyoung2018c}.
The ingredients of $\RRep_{\mathcal{V}}(\tG)$ are as introduced in \cite[\S 4]{shulman2018}:
\begin{itemize}
\item objects -- contravariance preserving pseudofunctors,
\item $1$-morphisms -- pseudonatural transformations respecting contravariance, 
\item  $2$-morphisms -- modifications respecting contravariance.
\end{itemize}
In particular, all $2$-morphisms are isomorphisms. In concrete terms, a Real $2$-representation of $\tG$ on $V \in \mathcal{V}$ is a morphism $\tG \rightarrow 1\Aut^{\textnormal{gen}}_{\mathcal{V}}(V)$ of $\ZTwo$-graded $2$-groups.

A symmetric monoidal structure on $\mathcal{V}$ (which commutes with the weak duality involution) induces symmetric monoidal structures on $\Rep_{\mathcal{V}}(\tG)$ and $\RRep_{\mathcal{V}}(\tG)$. These monoidal structures are compatible in the sense that the restriction pseudofunctor
\[
\Res_{\tG_0}^{\tG} : \RRep_{\mathcal{V}}(\tG) \rightarrow \Rep_{\mathcal{V}}(\tG_0) 
\]
is monoidal.

\begin{Ex}
Let $\mathsf{Cat}$ be the $2$-category of small categories. The assignment of a category to its opposite extends to a strict duality involution $(-)^{\opp} : \mathsf{Cat}^{\co} \rightarrow \mathsf{Cat}$. A Real $2$-representation of a $\ZTwo$-graded group $G$ is the data of a category $\mathcal{C}$, equivalences
\[
\rho(g): {^{\pi(g)}}\mathcal{C} \rightarrow \mathcal{C}, \qquad g \in G
\]
and composition natural isomorphisms
\[
\rho_{g_2, g_1} : \rho(g_2) \circ {^{\pi(g_2)}}\rho(g_1) \Longrightarrow \rho(g_2 g_1), \qquad g_i \in G.
\]
This data is required to satisfy the associativity constraints
\[
\rho_{g_3 g_2, g_1} \diamond  \left( \rho_{g_3, g_2} \circ \prescript{\pi(g_3 g_2)}{}{\rho(g_1)} \right) = \rho_{g_3, g_2 g_1} \diamond \left( \rho(g_3) \circ \prescript{\pi(g_3)}{}{\rho}_{g_2, g_1}^{\pi(g_3)} \right),
\qquad g_i \in G.
\]
\end{Ex}

\begin{Ex}
Let $2 \Vect_{\Fld}$ be the bicategory of finite dimensional Kapranov--Voevodsky $2$-vector spaces over $\Fld$. We use the semi-skeletal model \cite[\S 2.2]{ganter2008}, \cite[\S 1]{rumynin2018}. Objects are $[n]$, $n \in \mathbb{Z}_{\geq 0}$. A $1$-morphism $[n] \rightarrow [m]$ is an $m \times n$ matrix $A=(A_{ij})$ of finite dimensional vector spaces over ${\Fld}$. A $2$-morphism $u: A \Rightarrow B$ is a collection of ${\Fld}$-linear maps $(u_{ij}: A_{ij} \rightarrow B_{ij})$. We omit the definition of the various compositions. The bicategory $2 \Vect_{\Fld}$ has a natural symmetric monoidal structure. A weak duality involution on $2 \Vect_{\Fld}$ is defined as follows. The pseudofunctor $(-)^{\circ} : 2 \Vect_{\Fld}^{\co} \rightarrow 2 \Vect_{\Fld}$ is given on objects, $1$-morphisms and $2$-morphisms by
\[
[n]^{\circ} = [n], \qquad (A_{ij})^{\circ} = (A_{ij}^{\vee}), \qquad
(u_{ij})^{\circ} = (u_{ij}^{\vee}),
\]
respectively, where $(-)^{\vee}$ is the ${\Fld}$-linear duality functor on the category $\Vect_{\Fld}$ of finite dimensional vector spaces. The remaining data for the duality involution is induced from the evaluation isomorphism $1_{\Vect_{\Fld}} \simeq (-)^{\vee} \circ ((-)^{\vee})^{\opp}$. Equivalence classes of Real $2$-representations of finite groups on $2 \Vect_{\Fld}$ are classified in \cite[\S 5.3]{mbyoung2018c}.
\end{Ex}

\subsection{Real \texorpdfstring{$2$}{}-modules}
\label{sec:real2Mod}

Let $2 \MMod_{\Fld}$ be the $2$-category of $2$-modules over ${\Fld}$, as defined in \cite[\S 1]{rumynin2018} (where it is denoted $2 \mhyphen \mathcal{M}od^{\Fld}$). Objects are modules categories over the monoidal category $\Vect_{\Fld}$ (see \cite[\S 7.1]{etingof2015}) that are $\Vect_{\Fld}$-module equivalent to $A \Mod$ for some ${\Fld}$-algebra $A$. We do not recall the definitions of $1$- and $2$-morphisms. A $2$-representation of a $2$-group $\tG$ on $2 \MMod_{\Fld}$ is called a $2$-module over $\tG$.

In this paper we use a variation of this set-up. Let
\gls{glAALG}
be the Morita bicategory:
\begin{itemize}
\item objects -- ${\Fld}$-algebras,
\item $1$-morphisms $A \rightarrow B$ -- $B \mhyphen A$-bimodules,
\item $2$-morphisms -- bimodule intertwiners.
\end{itemize}
The composition of $1$-morphisms is the tensor product of bimodules. Let
\gls{glAAlg}
be the subbicategory of finite dimensional algebras, finite dimensional bimodules and intertwiners.
Tensor product of algebras over ${\Fld}$ induces symmetric monoidal structures on $\AALG_{\Fld}$ and $\AAlg_{\Fld}$. 
Let
\gls{glAAlgfd}
be the fully dualizable subbicategory of $\AALG_{\Fld}$. Every algebra $A$ admits a dual: it is the opposite algebra $A^{\opp}$; the evaluation 1-morphism and the coevaluation 1-morphism are both given by the natural bimodule ${}_AA_A$. For $A$ to be fully dualizable the bimodule ${}_AA_A$ needs to be projective. This splits the multiplication map $A\otimes_{\Fld} A \rightarrow A$ as a map of bimodules, proving that $A$ is separable. The opposite statement is clear:
if $A$ is separable, then the bimodule ${}_AA_A$ is finitely generated projective
and $A$ is fully dualizable. Thus, $\AAlg_{\Fld}^{\fd}$ is the full subbicategory of $\AAlg_{\Fld}$ spanned by separable algebras, cf. \cite[Proposition 7.10]{lorand2019}. Tensor product of algebras over ${\Fld}$ induces a symmetric monoidal structure on $\AAlg^{\fd}_{\Fld}$ as well.

The $2$-representation theories of $\tG$ on $2\MMod_{\Fld}$ and $\AALG_{\Fld}$ are equivalent. This follows from the fact that, by Morita theory, equivalences in $2\MMod_{\Fld}$ can be represented by bimodules. The $2$-representation theories of $\tG$ on $\AAlg_{\Fld}$ and $\AAlg^{\fd}_{\Fld}$ can therefore be thought of as the finite dimensional and separable $2$-module theories of $\tG$, respectively.

Recall that each ${\Fld}$-algebra morphism $\phi:A \rightarrow B$ defines restriction functors from $B$-modules to $A$-modules. If $M$ is a right $B$-module, then $M_{\phi}$ is a right $A$-module, equal to $M$ as an abelian group and with right $A$-module structure
\[
m \cdot a = m \phi(a), \qquad m \in M_{\phi}, \, a \in A.
\]
Similarly, a left $B$-module $N$ determines a left $A$-module ${}_{\phi}N$. We refer to $M_{\phi}$ and ${}_{\phi}N$ as the right and left twists of the bimodules $M$ and $N$ by $\phi$, respectively. Starting with the identity bimodule ${}_{B}B_{B}$, we get a {\em representable} $B \mhyphen A$-bimodule $B_{\phi}$.
We use the representable bimodules to embed $\mathsf{ALG}_{\Fld}$ as a locally discrete subbicategory of $\AALG_{\Fld}$. Also relevant is the locally full subbicategory $\AALG_{\Fld}^{\rep}$ of $\AALG_{\Fld}$ on representable $1$-morphisms. We record the following result for later use.

\begin{Lem}
\label{lem:natTransBij}
Let $\phi$ and $\psi$ be ${\Fld}$-algebra isomorphisms $A \rightarrow B$ and $\partial: B^{\times} \rightarrow \Aut(B)$ the structure map of $\AUT(B)$. Then the map
\[
\Upsilon_{\phi}^{\psi} : \{b \in B^{\times} \mid \phi =  \partial(b) \psi \} \rightarrow 2 \Hom_{\AALG_{\Fld}}(B_{\phi}, B_{\psi})
\]
which sends an element $b \in B^{\times}$ to the map given by right multiplication by $b$ is a bijection onto the subset of $2$-isomorphisms.
\end{Lem}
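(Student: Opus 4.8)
The plan is to analyze morphisms in $\AALG_{\Fld}$ between representable bimodules concretely. First I would recall that a $2$-morphism $B_\phi \Rightarrow B_\psi$ in $\AALG_{\Fld}$ is, by definition, a homomorphism of $B\mhyphen A$-bimodules $f : B_\phi \to B_\psi$. Since both bimodules have underlying abelian group $B$, and the left $B$-action on each is the standard left multiplication (the twist only affects the right $A$-action), such an $f$ must in particular be left $B$-linear; hence $f$ is determined by $b := f(1) \in B$ via $f(b') = b' b$, i.e.\ $f$ is right multiplication by $b$. Conversely, right multiplication $R_b : b' \mapsto b'b$ is always left $B$-linear, so the only constraint is right $A$-linearity: for $a \in A$ we need $f(1 \cdot a) = f(1) \cdot a$ in the two bimodule structures, i.e.\ $\phi(a) b = b \psi(a)$ for all $a \in A$. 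Since $\phi, \psi$ are isomorphisms $A \xrightarrow{\sim} B$, substituting $a = \psi^{-1}(b')$ shows this is equivalent to $\phi(\psi^{-1}(b'))\, b = b\, b'$ for all $b' \in B$, that is, conjugation by $b$ (when $b$ is invertible) implements $\phi \psi^{-1} \in \Aut(B)$. I would phrase this as: $R_b \in 2\Hom_{\AALG_{\Fld}}(B_\phi, B_\psi)$ if and only if $\phi(a) b = b\psi(a)$ for all $a$, and record that $\Upsilon_\phi^\psi(b) = R_b$ is the map in the statement.

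Next I would establish that $\Upsilon_\phi^\psi$, viewed as a map from $B^\times$ with the stated condition, lands in the $2$-\emph{isomorphisms} and is a bijection onto them. Injectivity is immediate since $R_b = R_{b'}$ forces $b = R_b(1) = R_{b'}(1) = b'$. For the image: if $b \in B^\times$ satisfies $\phi = \partial(b)\psi$ (equivalently $\phi(a) = b\psi(a)b^{-1}$, equivalently $\phi(a)b = b\psi(a)$), then $R_b$ is an isomorphism of bimodules with inverse $R_{b^{-1}}$ — one checks $R_{b^{-1}}$ satisfies the analogous condition for $\psi \Rightarrow \phi$, namely $\psi(a) b^{-1} = b^{-1}\phi(a)$, which follows by left- and right-multiplying $\phi(a)b = b\psi(a)$ by $b^{-1}$. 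Hence $\Upsilon_\phi^\psi$ is well-defined into the $2$-isomorphisms and injective. For surjectivity onto $2$-isomorphisms: given any bimodule isomorphism $f : B_\phi \xrightarrow{\sim} B_\psi$, by the first paragraph $f = R_b$ with $b = f(1)$, and since $f$ is invertible with inverse necessarily of the form $R_{b'}$, we get $R_{bb'} = \id$, so $bb' = 1$ and similarly $b'b = 1$, whence $b \in B^\times$; the right $A$-linearity of $f$ gives $\phi(a)b = b\psi(a)$, i.e.\ $\phi(a) = b\psi(a)b^{-1} = \partial(b)(\psi(a))$ for all $a$, so $\phi = \partial(b)\psi$ and $b$ lies in the source set.

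The only mild subtlety — and the step I'd be most careful about — is making sure the left $B$-module structures on $B_\phi$ and $B_\psi$ genuinely coincide with plain left multiplication, so that left $B$-linearity really does pin $f$ down to right multiplication by $f(1)$; this is exactly the content of the construction of representable bimodules recalled just before the lemma (the twist by $\phi$ only alters the right module structure), so it is essentially a bookkeeping check rather than a real obstacle. With that in hand, everything reduces to elementary algebra in $B$, and the chain of equivalences $\bigl(\phi(a)b = b\psi(a)\ \forall a\bigr) \iff \bigl(\phi = \partial(b)\psi\bigr)$ together with invertibility is what identifies the source set with the set of $2$-isomorphisms under $\Upsilon_\phi^\psi$.
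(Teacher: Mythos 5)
Your proof is correct, and it is exactly the elementary direct verification the paper leaves implicit (no proof is supplied in the paper for this lemma). The three observations — left $B$-linearity forces $f=R_{f(1)}$ because the twists only alter the right $A$-action, right $A$-linearity is precisely $\phi(a)b=b\psi(a)$ for all $a$ (equivalently $\phi=\partial(b)\psi$ once $b$ is invertible), and invertibility of $f$ pins $b\in B^{\times}$ via $R_{b'}\circ R_b = R_{bb'}$ — cover well-definedness, injectivity, and surjectivity onto the $2$-isomorphisms, so there is nothing to add.
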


\begin{Prop}[{\cite[Theorem 5.1]{lorand2019}}]
\label{prop:AlgWDI}
The bicategory $\AAlg_{\Fld}^{\fd}$ admits a weak duality involution in which the pseudofunctor $(-)^{\circ} : (\AAlg_{\Fld}^{\fd})^{\co}  \rightarrow \AAlg_{\Fld}^{\fd}$ is defined on objects by $A^{\circ} = A^{\opp}$, on a $1$-morphism $M : A \rightarrow B$ by $M^{\circ} = \Hom_{\textnormal{Mod}\mhyphen A}(M,A)$ (viewing the $A \mhyphen B$-bimodule on the right hand side as a $B^{\opp} \mhyphen A^{\opp}$-bimodule), and on a $2$-morphism $\phi: M \Rightarrow N$ by $\phi^{\circ} = (-) \circ \phi$.
\end{Prop}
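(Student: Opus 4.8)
The plan is to verify, step by step, that the data $((-)^{\circ}, \mu, \ldots)$ satisfies Shulman's axioms for a weak duality involution on $\AAlg_{\Fld}^{\fd}$, cf. \cite[\S 2]{shulman2018}. Since this is precisely the content of \cite[Theorem 5.1]{lorand2019}, the bulk of the argument is a matter of recalling that result and checking that it restricts appropriately; nonetheless let me sketch the substance. First I would confirm that $(-)^{\circ}$ is well defined as a pseudofunctor $(\AAlg_{\Fld}^{\fd})^{\co} \rightarrow \AAlg_{\Fld}^{\fd}$. On objects, $A^{\opp}$ is separable whenever $A$ is, since separability is preserved under $(-)^{\opp}$ (the separability idempotent of $A$ maps to one for $A^{\opp}$ under the flip $a \otimes b \mapsto b \otimes a$), so $A \mapsto A^{\opp}$ preserves the fully dualizable subbicategory. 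On a $1$-morphism $M : A \rightarrow B$, i.e.\ a finite dimensional $B \mhyphen A$-bimodule, the right $A$-module dual $M^{\vee} = \Hom_{\textnormal{Mod}\mhyphen A}(M, A)$ carries commuting left $A$- and right $B$-actions, hence is a $B^{\opp} \mhyphen A^{\opp}$-bimodule, and it is again finite dimensional; I would note that because $A$ is separable, $A \Mod$ is semisimple, so $M$ is projective as a right $A$-module and the canonical double-dual map $M \rightarrow (M^{\vee})^{\vee}$ is an isomorphism — this is what makes $\mu$ an equivalence rather than merely a transformation.

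Next I would record the pseudofunctoriality structure: the comparison $2$-isomorphisms $(N \otimes_B M)^{\circ} \xrightarrow{\sim} M^{\circ} \otimes_B N^{\circ}$ and $(1_A)^{\circ} \xrightarrow{\sim} 1_{A^{\opp}}$, which come from the standard natural isomorphism $\Hom_{\textnormal{Mod}\mhyphen A}(N \otimes_B M, A) \cong \Hom_{\textnormal{Mod}\mhyphen B}(N, \Hom_{\textnormal{Mod}\mhyphen A}(M, A))$ for $M$ projective over $A$, together with the obvious identification $\Hom_{\textnormal{Mod}\mhyphen A}(A, A) \cong A^{\opp}$. Functoriality on $2$-morphisms, $\phi \mapsto (-) \circ \phi$, is immediate and contravariant as required (reversing $2$-cells, so that $(-)^{\circ}$ lands on $(\AAlg_{\Fld}^{\fd})^{\co}$ correctly). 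The coherence of these comparison cells — the usual hexagon/pentagon-type diagrams for a pseudofunctor — follows from the associativity and unit coherences of the Hom-tensor adjunction; I would cite \cite{lorand2019} for the detailed diagram chase rather than reproduce it.

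Then I would turn to the pseudonatural adjoint equivalence $\mu : 1_{\AAlg_{\Fld}^{\fd}} \Rightarrow (-)^{\circ} \circ ((-)^{\circ})^{\co}$. Its component at an object $A$ is the $1$-morphism $A \rightarrow A^{\opp\opp} = A$ given by the bimodule implementing the canonical isomorphism $A \cong (A^{\opp})^{\opp}$, namely ${}_A A_A$ itself; the pseudonaturality squares assert that for a bimodule $M : A \rightarrow B$ the composite $M^{\circ\circ} = (M^{\vee})^{\vee}$ is identified with $M$ compatibly with $\mu_A$, $\mu_B$ — this is exactly the naturality of the evaluation/double-dual isomorphism, valid because everything in sight is finitely generated projective over a separable algebra. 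Finally, the higher coherence datum (Shulman's invertible modification relating $\mu$ to its own image under $(-)^{\circ}$, sometimes called the "swallowtail"-type axiom) is again inherited from the coherence of $\vee$ on $\Vect_{\Fld}$; I would invoke \cite[Theorem 5.1]{lorand2019} for this as well.

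The main obstacle, and the point deserving genuine care, is the finite-dimensionality and separability bookkeeping: one must check at every stage that $(-)^{\circ}$, the comparison $2$-cells, and $\mu$ do not leave the subbicategory $\AAlg_{\Fld}^{\fd}$, and — crucially — that $\mu_A$ is an \emph{equivalence} and not just a $1$-morphism. Both hinge on the same fact, that separability of $A$ forces $A \Mod$ to be semisimple so that all finite dimensional modules are projective and all canonical double-dual maps are isomorphisms; this is precisely the characterization of $\AAlg_{\Fld}^{\fd}$ recorded in the paragraph preceding the statement (an algebra is fully dualizable iff it is separable). Once this is in hand, the weak duality involution axioms reduce to the coherence of $\Fld$-linear duality on $\Vect_{\Fld}$, which is classical, completing the proof.
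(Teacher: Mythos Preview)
Your proposal is correct and takes essentially the same approach as the paper: both note that separability and finite dimensionality are what make the construction go through, and both defer to \cite{lorand2019} for the detailed coherence verifications. The paper's own proof is in fact considerably terser than yours, consisting only of the observation that the coherence data are built from separability and finite dimensionality together with a referral to \cite{lorand2019}.
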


\begin{proof}
The additional coherence data for the pseudofunctor $(-)^{\circ}$ and its lift to a weak duality involution are constructed using the separability of algebras and the finite dimensionality of bimodules involved. We refer to reader to \cite{lorand2019} for details.
\end{proof}

\begin{Def}
A Real $2$-module over a $\ZTwo$-graded $2$-group $\tG$ is a Real $2$-representation of $\tG$ on $\AAlg_{\Fld}^{\fd}$.
\end{Def}

There is a canonical pseudofunctor $2 \Vect_{\Fld} \rightarrow \AAlg^{\fd}_{\Fld}$ which sends the object $[n]$ to the algebra ${\Fld}^n$. If ${\Fld}$ is separably closed, then this is a monoidal biequivalence which, with $2 \Vect_{\Fld}$ equipped with the weak duality involution of Section \ref{sec:Real2Rep}, lifts to a duality biequivalence. An explicit construction of this lifting is given in \cite[Theorem 6.3]{lorand2019}. In particular, this allows us to conclude that the bicategories of Real $2$-representations of $\tG$ on $2\Vect_{\Fld}$ and $\AAlg_{\Fld}^{\fd}$ are monoidally biequivalent.

\section{Actions of crossed modules on algebras}
\label{sec:weakcGAlg}

\subsection{Weak \texorpdfstring{$\cG$}{}-algebras}
\label{sec:weakAlg}

Let $\cH$ be a crossed module. A weak $\cH$-algebra is a ${\Fld}$-algebra $A$ together with a weak morphism of crossed modules $\omega_A: \cH \rightarrow \AUT(A)$. There are (at least) two different notions of a weak morphism of crossed modules in the literature, namely those introduced by Noohi\footnote{An unrelated definition appears in \cite[Definition 8.4]{noohi2007}, which was later revised in \cite[Definition 8.4]{noohi_arxiv}. The revised definition also seems to contain a typo; compare with equation \eqref{eq:omega1Proj}.} \cite[Definition 8.4]{noohi_arxiv} (N-weak for short) and Rumynin--Wendland \cite[\S 2]{rumynin2018} (RW-weak). The latter is a particular case of the former.

Let now $\cG$ be a $\ZTwo$-graded crossed module. Influenced by the ordinary case, we introduce the following definition.

\begin{Def}
An  N-weak $\cG$-algebra is a ${\Fld}$-algebra $A$ together with a weak morphism of $\ZTwo$-graded crossed modules $\omega_A: \cG \rightarrow \AUT^{\gen}(A)$. Explicitly, $\omega_A$ is the data of
\begin{enumerate}[label=(\roman*)]
\item a function $\omega_3: G_1 \times G_1 \rightarrow A^{\times}$ that restricts to the identity on $G_1 \times \{e_{G_1}\}$ and $\{e_{G_1}\} \times G_1$,

\item a unital $\partial_*\omega_3$-projective $\ZTwo$-graded group homomorphism $\omega_1: G_1 \rightarrow \Aut^{\gen}(A)$, that is, a pointed map over $\ZTwo$ which satisfies
\begin{equation}
\label{eq:omega1Proj}
\omega_1(g_2 g_1) = \partial(\omega_3(g_2,g_1)) \omega_1(g_2) \omega_1(g_1), \qquad g_i \in G_1
\end{equation}
and

\item a unital $\partial^* \omega_3$-projective group homomorphism $\omega_2: G_2 \rightarrow A^{\times}$, that is, a pointed map which satisfies
\begin{equation}
\label{eq:omega2Proj}
\omega_2(x_2 x_1) = \omega_3(\partial x_2, \partial x_1) \omega_2(x_2) \omega_2(x_1), \qquad x_i \in G_2.
\end{equation}
\end{enumerate}
This data is required to satisfy
\begin{equation}
\label{eq:NAlgAx1}
\omega_1 \circ \partial = \partial \circ \omega_2,
\end{equation}
the non-abelian $2$-cocycle condition
\begin{equation}
\label{eq:omega3Cocycle}
\omega_3(g_3g_2,g_1) \omega_3(g_3,g_2) = \omega_3(g_3,g_2 g_1) \cdot {^{\omega_1(g_3)}} \omega_3(g_2,g_1), \qquad g_i \in G_1
\end{equation}
and the equivariance condition
\begin{equation}
\label{eq:NAlgAx2}
\omega_2({^{g}}x)
=
\omega_3(g \partial x, g^{-1}) \omega_3(g,\partial x) \cdot {^{\omega_1(g)}}\omega_2(x) \omega_3(g,g^{-1})^{-1} , \qquad g \in G_1, \, x \in G_2.
\end{equation}
\end{Def}


We write $Z^2(G_1, A_{\pi}^{\times})$ for the set of normalized functions $\omega_3$ which satisfy equation~\eqref{eq:omega3Cocycle}. The subscript $\pi$ indicates that the action of $G_1 \setminus G_0$ on $A^{\times}$ involves inversion. 
%
%

We utilize the following (partial) strictifications of the previous definition.

\begin{Def}
\begin{enumerate}[label=(\roman*)]
\item An RW-weak $\cG$-algebra is an N-weak $\cG$-algebra $A$ in which $\omega_3$ factors through $Z(A)^{\times} \leq A^{\times}$.

\item A (strict) $\cG$-algebra is an N-weak $\cG$-algebra with trivial $\omega_3$.
\end{enumerate}
\end{Def}

Said differently, an RW-weak $\cG$-algebra is an N-weak $\cG$-algebra in which $\omega_1$ is a $\ZTwo$-graded group homomorphism.

The ungraded and graded notions of weak algebras are compatible in the sense that the restriction of an N-weak (resp. RW-weak, strict) $\cG$-algebra $\omega_A$ to $\cG_0$ is an N-weak (resp. RW-weak, strict) $\cG_0$-algebra $\omega_A: \cG_0 \rightarrow \AUT(A)$.

\begin{Def}
A strict morphism of N-weak $\cG$-algebras $\phi: A \rightarrow B$ is a unital algebra morphism which makes the diagrams
\[
\begin{tikzpicture}[baseline= (a).base]
\node[scale=1] (a) at (0,0){
\begin{tikzcd}[column sep=5.0em, row sep=2.0em]
{^{\pi(g)}}A \arrow{d}[left]{\omega_{A,1}(g)} \arrow{r}[above]{{^{\pi(g)}}\phi} & {^{\pi(g)}}B \arrow{d}[right]{\omega_{B,1}(g)}\\
A \arrow{r}[below]{\phi}& B
\end{tikzcd}
};
\end{tikzpicture}
\qquad
,
\qquad
\begin{tikzpicture}[baseline= (a).base]
\node[scale=1] (a) at (0,0){
\begin{tikzcd}[column sep=2.0em, row sep=2.0em]
& G_2 \arrow{dl}[above left]{\omega_{A,2}} \arrow{dr}[above right]{\omega_{B,2}} & \\
A^{\times} \arrow{rr}[below]{\phi_{\vert A^{\times}}} && B^{\times}
\end{tikzcd}
};
\end{tikzpicture}
\]
commute for all $g \in G_1$ and which satisfies $\phi_* \omega_{A,3} = \omega_{B,3}$.
\end{Def}

Let $\cH$ and $\cK$ be crossed modules. 
Noohi's weak crossed module morphisms are defined so that there is a biequivalence
\[
\Hom_{\mathsf{CM}}(\cH, \cK) \simeq \Hom_{\mathsf{Bicat}}(\tcH, \tcK),
\]
where the left-hand side is the bicategory of weak crossed module morphisms, transformations and modifications.
Strictly speaking, the above biequivalence is not proved in \cite{noohi2007} and so we do not use it in the remainder of the paper; see, however \cite[Proposition 8.1]{noohi2007} and Proposition \ref{prop:weakAlgTo2Mod} below. Under the above biequivalence, strict crossed module morphisms correspond to strict $2$-functors. This, together with the following lemma, explains the categorical meaning of weak and strict $\cH$-algebras.

\begin{Lem}
\label{lem:eq:2AutCategorical}
For any ${\Fld}$-algebra $A$, there is a biequivalence $\widetilde{\AUT(A)} \simeq 1\Aut_{\AALG^{\rep}_{\Fld}}(A)$.
\end{Lem}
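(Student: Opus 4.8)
The plan is to construct an explicit biequivalence of $2$-groups (viewed as one-object bicategories) $F : \widetilde{\AUT(A)} \to 1\Aut_{\AALG^{\rep}_{\Fld}}(A)$ and verify it is essentially surjective, essentially full, and fully faithful on $2$-cells. First I would record what the source looks like: by the functor $\widetilde{(-)}$ recalled in Section~\ref{sec:2Grps}, the $2$-group $\widetilde{\AUT(A)}$ has one object $\star$, $1$-morphisms the elements of $\Aut(A)$, and $2\Hom(\phi, \psi) = \{b \in A^{\times} \mid \partial(b)\phi = \psi\}$, with vertical composition given by multiplication in $A^{\times}$ and horizontal composition encoding the crossed module action ${}^{g}x = g(x)$. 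The target has the single object $A$, $1$-morphisms the self-equivalences of $A$ in $\AALG^{\rep}_{\Fld}$ — which, since we restrict to the locally full subbicategory on representable $1$-morphisms, are precisely the bimodules $A_\phi$ for $\phi$ an algebra automorphism of $A$ that is invertible in the Morita sense, i.e. an algebra \emph{iso}morphism (the representable bimodule $A_\phi$ is invertible iff $\phi \in \Aut(A)$, with inverse $A_{\phi^{-1}}$) — and $2$-morphisms the bimodule intertwiners.

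The definition of $F$ is then forced: on the object, $\star \mapsto A$; on $1$-morphisms, $\phi \mapsto A_\phi$; on $2$-morphisms, send $b \in 2\Hom(\phi,\psi) = \{b \mid \partial(b)\phi = \psi\}$ to the intertwiner $A_\phi \Rightarrow A_\psi$ given by right multiplication by $b$. The key point for fully faithfulness on $2$-cells is exactly Lemma~\ref{lem:natTransBij}, specialized to $A = B$ and $\phi, \psi \in \Aut(A)$: the map $\Upsilon_\phi^\psi$ is a bijection from $\{b \in A^\times \mid \phi = \partial(b)\psi\}$ onto the $2$-isomorphisms $A_\psi \Rightarrow A_\phi$; since every $2$-morphism in a $2$-group is invertible, and every $2$-morphism in the locally full subbicategory $\AALG^{\rep}_\Fld$ between the invertible $1$-morphisms $A_\phi, A_\psi$ is automatically a $2$-isomorphism (a $2$-morphism between equivalences in a bicategory whose ambient $1$-cells all have the stated form — one checks directly that an intertwiner $A_\phi \to A_\psi$ that need not be invertible in $\AALG_\Fld$ still must be, because it is determined by the image of $1$, which must be a unit as $\partial(b)\phi = \psi$ forces $b \in A^\times$), this gives the bijection on $2$-hom-sets. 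Essential surjectivity on $1$-morphisms is the statement that every self-equivalence in $\AALG^{\rep}_\Fld$ of $A$ is isomorphic to some $A_\phi$ with $\phi \in \Aut(A)$, which holds by definition of $\AALG^{\rep}_\Fld$ as the locally full subbicategory on representable $1$-morphisms together with the observation that $A_\phi$ is an equivalence precisely when $\phi$ is invertible; there is a single object on each side so $F$ is trivially essentially surjective on objects.

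It remains to equip $F$ with coherence data making it a pseudofunctor (compositors $A_\psi \otimes_A A_\phi = A_{\psi} \otimes_A A_\phi \xrightarrow{\sim} A_{\psi\phi}$ coming from the canonical isomorphism of representable bimodules, and a unitor $A_{\id_A} \cong {}_AA_A$) and to check the compatibility with horizontal composition of $2$-cells — namely that under $F$, the horizontal composite $b \diamond b'$ in $\widetilde{\AUT(A)}$, which by the illustrated law equals $b \cdot {}^{\psi}(b')$ for appropriate elements (using that $\Aut(A)$ acts on $A^\times$ by evaluation), maps to the horizontal composite of the corresponding right-multiplication intertwiners, which is again right multiplication by $b \cdot \psi(b')$ after transport along the compositors; this is a short direct computation on representable bimodules. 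The main obstacle I anticipate is not any single hard step but bookkeeping: carefully pinning down exactly which $1$-morphisms of $\AALG^{\rep}_\Fld$ are equivalences (and checking this matches $\Aut(A)$ rather than, say, all algebra endomorphisms or all Morita self-equivalences) and then verifying that $F$'s compositor is compatible with the crossed-module horizontal composition law on the nose, including the twist ${}^{\psi}(b')$. Modulo Lemma~\ref{lem:natTransBij} doing the heavy lifting on $2$-cells, the rest is a routine unwinding of definitions, and I would present it as: ``Define $F$ as above; $F$ is essentially surjective by construction; $F$ is fully faithful on $2$-morphisms by Lemma~\ref{lem:natTransBij}; $F$ is essentially full on $1$-morphisms since $A_\phi \otimes_A A_\psi \cong A_{\phi\psi}$ and every representable self-equivalence is of this form; the coherence data is induced by the canonical isomorphisms of representable bimodules, and the axioms follow from associativity and unitality in $A$.''
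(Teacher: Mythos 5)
Your approach — construct $F$ explicitly via $\star \mapsto A$, $\phi \mapsto A_\phi$, and handle the $2$-cells with Lemma~\ref{lem:natTransBij} — is the expected one; the paper's own proof is only a pointer to the analogous argument in the cited reference. There are, however, two slips in your write-up.

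The first is a genuine error: the parenthetical claim that every intertwiner $A_\phi \to A_\psi$ between representable equivalences is automatically a $2$-isomorphism is false, and the reasoning offered for it is circular. Writing $\partial(b)$ already presupposes $b \in A^{\times}$, whereas the intertwiner condition on $b = f(1)$ is merely $\phi(a)b = b\psi(a)$ for all $a$, which does not force $b$ to be a unit — for instance $b = 0$ always satisfies it, and for $\phi = \psi = \id_A$ the bimodule endomorphisms of $_{A}A_{A}$ are right multiplications by central elements, so any non-unit central element (e.g.\ $x$ in $A = \Fld[x]/(x^2)$) gives a nonzero non-invertible intertwiner between equivalences. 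Fortunately the claim is not needed: $1\Aut_{\mathcal{V}}(V)$ is defined in Section~\ref{sec:genAut2Grp} with $2$-isomorphisms as its only $2$-cells, so Lemma~\ref{lem:natTransBij} already identifies the target $2$-hom-set exactly, with no auxiliary invertibility argument required. The second is an inversion slip: for $b$ satisfying $\partial(b)\phi = \psi$, right multiplication by $b$ is $\Upsilon_{\psi}^{\phi}(b)$, a $2$-isomorphism $A_\psi \Rightarrow A_\phi$ rather than $A_\phi \Rightarrow A_\psi$ as you assert; to define $F$ covariantly you should send $b$ to right multiplication by $b^{-1}$, i.e.\ to $\Upsilon_{\phi}^{\psi}(b^{-1})$, after which vertical composition checks out ($F(x_2 x_1)$ and $F(x_2) \circ F(x_1)$ are both right multiplication by $x_1^{-1}x_2^{-1} = (x_2x_1)^{-1}$). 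With these corrections, the rest — characterization of equivalences as $A_\phi$ with $\phi \in \Aut(A)$, the compositor $A_\psi \otimes_A A_\phi \cong A_{\psi\phi}$, and the deferred horizontal-composition bookkeeping — is sound.
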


\begin{proof}
This can be proved in the same way as \cite[Proposition 2.2]{rumynin2018}.
\end{proof}

Similarly, if $A$ is separable, then one can show that $\AUT^{\gen}(A)$ models the $\ZTwo$-graded $2$-group $1 \Aut^{\gen}_{\AAlg_{\Fld}^{\fd,\rep}}(A)$. We therefore obtain an analogous categorical interpretation of (weak) $\cG$-algebras.

%

\subsection{Equivariant objects}
\label{sec:homoFixed}

We introduce the notion of an equivariant object of a Real $2$-representation on $\mathsf{Cat}$. This clarifies some of the constructions which follow.

Let $\rho$ be a Real $2$-representation of a $\ZTwo$-graded group $G$ on a category $\mathcal{C}$. An equivariant object of $\rho$ is a pair $(t, \alpha)$ consisting of an object $t \in \mathcal{C}$ and isomorphisms $\alpha_g : \rho(g)(t) \rightarrow t$, $g \in G$, which make the diagrams\footnote{For notational simplicity, we omit the symbol $(-)^{\opp}$ in this diagram.}
\[
\begin{tikzpicture}[baseline= (a).base]
\node[scale=1] (a) at (0,0){
\begin{tikzcd}[column sep=5.0em, row sep=2.0em]
\rho(g_2 g_1)(t) \arrow{d}[left]{\alpha_{g_2 g_1}} & \rho(g_2)(\rho(g_1)(t)) \arrow{l}[above]{\rho_{g_2,g_1,t}} \arrow{d}[right]{\rho(g_2)(\alpha^{\pi(g_2)}_{g_1})}\\
t & \arrow{l}[below]{\alpha_{g_2}} \rho(g_2)(t)
\end{tikzcd}
};
\end{tikzpicture}
\;
,
\qquad g_i \in G
\]
commute. A morphism of equivariant objects $\phi: (t,\alpha) \rightarrow (s, \beta)$ is an isomorphism $\phi: t \rightarrow s$ which makes the diagrams
\[
\begin{tikzpicture}[baseline= (a).base]
\node[scale=1] (a) at (0,0){
\begin{tikzcd}[column sep=5.0em, row sep=2.0em]
\rho(g)(t) \arrow{d}[left]{\alpha_g} \arrow{r}[above]{\rho(g)(\phi^{\pi(g)})} & \rho(g)(s) \arrow{d}[right]{\beta_g}\\
t \arrow{r}[below]{\phi}& s
\end{tikzcd}
};
\end{tikzpicture}
\;
,
\qquad g \in G
\]
commute. This defines a groupoid of equivariant objects of $\rho$.

When $G$ is trivially graded, there are two possible definitions. We need not require the map $\phi$ to be an isomorphism. In this case the definition reduces to the standard category of equivariant objects of $\rho$.

\subsection{Morita bicategories of weak \texorpdfstring{$\cG$}{}-algebras}
\label{sec:moritaBicat}

In this section we introduce a more flexible notion of a morphism of N-weak $\cG$-algebras. We begin with some preliminary material. We use left and right twists of bimodules by algebra homomorphisms; see Section \ref{sec:real2Mod}. Note that the pseudofunctor $(-)^{\circ}$ is defined on the bicategory $\AALG_{\Fld}$, although it only lifts to a weak duality involution on $\AAlg_{\Fld}^{\fd}$.

\begin{Lem}
\label{lem:dualBimoduleTwists}
Let $A, B \in \AALG_{\Fld}$. For each $B \mhyphen A$-bimodule $M$ and $\psi \in \Aut(A)$, $\phi \in \Aut(B)$, there is a $B^{\opp} \mhyphen A^{\opp}$-bimodule isomorphism
\[
({_{\phi}}M_{\psi})^{\circ}
\xrightarrow[]{\sim}
{_{\phi^{\opp}}}(M^{\circ})_{\psi^{\opp}},
\qquad
{\mathfrak{m}} \mapsto \big( m \mapsto \psi({\mathfrak{m}}(m)) \big) .
\]
\end{Lem}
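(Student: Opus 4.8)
The statement is a compatibility between the duality pseudofunctor $(-)^{\circ}$ on $\AALG_{\Fld}$ and the left/right twisting operations by algebra automorphisms. The plan is to first unwind the definitions of both sides as abelian groups and then as bimodules, and finally to check that the stated formula is a well-defined bijective intertwiner. Recall from Proposition~\ref{prop:AlgWDI} that for a $1$-morphism $N : A \to B$, i.e.\ a $B \mhyphen A$-bimodule, one has $N^{\circ} = \Hom_{\textnormal{Mod}\mhyphen A}(N, A)$, regarded as an $A \mhyphen B$-bimodule via $(a \cdot {\mathfrak m} \cdot b)(n) = a\,{\mathfrak m}(b n)$ and then as a $B^{\opp} \mhyphen A^{\opp}$-bimodule. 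So the left-hand side $({_{\phi}}M_{\psi})^{\circ}$ is $\Hom_{\textnormal{Mod}\mhyphen A}({_{\phi}}M_{\psi}, A)$, where ${_{\phi}}M_{\psi}$ denotes $M$ with its $B \mhyphen A$-action precomposed by $\phi$ on the left and $\psi$ on the right; note that since $\psi$ is an automorphism, the underlying set of $\textnormal{Mod}\mhyphen A$-maps out of $M_{\psi}$ is literally the same as that of $\textnormal{Mod}\mhyphen A$-maps out of $M$. The right-hand side ${_{\phi^{\opp}}}(M^{\circ})_{\psi^{\opp}}$ is $\Hom_{\textnormal{Mod}\mhyphen A}(M, A)$ with its $B^{\opp} \mhyphen A^{\opp}$-action twisted by $\phi^{\opp}$ on the left and $\psi^{\opp}$ on the right.

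First I would fix the convention that the duality $(-)^{\vee}$ on $\Vect_{\Fld}$, and hence the module-level $\Hom$, is being used to build $(-)^{\circ}$; this lets me write elements of both sides as $\Fld$-linear maps $M \to A$ and compare the two bimodule structures directly. Then the core computation is: given ${\mathfrak m} : M \to A$ linear, define $\Xi({\mathfrak m}) : M \to A$ by $\Xi({\mathfrak m})(m) = \psi({\mathfrak m}(m))$. The map $\Xi$ is $\Fld$-linear and bijective, with inverse ${\mathfrak m} \mapsto (m \mapsto \psi^{-1}({\mathfrak m}(m)))$, because $\psi$ is an automorphism of $A$. It remains to verify that $\Xi$ intertwines the two $B^{\opp} \mhyphen A^{\opp}$-bimodule structures. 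On the domain side, an element ${\mathfrak m} \in ({_{\phi}}M_{\psi})^{\circ}$ has the right $A^{\opp}$-action $({\mathfrak m} \cdot_{\opp} a)(m) = a \cdot {\mathfrak m}(m)$ computed with the original $A$-module structure on the target $A$, while the left $B^{\opp}$-action uses the twist $\phi$ on $M$; unwinding, $(b \cdot_{\opp} {\mathfrak m})(m) = {\mathfrak m}(\phi^{-1}(b) m)$ after accounting for how $N^{\circ}$ is built from the $A \mhyphen B$-bimodule $\Hom_{\textnormal{Mod}\mhyphen A}(N,A)$. One then checks $\Xi({\mathfrak m} \cdot_{\opp} a) = \Xi({\mathfrak m}) \cdot_{\opp} \psi^{\opp}(a)$ and $\Xi(b \cdot_{\opp} {\mathfrak m}) = \phi^{\opp}(b) \cdot_{\opp} \Xi({\mathfrak m})$, each of which comes down to a one-line application of $\psi$ being an algebra homomorphism (so $\psi(a\, {\mathfrak m}(m)) = \psi(a)\,\psi({\mathfrak m}(m))$) or of $\phi$ being irrelevant to the target twist.

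The main obstacle is purely bookkeeping: keeping straight the four layers of structure-twisting (the $\phi$ and $\psi$ twists on $M$, the passage from an $A \mhyphen B$-bimodule to a $B^{\opp} \mhyphen A^{\opp}$-bimodule inside the definition of $(-)^{\circ}$, and then the $\phi^{\opp}$ and $\psi^{\opp}$ twists applied externally on $M^{\circ}$), and checking that the two ways of threading these through agree. A clean way to organize this is to treat $(-)^{\circ}$ as a functor and observe that left/right twisting by an automorphism $\psi$ is the same as composing the bimodule, viewed as a $1$-morphism, with the representable equivalence $A_{\psi}$; then the lemma becomes the statement that $(-)^{\circ}$, being a pseudofunctor, takes $A_\psi$ to (something isomorphic to) $A^{\opp}_{\psi^{\opp}}$ and is compatible with composition. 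One could phrase the proof that way to avoid the element-level verification, but since the paper elsewhere works with explicit formulas, I would present the direct check with the map ${\mathfrak m} \mapsto (m \mapsto \psi({\mathfrak m}(m)))$ exactly as stated, verify $\Fld$-linearity and bijectivity in one line each, and then verify the left and right $B^{\opp}$- and $A^{\opp}$-equivariance in one displayed computation each, noting that the $\phi$-compatibility is automatic because the twist by $\phi$ on $M$ corresponds exactly to the twist by $\phi^{\opp}$ on $M^{\circ}$ under the definition of $(-)^\circ$, and the $\psi$-compatibility is precisely where the homomorphism property of $\psi$ enters.
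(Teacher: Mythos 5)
The paper does not prove this lemma (the statement is followed immediately by the next paragraph, with no proof), so there is no paper argument to compare against; your plan of unwinding Proposition~\ref{prop:AlgWDI} and verifying the explicit formula directly is the expected approach, and the two equivariance identities you state at the end are the correct ones to check. However, two of your intermediate claims are wrong and need repair before the write-up would go through.

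First, the remark that ``the underlying set of $\textnormal{Mod}\mhyphen A$-maps out of $M_{\psi}$ is literally the same as that of $\textnormal{Mod}\mhyphen A$-maps out of $M$'' is false. Both are subspaces of $\Hom_{\Fld}(M,A)$, but the membership conditions differ: a map $\mathfrak{m}$ lies in $\Hom_{\textnormal{Mod}\mhyphen A}(M_{\psi},A)$ iff $\mathfrak{m}(m\,\psi(a)) = \mathfrak{m}(m)\,a$, whereas it lies in $\Hom_{\textnormal{Mod}\mhyphen A}(M,A)$ iff $\mathfrak{m}(ma)=\mathfrak{m}(m)a$; for $\psi\neq\id$ these are different subspaces (indeed $M_\psi\not\cong M$ as right $A$-modules in general). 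The entire content of the map $\Xi:\mathfrak{m}\mapsto\psi\circ\mathfrak{m}$ is that it carries the first subspace onto the second: if $\mathfrak{m}(m\,\psi(a))=\mathfrak{m}(m)a$ then $\psi(\mathfrak{m}(ma))=\psi\bigl(\mathfrak{m}(m)\psi^{-1}(a)\bigr)=\psi(\mathfrak{m}(m))\,a$. This well-definedness check uses that $\psi$ is an algebra homomorphism and is logically prior to the $\Fld$-linearity, bijectivity and equivariance checks you list; it should be stated, not assumed away.

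Second, your unwound formula ``$(b\cdot_{\opp}\mathfrak{m})(m)=\mathfrak{m}(\phi^{-1}(b)m)$'' has the wrong sign. The paper's convention (Section~\ref{sec:real2Mod}) is that the left twist acts by $b\cdot_{\phi}m=\phi(b)m$. Since the left $B^{\opp}$-action on $N^{\circ}$ is the right $B$-action on $\Hom_{\textnormal{Mod}\mhyphen A}(N,A)$, which is pre-composition with the left $B$-action on $N$, one obtains $(b^{\opp}\cdot\mathfrak{m})(m)=\mathfrak{m}(\phi(b)m)$, with $\phi$ and not $\phi^{-1}$. With your $\phi^{-1}$ the identity $\Xi(b\cdot_{\opp}\mathfrak{m})=\phi^{\opp}(b)\cdot_{\opp}\Xi(\mathfrak{m})$ would fail; with $\phi$ it holds for the reason you indicate (the $\phi$-twist on $M$ and the $\phi^{\opp}$-twist on $M^{\circ}$ produce literally the same operation, so post-composition by $\psi$ commutes with it). Once these two points are fixed the verification goes through exactly as you outline.
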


Let $\cG$ be a $\ZTwo$-graded crossed module. Let $A$ and $B$ be separable N-weak $\cG$-algebras. The category $1\Hom_{\AAlg_{\Fld}^{\fd}}(A,B)$ of $1$-morphisms of the underlying ${\Fld}$-algebras inherits the structure of a Real $2$-representation $\lambda$ of $G_1$. Explicitly, an element $g \in G_1$ acts by the functor $\lambda(g)$ which sends a $B \mhyphen A$-bimodule $M$ to\footnote{For readability (and unlike Lemma \ref{lem:dualBimoduleTwists}), we henceforth omit the notation $(-)^{\opp}$ on morphisms.}
\[
\lambda(g)(M) = {_{\omega_{B,1}(g)^{-1}}}\left({^{\pi(g)}}M \right)_{\omega_{A,1}(g)^{-1}}.
\]
Recall that ${^{\pi(g)}}(-)$ denotes the identify pseudofunctor if $\pi(g)=+1$ and the pseudofunctor $(-)^{\circ}$ of Proposition \ref{prop:AlgWDI} if $\pi(g)=-1$. On morphisms $\lambda(g)$ acts as ${^{\pi(g)}}(-)$; the (anti-)automorphism twists $\omega_{?,1}(g)^{-1}$ act trivially. In particular, $\lambda(g)$ is contravariant precisely when $\pi(g)=-1$. The component at $M$ of the coherence natural transformation
\[
\lambda_{g_2,g_1}: \lambda(g_2) \circ {^{\pi(g_2)}}\lambda(g_1) \Longrightarrow \lambda(g_2 g_1), \qquad g_i \in G, 
\]
when viewed as a ${\Fld}$-linear map $\lambda_{g_2,g_1,M} : {^{\pi(g_2 g_1)}} M \rightarrow M$, is given by left multiplication by ${^{\omega_{B,1}(g_2 g_1)^{-1}}}\omega_{B,3}(g_2, g_1)^{-1}$ and right multiplication by ${^{\omega_{A,1}(g_2 g_1)^{-1}}}\omega_{A,3}(g_2, g_1)$, which we write as $\partial ({^{\omega_{?,1}(g_2 g_1)^{-1}}}\omega_{?,3}(g_2, g_1)^{-1})$. Implicit in this description of $\lambda_{g_2,g_1,M}$ is, when $\pi(g_2) =-1$, the use of Lemma \ref{lem:dualBimoduleTwists} and, when $\pi(g_1)=\pi(g_2) =-1$, the use of the evaluation isomorphism $\ev_M: M \rightarrow M^{\circ \circ}$.

Let us unpack the datum of a $G_1$-equivariant object $(M, \omega_M)$ of  $1\Hom_{\AAlg_{\Fld}^{\fd}}(A,B)$. First, we have a $B \mhyphen A$-bimodule $M$. Second, for each $g \in G_1$, we have a ${\Fld}$-linear isomorphism $\omega_M(g) : {^{\pi(g)}}M \rightarrow M$ which satisfies
\begin{equation}
\label{eq:Real1MorAx1}
\omega_M(g)(bma) = \omega_{B,1}(g)(b) \omega_M(g)(m) \omega_{A,1}(g)(a), \qquad
a \in {^{\pi(g)}}A, \, m \in {^{\pi(g)}}M, \, b \in {^{\pi(g)}}B.
\end{equation}
Moreover, these isomorphisms are required to satisfy
\begin{equation}
\label{eq:Real1MorAx2}
\omega_M(g_2 g_1) = \partial(\omega_{?,3}(g_2,g_1)) \circ \omega_{M}(g_2) \circ {^{\pi(g_2)}}\omega_M(g_1)^{\pi(g_2)},
\qquad
g_i \in G_1.
\end{equation}
A $1$-morphism $\phi: (M, \omega_M) \rightarrow (N, \omega_N)$ is a $B \mhyphen A$-bimodule isomorphism which is $G_1$-equivariant, in the sense that the diagrams
\[
\begin{tikzpicture}[baseline= (a).base]
\node[scale=1] (a) at (0,0){
\begin{tikzcd}[column sep=5.0em, row sep=2.0em]
\lambda(g)(M) \arrow{d}[left]{\omega_M(g)} \arrow{r}[above]{{^{\pi(g)}}\phi^{\pi(g)}} & \lambda(g)(N) \arrow{d}[right]{\omega_N(g)}\\
M \arrow{r}[below]{\phi}& N
\end{tikzcd}
};
\end{tikzpicture}
\;
,
\qquad g \in G_1
\]
commute.

Using the above notation, we define a bicategory
\gls{glGAAlgfd}:
\begin{itemize}
\item objects -- separable N-weak $\cG$-algebras,
  \item $1$-morphism category $1\Hom_{\cG \mhyphen N\AAlg_{\Fld}^{\fd}}(A,B)$ -- the full subcategory of the equivariant objects of $1\Hom_{\AAlg_{\Fld}^{\fd}}(A,B)$ spanned by pairs $(M, \omega_M)$ which, in addition, satisfy $\omega_M(e_{G_1}) = \id_M$ and
\begin{equation}
\label{eq:Real1MorAx3}
\omega_M(\partial x) = \partial(\omega_{?,2}(x)), \qquad x \in G_2.
\end{equation}
\end{itemize}
The horizontal composition of $1$-morphisms and the associativity data of $\AAlg_{\Fld}^{\fd}$ extend naturally to $\cG \mhyphen N\AAlg_{\Fld}^{\fd}$.

Denote by $\cG \mhyphen RW\AAlg_{\Fld}^{\fd}$ and $\cG \mhyphen \AAlg_{\Fld}^{\fd}$ the full subbicategories of $\cG \mhyphen N\AAlg_{\Fld}^{\fd}$ spanned by RW-weak and strict $\cG$-algebras, respectively.

\begin{Ex}
A strict morphism $\phi: A \rightarrow B$ of separable N-weak $\cG$-algebras defines in a canonical way a $1$-morphism $B_{\phi}:A \rightarrow B$ in $\cG \mhyphen N\AAlg_{\Fld}^{\fd}$.
\end{Ex}

\begin{Rem}
\begin{enumerate}[label=(\roman*)]
\item A more conceptual definition of $1\Hom_{\cG \mhyphen N\AAlg_{\Fld}^{\fd}}(A,B)$ is as the equivariant groupoid of $1\Hom_{\AAlg_{\Fld}^{\fd}}(A,B)$, viewed as a Real $2$-representation of $\tcG$. We opt to avoid defining equivariant objects for $2$-groups.

\item When $\cG$ is trivially graded, we do not require the existence of evaluation isomorphisms. We can therefore define a larger bicategory $\cG \mhyphen N\AALG_{\Fld}$. In this way we connect with the $\cG$-equivariant Morita contexts of \cite[\S 2]{rumynin2018}.
\end{enumerate}
\end{Rem}

\begin{Def}
Equivalence in the bicategory $\cG \mhyphen N\AAlg^{\fd}_{\Fld}$ is called $\cG$-Morita equivalence.
\end{Def}

We describe some additional structures on $\cG \mhyphen N\AAlg^{\fd}_{\Fld}$. Let $A$ and $B$ be N-weak $\cG$-algebras. Then the direct sum $A \oplus B$ has an obvious N-weak $\cG$-algebra structure $A \boxplus B$. Similarly, the tensor product $A \otimes_{\Fld} B$ is an N-weak $\cG$-algebra $A \boxtimes B$ with structure maps $\omega_{A \boxtimes B,i} = \omega_{A,i} \otimes \omega_{B,i}$, $i=1, 2,3$. This extends to a symmetric monoidal structure $\boxtimes$ on $\cG \mhyphen N\AAlg_{\Fld}^{\fd}$. Both $\boxplus$ and $\boxtimes$ restrict to $\cG \mhyphen RW\AAlg_{\Fld}^{\fd}$ and $\cG \mhyphen \AAlg_{\Fld}^{\fd}$.

Finally, given a strict $\cG$-algebra $A$, its dual $\cG$-algebra $A^{\vee}$ (see \cite[\S 3]{rumynin2018}) is defined so that its underlying ${\Fld}$-algebra is $A^{\opp}$ and its structure maps are
\[
\omega_{A^{\vee},1} (g) = \omega_{A,1}(g)^{\opp},
\qquad
\omega_{A^{\vee},2}(x) = \omega_{A,2}(x)^{-1}.
\]
An interested reader can generalize the construction of $A^{\vee}$ to the case of an N-weak $\cG$-algebra $A$.

\subsection{Realizability of twisted \texorpdfstring{$2$}{}-cocycles}
\label{sec:realizability}

Let $G$ be a $\ZTwo$-graded group. Fix an integer $t \geq 1$ and let $A = {\Fld}^t$ with RW-weak $G$-algebra structure $\omega_A$. Then $\omega_{A,1}: G \rightarrow \Aut^{\gen} (A) \cong \Sym\times \ZTwo$ is a group homomorphism. The datum $\omega_A$ determines a Real $2$-representation $\lambda$ of $G$ on $A\Mfd$, the category of finite dimensional left $A$-modules, considered as an object of the $2$-category $\mathsf{Cat}$ with duality involution $(-)^{\opp}$. The action functors are defined on objects by
\[
\lambda(g)(M) = {_{\omega_{A,1}(g)^{-1}}} \left({^{\pi(g)}}M\right), \qquad g \in G
\]
and the coherence natural isomorphisms
\[
\lambda_{g_2,g_1}: \lambda(g_2) \circ {^{\pi(g_2)}}\lambda(g_1) \Longrightarrow \lambda(g_2 g_1), \qquad g_i \in G
\]
are left multiplication by ${^{\omega_{A,1}(g_2 g_1)^{-1}}}\omega_{A,3}(g_2, g_1)^{-1}$ (cf. Section~\ref{sec:moritaBicat}).

Recall that $A_{\pi}^{\times}$ denotes the group of units $A^{\times}$ considered as a $G$-module via the homomorphism $\pi: G \rightarrow \ZTwo$.

\begin{Def}
A realization of $\omega_{A,3} \in Z^2(G, A_{\pi}^{\times})$ is an equivariant object  $(M, \alpha)$ of $\lambda$ whose underlying $A$-module $M$ is faithful.
\end{Def}

It is convenient to interpret $A$-linear equivariant structure maps
\[
\alpha_g: {_{\omega_{A,1}(g)^{-1}}} \left({^{\pi(g)}}M\right) \rightarrow M,
\qquad
g \in G
\]
as $\Fld$-linear maps $\mathfrak{p}_M(g): {^{\pi(g)}}M \rightarrow M$ (equal to $\alpha_g$ as maps of sets) which satisfy $\mathfrak{p}_M(g)(am) = \omega_{A,1}(g)(a) \mathfrak{p}_M(g)(m)$. The coherence constraints on $\mathfrak{p}_M$ then read
\[
\mathfrak{p}_M(g_2 g_1) = \omega_{A,3}(g_2 , g_1) \mathfrak{p}_M(g_2) \circ {^{\pi(g_2)}}\mathfrak{p}_M(g_1)^{\pi(g_2)} \circ \ev_M^{\delta_{\pi(g_2),\pi(g_1),-1}}, \qquad g_i \in G.
\]
In particular, with this interpretation we recover, for trivially graded $G$, the notion of realizability introduced in \cite[\S 3]{rumynin2018}.

\begin{Lem}
\label{lem:finiteRealizable}
If $G$ is finite, then any cocycle $\omega_{A,3} \in Z^2(G, A_{\pi}^{\times})$ is realizable.
\end{Lem}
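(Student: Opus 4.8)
The plan is to construct a realization of $\omega_{A,3}$ directly by a regular-representation-type construction, then average. First I would take the $A$-module $M = \bigoplus_{g \in G} {}^{\pi(g)}A$, a finite direct sum since $G$ is finite; this is a faithful $A$-module because each summand (for $g=e$, say) contains $A$ itself. The point of including all the twists ${}^{\pi(g)}(-)$ is that the functor $\lambda(h)$ permutes the summands: $\lambda(h)$ sends the summand indexed by $g$ to something naturally identified (via $\omega_{A,1}$ and, when the relevant gradings are $-1$, via $\mathrm{ev}$ and Lemma \ref{lem:dualBimoduleTwists}) with the summand indexed by $hg$. Thus on the nose $\lambda(h)(M) \cong M$ as an $A$-module, but the isomorphism we care about must be chosen so as to satisfy the twisted coherence constraint recorded just before the lemma.

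The key step is to set up the equivariant structure maps $\mathfrak{p}_M(g) : {}^{\pi(g)}M \to M$ block by block. On the block indexed by $f$ inside ${}^{\pi(g)}M$, I would define $\mathfrak{p}_M(g)$ to land in the block indexed by $gf$ and to be given by left multiplication by the element $\omega_{A,3}(g,f) \in A^\times$ (suitably interpreted using ${}^{\pi(g)}(-)$ and the evaluation isomorphism in the doubly-graded case). The verification that
\[
\mathfrak{p}_M(g_2 g_1) = \omega_{A,3}(g_2,g_1)\, \mathfrak{p}_M(g_2) \circ {}^{\pi(g_2)}\mathfrak{p}_M(g_1)^{\pi(g_2)} \circ \mathrm{ev}_M^{\delta_{\pi(g_2),\pi(g_1),-1}}
\]
then reduces, on the block indexed by $f$, exactly to the non-abelian $2$-cocycle identity \eqref{eq:omega3Cocycle} for $\omega_{A,3}$ evaluated at the triple $(g_2, g_1, f)$, after moving the ${}^{\omega_{A,1}}$-twists around using that $\omega_{A,1}$ is a genuine homomorphism (here we use that $A$ is RW-weak, so $\omega_{A,1}$ really is a $\ZTwo$-graded homomorphism and there is no $\partial(\omega_{A,3})$ correction to worry about in $\omega_{A,1}$). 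I would also check the normalization $\mathfrak{p}_M(e) = \mathrm{id}$, which holds since $\omega_{A,3}$ is normalized, and that $\mathfrak{p}_M(g)(am) = \omega_{A,1}(g)(a)\mathfrak{p}_M(g)(m)$, which is immediate from the defining formula.

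The main obstacle — and the only place finiteness of $G$ is genuinely used — is making the direct sum $\bigoplus_{g \in G} {}^{\pi(g)}A$ finite-dimensional, so that $M \in A\Mfd$; this is automatic once $|G| < \infty$ since $A = \Fld^t$ is finite-dimensional. The remaining subtlety is purely bookkeeping: carefully tracking the interaction of the pseudofunctor ${}^{\pi(g)}(-)$ with left/right module structures and the evaluation isomorphism $\mathrm{ev}_M$ in the case $\pi(g_2) = \pi(g_1) = -1$, so that the composite $\mathfrak{p}_M(g_2) \circ {}^{\pi(g_2)}\mathfrak{p}_M(g_1)^{\pi(g_2)}$ really acts on the block indexed by $f$ by multiplication by ${}^{\omega_{A,1}(g_2)}\omega_{A,3}(g_1,f) \cdot \omega_{A,3}(g_2, g_1 f)$; comparing with $\omega_{A,3}(g_2 g_1, f)\,\omega_{A,3}(g_2,g_1)$ via \eqref{eq:omega3Cocycle} closes the argument. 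I would write out one block computation in detail and leave the sign/grading cases to the reader, as is done elsewhere in the paper.
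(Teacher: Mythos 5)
Your plan is sound and it is in substance the same construction the paper uses: the module $M = \bigoplus_{g\in G}{}^{\pi(g)}A$ is precisely (up to reindexing) the left regular representation of the skew group algebra $A\sharp_{\omega_A}G_0$ direct-summed with its dual, which is what the paper calls ``the hyperbolic representation on the left regular representation of $A\sharp_{\omega_A}G_0$.'' You simply package the two-step construction (realize over $G_0$, then hyperbolically induce to $G$) as a single block-by-block formula. The faithfulness and finite-dimensionality observations, and the localisation of where finiteness of $G$ is used, are all correct.

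There is, however, a genuine sign slip in the block formula, and it propagates into your final comparison. Take the trivially graded case and work on the block indexed by $f$, writing the move $f\mapsto gf$ as the map $m\mapsto\omega_{A,1}(g)(m)$ (you need the $\omega_{A,1}(g)$-twist here, otherwise $\mathfrak{p}_M(g)(am)=\omega_{A,1}(g)(a)\mathfrak{p}_M(g)(m)$ already fails). Composing, the block-$f$ coefficient of $\mathfrak{p}_M(g_2)\circ\mathfrak{p}_M(g_1)$ is $\omega_{A,3}(g_2,g_1f)\cdot{}^{\omega_{A,1}(g_2)}\omega_{A,3}(g_1,f)$ as you say, but the coherence constraint $\mathfrak{p}_M(g_2g_1)=\omega_{A,3}(g_2,g_1)\,\mathfrak{p}_M(g_2)\circ\mathfrak{p}_M(g_1)$ then demands
\[
\omega_{A,3}(g_2g_1,f)=\omega_{A,3}(g_2,g_1)\cdot\omega_{A,3}(g_2,g_1f)\cdot{}^{\omega_{A,1}(g_2)}\omega_{A,3}(g_1,f),
\]
whereas the cocycle identity \eqref{eq:omega3Cocycle} gives $\omega_{A,3}(g_2g_1,f)=\omega_{A,3}(g_2,g_1)^{-1}\cdot\omega_{A,3}(g_2,g_1f)\cdot{}^{\omega_{A,1}(g_2)}\omega_{A,3}(g_1,f)$, which differs by $\omega_{A,3}(g_2,g_1)^{2}$. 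So the coefficient in your definition of $\mathfrak{p}_M(g)$ on block $f$ must be $\omega_{A,3}(g,f)^{-1}$, not $\omega_{A,3}(g,f)$; this matches the $\omega_3(g,h)^{-1}$ appearing in the product formula of Proposition~\ref{prop:strictification}, which is the convention implicit in the paper's skew group algebra. The error is cosmetic rather than structural — your construction as written realizes $\omega_{A,3}^{-1}$, and applying it to $\omega_{A,3}^{-1}$ (which is also a cocycle) still proves the lemma — but as stated the displayed verification does not close.
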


\begin{proof}
A realization of the underlying untwisted $2$-cocycle $\omega_{A,3} \in Z^2(G_0, A^{\times})$ is given by the left regular representation of the skew group algebra $A \sharp_{\omega_A} G_0$. The hyperbolic representation on the left regular representation of $A \sharp_{\omega_A} G_0$ is then a realization of $\omega_{A,3} \in Z^2(G, A_{\pi}^{\times})$.
\end{proof}


The following result is used repeatedly in the remainder of the paper.

\begin{Prop}
\label{prop:2RepFromRep}
A realization $(M,\mathfrak{p}_M)$ of $\omega_{A,3} \in Z^2(G, A_{\pi}^{\times})$ determines a $\ZTwo$-graded group homomorphism $\mathfrak{a}:G \rightarrow \Aut^{\gen}(\End_A(M))$, $g\mapsto\mathfrak{a}_g$, by the formula
\[
\mathfrak{a}_{g}(\phi) = \mathfrak{p}_M(g) \circ {^{\pi(g)}} \phi \circ \mathfrak{p}_M(g)^{-1}, \qquad g \in G,\, \phi \in \End_A(M)
\]
where ${^{\pi(g)}}(-)$ determines the application of $A$-linear duality $(-)^{\circ}$.
\end{Prop}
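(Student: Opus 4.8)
The plan is to check directly that the stated formula produces well-defined algebra (anti)automorphisms and that the assignment $g \mapsto \mathfrak{a}_g$ respects the $\ZTwo$-graded group multiplication. First I would observe that for $\phi \in \End_A(M)$, the composite ${}^{\pi(g)}\phi$ is either $\phi$ itself (if $\pi(g)=+1$) or the $A^{\opp}$-linear dual $\phi^{\circ} \in \End_{A^{\opp}}(M^{\circ})$ (if $\pi(g)=-1$). Since $\mathfrak{p}_M(g): {}^{\pi(g)}M \to M$ is an isomorphism of $\Fld$-vector spaces intertwining the $\omega_{A,1}(g)$-twisted $A$-action (i.e.\ $\mathfrak{p}_M(g)(am) = \omega_{A,1}(g)(a)\mathfrak{p}_M(g)(m)$), conjugation by $\mathfrak{p}_M(g)$ carries a ${}^{\pi(g)}A$-linear endomorphism of ${}^{\pi(g)}M$ to an $A$-linear endomorphism of $M$; hence $\mathfrak{a}_g(\phi) \in \End_A(M)$, and the map $\mathfrak{a}_g$ is a bijection with inverse given by conjugation by $\mathfrak{p}_M(g)^{-1}$ (using $\ev_M$ to identify $M^{\circ\circ}$ with $M$ when $\pi(g)=-1$).

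Next I would verify that $\mathfrak{a}_g$ is multiplicative up to the grading: when $\pi(g)=+1$, $\mathfrak{a}_g(\phi_1\phi_2) = \mathfrak{p}_M(g)\phi_1\phi_2\mathfrak{p}_M(g)^{-1} = \mathfrak{a}_g(\phi_1)\mathfrak{a}_g(\phi_2)$, whereas when $\pi(g)=-1$ the contravariance of $(-)^{\circ}$ gives $(\phi_1\phi_2)^{\circ} = \phi_2^{\circ}\phi_1^{\circ}$, so $\mathfrak{a}_g(\phi_1\phi_2) = \mathfrak{a}_g(\phi_2)\mathfrak{a}_g(\phi_1)$; thus $\mathfrak{a}_g: {}^{\pi(g)}\!\End_A(M) \to \End_A(M)$ is an algebra homomorphism, and $\pi(\mathfrak{a}_g) = \pi(g)$, so $\mathfrak{a}$ is a map over $\ZTwo$. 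It is pointed since $\mathfrak{p}_M(e)=\id_M$ (normalization of the realization). The essential computation is the homomorphism property $\mathfrak{a}_{g_2 g_1} = \mathfrak{a}_{g_2}\cdot\mathfrak{a}_{g_1}$ in the graded group $\Aut^{\gen}(\End_A(M))$, i.e.\ $\mathfrak{a}_{g_2 g_1} = \mathfrak{a}_{g_2} \circ {}^{\pi(g_2)}\mathfrak{a}_{g_1}$. I would substitute the coherence constraint
\[
\mathfrak{p}_M(g_2 g_1) = \omega_{A,3}(g_2,g_1)\,\mathfrak{p}_M(g_2) \circ {}^{\pi(g_2)}\mathfrak{p}_M(g_1)^{\pi(g_2)} \circ \ev_M^{\delta_{\pi(g_2),\pi(g_1),-1}}
\]
into the definition of $\mathfrak{a}_{g_2 g_1}(\phi)$ and compute, noting that the scalar-matrix factor $\omega_{A,3}(g_2,g_1) \in A^{\times} = (\Fld^t)^{\times}$ acts on $M$ by an $A$-module automorphism and therefore cancels in the conjugation $\mathfrak{p}_M(g_2 g_1)\circ(-)\circ\mathfrak{p}_M(g_2 g_1)^{-1}$; the remaining factors reorganize, using functoriality of $(-)^{\circ}$ and naturality of $\ev_M$, into $\mathfrak{p}_M(g_2)\circ {}^{\pi(g_2)}\big(\mathfrak{p}_M(g_1)\circ {}^{\pi(g_1)}\phi\circ\mathfrak{p}_M(g_1)^{-1}\big)\circ\mathfrak{p}_M(g_2)^{-1} = \mathfrak{a}_{g_2}({}^{\pi(g_2)}\mathfrak{a}_{g_1}(\phi))$.

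The main obstacle will be the bookkeeping of the duality functor $(-)^{\circ}$ and the evaluation isomorphism $\ev_M$ in the mixed case $\pi(g_1)=\pi(g_2)=-1$: one must confirm that $\mathfrak{a}_{g_2 g_1}$, a priori an endomorphism of $M$, matches the composite of two conjugations, one of which passes through $M^{\circ}$ and $M^{\circ\circ}$, and that the $\ev_M$ appearing in the cocycle constraint is exactly the one needed to make $\mathfrak{a}_{g_2}\circ {}^{\pi(g_2)}\mathfrak{a}_{g_1}$ land back in $\End_A(M)$ in a way compatible with the composition law $g\cdot h = g\circ {}^{\pi(g)}h$ of $\Aut^{\gen}$. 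This is precisely the kind of coherence that Lemma~\ref{lem:dualBimoduleTwists} and the weak duality involution of Proposition~\ref{prop:AlgWDI} are set up to handle, so I would invoke those to discharge the sign/duality coherences rather than verifying each pentagon by hand; the underlying calculation is the same one that appears in the ordinary (trivially graded) case treated in \cite{rumynin2018}, with the Real decorations inserted.
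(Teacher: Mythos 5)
Your proposal is correct and follows essentially the same route as the paper's proof: you check $A$-linearity, (anti)multiplicativity according to the grading, and then substitute the coherence constraint for $\mathfrak{p}_M(g_2g_1)$ into $\mathfrak{a}_{g_2}\circ{}^{\pi(g_2)}\mathfrak{a}_{g_1}$, with the $\omega_{A,3}(g_2,g_1)$ factor disappearing because left multiplication by a unit of $A$ commutes with the $A$-linear map $\mathfrak{a}_{g_2g_1}(\phi)$ and $\ev_M$ is natural. The only cosmetic difference is that the paper carries out the $\pi(g_1)=\pi(g_2)=-1$ case explicitly and leaves the others to the reader, whereas you emphasize appealing to Lemma~\ref{lem:dualBimoduleTwists} and Proposition~\ref{prop:AlgWDI} to organize the duality bookkeeping.
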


\begin{proof}
Since $\mathfrak{p}_M(g)^{-1} (a m) = \omega_{A,1}(g)^{-1}(a) \mathfrak{p}_M(g)^{-1} (m)$, the map $\mathfrak{a}_{g}(\phi): M \rightarrow M$ is again $A$-linear. It is clear that $\mathfrak{a}_{g} \in \Aut(\End_A(M))$ when $\pi(g)=1$. If $\pi(g)=-1$, then
\begin{eqnarray*}
\mathfrak{a}_{g}(\phi_2 \circ \phi_1)
&=&
\mathfrak{p}_M(g) \circ (\phi_2 \circ \phi_1)^{\circ} \circ \mathfrak{p}_M(g)^{-1}
=
\mathfrak{a}_{g}(\phi_1) \circ \mathfrak{a}_{g}(\phi_2).
\end{eqnarray*}
Hence, we indeed have a map $\mathfrak{a}:
G \rightarrow \Aut^{\gen}(\End_A(M))$. To verify that $\mathfrak{a}$ is a group homomorphism, suppose, for instance, that $\pi(g_1)=\pi(g_2)=-1$. We compute
\begin{eqnarray*}
\mathfrak{a}_{g_2} (\mathfrak{a}_{g_1}(\phi))
&=&
\mathfrak{p}_M(g_2) \circ \mathfrak{p}_M(g_1)^{- \circ} \circ \phi^{\circ \circ} \circ \mathfrak{p}_M (g_1)^{\circ} \circ \mathfrak{p}_M(g_2)^{-1}\\
&=&
\omega_{A,3}(g_2 , g_1)^{-1}\mathfrak{p}_M(g_2 g_1) \circ \ev^{-1}_M \circ \phi^{\circ \circ} \circ \ev_M \circ \mathfrak{p}_M(g_2 g_1)^{-1} \omega_{A,3}(g_2 , g_1) \\
&=&
\mathfrak{p}_M(g_2 g_1) \circ \phi \circ \mathfrak{p}_M(g_2 g_1)^{-1}
=
\mathfrak{a}_{g_2 g_1} (\phi).
\end{eqnarray*}
The other cases are similar.
\end{proof}

\begin{Rem}
This section can be retold for N-weak $G$-algebras, but we do not require that level of generality.
\end{Rem}

\subsection{Strictification of weak \texorpdfstring{$\cG$}{}-algebras}
\label{sec:strictWeakAlg}

In this section we prove that, subject to a realizability condition, every split semisimple N-weak $\cG$-algebra is $\cG$-Morita equivalent to a strict $\cG$-algebra. When the $\ZTwo$-grading of $\cG$ is trivial this result is known \cite[Corollary 3.3]{rumynin2018}, although the proof there contains a gap. We provide a complete proof in this section, which covers also the $\ZTwo$-graded generalization. Our proof is more conceptual and is different from the proof in \cite{rumynin2018}, even in the ordinary case.

Let $A$ be a split semisimple N-weak $\cG$-algebra. The Artin--Wedderburn theorem asserts that there is a ${\Fld}$-algebra decomposition
\[
A \simeq \bigoplus_{n =1}^m M_n({\Fld})^{\oplus t_n}.
\]
Each summand $A_n\coloneqq M_n({\Fld})^{\oplus t_n}$ is an N-weak $\cG$-subalgebra and $A \simeq \boxplus_{n =1}^m A_n$. Let us consider the case $A = M_n({\Fld})^{\oplus t}$. There are isomorphisms
\[
\Aut(A) \simeq PGL_n({\Fld})^{\times t} \rtimes \Sym,
\qquad
\Aut^{\gen}(A) \simeq  \Aut(A) \rtimes \ZTwo.
\]
There is a canonical choice of a generator $s\in \ZTwo \leq \Aut^{\gen} (A)$: the transposition $s(a_1,\ldots ,a_t)=(a_1^T,\ldots, a_t^T)$. Its action on $\Aut (A)$ is the inverse transpose:
$$
{}^{s}\big( (a_1,\ldots ,a_t), \tau \big) = \big( (a_1^{-1})^T,\ldots, (a_t^{-1})^T), \tau \big).
$$
The composition
$$
\sigma: G_1 \xrightarrow{\omega_{A,1}} \Aut^{\gen}(A) \twoheadrightarrow \Sym
$$
is a group homomorphism. Indeed, $\omega_{A,1}$ fails to be a homomorphism by conjugation by elements of $GL_n({\Fld})^{\times t}$, which is not seen at the level of permutations. Choose a lift ${\Lambda}$ of $\omega_{A,1}$ along the quotient $GL_n({\Fld})^{\times t}\rtimes \Sym \rightarrow PGL_n({\Fld})^{\times t}\rtimes \Sym$. This determines a function $\mu_{\Lambda}: G_1 \times G_1 \rightarrow ({\Fld}^{\times})^t$ via the equation
\[
{\Lambda}(g_2 g_1) = \mu_{\Lambda}(g_2,g_1) \omega_{A,3}(g_2,g_1) {\Lambda}(g_2) ( {^{\pi(g_2)}}{\Lambda}(g_1)^{\pi(g_2)}).
\]
Using the $2$-cocycle condition on $\omega_{A,3}$, we find that $\mu_{\Lambda} \in Z^2(G_1, ({\Fld}^{\times})_{\pi}^t)$, where ${\Fld}^t$ is viewed as a $G_1$-algebra via $\sigma$. It is straightforward to verify that different choices of ${\Lambda}$ lead to cohomologous $2$-cocycles. In this way, we attach a cohomology class $[\mu_n] \in H^2(G_1, ({\Fld}^{\times})_{\pi}^{t_n})$ to $M_n({\Fld})^{\oplus t_n} \unlhd A$.

\begin{Thm}
\label{thm:strictcGAlg}
Let $\cG$ be a $\ZTwo$-graded crossed module, $A$ a split semisimple N-weak $\cG$-algebra. If each cohomology class $[\mu_n] \in H^2(G_1, ({\Fld}^{\times})_{\pi}^{t_n})$, $n \geq 1$, is realizable, then there exists a $\cG$-algebra $B$ which is $\cG$-Morita equivalent to $A$.
\end{Thm}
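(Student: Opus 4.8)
The plan is to reduce to the case of a single isotypic block $A = M_n(\Fld)^{\oplus t}$, since $A \simeq \boxplus_n A_n$ and a strictification of each block assembles (via $\boxplus$) to a strictification of $A$. So fix $A = M_n(\Fld)^{\oplus t}$ and let $[\mu] = [\mu_{\Lambda}] \in H^2(G_1, (\Fld^\times)^t_\pi)$ be the class constructed before the theorem statement, with $\mu = \mu_{\Lambda} \in Z^2(G_1, (\Fld^\times)^t_\pi)$ a representing cocycle. The idea is to manufacture the desired strict $\cG$-algebra as an endomorphism algebra: apply the realizability hypothesis to the $\ZTwo$-graded group $G_1$ acting on $\Fld^t$ via $\sigma: G_1 \to \Sym$ (so $\Fld^t$ becomes an RW-weak $G_1$-algebra with $\omega_3 = \mu$), obtaining a realization $(M, \mathfrak{p}_M)$ of $\mu$ with $M$ a faithful finite dimensional $\Fld^t$-module. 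Then $B \coloneqq \End_{\Fld^t}(M)$ is a split semisimple $\Fld$-algebra, Morita equivalent to $\Fld^t$ and hence (one checks) having the same Artin--Wedderburn type of block structure related to $A$.

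The core of the argument is then to endow $B$ with a \emph{strict} $\cG$-algebra structure and to exhibit a $\cG$-Morita equivalence $A \rightsquigarrow B$. For the strict structure, I would apply Proposition~\ref{prop:2RepFromRep}: the realization $(M, \mathfrak{p}_M)$ yields a genuine $\ZTwo$-graded group homomorphism $\mathfrak{a} : G_1 \to \Aut^{\gen}(\End_{\Fld^t}(M)) = \Aut^{\gen}(B)$, which I take as $\omega_{B,1}$. Since $\mathfrak{a}$ is an honest homomorphism, $\omega_{B,3}$ can be taken trivial, so axiom~\eqref{eq:omega1Proj} holds strictly. For $\omega_{B,2}: G_2 \to B^\times$, I would transport $\omega_{A,2}$ across the Morita equivalence; concretely, the composite $G_2 \xrightarrow{\partial} G_1 \xrightarrow{\sigma} \Sym$ is trivial (since $\im \partial \subseteq \ker(\pi)$ and $\omega_{A,1}\circ \partial = \partial \circ \omega_{A,2}$ lands in the inner automorphisms $PGL_n^{\times t}$), so $\omega_{A,2}$ corresponds under Morita theory to a homomorphism into $B^\times$; I would verify \eqref{eq:omega2Proj} (strict), \eqref{eq:NAlgAx1}, and the equivariance condition \eqref{eq:NAlgAx2} using the explicit formula for $\mathfrak{a}$ and the cocycle identities satisfied by the $\omega_{A,i}$. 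The $\cG$-Morita equivalence itself is provided by the bimodule $M$, twisted appropriately: the maps $\mathfrak{p}_M(g)$ supply the equivariant structure $\omega_M(g): {}^{\pi(g)}M \to M$ required by \eqref{eq:Real1MorAx1}, the coherence constraint on $\mathfrak{p}_M$ recorded in Section~\ref{sec:realizability} gives \eqref{eq:Real1MorAx2} (with the $\omega_{B,3}$ term trivial on one side and $\mu = \omega_{A,3}$-related on the other — here one must match conventions carefully), and \eqref{eq:Real1MorAx3} follows from the compatibility of $\omega_{A,2}$ with $\mathfrak{p}_M$ under $\partial$. Faithfulness of $M$ over $\Fld^t$ ensures $M$ (and its dual) realize an equivalence in $\cG \mhyphen N\AAlg^{\fd}_\Fld$.

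I expect the main obstacle to be \emph{bookkeeping of cocycle twists and the duality signs} in verifying \eqref{eq:Real1MorAx2} and \eqref{eq:NAlgAx2} when $\pi(g) = -1$, i.e.\ handling the interplay between $(-)^\circ = (-)^{\opp}$ on $B$, the inverse-transpose action of $s$, the evaluation isomorphisms $\ev_M$, and the $\delta_{\pi(g_2),\pi(g_1),-1}$ correction terms appearing in the coherence law for $\mathfrak{p}_M$. In the trivially graded case these signs disappear and the argument is essentially the Morita-invariance of the endomorphism-algebra construction; the Real case requires checking that every instance of $(-)^\circ$ and $\ev_M$ cancels correctly, which is precisely the kind of verification Proposition~\ref{prop:2RepFromRep} was set up to streamline. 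A secondary point needing care is that different choices of the lift $\Lambda$ (hence of the representing cocycle $\mu$) give cohomologous cocycles and thus isomorphic realizations, so the resulting $B$ is well defined up to $\cG$-Morita equivalence — but since we only assert \emph{existence}, this can be dispatched quickly.
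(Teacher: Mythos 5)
Your high-level strategy matches the paper's --- decompose $A$ into isotypic blocks, realize the cocycle $\mu = \mu_{\Lambda}$, set $B$ equal to the endomorphism algebra of the realizing module with the strict $G_1$-action from Proposition~\ref{prop:2RepFromRep} --- but the construction of the $\cG$-Morita equivalence has a genuine gap. You take $M$ to be the faithful $\Fld^t$-module underlying a realization of $\mu$, put $B = \End_{\Fld^t}(M)$, and assert that ``the bimodule $M$'' (twisted appropriately) realizes the equivalence. However $M$ is a $B\mhyphen\Fld^t$-bimodule: it carries no $A = M_n(\Fld)^{\oplus t}$-module structure at all when $n>1$, so it cannot be a $1$-morphism in $\cG\mhyphen N\AAlg^{\fd}_\Fld$ between $A$ and $B$. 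To bridge $A$ and $B$ one must also use the column module $V = (\Fld^n)^{\oplus t}$, the canonical $A\mhyphen\Fld^t$-Morita bimodule, carrying the Real projective $G_1$-representation $\Lambda$, whose cocycle is $\mu\cdot\omega_{A,3}$. The paper realizes $\mu^{-1}$ to get $(U,\eta_U)$, sets $B = \End_{\Fld^t}(U)$, and forms the bimodule $V\otimes U$ with $\omega_M(g) = \Lambda(g)\otimes\eta_U(g)$; the factor $\mu$ in the cocycle of $\Lambda$ cancels against the $\mu^{-1}$ of $\eta_U$, leaving exactly $\omega_{A,3}$, which is precisely what \eqref{eq:Real1MorAx2} requires once $\omega_{B,3}$ is trivial. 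With your sign convention (realizing $\mu$ rather than $\mu^{-1}$) you would need the $\Fld^t$-dual of $M$ in place of $U$; in any case $M$ alone is not the right object.

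A secondary issue is that ``transport $\omega_{A,2}$ across the Morita equivalence'' does not define $\omega_{B,2}$: a Morita equivalence gives no canonical map $A^\times \to B^\times$. The paper's device is to extract the central scalar deviation $\gamma \colon G_2\to(\Fld^\times)^t$ from the identity $\omega_{A,2}(x) = \gamma(x)\Lambda(\partial x)$, check that $\gamma(x_2x_1) = \mu(\partial x_2,\partial x_1)^{-1}\gamma(x_2)\gamma(x_1)$, and then set $\omega_{B,2}(x) = \gamma(x)^{-1}\eta_U(\partial x)$, where the $\mu$-factors cancel so that $\omega_{B,2}$ is a genuine homomorphism. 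This explicit $\gamma$ (with the sign matched to whichever of $\mu$, $\mu^{-1}$ you realize) is what makes the verifications of \eqref{eq:omega2Proj}, \eqref{eq:NAlgAx1}, \eqref{eq:NAlgAx2} and \eqref{eq:Real1MorAx3} concrete; the proposal needs to produce it rather than appeal to Morita transport.
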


\begin{proof}
By the discussion preceding the theorem, it suffices to consider the case $A = M_n({\Fld})^{\oplus t}$. Fix a lift ${\Lambda}$ of $\omega_{A,1}$ with associated cocycle $\mu =\mu_{\Lambda}\in Z^2(G_1, ({\Fld}^{\times})_{\pi}^t)$. The definition of ${\Lambda}$ implies the equality
\[
\omega_{A,1}(g) (a) = {\Lambda}(g) ( {^{\pi(g)}}a) {\Lambda}(g)^{-1}, \qquad g \in G_1, \, a \in A.
\]
Equation \eqref{eq:NAlgAx1} implies that $\omega_{A,2}$ satisfies
\[
{\Lambda}(\partial x ) \circ a \circ {\Lambda}(\partial x)^{-1} = \omega_{A,2}(x) a \omega_{A,2}(x)^{-1}, \qquad x \in G_2, \, a \in A.
\]
Hence, there exists a function $\gamma : G_2 \rightarrow ({\Fld}^{\times})^t$ such that
\[
\omega_{A,2}(x) = \gamma (x) {\Lambda}(\partial x), \qquad x \in G_2
\]
which, by equation \eqref{eq:omega2Proj}, satisfies
\begin{equation}
\label{eq:cocycRatTriv}
\gamma(x_2 x_1) = \mu(\partial x_2, \partial x_1)^{-1} \gamma(x_2) \gamma(x_1).
\end{equation}

By the realizability assumption, there exists a $\mu^{-1}$-projective Real representation $(U,\eta_U)$ of $G_1$. Put $B = \End_{{\Fld}^t}(U)$ with the $G_1$-action of Proposition \ref{prop:2RepFromRep}. Define $\omega_{B,2}: G_2 \rightarrow \Aut_{{\Fld}^t}(U)$ by $\omega_{B,2}(x) = \gamma(x)^{-1} \eta_U(\partial x)$. Equation \eqref{eq:cocycRatTriv} implies that this makes $B$ into a $\cG$-algebra.

Let $V = ({\Fld}^n)^{\oplus t}$, viewed as a Real representation of $G_1$ via ${\Lambda}$. Let $M = V \otimes_{\Fld} U$ with $A \mhyphen B^{\vee}$-bimodule structure $a \cdot v \otimes u \cdot b = a v \otimes b u$. Here $B^{\vee}$ denotes the $\cG$-algebra dual to $B$; see Section \ref{sec:moritaBicat}. For each $g \in G_1$, define a ${\Fld}$-linear map $\omega_M : {^{\pi(g)}}M \rightarrow M$ by
\[
\omega_M(g)(v \otimes u) = {\Lambda}(g)(v) \otimes \eta_U(g)(u).
\]
Then we have
\begin{eqnarray*}
\omega_M(g)(a \cdot v \otimes u \cdot b) 
&=&
{\Lambda}(g)({^{\pi(g)}}av) \otimes \eta_U(g)({^{\pi(g)}}bu)\\
&=&
{\Lambda}(g) ( {^{\pi(g)}}a) {\Lambda}(g)^{-1} {\Lambda}(g)(v) \otimes \eta_U(g) ({^{\pi(g)}}a ) \eta_U(g)^{-1} \eta_U(g)(u) \\
&=&
\omega_{A,1}(g)(a) \cdot \omega_M(g)(v \otimes u) \cdot \omega_{B,1}(g)(b),
\end{eqnarray*}
so that equation \eqref{eq:Real1MorAx1} is satisfied. Moreover, since $\omega_{B,3}$ is trivial,
\[
\partial(\omega_{?,3}(g_2,g_1)) \omega_M(g_2)({^{\pi(g_2)}}\omega_M(g_1)^{\pi(g_2)}(v \otimes u))
\]
is equal to
\begin{eqnarray*}
&&
\omega_{A,3}(g_2,g_1) {\Lambda}(g_2) ({^{\pi(g_2)}}{\Lambda}(g_1)^{\pi(g_2)} (v)) \otimes \eta_U(g_2)({^{\pi(g_2)}}\eta_U(g_1)^{\pi(g_2)} (u))\\
&=&
\mu(g_2,g_1)^{-1} {\Lambda}(g_2 g_1)(v) \otimes \mu(g_2,g_1) \eta_U(g_2 g_1)(u) \\
&=&
{\Lambda}(g_2 g_1)(v) \otimes \eta_U(g_2 g_1)(u)
=
\omega_M(g_2 g_1)(v \otimes u),
\end{eqnarray*}
so that equation \eqref{eq:Real1MorAx2} is satisfied. Finally, we have
\[
\omega_M(\partial x)(v \otimes u)
=
{\Lambda}(\partial x) (v) \otimes \eta_U(\partial x) (u)
=
\gamma(x)^{-1} \omega_{A,2}(x) (v) \otimes \eta_U(\partial x) (u),
\]
so that equation \eqref{eq:Real1MorAx3} is satisfied. Hence $(M,\omega_M): B^{\vee} \rightarrow A$ is a $1$-morphism in $\cG \mhyphen N \AAlg_{\Fld}^{\fd}$. The bimodule $M$ is clearly an equivalence in $\AAlg_{\Fld}^{\fd}$. This implies that $(M, \omega_M)$ is also an equivalence. 
\end{proof}

\subsection{Strictification of $2$-groups}
The following proposition is an alternative version of Theorem~\ref{thm:strictcGAlg}, where the strictification alters the crossed module representing the $2$-group, instead of altering the algebra. Its part~(i) is quite formal and probably known in the ungraded but we are still compelled to give a complete proof.

\begin{Prop}
  \label{prop:strictification}
  Let $\cG$ be a $\ZTwo$-graded crossed module and $(A,\omega)$ an N-weak $\cG$-algebra.
There exists a $\ZTwo$-graded crossed module $\cH$, a strict homomorphism of $\ZTwo$-graded crossed modules $\varphi:\cH \rightarrow \cG$ and a strict $\cH$-algebra structure $\psi: \cH \rightarrow \AUT^{\gen}(A)$ such that
  \begin{enumerate}[label=(\roman*)]
  \item the induced homomorphism of $2$-groups $\widetilde{\varphi}:\tcH \rightarrow\tcG$ is an equivalence, and
    \item the N-weak $\cH$-algebra $(A,\omega\varphi)$ and the strict $\cH$-algebra $(A,\psi)$ are $\cH$-Morita equivalent.
      \end{enumerate}
\end{Prop}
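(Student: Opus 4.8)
My plan is to realise $\cH$ as an explicit extension of $\cG$ by $A^{\times}$ that absorbs the projective data $(\omega_1,\omega_3)$ of the weak structure. Write $\partial$ for the structure maps of $\cG$, $\cH$ and $\AUT^{\gen}(A)$ alike, let $\omega_1,\omega_2,\omega_3$ be the components of $\omega$, and abbreviate ${}^{g}a := \omega_1(g)(a^{\pi(g)})$ for $g\in G_1$, $a\in A^{\times}$; one first notes that $a\mapsto{}^{g}a$ is a genuine group automorphism of $A^{\times}$, since when $\pi(g)=-1$ the inversion cancels the contravariance of $\omega_1(g)$. Define $H_1 := A^{\times}\times G_1$ as a set, with multiplication
\[
(b_2,g_2)(b_1,g_1) = \big(b_2\cdot{}^{g_2}b_1\cdot\omega_3(g_2,g_1)^{-1},\ g_2 g_1\big),
\]
which is associative precisely by the non-abelian cocycle identity \eqref{eq:omega3Cocycle}. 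Set $H_2 := A^{\times}\times G_2$ with componentwise product, $\partial_H(a,x) := \big(a\,\omega_2(x)^{-1},\ \partial x\big)$, let $H_1$ act on $H_2$ by ${}^{(b,g)}(a,x) := \big(b\cdot{}^{g}a\cdot b^{-1},\ {}^{g}x\big)$, and grade $H_1$ by $\pi_H(b,g) := \pi(g)$. Finally define $\varphi:\cH\to\cG$ by $\varphi_1(b,g) := g$, $\varphi_2(a,x) := x$, and $\psi:\cH\to\AUT^{\gen}(A)$ by $\psi_1(b,g) := \partial(b)\,\omega_1(g)$, $\psi_2(a,x) := a$.

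\textbf{Part (i).} I would check that $\cH = (H_2\xrightarrow{\partial_H}H_1)$ is a $\ZTwo$-graded crossed module and that $\varphi$, $\psi$ are strict $\ZTwo$-graded morphisms. That $\pi_H$ is a homomorphism annihilating $\partial_H(H_2)$, that $\partial_H$ and the $H_1$-action are well defined, and that $\varphi$, $\psi$ commute with all structure maps are short computations from \eqref{eq:omega1Proj}, \eqref{eq:omega2Proj}, \eqref{eq:NAlgAx1} and the consequence $\omega_3(\partial x_2,\partial x_1) = \omega_2(x_2x_1)\omega_2(x_1)^{-1}\omega_2(x_2)^{-1}$ of \eqref{eq:omega2Proj}; that $\psi_1$ is a homomorphism uses \eqref{eq:omega1Proj} together with $\omega_1(g)\,\partial(a)\,\omega_1(g)^{-1} = \partial({}^{g}a)$. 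The two crossed-module axioms for $\cH$, once unwound, are exactly the remaining N-weak axiom \eqref{eq:NAlgAx2}. Then $\widetilde\varphi$ is an equivalence because $\varphi$ induces isomorphisms on homotopy groups: $\partial_H(H_2) = A^{\times}\times\partial(G_2)$ is the full $\varphi_1$-preimage of $\partial(G_2)$, so $\varphi_1$ descends to $\coker\partial_H\xrightarrow{\sim}\coker\partial = \pi_1(\cG)$; and $\ker\partial_H = \{(\omega_2(x),x) : x\in\ker\partial\}$ is carried isomorphically onto $\ker\partial = \pi_2(\cG)$ by $\varphi_2$, using that $\omega_2$ restricts to a homomorphism on $\ker\partial$ (because $\omega_3$ is normalized).

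\textbf{Part (ii).} The N-weak $\cH$-algebra $(A,\omega\varphi)$ has structure maps $\omega_1\circ\varphi_1$, $\omega_2\circ\varphi_2$ and $\omega_3\circ(\varphi_1\times\varphi_1)$, whereas $(A,\psi)$ is strict. I would realise the $\cH$-Morita equivalence by the regular bimodule $M = {}_AA_A$, regarded as a $1$-morphism $(A,\psi)\to(A,\omega\varphi)$ in $\cH\mhyphen N\AAlg_{\Fld}^{\fd}$ (or in $\cH\mhyphen N\AALG_{\Fld}$ when $A$ is not separable), equipped with the equivariant structure
\[
\omega_M(b,g) : {}^{\pi_H(b,g)}M\longrightarrow M, \qquad \omega_M(b,g)(m) = \omega_1(g)(m)\cdot b^{-1},
\]
where ${}^{\pi(g)}({}_AA_A)$ is identified with the underlying twisted algebra ${}^{\pi(g)}A$ in the obvious way. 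Condition \eqref{eq:Real1MorAx1} is immediate; \eqref{eq:Real1MorAx2} is precisely the multiplication law of $H_1$ combined with \eqref{eq:omega1Proj} (the $\omega_3$-correction in \eqref{eq:Real1MorAx2} coming entirely from the target, $\psi$ being strict); and \eqref{eq:Real1MorAx3} is immediate from the formulas for $\partial_H$ and $\psi_2$ together with \eqref{eq:NAlgAx1}. Since ${}_AA_A$ is an equivalence in $\AALG_{\Fld}$ (and in $\AAlg_{\Fld}^{\fd}$ when $A$ is separable), its equivariant lift $(M,\omega_M)$ is an equivalence, which is the asserted $\cH$-Morita equivalence.

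\textbf{Main obstacle.} I expect the difficulty to be bookkeeping rather than conceptual. The crossed-module axiom $\partial_H({}^{h}k) = h\,\partial_H(k)\,h^{-1}$ is the only nontrivial verification in part (i): it is where \eqref{eq:NAlgAx2} is genuinely used, and the conjugation in $H_1$ must be computed on the nose, not merely modulo $Z(A)^{\times}$. Similarly, identity \eqref{eq:Real1MorAx2} for $\omega_M$ requires the explicit $H_1$-multiplication. In the $\ZTwo$-graded case both steps demand careful tracking of the order reversals and of the evaluation isomorphisms attached to the anti-automorphisms $\omega_1(g)$ with $\pi(g)=-1$.
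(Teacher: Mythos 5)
Your proof is correct and follows essentially the same route as the paper: $\cH$ is an extension of $\cG$ by $A^{\times}$ that absorbs the projective data, $\varphi$ is the projection, $\psi_1(b,g)=\Ad_A(b)\,\omega_1(g)$, and the $\cH$-Morita equivalence is realized by the identity bimodule with a twisted equivariant structure. The one real difference is cosmetic but advantageous: you choose coordinates on $H_2$ in which the product is componentwise, whereas the paper's $H_2$ carries a twisted product; the two are identified by $(a,x)\mapsto(a\,\omega_2(x)^{-1},x)$, which accounts for your different $\partial_H$, $\psi_2$ and $H_1$-action. This normalization buys a one-line action formula ${}^{(b,g)}(a,x)=(b\,{}^{g}a\,b^{-1},{}^{g}x)$ in place of the paper's long \eqref{ext_action}, and the Peiffer identity becomes nearly immediate. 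You also write the product on $H_1$ using the crossed-module action ${}^{g}b=\omega_1(g)(b^{\pi(g)})$ rather than the paper's $\omega_1(g)(b)$ in \eqref{ext_product}; this is the form associativity actually needs, since for $\pi(g)=-1$ the anti-automorphism $\omega_1(g)$ reverses products and the literal formula fails when $A^\times$ is non-abelian (the paper surely intends the crossed-module action, as the cocycle condition \eqref{eq:omega3Cocycle} it invokes, and its later use of the notation ${}^{g}b$, indicate). One minor slip on your side: the Peiffer identity for $\cH$ needs only \eqref{eq:NAlgAx1}, not \eqref{eq:NAlgAx2}; it is the other crossed-module axiom $\partial_H({}^{h}k)=h\,\partial_H(k)\,h^{-1}$ that uses \eqref{eq:NAlgAx2}, as you in fact anticipate correctly in your remark on the main obstacle.
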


%
%
\begin{proof}
Define constituents $H_i$, $i=1,2$ of the crossed module $\cH$ as extensions of $G_i$ by the multiplicative group $A^\times$, using the systems of factors that arise from the N-weak structure:
  $$
  1 \rightarrow A^\times \rightarrow H_i \xrightarrow{\varphi_i} G_i \rightarrow 1.
  $$
 The homomorphism $\varphi$ is $(\varphi_1,\varphi_2)$, i.e., $\varphi|_{H_i}=\varphi_i$. Writing $H_i = A^{\times} \times G_i$ as sets, we can express the homomorphism $\psi=(\psi_1,\psi_2)$ as
 \begin{equation} \label{eq:psi}
 \psi_1 (a,g) = \Ad_A (a)  \omega_1(g),
 \qquad
 \psi_2 (a,x) = a \omega_2 (x),
 \end{equation}
where, as usual, $a,b\in A^{\times}$, $g,h\in G_1$ and $x,y\in G_2$. The rest of the proof is devoted to verifying the technical details.
 
{\bf 1) Groups $H_i$:} the multiplications for $H_1$ and $H_2$ are slight variations of the standard product defined by the system of factors:  
\begin{equation}\label{ext_product}
  (a,g)\cdot (b,h) \coloneqq
  (a \omega_1 (g)(b) \omega_3 (g,h)^{-1}, gh)
  , \ \ 
(a,x)\cdot (b,y) \coloneqq 
(a ( {}^{\omega_2 (x)}b ) \omega_3 (\partial x, \partial y)^{-1}, xy).
\end{equation}
Associativity follows from the cocycle condition. The inverses are 
\[
(a,g)^{-1} =
(\omega_3 (g^{-1},g)\omega_1(g^{-1})(a^{-1}), g^{-1})
  , \ \ 
(a,x)^{-1} =
(\omega_3 (\partial x^{-1}, \partial x) ({}^{\omega_2(x^{-1})}(a^{-1})), x^{-1}).
\]

{\bf 2) Action of $H_1$ on $H_2$:}  define the action by
\begin{equation}\label{ext_action}
{}^{(a,g)}(b,x) \coloneqq  (
a
\omega_1(g)(b) 
\omega_3 (g,\partial x)^{-1}
\omega_1(g \partial x) (\omega_3(g^{-1},g))
\omega_3 (g\partial x, g^{-1})^{-1}
({}^{\omega_2({}^gx)}(a^{-1}))
,{}^{g}x).
\end{equation}
We have derived this formula by assuming that $G_2$ is a normal subgroup of $G_1$ and computing $((a,g)\cdot(b,x))\cdot (a,g)^{-1}$. Observe that the evaluation of $(a,g)\cdot((b,x)\cdot (a,g)^{-1})$ produces the same formula (after a longer calculation, utilizing the cocycle condition).
For clarity we simplify the notation by $\omega=\omega_3$ and dropping $\omega_1$ and $\omega_2$:
\[
{}^{(a,g)}(b,x) = (
a \, 
({}^{g}b) \,  
\omega (g,\partial x)^{-1} \, 
({}^{g \partial x} \omega(g^{-1},g)) \, 
\omega (g\partial x, g^{-1})^{-1} \, 
({}^{{}^gx}(a^{-1}))
,{}^{g}x).
\]

We need to verify that each element of $H_1$ acts by a group endomorphism. It suffices to verify this for elements of $G_i$ and $A^\times$ separately. Eight separate verifications are required but it is easier due to the complexity of equation ~\eqref{ext_action}. Here we show one of them, leaving the remainder to an interested reader:
\begin{align*}  
  {}^{(a,e)}(1,x) \cdot {}^{(a,e)} (b,e) =
  ( a ({}^{x}(a^{-1})),x) & 
  ( a b a^{-1},e) =  ( a ({}^{x}(a^{-1})) {}^{x}( a b a^{-1}),x) =
  \\
  ( a ({}^{x}(b)) {}^{x}(a^{-1}),x) =
  &
    {}^{(a,e)}({}^{x}b,x) =
      {}^{(a,e)} ((1,x) \cdot(b,e)).
  \end{align*}
Further we need to verify that it is actually an action. Again, verifying this for the elements of $G_i$ and $A^\times$ separately requires eight separate verifications. We perform just one of them as an illustration.
%
%
%
%
We need to show that
\[
{}^{(1,g)} ( {}^{(a,e)} (1,x))
=
({}^{g}a  \, {}^g ({}^{x}(a^{-1}))  \omega (g,\partial x)^{-1}\, 
    {}^{g\partial x}\omega (g^{-1},g) \omega(g\partial x, g^{-1})^{-1},{}^{g}x)
\]
is equal to
\[
{}^{(1,g)(a,e)} (1,x) = {}^{({}^ga,g)} (1,x)
=
( {}^{g}a   \omega(g,\partial x)^{-1} \, {}^{g\partial x}\omega(g^{-1},g) \omega (g\partial x,g^{-1})^{-1}\, {}^{{}^gx}({}^{g}(a^{-1})),{}^{g}x).
\]
Using ${}^{g}({}^{h} a) = \omega(g,h)^{-1} \, {}^{gh} a \, \omega(g,h)$
(equation~\eqref{eq:omega2Proj}), we are left to verify that
\[
\underline{\omega(g,\partial x)^{-1}} {^{g \partial x}}(a^{-1}) \underline{\omega(g,\partial x) \omega(g,\partial x)^{-1}} {^{g \partial x}} \omega(g^{-1},g) \omega(g \partial x, g^{-1})^{-1}
\]
is equal to
\[
\underline{\omega(g, \partial x)^{-1}} \;
\underline{{^{g \partial x}} \omega(g^{-1},g) \omega(g \partial x, g^{-1})^{-1} \omega(g\partial(x) g^{-1}, g)^{-1}} {^{g \partial x}} (a^{-1}) \omega(g\partial(x) g^{-1}, g).
\]
We can cancel all underlined parts, for instance, the three terms in the second expression is equation~\eqref{eq:omega3Cocycle} with $g_3=g\partial (x)$, $g_2=g^{-1}$ and $g_1=g$. Hence, we need to see that the equality
\[
{^{g \partial x}}(a^{-1}) {^{g \partial x}} \omega(g^{-1},g) \omega(g \partial x, g^{-1})^{-1}
\overset{?}{=}
{^{g \partial x}} (a^{-1}) \omega(g \partial(x) g^{-1}, g)
\]
holds. Again this follows from equation~\eqref{eq:omega3Cocycle} with $g_3=g\partial (x)$, $g_2=g^{-1}$ and $g_1=g$.


{\bf 3) Differential for $\cH$ and the Peiffer identity:}  
define the differential by
\[
  \partial (a,x) \coloneqq (a, \partial x). 
\]
  The similarities in the definitions of the products in $H_1$ and $H_2$ (formula~(\ref{ext_product})) ensure that $\partial$ is a group homomorphism. The Peiffer identity holds automatically, due to the way we have derived formula~(\ref{ext_action}) in {\bf 2)}.
    

{\bf 4) Homomorphism $\varphi$:} define $\varphi_i : H_i \rightarrow G_i$ by
$$
\varphi_1  (a,g) =g,
\qquad
\varphi_2  (a,x) =x.
$$
It is easy to see that $\varphi = (\varphi_1 , \varphi_2)$ is a strict homomorphism of crossed modules. Notice that $\pi_i(\cH) = \{e\}\times \pi_i(\cG)$, $i=1,2$, and $\pi_i (\varphi):\pi_i(\cH)\rightarrow\pi_i(\cG)$ are identities.

{\bf 5) Grading:} if $\pi: G_1 \rightarrow {\mathbb Z}_2$ is the grading on $\cG$, a grading on $\cH$ is given by $\pi \circ \varphi_1 : (a,g)\mapsto \pi (g)$. Clearly, $\varphi = (\varphi_1 , \varphi_2)$ is a homomorphism of graded crossed modules.

{\bf 6) Homomorphism $\psi$:}
it is defined above (formula~\eqref{eq:psi}).
Let us verify that $\psi_2$ is a homomorphism:
\begin{align*}  
  \psi_2 ((a,x)(b,y))  = \psi_2 ((a\, {}^{x}b \omega_1(\partial x,\partial y)^{-1},xy))
    &
    = a\, {}^{x}b \omega_1(\partial x, \partial y)^{-1} \omega_2(xy)   =
      \\
      a \omega_2 (x) b \omega_2(x)^{-1} \omega_2(x) \omega_2(y) =
      &
      a \omega_2 (x) b \omega_2(y) =
           \psi_2 (a,x) \psi_2(b,y).
  \end{align*}
The verification for $\psi_1$ is similar. If $\pi(g)=1$, it is identical. If $\pi(g)=-1$, we denote $\circ{}^{-1}$ by $\bullet$. The key observation is
$\omega_1(g)\bullet \omega_1(g) = \id_A$ so that
\begin{align*}
  \psi_1 ((a,g)(b,h)) =
  \Ad(a)\circ \Ad({}^{g}b) \circ
  \Ad(\omega_1(g, h)^{-1}) \circ  \omega_1(gh) &   =
  \Ad(a)\circ 
  \\
  \omega_1 (g) \bullet \Ad(b) \circ \omega_1(g^{-1}) \bullet \omega_1(g) \bullet \omega_1(h)   =  \Ad(a)\circ \omega_1 (g)
  &
  \bullet \Ad(b) \circ \omega_1(h),
  \end{align*}
which is equal to $\psi_1 (a,g) \psi_1(b,h)$.

{\bf 7) Morita equivalence:} It suffices to observe that the identity bimodule $(M={}_AA_A,\theta_M)$ with
$$
\theta_M (a,g) (m) = a \omega_1(g)(m)
$$
yields the Morita equivalence of $\cH$-algebras $(A,\omega\varphi)\rightarrow (A,\psi)$. Let us carry out the necessary verifications: 
\begin{align*}
  \theta_M (a,g) (bmc) = a\omega_1(g)(bmc) =
  &
\, a  \omega_1(g)(b) \omega_1(g)(m) \omega_1(g)(c) =
  \\
   \Ad_A(a)( \omega_1(g)(b)) a \omega_1(g)(m) \omega_1(g)(c) =
  &
 \,   \psi_1(a,g)(b) \theta_M(a,g)(m) \omega_1(\varphi_1(a,g))(c)
\end{align*}
and
\begin{align*}
  \theta_M \big( (a,g)(b,h) \big) (m) = &
  \, a \, {}^{g}b \omega_3(g,h)^{-1} \omega_1(gh)(m) =
  \\
  a \, {}^{g}b \omega_3(g,h)^{-1} \Ad \big( \omega_3(g,h)\big) \big(  \omega_1(g) (\omega_1(h)(m))\big)\,
  &
  =
  a \, {}^{g}b \omega_1(g) \big( \omega_1(h)(m)\big) \omega_3(g,h)^{-1}
  \\
  =
  \partial\big(\omega_{?,3}((a,g),(b,h)) \big) & \big(\theta_M(a,g) \big( \theta_M (b,h)(m)\big)\big). 
   \end{align*}
\end{proof}

Proposition~\ref{prop:strictification} reduces all computations with Real $2$-modules of a $2$-group $\tG$ to manipulations with strict $\cG$-algebras, albeit for different crossed modules. For instance, take two $2$-modules $V$ and $W$. A straightforward variation of Proposition~\ref{prop:strictification} yields a crossed module $\cG$ and strict $\cG$-algebras $A$ and $B$ that realize $V$ and $W$. This allows us to define 
$V^\vee$, $V\boxplus W$ and $V\boxtimes W$ as in the end of Section~\ref{sec:moritaBicat}.


\begin{Prob}
  Characterize those crossed modules $\cG$ such that any N-weak $\cG$-algebra is $\cG$-Morita equivalent to a strict $\cG$-algebra. Characterize those $2$-groups $\tG$ which admit a crossed module realization $\tG\cong\tcG$ such that $\cG$ satisfies the property in the previous sentence. 
\end{Prob}

\section{Induction of \texorpdfstring{$\cG$}{}-algebras}
\label{sec:induction}

In this section we define and study induction for $\cG$-algebras, in both the ordinary and $\ZTwo$-graded settings. In view of Theorem \ref{thm:strictcGAlg}, Proposition~\ref{prop:strictification} and our later applications to Real $2$-representation theory, we restrict our attention to strict algebras.

\subsection{The ordinary case}
\label{sec:indKAlg}

Let $\cG$ be a crossed module with a crossed submodule $\cH$. There is an associated restriction pseudofunctor $\Res_{\cH}^{\cG}: \cG \mhyphen \AALG_{\Fld} \rightarrow \cH \mhyphen \AALG_{\Fld}$. The goal of this section is to define an induction pseudofunctor $\Ind_{\cH}^{\cG} : \cH \mhyphen \AALG_{\Fld} \rightarrow \cG \mhyphen \AALG_{\Fld}$. The $\ZTwo$-graded case is treated in Section \ref{sec:indRealKAlg}.
Our construction generalizes the known definition (at the level of objects) in the case $H_2 =G_2$ and $\vert G_1:H_1 \vert <\infty$ \cite[\S 3]{rumynin2018}.

Let $A$ be an $\cH$-algebra. We define a $\cG$-algebra $\widetilde{A} = \Ind_{\cH}^{\cG} A$ as follows. Fix a left transversal $\sT$ to $H_1$ in $G_1$. For each $t \in \sT$, denote by ${\Fld}G_{2,t}$ the group algebra of $G_2$ with right ${\Fld}H_2$-module structure
\[
x \cdot z = x ({^{t}}z), \qquad x \in G_2, \, z \in H_2.
\]
As a vector space, set
\[
\widetilde{A} = \prod_{t \in \mathcal{T}} {\Fld}G_{2,t} \otimes_{{\Fld}H_2} A.
\]
Explicitly, the tensor relations in $\widetilde{A}$ read
\[
[x ({^{t}}z) \otimes a]_t = [x \otimes \omega_{A,2}(z) a]_t, \qquad x \in G_2, \, z \in H_2, \, a \in A
\]
where $[-]_t$ denotes an element of the $t$\textsuperscript{th} factor of $\widetilde{A}$.
Let $\delta_{g_2,g_1}$ be the $H_1$-coset delta function: given $g_1,g_2 \in G_1$, 
$$
\delta_{g_2,g_1} =
\begin{cases}
  1 & \mbox{ if } g_1H_1=g_2H_1 , \\
  0 & \mbox{ otherwise.}
\end{cases}
$$


\begin{Lem}
The formula
\begin{equation}
\label{eq:indMult}
[x_2 \otimes a_2]_{t_2} \cdot [x_1 \otimes a_1]_{t_1}
=
\delta_{\partial (x_1^{-1}) t_2, t_1}
[x_2 x_1 \otimes \omega_{A,1}(  t_1^{-1} \partial ( x_1^{-1}) t_2 ) (a_2) a_1]_{t_1}
\end{equation}
defines an associative algebra structure on $\widetilde{A}$ with identity $1_{\tilde{A}} =([e_{G_2} \otimes 1_A]_t)_{t \in \sT}$.
\end{Lem}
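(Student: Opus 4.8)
The statement asserts three things: (a) the formula \eqref{eq:indMult} is well-defined on the tensor products $\Fld G_{2,t}\otimes_{\Fld H_2}A$, (b) the resulting multiplication is associative, and (c) $1_{\widetilde A}$ is a two-sided identity. The plan is to treat these in that order, since well-definedness is the most delicate and the other two are essentially bookkeeping once the conventions are fixed.

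\emph{Step 1: well-definedness.} I would first check that the right-hand side of \eqref{eq:indMult} is independent of the representatives chosen in each tensor factor. Using the tensor relation $[x({^t}z)\otimes a]_t=[x\otimes\omega_{A,2}(z)a]_t$, one must verify that replacing $[x_i\otimes a_i]_{t_i}$ by $[x_i({^{t_i}}z_i)\otimes\omega_{A,2}(z_i)a_i]_{t_i}$ (for $z_i\in H_2$) leaves the product unchanged. The Kronecker delta factor $\delta_{\partial(x_1^{-1})t_2,t_1}$ is insensitive to modifying $x_2$ on the right (it only involves $x_1$), and modifying $x_1\mapsto x_1({^{t_1}}z_1)$ changes $\partial(x_1^{-1})$ by conjugation by $\partial({^{t_1}}z_1)=t_1\partial(z_1)t_1^{-1}$, which lies in $t_1 H_1 t_1^{-1}$; since $\partial(z_1)\in H_1$ this does not change the $H_1$-coset of $\partial(x_1^{-1})t_2$, so the delta survives. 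The nontrivial part is matching the $A$-factor: one expands $\omega_{A,1}(t_1^{-1}\partial(x_1^{-1})t_2)(a_2)$ after the substitution and uses the crossed-module relation ${^{\partial(y)}}z=yzy^{-1}$ together with equation \eqref{eq:NAlgAx1}, $\omega_{A,1}\circ\partial=\partial\circ\omega_{A,2}$, to push the $\omega_{A,2}(z_i)$'s past $\omega_{A,1}(\cdots)$ and absorb them into the tensor relations on the left- and right-hand factors. This is the step I expect to be the main obstacle: it requires carefully tracking how $z_1\in H_2$ interacts with the transversal element $t_1$ and with $\partial$, and invoking the $\cH$-algebra axioms at exactly the right place.

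\emph{Step 2: associativity.} With well-definedness in hand, I would compute both $([x_3\otimes a_3]_{t_3}[x_2\otimes a_2]_{t_2})[x_1\otimes a_1]_{t_1}$ and $[x_3\otimes a_3]_{t_3}([x_2\otimes a_2]_{t_2}[x_1\otimes a_1]_{t_1})$ directly from \eqref{eq:indMult}. Both sides produce the monomial $[x_3x_2x_1\otimes(\cdots)a_1]_{t_1}$ times a product of delta functions, so the task is to (i) check the delta functions agree — this follows because $g_1H_1=g_2H_1$ is an equivalence relation and the two bracketings impose the conditions $\partial(x_1^{-1})t_2\in t_1H_1$, $\partial(x_2^{-1})t_3\in t_2H_1$ versus $\partial(x_1^{-1})t_2\in t_1H_1$, $\partial((x_2x_1)^{-1})t_3\in t_1H_1$, which coincide on the support — and (ii) check the $A$-coefficients agree, which uses the projective-homomorphism identity \eqref{eq:omega1Proj} for $\omega_{A,1}$ (here with trivial $\omega_3$ since $A$ is strict, so genuine multiplicativity $\omega_{A,1}(gh)=\omega_{A,1}(g)\omega_{A,1}(h)$) to rewrite $\omega_{A,1}(t_1^{-1}\partial((x_2x_1)^{-1})t_3)$ as a composite and reconcile it with $\omega_{A,1}(t_1^{-1}\partial(x_1^{-1})t_2)(\omega_{A,1}(t_2^{-1}\partial(x_2^{-1})t_3)(a_3))$. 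One also uses ${^{\partial(y)}}z=yzy^{-1}$ to commute $\partial(x_1^{-1})$ past $\partial(x_2^{-1})$ appropriately.

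\emph{Step 3: the unit.} Finally, setting $t_2=t_1$, $x_2=e_{G_2}$, $a_2=1_A$ in \eqref{eq:indMult} gives $\delta_{t_1,t_1}[x_1\otimes\omega_{A,1}(e_{G_1})(1_A)a_1]_{t_1}=[x_1\otimes a_1]_{t_1}$ since $\omega_{A,1}$ is unital; this shows $1_{\widetilde A}$ is a left identity. For the right identity, one takes the second factor to be $([e_{G_2}\otimes 1_A]_t)_{t\in\sT}$ so that $t_1$ ranges over all of $\sT$: the delta $\delta_{\partial(e)t_2,t_1}=\delta_{t_2,t_1}$ picks out $t_1=t_2$, and the coefficient is $\omega_{A,1}(e_{G_1})(a_2)\cdot 1_A=a_2$, recovering $[x_2\otimes a_2]_{t_2}$. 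Hence $1_{\widetilde A}$ is a two-sided unit, completing the proof. $\Box$
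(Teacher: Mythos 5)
Your overall strategy (well‑definedness via tensor relations, then associativity, then unit) matches the paper's; the paper in fact only writes out the well‑definedness step and explicitly omits the verifications of associativity and the unit. Two points in your sketch need repair.

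First, in Step~1, the identity you need for the first tensor relation (the case $x_2 \mapsto x_2\,({^{t_2}}z)$) is the equivariance axiom \eqref{eq:NAlgAx2}, not \eqref{eq:NAlgAx1}. After the crossed‑module rewriting $x_2({^{t_2}}z)x_1 = x_2x_1({^{\partial(x_1^{-1})t_2}}z) = x_2x_1({^{t_1}}({^h}z))$ with $h=t_1^{-1}\partial(x_1^{-1})t_2\in H_1$, the tensor relation brings ${^h}z$ out as $\omega_{A,2}({^h}z)$, and matching the other side requires precisely $\omega_{A,2}({^{h}}z) = \omega_{A,1}(h)(\omega_{A,2}(z))$, which is \eqref{eq:NAlgAx2} for a strict algebra. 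The axiom $\omega_{A,1}\circ\partial=\partial\circ\omega_{A,2}$ (i.e.\ \eqref{eq:NAlgAx1}) is a different compatibility and does not deliver this step. Also a small slip: modifying $x_1\mapsto x_1({^{t_1}}z_1)$ changes $\partial(x_1^{-1})$ by \emph{left multiplication} by $t_1\partial(z_1^{-1})t_1^{-1}$, not by conjugation; the delta is still preserved because that factor gets absorbed into $t_1 H_1$, but the stated reason is off.

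Second, the left‑unit computation in Step~3 is incorrect as written. You set $t_2=t_1$ and claim the delta becomes $\delta_{t_1,t_1}$, but with $x_2=e_{G_2}$ the delta in \eqref{eq:indMult} is $\delta_{\partial(x_1^{-1})t_2,\,t_1}$, which involves $x_1$, not $x_2$. In the product $1_{\widetilde A}\cdot[x_1\otimes a_1]_{t_1}=\sum_{t_2\in\sT}[e_{G_2}\otimes 1_A]_{t_2}\cdot[x_1\otimes a_1]_{t_1}$ the unique surviving $t_2$ is the representative of the coset $\partial(x_1)t_1H_1$, which equals $t_1$ only when $\partial(x_1)\in t_1 H_1 t_1^{-1}$; in general it is a different transversal element. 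For that $t_2$ the coefficient is $\omega_{A,1}(t_1^{-1}\partial(x_1^{-1})t_2)(1_A)a_1 = a_1$, which gives the desired $[x_1\otimes a_1]_{t_1}$. Your right‑unit check is correct as written.
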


\begin{proof}
To begin, observe that if $\delta_{\partial (x_1^{-1}) t_2, t_1}$ is non-zero, then the argument $t_1^{-1} \partial(x_1^{-1}) t_2$ of $\omega_{A,1}$ lies in $H_1$. The right hand side of equation \eqref{eq:indMult} is therefore well-defined. By construction only finite sums appear in the calculation of the product of two arbitrary elements of $\widetilde{A}$, and so the product in $\widetilde{A}$ is well-defined.

To be well-defined, equation \eqref{eq:indMult} must respect the tensor relations in $\widetilde{A}$. For $z \in H_2$, we have
\begin{equation}
\label{eq:firstProd}
[x_2 \cdot z \otimes a_2]_{t_2} \cdot [x_1 \otimes a_1]_{t_1}
=
\delta_{\partial (x_1^{-1}) t_2, t_1}
[x_2 ({^{t_2}}z) x_1 \otimes \omega_{A,1}(t_1^{-1} \partial ( x_1^{-1}) t_2)(a_2) a_1]_{t_1}.
\end{equation}
The crossed module axioms for $\cG$ give $x_2 ({^{t_2}}z) x_1 = x_2 x_1 ({^{\partial (x_1^{-1}) t_2}}z)$. Write $\partial (x_1^{-1}) t_2 = t^{\prime} h$ for $t^{\prime} \in \mathcal{T}$ and $h \in H_1$. If the product \eqref{eq:firstProd} is non-zero, then $t^{\prime}=t_1$. In this case, ${^{\partial (x_1^{-1}) t_2}}z = {^{t_1}}({^{h}}z)$ and, since $\cH$ is a crossed submodule of $\cG$, we have ${^{h}}z \in H_2$. Noting that $t_1^{-1} \partial ( x_1^{-1}) t_2 = h$, the product \eqref{eq:firstProd} becomes
\[
\delta_{\partial (x_1^{-1}) t_2, t_1}
[x_2 x_1 \otimes \omega_{A,2}({^{h}}z)\omega_{A,1}( h)(a_2) a_1]_{t_1}.
\]
The axiom  \eqref{eq:NAlgAx2} gives $\omega_{A,2}({^{h}}z) = \omega_{A,1}(h)(\omega_{A,2}(z))$, so that \eqref{eq:firstProd} can be written as
\[
\delta_{\partial (x_1^{-1}) t_2, t_1}
[x_2 x_1 \otimes \omega_{A,1}(h)(\omega_{A,2}(z)a_2) a_1]_{t_1}.
\]
This is plainly equal to $[x_2 \otimes \omega_{A,2}(z)a_2]_{t_2} \cdot [x_1 \otimes a_1]_{t_1}$, as required.

Similarly, for each $z \in H_2$, we have
\[
[x_2 \otimes a_2]_{t_2} \cdot [x_1 ({^{t_1}}z)  \otimes a_1]_{t_1}
=
\delta_{\partial (({^{t_1}}z)^{-1} x_1^{-1}) t_2, t_1}
[x_2 x_1 ({^{t_1}}z) \otimes \omega_{A,1}(  t_1^{-1} \partial ( x_1^{-1}) t_2 ) (a_2) a_1]_{t_1}
\]
and
\[
[x_2 \otimes a_2]_{t_2} \cdot [x_1 \otimes \omega_{A,2}(z) a_1]_{t_1}
=
\delta_{\partial(x_1^{-1}) t_2,t_1} [x_2 x_1 \otimes \omega_{A,1}(t_1^{-1} \partial(x_1^{-1}) t_2)(a_2) \omega_{A,2}(z)a_1]_{t_1}.
\]
Using that $\partial (({^{t_1}}z)^{-1} x_1^{-1}) = t_1 \partial(z^{-1}) t_1^{-1} \partial(x_1^{-1})$, a short calculation shows that the $\delta$-functions appearing in the two products are equal. Since
\[
[x_2 x_1 ({^{t_1}}z) \otimes \omega_{A,1}(  t_1^{-1} \partial ( x_1^{-1}) t_2 ) (a_2) a_1]_{t_1}
= 
[x_2 x_1 \otimes \omega_{A,2}(z) \omega_{A,1} (  t_1^{-1} \partial ( x_1^{-1}) t_2 ) (a_2) a_1]_{t_1},
\]
we see that the products are indeed equal.

We omit the verification of associativity and the identity property.
\end{proof}

Next, we define a $\cG$-algebra structure on $\widetilde{A}$.

\begin{Prop}
\label{thm:2IndCplx}
The maps
\[
\omega_{\tilde{A},1}(g)( [x \otimes a]_t ) = [{^{g}}x \otimes \omega_{A,1}(h)(a)]_{t^{\prime}}, \qquad g \in G_1
\]
where $g t =t^{\prime} h$ for $t^{\prime} \in \mathcal{T}$ and $h \in H_2$, and
\[
\omega_{\tilde{A},2}(x) = 
( \, [x \otimes 1_A]_t\, )_{t \in \mathcal{T}}, \qquad x \in G_2
\]
supply $\widetilde{A}$ with the structure of a $\cG$-algebra.
\end{Prop}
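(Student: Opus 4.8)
The plan is a direct verification of the axioms of a strict $\cG$-algebra for the pair $(\omega_{\tilde{A},1},\omega_{\tilde{A},2})$. As $\cG$ is ungraded here, this amounts to showing that each $\omega_{\tilde{A},1}(g)$ is a unital algebra automorphism of $\widetilde{A}$, that $\omega_{\tilde{A},1}\colon G_1\to\Aut(\widetilde{A})$ and $\omega_{\tilde{A},2}\colon G_2\to\widetilde{A}^{\times}$ are group homomorphisms, that $\omega_{\tilde{A},1}\circ\partial=\partial\circ\omega_{\tilde{A},2}$ with $\partial$ on the target the adjoint action (equation \eqref{eq:NAlgAx1}), and that the equivariance relation $\omega_{\tilde{A},1}(g)(\omega_{\tilde{A},2}(x))=\omega_{\tilde{A},2}({}^{g}x)$ holds (the strict form of \eqref{eq:NAlgAx2}). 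The combinatorial input used throughout is that left multiplication by $g\in G_1$ permutes the left $H_1$-cosets, so the rule $t\mapsto t'$ defined by $gt=t'h$ is a bijection of $\sT$ with $h=(t')^{-1}gt\in H_1$ (thus the symbol $h$ in the statement should be read as $h\in H_1$); this, together with $\cH$ being a crossed submodule of $\cG$, makes all the verifications go through.

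First I would check that $\omega_{\tilde{A},1}(g)$ respects the tensor relations of $\widetilde{A}$: applying the defining formula to $[x\,({}^{t}z)\otimes a]_t$ with $z\in H_2$ and writing $gt=t'h$ gives $[({}^{g}x)({}^{t'}({}^{h}z))\otimes\omega_{A,1}(h)(a)]_{t'}$; since ${}^{h}z\in H_2$, the tensor relation at $t'$ rewrites this as $[{}^{g}x\otimes\omega_{A,2}({}^{h}z)\,\omega_{A,1}(h)(a)]_{t'}$, and the equivariance axiom for $A$ in its strict form $\omega_{A,2}({}^{h}z)=\omega_{A,1}(h)(\omega_{A,2}(z))$ identifies this with $\omega_{\tilde{A},1}(g)([x\otimes\omega_{A,2}(z)a]_t)$. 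That $\omega_{\tilde{A},1}(g)$ is unital and that $\omega_{\tilde{A},1}(g_2)\circ\omega_{\tilde{A},1}(g_1)=\omega_{\tilde{A},1}(g_2g_1)$ (whence $\omega_{\tilde{A},1}(g)$ is invertible) then follow from $\omega_{A,1}$ being a homomorphism and from composability of transversal decompositions: if $g_1t=t_1h_1$ and $g_2t_1=t_2h_2$ then $g_2g_1t=t_2(h_2h_1)$.

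The main calculation, and the step where the bookkeeping is heaviest, is multiplicativity of $\omega_{\tilde{A},1}(g)$. Writing $gt_i=t_i'h_i$, one first notes that the coset condition is preserved, namely $\partial\big(({}^{g}x_1)^{-1}\big)\,t_2'=g\,\partial(x_1^{-1})\,t_2\,h_2^{-1}$ (using $\partial\big(({}^{g}x_1)^{-1}\big)=g\,\partial(x_1^{-1})\,g^{-1}$ and $g^{-1}t_2'=t_2h_2^{-1}$), so that $\delta_{\partial(({}^{g}x_1)^{-1})t_2',\,t_1'}=\delta_{\partial(x_1^{-1})t_2,\,t_1}$. Assuming this is nonzero, expanding both $\omega_{\tilde{A},1}(g)\big([x_2\otimes a_2]_{t_2}\cdot[x_1\otimes a_1]_{t_1}\big)$ and $\omega_{\tilde{A},1}(g)([x_2\otimes a_2]_{t_2})\cdot\omega_{\tilde{A},1}(g)([x_1\otimes a_1]_{t_1})$ via \eqref{eq:indMult}, and using that $\omega_{A,1}$ is a homomorphism, reduces the claim to the identity of $\omega_{A,1}$-arguments $h_1\,t_1^{-1}\partial(x_1^{-1})\,t_2=(t_1')^{-1}\partial\big(({}^{g}x_1)^{-1}\big)\,t_2'\,h_2$, which holds because $(t_1')^{-1}g=h_1t_1^{-1}$.

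It remains to handle $\omega_{\tilde{A},2}$. A direct computation with \eqref{eq:indMult} gives $[x_2\otimes 1_A]_{t_2}\cdot[x_1\otimes 1_A]_{t_1}=\delta_{\partial(x_1^{-1})t_2,\,t_1}\,[x_2x_1\otimes 1_A]_{t_1}$; since for each $t_1\in\sT$ there is exactly one $t_2\in\sT$ with $\partial(x_1^{-1})t_2\in t_1H_1$, the sum defining the product collapses to a single term, giving $\omega_{\tilde{A},2}(x_2)\,\omega_{\tilde{A},2}(x_1)=\omega_{\tilde{A},2}(x_2x_1)$, and $\omega_{\tilde{A},2}(e_{G_2})=1_{\widetilde{A}}$, so each $\omega_{\tilde{A},2}(x)$ is a unit. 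For \eqref{eq:NAlgAx1} I would evaluate $\omega_{\tilde{A},2}(x)\cdot[y\otimes a]_t\cdot\omega_{\tilde{A},2}(x^{-1})$ on a generator: the rightmost factor yields $[yx^{-1}\otimes\omega_{A,1}(h)(a)]_{t'}$ where $\partial(x)t=t'h$, and left multiplication by $\omega_{\tilde{A},2}(x)$ then yields $[xyx^{-1}\otimes\omega_{A,1}(h)(a)]_{t'}$; since ${}^{\partial x}y=xyx^{-1}$ by the Peiffer identity in $\cG$ and $\partial(x)t=t'h$ is precisely the decomposition computing $\omega_{\tilde{A},1}(\partial x)$, this equals $\omega_{\tilde{A},1}(\partial x)([y\otimes a]_t)$. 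Finally, equivariance is immediate: $\omega_{\tilde{A},1}(g)(\omega_{\tilde{A},2}(x))=\big([{}^{g}x\otimes 1_A]_{t'}\big)_{t'}=\omega_{\tilde{A},2}({}^{g}x)$, using the bijection $t\mapsto t'$ on $\sT$. Associativity and unitality of $\widetilde{A}$ itself having been recorded in the preceding lemma, this completes the verification.
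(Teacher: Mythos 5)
Your proof is correct and follows essentially the same route as the paper's: a direct verification that each $\omega_{\tilde{A},1}(g)$ is a unital algebra automorphism, that both $\omega_{\tilde{A},1}$ and $\omega_{\tilde{A},2}$ are group homomorphisms, and that the strict forms of the compatibility axioms \eqref{eq:NAlgAx1} and \eqref{eq:NAlgAx2} hold, using the bijection $t\mapsto t'$ of $\sT$ induced by left multiplication by $g$. You rightly note the typo in the statement ($h\in H_1$, not $H_2$), and you also explicitly check that $\omega_{\tilde{A},1}(g)$ is well-defined on the quotient $\Fld G_{2,t}\otimes_{\Fld H_2}A$, a step the paper's proof leaves implicit; this is a welcome addition since the preceding lemma verifies this only for the multiplication.
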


\begin{proof}
To begin, we verify that $\omega_{\tilde{A},1}(g)$ is an algebra homomorphism:
\begin{eqnarray*}
\omega_{\tilde{A},1}(g) \left( [x_2 \otimes a_2]_{t_2} \cdot [x_1 \otimes a_1]_{t_1} \right)
&=&
\delta_{\partial (x_1^{-1}) t_2, t_1} \omega_{\tilde{A},1}(g) [x_2 x_1 \otimes \omega_{A,1}(h) (a_2) a_1]_{t_1} \\
&=&
\delta_{\partial (x_1^{-1}) t_2, t_1} [{^{g}}(x_2 x_1) \otimes 
\omega_{A,1}(h_1  h) (a_2) \omega_{A,1}(h_1)(a_1)]_{t_1^{\prime}}.
\end{eqnarray*}
Here $g t_i = t_i^{\prime}h_i$ for $t_i^{\prime} \in \mathcal{T}$ and $h_i \in H_1$ and we have written $h$ for $t_1^{-1} \partial ( x_1^{-1}) t_2$. On the other hand, $\omega_{\tilde{A},1}(g)  ([x_2 \otimes a_2]_{t_2} )\cdot (\omega_{\tilde{A},1}(g)[x_1 \otimes a_1]_{t_1})$ is equal to
\begin{eqnarray*}
&&\delta_{\partial({^{g}}x_1^{-1}) g t_2, g t_1}
[{^{g}}x_2 {^{g}}x_1 \otimes
\omega_{A,1}(t_1^{\prime -1} \partial({^{g}}x_1^{-1}) t_2^{\prime} h_2)(a_2) \omega_{A,1}(h_1)(a_1)]_{t^{\prime}_1} \\
&=&
\delta_{\partial(x_1^{-1}) t_2, t_1} [{^{g}}(x_2 x_1) \otimes
\omega_{A,1}(h_1 h h_2^{-1})(\omega_{A,1}(h_2)(a_2) \omega_{A,1}(h_1)(a_1)]_{t^{\prime}_1} \\
&=&
\delta_{\partial(x_1^{-1}) t_2, t_1}
[{^{g}}(x_2 x_1) \otimes
\omega_{A,1}(h_1 h)(a_2) \omega_{A,1}(h_1)(a_1)]_{t^{\prime}_1},
\end{eqnarray*}
as required. To verify that $\omega_{\tilde{A},1}$ is a group homomorphism, we compute
\[
\omega_{\tilde{A},1}(g_2) \left(\omega_{\tilde{A},1}(g_1)(\, [x \otimes a]_t \, )\right)
=
[{^{g_2}}({^{g_1}}x) \otimes \omega_{A,1}(h_2)( \omega_{A,1}(h_1)(a))]_{t_2^{\prime}}
=
\omega_{\tilde{A},1}(g_2 g_1)[x \otimes a]_t.
\]
To verify that $\omega_{\tilde{A},2}$ is a group homomorphism, we compute
\[
\omega_{\tilde{A},2} (x_2) \cdot \omega_{\tilde{A},2} (x_1)
=
( \, \delta_{\partial(x_1^{-1}) t_2, t_1} [ x_2 x_1 \otimes 1_A]_{t_1} \, )_{t_1 \in \mathcal{T}}
=
( \, [x_2 x_1 \otimes 1_A]_{t_1}\, )_{t_1 \in \mathcal{T}}
=
\omega_{\tilde{A},2} (x_2 x_1).
\]
To verify equation \eqref{eq:NAlgAx1}, we compute
\[
\omega_{\tilde{A},1}(g)(\omega_{\tilde{A},2}(x))
=
\omega_{\tilde{A},1}(g)(\,  [x \otimes 1_A]_t \, )_{t \in \mathcal{T}}
=
(\, [{^{g}}x \otimes 1_A]_t \, )_{t \in \mathcal{T}}
=
\omega_{\tilde{A},2}({^{g}}x).
\]
Finally, to verify equation \eqref{eq:NAlgAx2}, we compute
\[
\omega_{\tilde{A},1}(\partial x_2)( [x_1 \otimes a_1]_{t_1} )
=
[{^{\partial x_2}}x_1 \otimes \omega_{A,1}(h)(a_1)]_{t^{\prime}_1},
\]
where $\partial(x_2) t_1 = t_1^{\prime} h$, while
\begin{eqnarray*}
&&\omega_{\tilde{A},2}(x_2) \cdot [x_1 \otimes a_1]_{t_1} \cdot \omega_{\tilde{A},2}(x_2)^{-1}
=
( \, [x_2 \otimes 1_A]_t \, )_{t \in \mathcal{T}} \cdot (x_1 \otimes a_1)_{t_1} \cdot (\, [x_2^{-1} \otimes 1_A]_t \, )_{t \in \mathcal{T}} \\
&=&
(\, 
\delta_{\partial(x_2) t_1, t}
[x_2 x_1 \otimes \omega_{A,1}(t^{-1}  \partial(x_2) t)(a)]_{t} \, )_{t \in \mathcal{T}}
=
[x_2 x_1 x_2^{-1} \otimes  \omega_{A,1}(t^{-1} \partial (x_2) t_1)(a_1)]_{t_1^{\prime}}.
\end{eqnarray*}
It follows that $t=t_1^{\prime}$ and hence $t^{-1} \partial (x_2) t_1 = h$. This completes the proof.
\end{proof}

We complete the construction of $\Ind_{\cH}^{\cG}$ as a pseudofunctor in the following section. We end this section by describing two particularly simple cases of the construction $\Ind_{\cH}^{\cG}$.

\begin{Ex}
\begin{enumerate}[label=(\roman*)]
\item Suppose that $G_1$ is trivial. The groups $H_2$ and $G_2$ are then abelian and the group homomorphism $\omega_{A,2} : H_2 \rightarrow A^{\times}$ factors through the centre $Z(A)$. In this case, there is an algebra isomorphism $\widetilde{A} = k[G_2] \otimes_{k [H_2]} A$, the tensor relations reading
\[
l k \otimes a =  l \otimes \omega_{A,2}(k) a, \qquad l \in G_2, \, k \in H_2, \, a \in A,
\]
and the $\cG$-algebra structure of $\widetilde{A}$ is $\omega_{\tilde{A},2}(l) = l \otimes 1_A$. Note that $\omega_{\tilde{A},2}(l) \in \widetilde{A}$ is central because $G_2$ is abelian and $1_A \in A$ is central. In particular, $\widetilde{A}$ can be seen as the result of inducing the $H_2$-representation $A$ to a $G_2$-representation.

\item Suppose instead that $G_2$ is trivial. Then the subgroup $H_1 \leq G_1$ is arbitrary and we are in the setting of \cite[\S 5]{rumynin2018} (without the finiteness assumption). There is an algebra isomorphism
\[
\widetilde{A} = \Gamma(G_1 \slash H_1, G_1 \times_{H_1} A) \simeq \prod_{t \in \mathcal{T}} A_t
\]
and the $\cG$-algebra structure reads
\[
\omega_{\tilde{A},1}(g)( [a]_t ) = [\omega_{A,1}(h)(a)]_{t^{\prime}}, \qquad g \in G_1
\]
where $g t =t^{\prime} h$ for $t^{\prime} \in \mathcal{T}$. In particular, $\widetilde{A}$ can be seen as the result of coinducing the $H_1$-representation $A$ to a $G_1$-representation.
\end{enumerate}
\end{Ex}

\subsection{A Maschke-type theorem for induced \texorpdfstring{$\cG$}{}-algebras}
\label{sec:maschke}

In order to ensure that induction transforms $2$-representations into $2$-representations, we need the following version of Maschke's Theorem for the ${\Fld}$-algebra $\widetilde{A}$.

\begin{Prop}
\label{thm:maschke}
Let $A$ be an $\cH$-algebra. Assume that both indices $\vert G_1:H_1 \vert$ and $\vert G_2:H_2 \vert$ are finite.
\begin{enumerate}[label=(\roman*)]
\item If $A$ is finite dimensional over ${\Fld}$, then so too is $\widetilde{A}$.

\item Assume that $\vert G_2:H_2 \vert$ is not divisible by the characteristic of ${\Fld}$. If $A$ is separable, then so too is $\widetilde{A}$.
\end{enumerate}
\end{Prop}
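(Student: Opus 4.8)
The plan is to handle the two claims in turn, both by reducing the induced algebra $\widetilde{A} = \prod_{t \in \sT}{\Fld}G_{2,t} \otimes_{{\Fld}H_2} A$ to something manageable via the transversal decomposition. For (i), since $|G_1:H_1|$ is finite the index set $\sT$ is finite, so $\widetilde{A}$ is a finite direct product; it therefore suffices to see that each factor ${\Fld}G_{2,t} \otimes_{{\Fld}H_2} A$ is finite dimensional. First I would observe that ${\Fld}G_{2,t}$, as a right ${\Fld}H_2$-module, is free of rank $|G_2:H_2|$ (a transversal to $H_2$ in $G_2$, transported by the twist $z \mapsto {}^t z$, gives a basis); hence ${\Fld}G_{2,t} \otimes_{{\Fld}H_2} A \cong A^{\oplus |G_2:H_2|}$ as an ${\Fld}$-vector space, which is finite dimensional whenever $A$ is. This proves (i).

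For (ii), the strategy is to build an explicit separability idempotent for $\widetilde{A}$ out of one for $A$ together with an averaging over the finite index $|G_2:H_2|$, exactly in the spirit of the classical Maschke argument. Concretely, let $e = \sum_i a_i' \otimes a_i'' \in A \otimes_{\Fld} A$ be a separability idempotent for $A$, so $\sum_i a_i' a_i'' = 1_A$ and $\sum_i a a_i' \otimes a_i'' = \sum_i a_i' \otimes a_i'' a$ for all $a \in A$. Fix a transversal $\{y_1, \dots, y_r\}$ to $H_2$ in $G_2$ with $r = |G_2:H_2|$, invertible in ${\Fld}$ by hypothesis. I would then propose the element
\[
E = \frac{1}{r}\sum_{t \in \sT}\sum_{j=1}^r \sum_i [y_j \otimes a_i']_t \,\otimes_{\Fld}\, [y_j^{-1} \otimes a_i'']_t \;\in\; \widetilde{A}\otimes_{\Fld}\widetilde{A},
\]
(with care about which factor of the product each bracket lives in, and using the multiplication rule \eqref{eq:indMult} to compute products). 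The verification then has two parts: that $E$ has the correct "sum to $1$" property, i.e. applying the multiplication map $\widetilde{A}\otimes_{\Fld}\widetilde{A}\to\widetilde{A}$ sends $E$ to $1_{\tilde A}$ — here the delta-function $\delta_{\partial(x_1^{-1})t_2,t_1}$ in \eqref{eq:indMult} collapses the double sum over $t$ to a single one, the $y_j y_j^{-1}$ collapses the sum over $j$ to $r$ copies (cancelling the $1/r$), and the inner sum becomes $\sum_i \omega_{A,1}(\cdots)(a_i') a_i''$, which after tracking the (anti)automorphism twist reduces to $\omega_{A,1}(\text{something})(1_A)=1_A$; and that $E$ is $\widetilde{A}$-central, i.e. $[\xi]\cdot E = E \cdot [\xi]$ for $\xi$ a generator $[x\otimes a]_s$ — this uses the centrality of $e$ in $A\otimes A$ on the $A$-part and the fact that left/right multiplication by $[x\otimes a]_s$ permutes the transversal cosets of $t\in\sT$ and the $y_j$'s, so that the averaging sum is invariant.

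The main obstacle I expect is bookkeeping rather than conceptual: correctly tracking how the multiplication formula \eqref{eq:indMult} moves the coset labels $t\in\sT$ and applies $\omega_{A,1}$ of the resulting $H_1$-element, and in particular checking that in the centrality computation the permutation of the index set $\sT \times \{y_1,\dots,y_r\}$ induced by left- versus right-multiplication agrees, so that the two averaged sums coincide. A clean way to organize this is to first prove centrality of $E$ only for generators of the form $[e_{G_2}\otimes a]_t$ (pure $A$-part, using the separability idempotent relation directly) and for generators of the form $[x\otimes 1_A]_t = $ component of $\omega_{\tilde A,2}(x)$ together with the "permutation" generators coming from $\omega_{\tilde A,1}(g)$; since $\widetilde{A}$ is generated by such elements, centrality on all of $\widetilde{A}$ follows. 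I would also note that once one knows $\widetilde{A}$ is separable it is automatically finite dimensional over a field when $A$ is (recovering part of (i)), but stating (i) separately is cleaner since it needs no characteristic hypothesis. Finally, both parts only used $|G_1:H_1|<\infty$ (for finiteness of $\sT$), $|G_2:H_2|<\infty$ (for the finite averaging sum), and invertibility of $|G_2:H_2|$ in ${\Fld}$ (for part (ii) only), matching the hypotheses. $\Box$
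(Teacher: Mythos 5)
Your part (i) is fine and is exactly what the paper says (it is immediate from the description of $\widetilde{A}$ as a finite product of copies of $A$).

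For part (ii) the overall strategy is the paper's, but the explicit element you propose as a separability idempotent is incorrect, and the issue is not just bookkeeping. You set
\[
E \;=\; \tfrac{1}{r}\sum_{t\in\sT}\sum_{j}\sum_{i}\,[y_j\otimes a_i']_t\,\otimes\,[y_j^{-1}\otimes a_i'']_t.
\]
When you apply the multiplication map to a summand you get, by equation \eqref{eq:indMult},
\[
[y_j\otimes a_i']_t\,[y_j^{-1}\otimes a_i'']_t \;=\; \delta_{\partial(y_j)t,\,t}\;[\,e\otimes \omega_{A,1}\!\big(t^{-1}\partial(y_j)t\big)(a_i')\,a_i''\,]_t .
\]
There are two fatal problems. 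First, the delta factor $\delta_{\partial(y_j)t,\,t}$ is not identically $1$: it vanishes unless $t^{-1}\partial(y_j)t\in H_1$, and there is no reason for this to hold when $y_j\notin H_2$; so your sum does not collapse as you describe (and also does not produce the uniform multiplicity $r$ needed to cancel the $1/r$). Second, even on the terms that survive, the twist $\omega_{A,1}(h)$ lands on $a_i'$ alone, and $\sum_i \omega_{A,1}(h)(a_i')\,a_i''$ is in general \emph{not} $1_A$: the identity $\sum_i a_i'a_i''=1$ is preserved only when one twists \emph{both} legs by the same automorphism. The same twist mismatch wrecks the centrality computation.

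The paper avoids both problems by a more careful choice: it uses ${}^{t}x$ rather than $x$ (so that $\partial({}^{t}x)t = t\partial(x)$ and the delta becomes identically $1$), and it puts the second tensor factor in the coset slot $t_x$ (the representative of $t\partial(x)H_1$) rather than in slot $t$. Concretely the element is
\[
\mathfrak{w}=\sum_{x\in\sX,\,t\in\sT,\,j}[{}^{t}x\otimes a_j]_t \otimes \big([e\otimes b_j]_t\,[{}^{t}x^{-1}\otimes 1]_{t_x}\big),
\]
where the inner bracket is a product in $\widetilde{A}$. Writing the second factor this way guarantees that, upon multiplying, the $\sum_j a_j b_j = 1$ cancellation happens first in the $t$-slot with no twist, and the residual $\omega_{A,1}(t_x^{-1}t\partial x)$ is applied only to $1_A$. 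If one instead expands the inner product, one finds that $a_j$ and $b_j$ both pick up the \emph{same} $\omega_{A,1}$ twist, which is exactly what your formula fails to arrange. So the gap is the missing coset-slot shift $t\mapsto t_x$ and the twist ${}^{t}(-)$, without which the proposed $E$ neither maps to $1_{\tilde A}$ nor is $\widetilde{A}$-central.

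One smaller point: $\omega_{\tilde A,1}(g)$ is an automorphism of $\widetilde{A}$, not an element, so it does not contribute a class of algebra generators for the centrality check. The paper's split is into $[e\otimes \gamma]_s$ and $[y\otimes 1]_s$, which together generate $\widetilde{A}$; your second class $[x\otimes 1]_t$ agrees with this, but the "permutation generators coming from $\omega_{\tilde A,1}$" should be dropped.
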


\begin{proof}
The first statement follows immediately from the definition of $\widetilde{A}$.

Turning to the second statement, let $w=\sum_j a_j \otimes b_j\in A\otimes_{\Fld} A^{\opp}$ be a separability idempotent for $A$. Pick left transversals $\sT$ to $H_1$ in $G_1$ and $\sX$ to $H_2$ in $G_2$. For each $x\in \sX$ and $t\in\sT$, there exists a unique $t_x \in \sT$ such that
  $\delta_{t\partial x,t_x}=1$. Consider the element
\[
\mathfrak{w} = \sum_{x\in\sX, t\in\sT, j}  [{}^tx \otimes a_j]_{t} \otimes [e \otimes b_j]_{t} [{}^tx^{-1} \otimes 1]_{t_x}
\in \widetilde{A} \otimes_{\Fld} \widetilde{A}^{\opp},
\]
where the multiplication is in $\widetilde{A}$. We claim that $\tilde{\mathfrak{w}} = \vert G_2:H_2 \vert^{-1} \mathfrak{w}$ is a separability idempotent for $\widetilde{A}$. First, we show that $\tilde{\mathfrak{w}}$ is sent to $1_{\tilde{A}}$ under the multiplication map $\widetilde{A} \otimes_{\Fld} \widetilde{A}^{\opp} \rightarrow \widetilde{A}$:
\begin{align*}  
\sum_{x,t,j}  [{}^tx \otimes a_j]_{t} [e \otimes b_j]_{t} [{}^tx^{-1} \otimes 1]_{t_x}=&
\sum_{x,t,j}  [{}^tx \otimes a_jb_j]_{t} [{}^tx^{-1} \otimes 1]_{t_x}=
\\
\sum_{x,t}  [{}^tx \otimes 1]_{t} [{}^tx^{-1} \otimes 1]_{t_x}
= &
\vert G_2:H_2 \vert \sum_{t} \delta_{\partial({}^tx)t,t_x} [e \otimes 1]_{t_x} = 
\vert G_2:H_2 \vert 1_{\tilde{A}}.
\end{align*}
In the final equality we used that $\partial({}^tx)t=t\partial(x)$. To verify that $\tilde{\mathfrak{w}}$ is $\widetilde{A}$-central, it is useful to write
  $$
  \mathfrak{w} = 
  \sum_{x, t, j}  [{}^tx \otimes 1]_{t} [e_{G_2} \otimes a_j]_{t} \otimes [e \otimes b_j]_{t} [{}^{t}x^{-1} \otimes 1]_{t_x}
  =
  \sum_{x, t}  [{}^tx \otimes 1]_{t} w_{t} [{}^tx^{-1} \otimes 1]_{t_x},
  $$
where $w_{t}$ denotes $w$ considered as an element of $A \otimes_{\Fld} A^{\opp}$ in degree $t \in \sT$. It suffices to check that $\mathfrak{a}\tilde{\mathfrak{w}}=\tilde{\mathfrak{w}}\mathfrak{a}$ when $\mathfrak{a} \in \widetilde{A}$ is of one of the following two forms:
\[
\mathfrak{a}= [e \otimes \gamma]_s
\qquad
\mbox{or}
\qquad
\mathfrak{a}= [y \otimes 1]_s.
\]
In the first case, pick $q = q(x,s) \in \sT$ such that $s=q_x$.
Observe that for each $t\in\sT$, the equalities $\delta_{t\partial x,s}=\delta_{\partial({}^tx^{-1})s,t}=1$ hold if and only if $t=q$. We compute
\begin{eqnarray*}
&&[e \otimes\gamma]_{s} \mathfrak{w}
=
\sum_{x, t}  [e \otimes \gamma]_{s} [{}^tx \otimes 1]_{t} w_{t} [{}^tx^{-1} \otimes 1]_{t_x} \\
&=&
\sum_{x, t}  \delta_{\partial ({}^tx^{-1}) s, t} [{}^tx \otimes \omega_{A,1}(  t^{-1} \partial ( {}^tx^{-1}) s) (\gamma) ]_{t} w_{t} [{}^tx^{-1} \otimes 1]_{t_x} \\
&=&
\sum_x  [{}^qx \otimes \omega_{A,1}( \partial ( x^{-1}) q^{-1}s) (\gamma) ]_{q} w_{q} [{}^qx^{-1} \otimes 1]_{q_x} \\
&=&
\sum_{x}  [{}^qx \otimes 1]_q [e \otimes \omega_{A,1}(\partial ( x^{-1}) q^{-1}s) (\gamma) ]_{q} w_{q} [{}^qx^{-1} \otimes 1]_{q_x} \\
&=&
\sum_{x}  [{}^qx \otimes 1]_q w_{q}
[e \otimes \omega_{A,1}( \partial ( x^{-1}) q^{-1}s) (\gamma) ]_{q}  [{}^qx^{-1} \otimes 1]_{q_x} \\
&=&
\sum_{x}  [{}^qx \otimes 1]_q w_{q} [{}^qx^{-1}\otimes \gamma ]_{q_x}
=
\sum_{x} [{}^qx \otimes 1]_q w_{q} [{}^q x^{-1}\otimes 1]_{q_x} [e \otimes \gamma ]_{q_x} \\
&=&
\sum_{x,t} \delta_{\partial ({}^tx^{-1}) s, t} [{}^tx \otimes 1]_t w_{t} [{}^tx^{-1}\otimes 1]_{t_x} [e \otimes \gamma ]_{s}
= \mathfrak{w} [e \otimes \gamma ]_{s}.
\end{eqnarray*}
For the second case, we rewrite $y\,{}^tx=\,{}^tx_y \,{}^th_y$ with $x_y = x_y (t) \in \sX$ and $h_y = h_y (t) \in H_2$ and compute
\begin{eqnarray*}
&&
[y\otimes 1]_{s}\mathfrak{w}
=
\sum_{x, t} [y\otimes 1]_{s}[{}^tx \otimes 1]_{t} w_{t} [{}^tx^{-1} \otimes 1]_{t_x} \\
&=&
\sum_{x,t}  \delta_{\partial({}^tx^{-1})s,t} [y\,{}^tx \otimes 1]_{t} w_{t} [{}^tx^{-1} \otimes 1]_{t_x}
=
\sum_{x}  [\,{}^qx_y \otimes \omega_{A,1} (h_y ) ]_{q} w_{q} [{}^qx^{-1} \otimes 1]_{s} \\
&=&
\sum_{x}  [\,{}^qx_y \otimes 1]_{q} [e \otimes \omega_{A,1} (h_y)]_{q} w_{q} [{}^qx^{-1} \otimes 1]_{s}
=
\sum_{x}  [\,{}^qx_y \otimes 1]_{q} w_{q} [{}^{q}h_y \otimes 1]_{q} [{}^qx^{-1} \otimes 1]_{s} \\
&=&
\sum_{x}  \delta_{\partial({}^qx)q,s} [\,{}^qx_y \otimes 1]_{q} w_{q} [{}^{q}h_y {}^qx^{-1} \otimes 1]_{s}
=
\sum_{x} [\,{}^qx_y \otimes 1]_{q} w_{q} [\,{}^qx_y^{-1} \, y \otimes 1]_{s}.
\end{eqnarray*}
%
Notice that we have used the equality $\delta_{\partial({}^qx)q,s} = 1$. Similarly, we have
$$
\mathfrak{w}  [y\otimes 1]_{s}= \sum_{z, t}  [{}^tz \otimes 1]_{t} w_{t} [{}^tz^{-1} \otimes 1]_{t_z} [y\otimes 1]_{s}
= \sum_{z, t}  \delta_{\partial(y^{-1})t_z , s} [{}^tz \otimes 1]_{t} w_{t} [{}^tz^{-1} y\otimes 1]_{s} \, .
$$
Since both $z$ and $x_y$ run over the set $\sX$, it remains to put $z=x_y$ and verify that the $\delta$-function in the final sum is non-zero if (and hence only if) $q=t$:
$$
\delta_{\partial(y^{-1})q_z , s}
=
\delta_{\partial(y^{-1})q \partial z,  q \partial x}
=
\delta_{q \partial(z) q^{-1} q h_y,  \partial (y) q \partial(x) q^{-1} q}
=
\delta_{\partial({}^qz\, {}^{q}h_y) q ,  \partial (y\,{}^qx) q}
= 1.
$$
This completes the proof.
\end{proof}

We can now complete the construction of $\Ind_{\cH}^{\cG}$.

\begin{Thm}
\phantomsection
\label{thm:IndFunctor}
\begin{enumerate}[label=(\roman*)]
\item The assignment $A \mapsto \Ind_{\cH}^{\cG} A$ extends to a pseudofunctor $\Ind_{\cH}^{\cG}:
\cH \mhyphen \AALG_{\Fld} \rightarrow \cG \mhyphen \AALG_{\Fld}$. 

\item If both indices $\vert G_1 : H_1\vert$ and $\vert G_2 : H_2\vert$ are finite, then $\Ind_{\cH}^{\cG}$ restricts to a pseudofunctor $\cH \mhyphen \AAlg_{\Fld} \rightarrow \cG \mhyphen \AAlg_{\Fld}$. If, moreover, $\vert G_2:H_2 \vert$ is not divisible by the characteristic of ${\Fld}$, then $\Ind_{\cH}^{\cG}$ restricts to a pseudofunctor $\cH \mhyphen \AAlg^{\fd}_{\Fld} \rightarrow \cG \mhyphen \AAlg^{\fd}_{\Fld}$.
\end{enumerate}
\end{Thm}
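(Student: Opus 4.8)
The plan is to extend the object-level constructions of Sections~\ref{sec:indKAlg}--\ref{sec:maschke} to a genuine pseudofunctor and then to deduce part~(ii) from the Maschke-type theorem. \textbf{Induction on morphisms.} Given a $1$-morphism $(M,\omega_M): A \rightarrow B$ of $\cH \mhyphen \AALG_{\Fld}$, I would set $\Ind_{\cH}^{\cG}(M,\omega_M) = (\widetilde{M}, \omega_{\widetilde M})$ with $\widetilde{M} = \prod_{t \in \mathcal{T}} {\Fld}G_{2,t} \otimes_{{\Fld}H_2} M$, the tensor relations and the $\widetilde{B} \mhyphen \widetilde{A}$-bimodule structure being the evident analogues of the defining data of $\widetilde{A}$ in~\eqref{eq:indMult}, now built from the $B \mhyphen A$-bimodule structure of $M$ and from $\omega_{B,2}$, $\omega_{A,2}$, $\omega_M$ in place of $\omega_{A,2}$, $\omega_{A,1}$, and with $\omega_{\widetilde M}(g)\big([x \otimes m]_t\big) = [{}^g x \otimes \omega_M(h)(m)]_{t^\prime}$, where $gt = t^\prime h$ with $t^\prime \in \mathcal{T}$ and $h \in H_1$, in analogy with Proposition~\ref{thm:2IndCplx}. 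Checking that these formulas respect the tensor relations, make $\widetilde{M}$ a $\widetilde{B} \mhyphen \widetilde{A}$-bimodule, and satisfy the $1$-morphism axioms --- the ungraded instances of~\eqref{eq:Real1MorAx1}, \eqref{eq:Real1MorAx2}, \eqref{eq:Real1MorAx3} --- will run word-for-word along the lines of the lemma of Section~\ref{sec:indKAlg} and of Proposition~\ref{thm:2IndCplx}, since $A$ and $M$ enter those computations only through their module structures and through $\omega_{A,1}$, $\omega_M$. On a $2$-morphism $\phi: (M,\omega_M) \Rightarrow (N,\omega_N)$ I would put $\Ind_{\cH}^{\cG}(\phi) = \prod_{t \in \mathcal{T}} \big(\id_{{\Fld}G_{2,t}} \otimes \phi\big)$; this is visibly a well-defined $\widetilde{B} \mhyphen \widetilde{A}$-bimodule map, is $G_1$-equivariant because $\phi$ is $H_1$-equivariant, and makes $\Ind_{\cH}^{\cG}$ functorial on $2$-cells.

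\textbf{The pseudofunctor structure.} Next I would supply the coherence data. The unitor $\Ind_{\cH}^{\cG}({}_AA_A) \xrightarrow{\sim} {}_{\widetilde A}\widetilde{A}_{\widetilde A}$ is the identity, since $\Ind_{\cH}^{\cG}({}_AA_A)$ is by construction $\widetilde{A}$ with its regular bimodule structure and $\omega_{\widetilde A,1}$. For composable $M: A \rightarrow B$ and $N: B \rightarrow C$, I would take the compositor $\mathfrak{c}_{N,M}: \Ind_{\cH}^{\cG}(N) \otimes_{\widetilde B} \Ind_{\cH}^{\cG}(M) \xrightarrow{\sim} \Ind_{\cH}^{\cG}(N \otimes_B M)$ to be the map induced by
\[
[x_2 \otimes n]_{t_2} \otimes [x_1 \otimes m]_{t_1} \longmapsto \delta_{\partial(x_1^{-1})t_2,\, t_1}\,\big[\, x_2 x_1 \otimes \omega_N\big( t_1^{-1}\partial(x_1^{-1})t_2 \big)(n) \otimes m \,\big]_{t_1},
\]
with two-sided inverse induced by $[x \otimes (n \otimes m)]_t \mapsto [x \otimes n]_t \otimes [e_{G_2} \otimes m]_t$; well-definedness and bijectivity come from the same $\delta$-function cancellations as in the associativity check for~\eqref{eq:indMult}. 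It then remains to verify that $\mathfrak{c}_{N,M}$ is natural in $M$ and $N$, that it is a morphism of $\cG \mhyphen \AALG_{\Fld}$ --- i.e.\ compatible with the maps $\omega_{\widetilde{(-)},1}$, which is immediate from the formula above --- and that $\mathfrak{c}$ with the identity unitor satisfies the associativity and unit coherence axioms of a pseudofunctor. Since every associator in sight is the canonical tensor-product associator and $\mathfrak{c}$ is assembled out of the multiplication of the induced algebras, these coherences reduce to associativity and unitality in $\widetilde{A}$, $\widetilde{B}$, $\widetilde{C}$. I expect this last cluster of checks --- above all the equivariance and coherence of $\mathfrak{c}$ --- to be the main obstacle: each step is routine, but it is the most laborious bookkeeping in the argument; completing these verifications establishes part~(i).

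\textbf{Part (ii).} This follows quickly from Proposition~\ref{thm:maschke}. Assume $\vert G_1:H_1\vert$ and $\vert G_2:H_2\vert$ are finite. If $A$ is finite dimensional, then $\widetilde{A}$ is finite dimensional by Proposition~\ref{thm:maschke}(i); moreover each factor ${\Fld}G_{2,t}\otimes_{{\Fld}H_2}M$ has $\Fld$-dimension $\vert G_2:H_2\vert\cdot\dim_{\Fld} M$ and there are $\vert G_1:H_1\vert$ of them, so each induced bimodule $\widetilde{M}$ is finite dimensional; hence $\Ind_{\cH}^{\cG}$, together with its structure isomorphisms, restricts to a pseudofunctor $\cH \mhyphen \AAlg_{\Fld} \rightarrow \cG \mhyphen \AAlg_{\Fld}$. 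If, in addition, the characteristic of ${\Fld}$ does not divide $\vert G_2:H_2\vert$, then $\widetilde{A}$ is separable by Proposition~\ref{thm:maschke}(ii), hence fully dualizable by the discussion of Section~\ref{sec:real2Mod}; combined with the finite dimensionality of the induced bimodules, this gives the further restriction $\Ind_{\cH}^{\cG}: \cH \mhyphen \AAlg^{\fd}_{\Fld} \rightarrow \cG \mhyphen \AAlg^{\fd}_{\Fld}$.
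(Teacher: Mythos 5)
Your proposal matches the paper's proof almost step for step: same definition of $\widetilde{M}$ with the analogous tensor relations and bimodule structure, same formula for $\omega_{\widetilde{M}}$, same induced intertwiner on $2$-cells, same reduction of part~(ii) to Proposition~\ref{thm:maschke}. The only visible difference is that you write an explicit formula for the compositor $\mathfrak{c}_{N,M}$ and check it directly, whereas the paper dispatches this with the one-line remark that the coherence $2$-isomorphisms are ``induced by those of $\AALG_{\Fld}$''; your formula is correct (it intertwines the tensor-over-$\widetilde{B}$ relations by exactly the same $\delta$-function manipulations used in the associativity check for~\eqref{eq:indMult}), so you are being somewhat more explicit than the source, not taking a different route.
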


\begin{proof}
Let $M: A \rightarrow B$ be a $1$-morphism in $\cH \mhyphen \AALG_{\Fld}$. Define $\widetilde{M} =\Ind_{\cH}^{\cG} M$ to be
\[
\widetilde{M} = \prod_{t \in \sT} {\Fld}G_{2,t} \otimes_{{\Fld}H_2} M,
\]
where the tensor relations read
\[
[x({^{t}}z) \otimes m]_t = [x \otimes \omega_M(\partial z)(m)]_t, \qquad x \in G_2,\, z \in H_2, \, m \in M.
\]
The left $\widetilde{B} \mhyphen \widetilde{A}$-bimodule structure of $\widetilde{M}$ is defined by
\[
[x_2 \otimes b]_{t_2} \cdot [x_1 \otimes m]_{t_1}
=
\delta_{\partial (x_1^{-1}) t_2, t_1}
[x_2 x_1 \otimes \omega_{B,1}(  t_1^{-1} \partial ( x_1^{-1}) t_2 ) (b) m]_{t_1}
\]
and
\[
[x_2 \otimes m]_{t_2} \cdot [x_1 \otimes a]_{t_1}
=
\delta_{\partial (x_1^{-1}) t_2, t_1}
[x_2 x_1 \otimes \omega_{M}(  t_1^{-1} \partial ( x_1^{-1}) t_2 ) (m) a]_{t_1}.
\]
The structure maps for $\widetilde{M}$ are defined by
\[
\omega_{\tilde{M}}(g) ([x \otimes m]_t)_{t \in \sT} = ([{^{g}}x \otimes \omega_{M}(h)(m)]_{t^{\prime}})_{t^{\prime} \in \sT},
\]
where $g t = t^{\prime} h$ for $h \in H_1$. To verify equation \eqref{eq:Real1MorAx3}, we note that 
\[
\omega_{\tilde{M}}(x_2) ([x_1 \otimes m]_t) = [{^{\partial(x_2)}}x_1 \otimes \omega_{M}(h)(m)]_{t^{\prime}},
\]
where $\partial(x_2) t = t^{\prime} h$, while
\[
\omega_{\tilde{B},2}(x_2) \cdot [x_1 \otimes m]_t \cdot \omega_{\tilde{A},2}(x_2)^{-1} 
=
[x_2 x_1 \otimes m]_t \cdot \omega_{\tilde{A},2}(x_2)^{-1}
=
[x_2 x_1 x_2^{-1} \otimes \omega_{M}(t^{\prime ^{-1}} \partial(x_2) t)(m)]_t.
\]

Given a $2$-morphism $\phi: M \Rightarrow N$ in $\cH \mhyphen \AALG_{\Fld}$, define $\tilde{\phi} = \Ind_{\cH}^{\cG}\phi$ by
\[
\tilde{\phi}([x \otimes m]_t) = [x \otimes \phi(m)]_t. 
\]
This is left $\widetilde{B}$-linear because
\begin{eqnarray*}
\tilde{\phi}([x_2 \otimes b]_{t_2} \cdot [x_1 \otimes m]_{t_1})
&=&
\delta_{\partial (x_1^{-1}) t_2, t_1}
[x_2 x_1 \otimes \phi(\omega_{B,1}(  t_1^{-1} \partial ( x_1^{-1}) t_2 ) (b) m)]_{t_1} \\
&=&
\delta_{\partial (x_1^{-1}) t_2, t_1}
[x_2 x_1 \otimes \omega_{B,1}(  t_1^{-1} \partial ( x_1^{-1}) t_2 ) (b) \phi(m)]_{t_1},
\end{eqnarray*}
which is clearly equal to $[x_2 \otimes b]_{t_2} \cdot [x_1 \otimes \phi(m)]_{t_1}$.
Similarly, it is right $\widetilde{A}$-linear. 
The $H_1$-equivariance of $\phi$ implies the $G_1$-equivariance of $\tilde{\phi}$.

The $2$-isomorphisms relating compositions of $1$-morphisms are induced by those of $\AALG_{\Fld}$. It follows directly from the definitions that $\Ind_{\cH}^{\cG}$ strictly preserves identity $1$-morphisms. This completes the construction of $\Ind_{\cH}^{\cG}$.

The second statement now follows from Proposition~\ref{thm:maschke} and the observation that $\widetilde{M}$ is finite dimensional if $M$ is so.
\end{proof}

\subsection{Induction as a biadjunction}
\label{sec:indBiadjunct}

In this section, we prove the biadjointness of $\Res_{\cH}^{\cG}$ and $\Ind_{\cH}^{\cG}$ in two, in a sense, opposite situations. For the notion of a (left or right) biadjunction between pseudofunctors, see \cite[Definition 2.1]{gurski2012}.

\begin{Prop}
\label{thm:indBiadjunct}
If either $H_2=G_2$ or $H_1=G_1$, then $\Ind_{\cH}^{\cG}$ is right biadjoint to $\Res_{\cH}^{\cG}$. This statement holds for pseudofunctors between $? \mhyphen \AALG_{\Fld}$ or, with the assumptions of Theorem \ref{thm:IndFunctor}(ii), between $? \mhyphen \AAlg_{\Fld}$ or $? \mhyphen \AAlg^{\fd}_{\Fld}$.
\end{Prop}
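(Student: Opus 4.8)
Recall that asserting $\Ind_{\cH}^{\cG}$ is right biadjoint to $\Res_{\cH}^{\cG}$ (in the sense of \cite[Definition 2.1]{gurski2012}) amounts to producing a unit pseudonatural transformation $\eta\colon\mathrm{id}\Rightarrow\Ind_{\cH}^{\cG}\circ\Res_{\cH}^{\cG}$ on $\cG\mhyphen\AALG_{\Fld}$, a counit $\varepsilon\colon\Res_{\cH}^{\cG}\circ\Ind_{\cH}^{\cG}\Rightarrow\mathrm{id}$ on $\cH\mhyphen\AALG_{\Fld}$, and invertible triangle modifications; equivalently, a pseudonatural adjoint equivalence $1\Hom_{\cH\mhyphen\AALG_{\Fld}}(\Res_{\cH}^{\cG}B,A)\simeq 1\Hom_{\cG\mhyphen\AALG_{\Fld}}(B,\Ind_{\cH}^{\cG}A)$, pseudonatural in $A\in\cH\mhyphen\AALG_{\Fld}$ and $B\in\cG\mhyphen\AALG_{\Fld}$. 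The plan is to write $\eta$ and $\varepsilon$ as representable bimodules attached to explicit unital algebra homomorphisms, and to reduce the triangle identities to the classical (co)induction adjunctions for the group algebra inclusions $\Fld H_1\hookrightarrow\Fld G_1$ and $\Fld H_2\hookrightarrow\Fld G_2$. The two hypotheses are complementary: they collapse $\widetilde A=\Ind_{\cH}^{\cG}A$ so that exactly one of these two adjunctions is involved.

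\textbf{Step 1 (the degenerate forms of induction).} If $H_1=G_1$ the transversal $\sT$ is a point and, as in the Example in \S\ref{sec:indKAlg}, $\widetilde A=\Fld G_2\otimes_{\Fld H_2}A$ with tensor relations $[xz\otimes a]=[x\otimes\omega_{A,2}(z)a]$, $z\in H_2$: this is induction of $A$ along $\Fld H_2\hookrightarrow\Fld G_2$. If $H_2=G_2$ the transversal $\sX$ is a point and $\widetilde A=\prod_{t\in\sT}A_t$ is coinduction of $A$ along $\Fld H_1\hookrightarrow\Fld G_1$. In each case I first record the (co)unit on underlying algebras: the unit $\eta^{\mathrm{alg}}_B\colon B\to\Ind_{\cH}^{\cG}\Res_{\cH}^{\cG}B$, $b\mapsto[e\otimes b]$ (resp. the diagonal $b\mapsto([b]_t)_t$), and the counit $\varepsilon^{\mathrm{alg}}$, given in the $H_2=G_2$ case by the projection $([a]_t)_t\mapsto[a]_e$ onto the coset of $e$, and in the $H_1=G_1$ case by the $\omega_{?,2}$-evaluation $[x\otimes b]\mapsto\omega_{?,2}(x)b$ (which is well defined precisely because the target carries the full $\cG$-structure). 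That these are unital algebra maps is a short check using \eqref{eq:NAlgAx1}, the multiplication formula \eqref{eq:indMult} and the structure maps of Proposition~\ref{thm:2IndCplx}. I then take $\eta$ and $\varepsilon$ to be the associated representable bimodules, equipped with the $G_1$-equivariant structures induced from $\omega_{\widetilde A,1}$; that these bimodules satisfy \eqref{eq:Real1MorAx1}--\eqref{eq:Real1MorAx3} is verified by computations parallel to those already carried out in the proofs of Proposition~\ref{thm:2IndCplx} and Theorem~\ref{thm:IndFunctor}, with the equivariance axiom \eqref{eq:NAlgAx2} for $A$ as the only nontrivial input.

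\textbf{Step 2 (triangle identities, pseudonaturality, and descent).} The two zig-zag composites are then identified with identity $1$-morphisms. On underlying algebras $\varepsilon^{\mathrm{alg}}\circ\eta^{\mathrm{alg}}$ is the identity, since projection-after-diagonal (resp. $\omega_{?,2}$-evaluation-after-inclusion) fixes the $e$-coset component; the other composite is an identity after the standard identification furnished by the relevant (co)induction adjunction. In the $H_2=G_2$ case this identification needs nothing — coinduction over $\Fld G_1$ is literally the product over cosets — which is why the statement holds over $\AALG_{\Fld}$ without finiteness assumptions there. In the $H_1=G_1$ case, by contrast, induction along $\Fld H_2\hookrightarrow\Fld G_2$ is a priori only a \emph{left} adjoint of restriction; to realise it as a right adjoint one uses the finiteness of $|G_2:H_2|$ together with its invertibility in $\Fld$ — exactly the averaging packaged in the separability idempotent of Proposition~\ref{thm:maschke} — so that induction and coinduction coincide. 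The $2$-isomorphisms witnessing the triangle identities and the pseudonaturality squares are the associators and unitors of $\AALG_{\Fld}$ pulled back along the structure homomorphisms above; one checks these are equivariant modifications (not merely bimodule intertwiners) using the $G_1$-equivariance of the coherence data of \S\ref{sec:indKAlg}. Finally, under the hypotheses of Theorem~\ref{thm:IndFunctor}(ii) all bimodules in sight are finite dimensional, resp. the algebras stay separable, so the whole biadjunction restricts to $\cH\mhyphen\AAlg_{\Fld}\leftrightarrows\cG\mhyphen\AAlg_{\Fld}$ and to the fully dualizable versions.

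\textbf{Main obstacle.} There is no conceptual content beyond the two classical group-algebra adjunctions, so the real work — and the genuine obstacle — is the equivariance bookkeeping: checking that every naturally occurring bimodule map is a $\cG$- or $\cH$-equivariant modification and that the equivariant structures on $\eta$ and $\varepsilon$ obey \eqref{eq:Real1MorAx2}--\eqref{eq:Real1MorAx3}, then propagating this coherently through the zig-zags. The fiddliest point is the case $H_1=G_1$, where, because ordinary induction is only one-sidedly adjoint, one must make explicit use of the $\omega_{?,2}$-datum (to define $\varepsilon^{\mathrm{alg}}$ at all) and of the finite-index averaging of Proposition~\ref{thm:maschke} (to obtain the second triangle identity in the form required for a right biadjunction), and keep track of this identification uniformly across the constructions.
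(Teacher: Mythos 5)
Your proposal has several genuine gaps that cluster around one misconception: you try to realize both the unit and counit as \emph{representable} bimodules attached to algebra maps going in the ``correct'' direction (unit as $B\to\Ind\Res B$, counit as $\Res\Ind A\to A$). In the case $H_1=G_1$ this fails at the root: your counit $\varepsilon^{\mathrm{alg}}\colon\Res\Ind A\to A$, $[x\otimes a]\mapsto\omega_{?,2}(x)a$, needs $\omega_{A,2}$ to be defined on all of $G_2$, but $A$ is an $\cH$-algebra and $\omega_{A,2}$ is defined only on $H_2$; your parenthetical ``the target carries the full $\cG$-structure'' is false precisely here, since the target of a counit component is an $\cH$-algebra. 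The paper sidesteps this by letting the algebra map run the \emph{other} way: $\eta_B'\colon\Ind\Res B\to B$, $x\otimes b\mapsto\omega_{B,2}(x)b$, is well defined because $B$ is a $\cG$-algebra, and the unit $1$-morphism $B\to\Ind\Res B$ is then taken to be the \emph{left}-twisted bimodule $_{\eta_B'}B$, not the representable one; similarly $\epsilon_A'\colon A\to\Res\Ind A$, $a\mapsto e_{G_2}\otimes a$, gives the counit $1$-morphism $_{\epsilon_A'}\widetilde A\colon\Res\Ind A\to A$. Your unit $b\mapsto[e\otimes b]$ viewed as a morphism of $\cG$-algebras also fails: $\omega_{?,2}$-compatibility would require $[e\otimes\omega_{B,2}(x)]=[x\otimes 1_B]$ for all $x\in G_2$, which by the tensor relations holds only for $x\in H_2$.

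In the case $H_2=G_2$ your untwisted diagonal $b\mapsto([b]_t)_t$ is not $G_1$-equivariant; one needs the correction $b\mapsto([\omega_{B,1}(t^{-1})(b)]_t)_{t\in\sT}$ used in the paper, and likewise the counit carries a twist $[a]_t\mapsto\delta_{t,t_e}\omega_{A,1}(t)(a)$ rather than a bare projection. Finally, your appeal to Proposition~\ref{thm:maschke} and the invertibility of $|G_2:H_2|$ to ``make induction into coinduction'' in the $H_1=G_1$ case is out of place: the proposition is explicitly asserted over $?\mhyphen\AALG_{\Fld}$ \emph{without} any finiteness hypothesis, and the paper's proof there uses none. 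The right biadjointness in the non-finite setting does not reduce to the $1$-categorical coinduction adjunction you invoke; it is established entirely through the left-twisted bimodule construction, whose possibility is exactly what distinguishes the Morita bicategory from the category of algebras.
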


\begin{proof}
Suppose that $H_2 = G_2$. To begin, we need to define pseudonatural transformations
\[
\epsilon: \Res_{\cH}^{\cG} \circ \Ind_{\cH}^{\cG} \Longrightarrow \id_{\cH \mhyphen \AALG_{\Fld}},
\qquad
\eta: \id_{\cG \mhyphen \AALG_{\Fld}} \Longrightarrow \Ind_{\cH}^{\cG} \circ \Res_{\cH}^{\cG}.
\]
Write $t_e \in \sT$ for the representative of the identity coset. Given an $\cH$-algebra $A$, we claim that the ${\Fld}$-linear map
\[
\epsilon_A: \Res_{\cH}^{\cG} \Ind_{\cH}^{\cG} A \rightarrow A , \qquad [a]_t \mapsto \delta_{t,t_e} \omega_{A,1}(t)(a)
\]
is a strict $\cH$-algebra morphism. Firstly, $\epsilon_A$ is a ${\Fld}$-algebra morphism:
\[
\epsilon_A([a_2]_{t_2} \cdot [a_1]_{t_1})
=
\delta_{t_2,t_1} \epsilon_A([a_2 a_1]_{t_1}) =
\delta_{t_2,t_1} \delta_{t_1,t_e} \omega_{A,1}(t_1)(a_2 a_1)
\]
is equal to
\[
\epsilon_A([a_2]_{t_2}) \cdot \epsilon_A([a_1]_{t_1})
=
\delta_{t_2,t_e} \delta_{t_1,t_e} \omega_{A,1}(t_2)(a_2) \omega_{A,1}(t_1)(a_1).
\]
Clearly $\epsilon_A$ preserves multiplicative identities. Moreover, $\epsilon_A$ is $H_1$-equivariant:
\[
\epsilon_A \left( \omega_{\tilde{A},1}(h)([a]_t) \right)
=
\epsilon_A \left( [\omega_{A,1}(h^{\prime})(a)]_{t^{\prime}} \right)
=
\delta_{t^{\prime},t_e} \omega_{A,1}(t^{\prime} h^{\prime})(a),
\]
where $h t = t^{\prime}h^{\prime}$, is equal to
\begin{eqnarray*}
\omega_{A,1}(h)(\epsilon_A([a]_t))
&=&
\delta_{t,t_e}
\omega_{A,1}(h t_e)(a).
\end{eqnarray*}
Finally, $\epsilon_A$ is $\omega_{?,2}$-compatible: for each $z \in H_2$, we have
\[
\epsilon_A (\omega_{\tilde{A},2}(z))
=
\epsilon_A (( [\omega_{A,2}({^{t^{-1}}}z)]_t)_{t \in \mathcal{T}})
=
\omega_{A,1}(t_e)(\omega_{A,2}({^{t_e^{-1}}}z))
=
\omega_{A,2}(z).
\]
We henceforth interpret $\epsilon_A$ as a representable $1$-morphism $\Res_{\cH}^{\cG} \Ind_{\cH}^{\cG} A \rightarrow A$ in $\cH \mhyphen \AALG_{\Fld}$. Given a $1$-morphism $M: A \rightarrow B$ in $\cH \mhyphen \AALG_{\Fld}$, define
\[
\epsilon_M : B \otimes_{\tilde{B}} \widetilde{M} \Longrightarrow M \otimes_A A_{\epsilon_A} \simeq M_{\epsilon_A}, \qquad b \otimes [m]_t \mapsto \delta_{t,t_e} b \omega_M(t)(m).
\]
Calculations similar to those above show that $\epsilon_M$ is indeed a $2$-morphism in $\cH \mhyphen \AALG_{\Fld}$ and that $\{\epsilon_A\}_A$ and $\{\epsilon_M\}_M$ satisfy the coherence conditions required to define the pseudonatural transformation $\epsilon$.

Similarly, given a $\cG$-algebra $B$, we claim that the ${\Fld}$-linear map
\[
\eta_B: B \rightarrow \Ind_{\cH}^{\cG} \Res_{\cH}^{\cG} B, \qquad b \mapsto ( [\omega_{B,1}(t^{-1})(b)]_t )_{t \in \sT}
\]
is a strict $\cG$-algebra morphism. First, $\eta_B$ is multiplicative:
\begin{eqnarray*}
\eta_B( b_2 ) \cdot \eta_B(b_1)
&=&
( [\omega_{B,1}(t_2^{-1})(b_2)]_{t_2} )_{t_2 \in \sT} \cdot ( [\omega_{B,1}(t_1^{-1})(b_1)]_{t_1} )_{t_1 \in \sT} \\
&=&
(\delta_{t_2,t_1} [\omega_{B,1}(t_1^{-1} t_2)(\omega_{B,1}(t_2^{-1})(b_2))\omega_{B,1}(t_1^{-1})(b_1)]_{t_1} )_{t_1 \in \sT} \\
&=&
([\omega_{B,1}(t^{-1})(b_2 b_1)]_t )_{t \in \sT}
=
\eta_B( b_2 b_1).
\end{eqnarray*}
Clearly, $\eta_B$ is unital. Moreover, $\eta_B$ is $G_1$-equivariant:
\begin{eqnarray*}
\eta_B(\omega_{B,1}(g)(b))
&=&
( [\omega_{B,1}(t^{-1} g)(b)]_t )_{t \in \sT}
\end{eqnarray*}
while
\[
\omega_{\tilde{B},1}(g) (\eta_B(b))
=
\omega_{\tilde{B},1}(g) ( [\omega_{B,1}(t^{-1})(b)]_t )_{t \in \sT}
=
( [\omega_{B,1}(h t^{-1})(b)]_{t^{\prime}} )_{t^{\prime} \in \sT}
\]
where $g t = t^{\prime} h$. Finally, $\eta_B$ is $\omega_{?,2}$-compatible: for $x \in G_2$, we have
\[
\eta_B (\omega_{B,2}(x))
=
( [\omega_{B,1}(t^{-1})(\omega_{B,2}(x))]_t )_{t \in \sT}
=
( [\omega_{B,2}({^{t^{-1}}}x)]_t )_{t \in \sT} = \omega_{\tilde{B},2}(x).
\]
We interpret $\eta_B$ as a $1$-morphism $B \rightarrow \Ind_{\cH}^{\cG} \Res_{\cH}^{\cG} B$ in $\cG \mhyphen \AALG_{\Fld}$. Given a $1$-morphism $N: A \rightarrow B$ in $\cG \mhyphen \AALG_{\Fld}$, define the required $2$-morphism by
\[
\eta_B: \widetilde{B}_{\eta_B} \otimes_B N \Longrightarrow \widetilde{N} \otimes_{\tilde{A}} \widetilde{A}_{\eta_A} \simeq \widetilde{N}_{\eta_A},
\qquad
[b]_t \otimes n \mapsto [b \omega_{N}(t^{-1})(n)]_t .
\]
This data defines the pseudonatural transformation $\eta$.

It remains to define invertible zig-zag modifications
\[
\Gamma: (\Ind_{\cH}^{\cG} \diamond \epsilon) \circ (\eta \diamond \Ind_{\cH}^{\cG}) \xRrightarrow{\;\;\;\;\;\;} 1_{\Ind},
\qquad
\Lambda : 1_{\Res} \xRrightarrow{\;\;\;\;\;\;} (\epsilon \diamond \Res_{\cH}^{\cG}) \circ (\Res_{\cH}^{\cG} \diamond \eta).
\]
Hence, for each $A \in \cH \mhyphen \AALG_{\Fld}$ we need to define a $2$-isomorphism
\[
\Gamma_A: \Ind_{\cH}^{\cG} \epsilon_A \otimes_{\Ind \Res \Ind A} \eta_{\Ind A} \Longrightarrow \Ind_{\cH}^{\cG} A.
\]
Since $\iota\coloneqq {\Ind \epsilon_A \circ \eta_{\Ind A}}$ is the identity map, the domain bimodule $(\Ind_{\cH}^{\cG} A)_{\iota}$ of $\Gamma_A$ is the identity bimodule. After making this identification, we take $\Gamma_A$ to be the identity map. For $B \in \cG \mhyphen \AALG_{\Fld}$, we define a $2$-isomorphism
\[
\Lambda_B: \Res_{\cH}^{\cG} B \Longrightarrow \epsilon_{\Res B} \otimes_{\Res \Ind \Res B} \Res_{\cH}^{\cG} \eta_B.
\]
The codomain bimodule is isomorphic to $(\Res_{\cH}^{\cG} B)_{\epsilon_{\Res B} \circ \Res \eta_B} \simeq \Res_{\cH}^{\cG} B$, and we use this isomorphism to define $\Lambda_B$. It is straightforward to verify that the above data satisfies the required coherence conditions, proving the proposition in the case $H_2=G_2$.

The case in which $H_1 = G_1$ is similar, so we are brief. Define an algebra homomorphism
\[
\epsilon^{\prime}_A: A \rightarrow \Res_{\cH}^{\cG} \Ind_{\cH}^{\cG}  A, \qquad a \mapsto e_{G_2} \otimes a.
\]
The $G_1$-equivariance of $\epsilon^{\prime}_A$ is clear and it is $\omega_{?,2}$-compatible because
\[
\epsilon^{\prime}_A(\omega_{A,2}(z))
=
e_{G_2} \otimes \epsilon^{\prime}_A(\omega_{A,2}(z))
=z \otimes 1_A.
\]
We henceforth interpret $\epsilon_A^{\prime}$ as the $1$-morphism
\[
\epsilon_A = {_{\epsilon^{\prime}_A}}\widetilde{A}: \Res_{\cH}^{\cG} \Ind_{\cH}^{\cG}  A \rightarrow A
\]
in $\cH \mhyphen \AALG_{\Fld}$. Given a $1$-morphism $M: A \rightarrow B$ in $\cH \mhyphen \AALG_{\Fld}$, define a $2$-morphism
\[
\epsilon_M : {_{\epsilon^{\prime}_B}}\widetilde{B} \otimes_{\tilde{B}} \widetilde{M} \simeq {_{\epsilon^{\prime}_B}}\widetilde{M} \Longrightarrow M \otimes_A ({_{\epsilon^{\prime}_A}} \widetilde{A}),
 \qquad x \otimes m \mapsto m \otimes (x \otimes 1_A).
\]
This defines the pseudonatural transformation $\epsilon$.

Let 
\[
\eta^{\prime}_B: \Ind_{\cH}^{\cG} \Res_{\cH}^{\cG} B \rightarrow B , \qquad x \otimes b \mapsto \omega_{B,2}(x) b.
\]
This map is well-defined because
\[
x \cdot z \otimes b
\mapsto
\omega_{B,2}(xz)b
=
\omega_{B,2}(x) \omega_{B,2}(z)b
\]
and $x  \otimes \omega_{B,2}(z) b \mapsto \omega_{B,2}(x) \omega_{B,2}(z)b$. Moreover, $\eta^{\prime}_B$ is clearly a $G_1$-equivariant unital ${\Fld}$-algebra homomorphism and is $\omega_{?,2}$-compatible:
\[
\eta^{\prime}_B(\omega_{\tilde{B},2}(x))
=
\eta^{\prime}_B(x \otimes 1_B)
=
\eta^{\prime}_{B,2}(x).
\]
We interpret $\eta_B^{\prime}$ as the $1$-morphism
\[
\eta_B={_{\eta^{\prime}_B}}B: B \rightarrow \Ind_{\cH}^{\cG} \Res_{\cH}^{\cG} B
\]
in $\cG \mhyphen \AALG_{\Fld}$. Given a $1$-morphism $N: A \rightarrow B$ in $\cG \mhyphen \AALG_{\Fld}$, define a $2$-morphism
\[
\eta_N : {_{\eta^{\prime}_B}} B \otimes_B N \simeq {_{\eta^{\prime}_B}} N  \Longrightarrow \widetilde{N} \otimes_{\tilde{A}} ({_{\eta^{\prime}_A}} A),
\qquad
n \mapsto (e_{G_2} \otimes n) \otimes 1_A.
\]

Similar to the case $H_2=G_2$, after suitable identifications, we can take the $2$-isomorphisms $\Gamma_A$ and $\Lambda_B$ to be the respective identities.
\end{proof}

\begin{Rem}
\begin{enumerate}[label=(\roman*)]
\item In the setting of Proposition~\ref{thm:indBiadjunct}, one can also prove that $\Ind_{\cH}^{\cG}$ is left biadjoint to $\Res_{\cH}^{\cG}$. When $H_2=G_2$, for example, this is done by interpreting $\epsilon_A$ as a $1$-morphism $A \rightarrow \Res_{\cH}^{\cG} \Ind_{\cH}^{\cG} A$ in $\cH \mhyphen \AALG_{\Fld}$, and similarly for $\eta_B$. This is analogous to what was done in the proof of Proposition~\ref{thm:indBiadjunct} in the case $H_1=G_1$.

\item We expect that $\Ind_{\cH}^{\cG}$ is in fact left and right biadjoint to $\Res_{\cH}^{\cG}$ without the assumption $H_1=G_1$ or $H_2 = G_2$. This would categorify the left and right adjunctions between induction and restriction in the representation theory of finite groups.
\end{enumerate}
\end{Rem}

\subsection{The Real case}
\label{sec:indRealKAlg}

We extend the constructions of Sections \ref{sec:indKAlg} and \ref{sec:maschke} to the $\ZTwo$-graded setting. Since the calculations are similar, we are occasionally brief.

Crossed submodules $\cH$ of a $\ZTwo$-graded crossed module $\cG$ come in two flavours:
\begin{enumerate}[label=(\roman*)]
\item non-trivially graded: $H_1$ is a non-trivially $\ZTwo$-graded subgroup of $G_1$,

\item trivially graded: $H_1$ is a trivially $\ZTwo$-graded subgroup of $G_1$.
\end{enumerate}
There is a restriction pseudofunctor $\Res_{\cH}^{\cG} : \cG \mhyphen \AAlg^{\fd}_{\Fld} \rightarrow \cH \mhyphen \AAlg^{\fd}_{\Fld}$. We define a pseudofunctor $\Ind_{\cH}^{\cG} : \cH \mhyphen \AAlg^{\fd}_{\Fld} \rightarrow \cG \mhyphen \AAlg^{\fd}_{\Fld}$. To avoid confusion, we sometimes denote $\Ind_{\cH}^{\cG}$ by $\RInd_{\cH}^{\cG} A$ and $\HInd_{\cH}^{\cG} A$ in the case of non-trivially and trivially graded $\cH$, respectively, matching the notation for the Real and hyperbolic induction of \cite[\S 7]{mbyoung2018c}. We continue to use the  notation of Section \ref{sec:indKAlg}.

Let $A$ be an $\cH$-algebra. Define
\[
\widetilde{A} = \prod_{t \in \sT} {\Fld}G_{2,t} \otimes_{{\Fld}H_2} {^{\pi(t)}}A,
\]
where ${^{\pi(t)}}A$ is $A$ or $A^{\vee}$, depending on $\pi(t) \in \ZTwo$. The ${\Fld}$-algebra structure of $\widetilde{A}$ is again defined by equation \eqref{eq:indMult}, keeping in mind that we use the multiplication of ${^{\pi(t)}}A$ in the $t$\textsuperscript{th} factor. Define $\omega_{\tilde{A},1}$ and $\omega_{\tilde{A},2}$ by
\[
\omega_{\tilde{A},1}(g)( [x \otimes a]_t )
=
[{^{g}}x \otimes {^{\pi(t^{\prime})}}\omega_{A,1}(h)(a)]_{t^{\prime}},
\qquad
\omega_{\tilde{A},2}(x) = ( \, [x \otimes 1_A]_t \, )_{t \in \sT},
\]
where $g t = t^{\prime} h$ for $t^{\prime} \in \sT$ and $h \in H_1$.

Let us make explicit the trivially graded case. Since the morphism $\cH \hookrightarrow \cG$ factors through $\cG_0 \hookrightarrow \cG$, it suffices to consider the case $\cH = \cG_0$. The general case can then obtained as the composition
\[
\HInd_{\cH}^{\cG} = \HInd_{\cG_0}^{\cG} \circ \Ind_{\cH}^{\cG_0},
\]
where $\Ind_{\cH}^{\cG_0}$ is as in Theorem \ref{thm:IndFunctor}. Fix an element $\spcl \in G_1 \setminus G_0$ and take $\sT = \{e, \spcl\}$. Then $\widetilde{A} = [A]_e \oplus [A^{\opp}]_{\spcl}$ with
\[
\omega_{\tilde{A},1}(g) ([a]_t)
=[\omega_{A,1}(g^{\prime})(a)]_{t^{\prime}},
\qquad
\omega_{\tilde{A},2}(x) = [\omega_{A,2}(x)]_e + [\omega_{A,2}({^{\spcl^{-1}}}x)^{-1}]_{\spcl}, 
\]
where $g t = t^{\prime} g^{\prime}$ with $g^{\prime} \in G_0$. For example, $g \in G_0$ acts on $[A^{\opp}]_{\spcl}$ by $\spcl^{-1} g \spcl$.

\begin{Thm}
\label{thm:2IndReal}
The above constructions define a $\cG$-algebra $\widetilde{A}$.
\end{Thm}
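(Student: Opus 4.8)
The plan is to reduce Theorem~\ref{thm:2IndReal} to the ungraded case already treated in Proposition~\ref{thm:2IndCplx} together with a small amount of extra bookkeeping coming from the $\ZTwo$-grading. Concretely, one must check three things: (a) that the formula \eqref{eq:indMult}, now using the multiplication of ${}^{\pi(t)}A$ in the $t$\textsuperscript{th} factor, still defines a well-defined associative unital $\Fld$-algebra structure on $\widetilde{A}$; (b) that $\omega_{\tilde A,1}$ and $\omega_{\tilde A,2}$ as displayed are well-defined and satisfy the axioms \eqref{eq:omega1Proj}, \eqref{eq:omega2Proj}, \eqref{eq:NAlgAx1}, \eqref{eq:NAlgAx2} (with trivial $\omega_3$) making $\widetilde A$ a strict $\cG$-algebra; and (c) that these structures are compatible with the graded notion of dual $\cG$-algebra, i.e.\ the twisting by $\pi(t)$ is consistent with the tensor relations over $\Fld H_2$.

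First I would address well-definedness of the multiplication. The only new point relative to the proof of the lemma preceding Proposition~\ref{thm:2IndCplx} is that when $\pi(t_1)=-1$ the $t_1$\textsuperscript{th} factor carries the opposite multiplication, and the structure map $\omega_{A,2}$ governing the tensor relation over $\Fld H_2$ in that factor must be the one for ${}^{\pi(t_1)}A=A^{\vee}$, namely $\omega_{A^{\vee},2}(z)=\omega_{A,2}(z)^{-1}$ (see the definition of $A^{\vee}$ in Section~\ref{sec:moritaBicat}). One checks, exactly as in the ungraded computation reproduced after \eqref{eq:indMult}, that the identity ${}^{t_1}({}^h z)\in H_2$ together with equivariance \eqref{eq:NAlgAx2} for $A$ (equivalently, for $A^{\vee}$) makes \eqref{eq:indMult} respect the tensor relations; the sign bookkeeping in $\pi$ only permutes which of $a_2,a_1$ sits to the left in a product, which is precisely absorbed by passing to ${}^{\pi(t_1)}A$. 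Associativity and the identity property follow as before, since the $\delta$-function forces all three factors involved in a triple product to lie in the same coset, hence to carry the same grading $\pi(t_1)$.

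Next I would verify the $\cG$-algebra axioms for $(\omega_{\tilde A,1},\omega_{\tilde A,2})$. That $\omega_{\tilde A,1}(g)$ is an algebra homomorphism is the same computation as in Proposition~\ref{thm:2IndCplx}, except that on the $t^{\prime}$\textsuperscript{th} factor one applies ${}^{\pi(t^{\prime})}\omega_{A,1}(h)=\omega_{{}^{\pi(t^{\prime})}A,1}(h)$, which is again an algebra homomorphism of ${}^{\pi(t^{\prime})}A$; the grading of the target factor is $\pi(t^{\prime})=\pi(gt)=\pi(g)\pi(t)$, consistent with the $\ZTwo$-grading condition on $\Aut^{\gen}(\widetilde A)$. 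The homomorphism property of $\omega_{\tilde A,1}$ and of $\omega_{\tilde A,2}$, and axioms \eqref{eq:NAlgAx1} and \eqref{eq:NAlgAx2}, transcribe verbatim from Proposition~\ref{thm:2IndCplx}: the $\pi(t)$ twists on each factor are applied pointwise and never interact across the $\delta$-function. The trivially graded case displayed with $\sT=\{e,\spcl\}$ is then just the specialization; there one checks directly that $\omega_{\tilde A,2}(x)=[\omega_{A,2}(x)]_e+[\omega_{A,2}({}^{\spcl^{-1}}x)^{-1}]_{\spcl}$ lands in $\widetilde A^{\times}$ and is a homomorphism, using $\omega_{A^{\opp},2}=\omega_{A,2}^{-1}$.

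The main obstacle is the sign (grading) bookkeeping in (c): keeping straight which factors carry $A$ versus $A^{\vee}$, and checking that the definition of $\widetilde A$ as a product over $\sT$ is independent of the chosen transversal $\sT$ and compatible with the duality $(-)^{\vee}$ on $\cG$-algebras, so that $\RInd$ and $\HInd$ genuinely extend the ungraded $\Ind$ under $\Res^{\cG}_{\cG_0}$. This requires noting that changing $t\mapsto th$ for $h\in H_1$ identifies the factors via $\omega_{A,1}(h)$ (or its $\vee$-twist), and that $\pi(th)=\pi(t)$ since $H_1\le\ker\pi$ is assumed in the trivially graded case while in the non-trivially graded case $\pi(h)$ is accounted for by the grading of $\omega_{A,1}(h)$; once this is in place, no genuinely new argument beyond Proposition~\ref{thm:2IndCplx} is needed. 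I would therefore present the proof as: ``the construction is the one of Proposition~\ref{thm:2IndCplx} applied factorwise to ${}^{\pi(t)}A$, and all verifications go through mutatis mutandis,'' spelling out only the trivially graded $\sT=\{e,\spcl\}$ formulas for $\omega_{\tilde A,1}$ and $\omega_{\tilde A,2}$ where the interaction with $(-)^{\vee}$ is most visible.
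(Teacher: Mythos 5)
Your overall plan agrees with the paper's proof of Theorem~\ref{thm:2IndReal}: defer the non-trivially graded case to ``similar to Proposition~\ref{thm:2IndCplx}'' and spell out the trivially graded case with $\sT=\{e,\spcl\}$. The gap is in your asserted justification that ``the $\delta$-function forces all three factors involved in a triple product to lie in the same coset, hence to carry the same grading $\pi(t_1)$.'' This is true only in the trivially graded case, where $H_1=G_0\leq\ker\pi$. In the non-trivially graded case the constraint $\delta_{\partial(x_1^{-1})t_2,t_1}=1$ reads $\partial(x_1^{-1})t_2\in t_1H_1$; since $\im(\partial)\leq\ker\pi$ this gives $\pi(t_2)=\pi(t_1)\pi(h)$ for some $h\in H_1$, and a non-trivially graded $H_1$ contains elements of both parities, so $\pi(t_2)$ can differ from $\pi(t_1)$. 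Equation \eqref{eq:indMult} nonetheless type-checks, but for a different reason: the element $h=t_1^{-1}\partial(x_1^{-1})t_2$ has $\pi(h)=\pi(t_1)\pi(t_2)$, and $\omega_{A,1}(h)\colon {}^{\pi(h)}A\to A$ is the same data as a map ${}^{\pi(t_2)}A\to {}^{\pi(t_1)}A$, so the twist by $\omega_{A,1}(h)$ carries $a_2$ into the graded piece where the multiplication takes place. Well-definedness of the product and its associativity should be verified via this $\ZTwo$-graded homomorphism property of $\omega_{A,1}$ together with functoriality of $(-)^{\opp}$, not from a uniformity of $\pi(t)$ across the coset constraint. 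With this one misstep corrected, your proposal follows the same route as the paper's.
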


\begin{proof}
  The proof in the non-trivially graded case is similar to that of
  Proposition~\ref{thm:2IndCplx}, 
    so we focus on the trivially graded case. Let us check that $\omega_{\tilde{A},1}$ is a generalized algebra automorphism. When $\pi(g)=-1$, for example, we have
\begin{eqnarray*}
\omega_{\tilde{A},1}(g) \left( [a_2]_e \cdot [a_1]_e \right)
&=&
\omega_{\tilde{A},1}(g) ([a_2 a_1]_e) \\
&=&
[\omega_{A,1}(\spcl^{-1} g)(a_1)]_{\spcl} \bullet [\omega_{A,1}(\spcl^{-1} g)(a_2)]_{\spcl} \\
&=&
\omega_{A,1}(g) ([a_1]_e) \bullet \omega_{A,1}(g) ([a_2]_e),
\end{eqnarray*}
where $\bullet$ indicates multiplication in $A^{\opp}$, and
\[
\omega_{\tilde{A},1}(g) \left( [a_2]_{\spcl} \cdot [a_1]_{\spcl} \right)
=
[\omega_{A,1}(g \spcl)(a_1 a_2)]_e
=
\omega_{\tilde{A},1}(g)([a_1]_{\spcl}) \bullet \omega_{\tilde{A},1}(g)([a_2]_{\spcl}).
\]
We omit the proof that $\omega_{\tilde{A},1}$ is a group homomorphism. To see that $\omega_{\tilde{A},2}$ is a group homomorphism, we compute
\begin{eqnarray*}
\omega_{\tilde{A},2}(x_2 x_1)
&=&
[\omega_{A,2}(x_2 x_1)]_e + [\omega_{A,2}({^{\spcl^{-1}}}(x_2 x_1))^{-1}]_{\spcl} \\
&=&
[\omega_{A,2}(x_2) \omega_{A,2}(x_1)]_e + [\omega_{A,2}({^{\spcl^{-1}}}x_1)^{-1} \omega_{A,2}( {^{\spcl^{-1}}}x_2)^{-1}]_{\spcl}
\end{eqnarray*}
while
\begin{eqnarray*}
\omega_{\tilde{A},2}(x_2) \omega_{\tilde{A},2}(x_1)
&=&
[\omega_{A,2}(x_2)]_e \cdot [\omega_{A,2}(x_1)]_e + [\omega_{A,2}({^{\spcl^{-1}}}x_2)^{-1}]_{\spcl} \cdot [\omega_{A,2}({^{\spcl^{-1}}}x_1)^{-1}]_{\spcl} \\
&=& 
[\omega_{A,2}(x_2) \omega_{A,2}(x_1)]_e + [\omega_{A,2}({^{\spcl^{-1}}}x_1)^{-1} \omega_{A,2}({^{\spcl^{-1}}}x_2)^{-1}]_{\spcl},
\end{eqnarray*}
as required. To verify equation \eqref{eq:NAlgAx1}, fix $x \in G_2$ and compute
\[
\omega_{\tilde{A},1}(\partial x) ([a]_{\spcl})
=
[a]_{\partial(x) \spcl}
=
[a]_{\spcl \partial({^{\spcl^{-1}}}x)}
=
[\omega_{A,2}({^{\spcl^{-1}}}x)^{-1} \bullet a \bullet \omega_{A,2}({^{\spcl^{-1}}}x)]_{\spcl},
\]
which is equal to $\partial(\omega_{\tilde{A},2}(x))([a]_{\spcl})$. The computation with $e$ in place of $\spcl$ is similar. Turning to equation \eqref{eq:NAlgAx2}, consider, for example, the case $\pi(g)=-1$. We have
\begin{eqnarray*}
\omega_{\tilde{A},1}(g)(\omega_{\tilde{A},2}(x)^{-1})
&=&
\omega_{\tilde{A},1}(g)([\omega_{A,2}(x)^{-1}]_e) + \omega_{\tilde{A},1}(g)([\omega_{A,2}({^{\spcl^{-1}}}x)]_{\spcl}) \\
&=&
[\omega_{A,1}(\spcl^{-1} g)(\omega_{A,2}(x)^{-1})]_{\spcl} + [\omega_{A,1}(g \spcl)(\omega_{A,2}({^{\spcl^{-1}}}x))]_e \\
&=&
[\omega_{A,2}({^{\spcl^{-1} g}}x)^{-1}]_{\spcl} + [\omega_{A,2}({^{g}}x)]_e,
\end{eqnarray*}
which is plainly equal to $\omega_{\tilde{A},2}({^{g}}x)$.
\end{proof}

There is an extension of Proposition~\ref{thm:indBiadjunct} to the $\ZTwo$-graded setting. We briefly indicate the construction in the hyperbolic setting; the construction for $\RInd_{\cH}^{\cG}$ is similar. More precisely, the assignment $A \mapsto \HInd_{\cG_0}^{\cG}$ extends to a pseudofunctor
\[
\HInd_{\cG_0}^{\cG}: \cG_0 \mhyphen \AAlg_{\Fld}^{\fd, 2 \mhyphen \simeq} \rightarrow \cG \mhyphen \AAlg^{\fd}_{\Fld}
\]
with domain the maximal locally groupoidal subbicategory of $\cG_0 \mhyphen \AAlg^{\fd}_{\Fld}$. Given a $1$-morphism $M : A \rightarrow B$ in $\cG_0 \mhyphen \AAlg^{\fd}_{\Fld}$, define $\widetilde{M}=\HInd_{\cG_0}^{\cG} M$ as follows. As a $\widetilde{B} \mhyphen \widetilde{A}$-bimodule, $\widetilde{M}$ is simply $M \oplus M^{\circ}$. The structure maps of $\widetilde{M}$ are defined by
\[
\omega_{\tilde{M}}(g)
=
\left( \begin{matrix}
\omega_M(g) & 0 \\
0 & \omega_{M^{\circ}}(\spcl^{-1} g \spcl)
\end{matrix} \right),
\qquad
\omega_{\tilde{M}}(f)
=
\left( \begin{matrix}
0 & \omega_{M^{\circ}}(f \spcl) \\
\ev_M \circ \omega_M(\spcl^{-1} f) & 0
\end{matrix} \right)
\]
where $g \in G_0$ and $f \in G_1 \setminus G_0$ and
\[
\omega_{M^{\circ}}(\spcl^{-1} g \spcl)({\mathfrak{m}})(m) = \omega_{A,1}(\spcl^{-1} g \spcl)\left[{\mathfrak{m}}(\omega_M(\spcl^{-1} g \spcl)^{-1}(m))
\right],
\qquad {\mathfrak{m}} \in M^{\circ}, \, m \in M.
\]
Given a $2$-isomorphism $\phi: M \Rightarrow N$ in $\cG_0 \mhyphen \AAlg^{\fd}_{\Fld}$, the $G_1$-equivariant intertwiner $\HInd_{\cG_0}^{\cG} \phi$ is defined to be $\phi \oplus \phi^{-\circ}$, where
\[
\phi^{-\circ}({\mathfrak{m}})(n) = {\mathfrak{m}}(\phi^{-1}(n)), \qquad {\mathfrak{m}} \in M^{\circ}, \, n \in N.
\]

The $\ZTwo$-graded analogue of Proposition \ref{thm:indBiadjunct} is more subtle.

\begin{Prob}
  \label{prob:adjunction}
  Investigate the adjunction properties of  $\Ind_{\cH}^{\cG}$ and $\Res_{\cH}^{\cG}$ when $\cG$ is non-trivially $\ZTwo$-graded.
\end{Prob}

The difficulty stems from the fact that $\RInd_{\cH}^{\cG}$, for example, is defined only on $2$-isomorphisms of $\cH \mhyphen \AAlg^{\fd}_{\Fld}$, while the $2$-morphism components of $\epsilon$ (as in the proof of Proposition~\ref{thm:indBiadjunct}) are not $2$-isomorphisms. For this reason, we expect the biadjointness properties to be more naturally formulated in the anti-linear approach to Real $2$-representations.


It is useful to decompose the assignment $A \mapsto \HInd_{\cG_0}^{\cG} A$ into two steps. If $\cG$ is a $\ZTwo$-graded crossed module, $\spcl \in G_1 \setminus G_0$ and $A$ is a $\cG_0$-algebra, denote by $\spcl \cdot A$ the $\cG_0$-algebra with underlying ${\Fld}$-algebra $A$ and structure maps
\[
\omega_{\spcl \cdot A,1} (g) = \omega_{A,1}(\spcl^{-1} g \spcl),
\qquad
\omega_{\spcl \cdot A,2}(x) = \omega_{A,2}({^{\spcl^{-1}}}x).
\]
It then follows immediately from the definitions that there is a $\cG$-algebra isomorphism
\[
\HInd_{\cG_0}^{\cG} A \simeq A \boxplus \spcl \cdot A^{\vee},
\]
where an element $f \in G_1 \setminus G_0$ acts on $A \boxplus \spcl \cdot A^{\vee}$ by the matrix 
\[
\left( \begin{matrix}
0 & \omega_{A,1}(f \spcl) \\
\omega_{A,1}(\spcl^{-1} f)^{\opp} & 0
\end{matrix} \right).
\]
This isomorphism generalizes as follows.

\begin{Lem}
\label{lem:HIndSymmetry}
Let $\cG$ be a $\ZTwo$-graded crossed module with trivially graded crossed submodule $\cH$. For each $\cH$-algebra $A$ and $\spcl \in H_1 \setminus H_0$, there is a $\cG$-algebra isomorphism
\[
\HInd_{\cH}^{\cG} A \simeq \HInd_{\spcl \cdot \cH \cdot \spcl^{-1}}^{\cG} \spcl \cdot A^{\vee}.
\]
\end{Lem}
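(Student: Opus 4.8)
The plan is to reduce to the basic case $\cK=\cG_0$, where hyperbolic induction is computed by the explicit formula $\HInd_{\cG_0}^{\cG}B\simeq B\boxplus\spcl\cdot B^{\vee}$, and then to exploit the evident symmetry of that formula in $B$ and $\spcl\cdot B^{\vee}$. First I would observe that $\spcl\cdot\cH\cdot\spcl^{-1}$ is again a \emph{trivially graded} crossed submodule of $\cG$: since $G_0$ is normal in $G_1$, conjugation by $\spcl$ preserves $G_0$, so $\spcl H_1\spcl^{-1}\le G_0$, while ${}^{\spcl}H_2\le G_2$; moreover $\spcl\cdot A^{\vee}$ is naturally a $(\spcl\cdot\cH\cdot\spcl^{-1})$-algebra. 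Using the factorisation $\HInd_{\cK}^{\cG}=\HInd_{\cG_0}^{\cG}\circ\Ind_{\cK}^{\cG_0}$, valid for any trivially graded crossed submodule $\cK$, both sides of the asserted isomorphism are obtained by applying the pseudofunctor $\HInd_{\cG_0}^{\cG}$ to $\Ind_{\cH}^{\cG_0}A$ and to $\Ind_{\spcl\cdot\cH\cdot\spcl^{-1}}^{\cG_0}(\spcl\cdot A^{\vee})$, respectively. It therefore suffices to produce (a) a $\cG_0$-algebra isomorphism $\Ind_{\spcl\cdot\cH\cdot\spcl^{-1}}^{\cG_0}(\spcl\cdot A^{\vee})\simeq\spcl\cdot(\Ind_{\cH}^{\cG_0}A)^{\vee}$, and (b) for every $\cG_0$-algebra $B$, a $\cG$-algebra isomorphism $\HInd_{\cG_0}^{\cG}(\spcl\cdot B^{\vee})\simeq\HInd_{\cG_0}^{\cG}B$.

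Claim (a) is the purely ungraded content: ordinary induction over $\cG_0$ commutes with conjugation by $\spcl$ (replace the left transversal $\sT$ to $H_1$ in $G_0$ by $\spcl\sT\spcl^{-1}$ and reindex $t\leftrightarrow\spcl t\spcl^{-1}$) and with the duality $(-)^{\vee}$ (send $[x\otimes a]_t$ to $[x^{-1}\otimes a]_{t'}$ with $t'$ the transversal representative of $\partial(x^{-1})t$); both are checked against the multiplication formula~\eqref{eq:indMult} and the structure maps of Proposition~\ref{thm:2IndCplx}, which is routine if tedious. For claim (b), using $(\spcl\cdot B^{\vee})^{\vee}=\spcl\cdot B$ we rewrite $\HInd_{\cG_0}^{\cG}(\spcl\cdot B^{\vee})=(\spcl\cdot B^{\vee})\boxplus(\spcl\cdot\spcl\cdot B)$, and the map to $\HInd_{\cG_0}^{\cG}B=B\boxplus\spcl\cdot B^{\vee}$ is the composite of $\id_{\spcl\cdot B^{\vee}}\boxplus\,\omega_{B,1}(\spcl^{2})$ with the swap of the two summands; here $\omega_{B,1}(\spcl^{2})\colon\spcl\cdot\spcl\cdot B\to B$ is a $\cG_0$-algebra isomorphism because $\spcl^{2}\in G_0$ and $\omega_{B}$ is a strict homomorphism, which also makes the ``diagonal'' $G_0$- and $G_2$-structure maps match automatically.

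The one step requiring genuine care --- and the main obstacle --- is verifying that the isomorphism of (b) intertwines the action of $G_1\setminus G_0$, which on $\HInd_{\cG_0}^{\cG}C=C\boxplus\spcl\cdot C^{\vee}$ is the off-diagonal matrix $\left(\begin{smallmatrix}0&\omega_{C,1}(f\spcl)\\\omega_{C,1}(\spcl^{-1}f)^{\opp}&0\end{smallmatrix}\right)$. Substituting $C=\spcl\cdot B^{\vee}$, so that $\omega_{C,1}(g)=\omega_{B,1}(\spcl^{-1}g\spcl)^{\opp}$, then conjugating by $\id\boxplus\,\omega_{B,1}(\spcl^{2})$ and by the swap, and using that $(-)^{\opp}$ is the identity on underlying linear maps together with multiplicativity of $\omega_{B,1}$ --- e.g.\ $\omega_{B,1}(\spcl^{2})\circ\omega_{B,1}(\spcl^{-2}f\spcl)=\omega_{B,1}(f\spcl)$ and $\omega_{B,1}(\spcl^{-1}f\spcl^{2})\circ\omega_{B,1}(\spcl^{-2})=\omega_{B,1}(\spcl^{-1}f)$ --- one recovers exactly the matrix $\left(\begin{smallmatrix}0&\omega_{B,1}(f\spcl)\\\omega_{B,1}(\spcl^{-1}f)^{\opp}&0\end{smallmatrix}\right)$ of $\HInd_{\cG_0}^{\cG}B$. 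Combining (a) and (b) with $B=\Ind_{\cH}^{\cG_0}A$ then yields $\HInd_{\spcl\cdot\cH\cdot\spcl^{-1}}^{\cG}(\spcl\cdot A^{\vee})\simeq\HInd_{\cG_0}^{\cG}\bigl(\spcl\cdot(\Ind_{\cH}^{\cG_0}A)^{\vee}\bigr)\simeq\HInd_{\cG_0}^{\cG}(\Ind_{\cH}^{\cG_0}A)=\HInd_{\cH}^{\cG}A$, which is the assertion.
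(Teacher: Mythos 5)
The paper states this lemma without proof, so there is no internal argument to compare against; your factorisation $\HInd_{\cH}^{\cG}=\HInd_{\cG_0}^{\cG}\circ\Ind_{\cH}^{\cG_0}$ followed by claims (a) and (b) is a reasonable reconstruction, and claim (b) --- which is the case $\cH=\cG_0$ and carries the real content --- is verified correctly. Explicitly, with $\phi(c_1,c_2)=(\omega_{B,1}(\spcl^2)(c_2),c_1)$ and $C=\spcl\cdot B^{\vee}$ (so that $\omega_{C,1}(g)=\omega_{B,1}(\spcl^{-1}g\spcl)$ as underlying maps), for $f\in G_1\setminus G_0$ one has
\[
\phi\bigl(\omega_{\tilde{C},1}(f)(c_1,c_2)\bigr)
=\bigl(\omega_{B,1}(\spcl^2)\omega_{B,1}(\spcl^{-2}f\spcl)(c_1),\,\omega_{B,1}(\spcl^{-1}f\spcl^2)(c_2)\bigr)
=\omega_{\tilde{B},1}(f)\bigl(\phi(c_1,c_2)\bigr),
\]
using the multiplicativity of $\omega_{B,1}$ exactly as you cite, while the diagonal $G_0$- and $G_2$-actions match because $\omega_{B,1}(\spcl^2)\colon\spcl\cdot\spcl\cdot B\to B$ is a $\cG_0$-algebra isomorphism.

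The one concrete slip is the formula you propose for the duality half of claim (a). In $\Fld G_{2,t}\otimes_{\Fld H_2}A^{\vee}$ the tensor relation reads $[x({^{t}}z)\otimes a]_t=[x\otimes a\,\omega_{A,2}(z)^{-1}]_t$, and the assignment $[x\otimes a]_t\mapsto[x^{-1}\otimes a]_{t'}$, with $t'$ the representative of $\partial(x^{-1})t$, sends the two sides into \emph{different} components of $\Ind_{\cH}^{\cG_0}A$: inversion places $({^{t}}z)^{-1}$ to the \emph{left} of $x^{-1}$, where the target's tensor relation cannot be applied. The map forced by $\Psi([e\otimes a]_t)=[e\otimes a]_t$, $\Psi(\omega_{2}(x))=\omega_{2}(x)^{-1}$ and multiplicativity into $(\Ind_{\cH}^{\cG_0}A)^{\vee}$ is rather $\Psi([x\otimes a]_t)=[x^{-1}\otimes\omega_{A,1}(h)(a)]_s$, where $\partial(x)t=sh$ with $s\in\sT$ and $h\in H_1$; well-definedness then follows from $({^{t}}z^{-1})x^{-1}=x^{-1}({^{\partial(x)t}}z^{-1})$. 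Since the rest of your argument uses only the truth of (a) and not the formula, this is a write-up error rather than a logical gap, but it should be corrected. One can also bypass claim (a) entirely by taking $\sT\spcl^{-1}$ as the transversal in $\HInd_{\spcl\cdot\cH\cdot\spcl^{-1}}^{\cG}$: the summand indexed by $t'=t\spcl^{-1}$ then matches the summand of $\HInd_{\cH}^{\cG}A$ indexed by $t$ term-by-term, using $(\spcl\cdot A^{\vee})^{\vee}=\spcl\cdot A$ and the reindexing $z\leftrightarrow{^{\spcl}}z$ of $H_2$, with no inversion of $x$ and only a check of the same size as your (b).
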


While the pseudofunctor $\HInd_{\cG_0}^{\cG}$ is not monoidal, it does admit a natural enhancement to a $\cG \mhyphen \AAlg_{\Fld}^{\fd}$-module pseudofunctor. We do not use the full strength of this statement, only that for each $\cG_0$-algebra $A$ and $\cG$-algebra $B$, there is a $\cG$-algebra isomorphism
\begin{equation}
\label{eq:hypModFunctor}
B \boxtimes \HInd_{\cG_0}^{\cG} A
\simeq
\HInd_{\cG_0}^{\cG} 
\left( \Res_{\cG_0}^{\cG} B \boxtimes A \right).
\end{equation}
Indeed, from the perspective described above Lemma \ref{lem:HIndSymmetry}, the left and right hand sides of the desired isomorphism \eqref{eq:hypModFunctor} are represented by
\[
\left(
(B \boxtimes A) \boxplus (B \boxtimes \spcl \cdot A^{\vee}),
\left( \begin{matrix}
0 & \omega_{B,1}(\spcl) \otimes \omega_{A,1}(\spcl^2) \\
\omega_{B,1}(\spcl) \otimes \id_A & 0
\end{matrix} \right)
\right)
\]
and
\[
\left(
(B \boxtimes A) \boxplus (\spcl \cdot B^{\vee} \boxtimes \spcl \cdot A^{\vee}),
\left( \begin{matrix}
0 & \omega_{B,1}(\spcl^2) \otimes \omega_{A,1}(\spcl^2) \\
\id_{B \otimes A} & 0
\end{matrix} \right)
\right),
\]
respectively, where we have displayed the matrices giving the action of $\spcl$ in each case (which, together with the underlying $\cG_0$-algebra structure, determines the $\cG$-algebra structure). These pairs are equivalent via the map $\left( \begin{smallmatrix}
\id_{B \otimes A} & 0 \\
0 & \omega_{B,1}(\spcl) \otimes 1_A
\end{smallmatrix} \right)$.


\section{The classification of Real \texorpdfstring{$2$}{}-representations on \texorpdfstring{$2\Vect_{\Fld}$}{}}
\label{sec:classReal2Reps}

We apply the results of the previous chapters to Real $2$-representation theory.

\subsection{From N-weak \texorpdfstring{$\cG$}{}-algebras to Real \texorpdfstring{$2$}{}-modules}
\label{subsec:FromTo}
We begin by connecting N-weak algebras to $2$-representation theory. The proof of the following result can be seen as justifying (or even deriving) the definition of an N-weak algebra.

\begin{Prop}
\label{prop:weakAlgTo2Mod}
Let $\cG$ be a $\ZTwo$-graded crossed module. Every separable N-weak $\cG$-algebra $\omega_A: \cG \rightarrow \AUT^{\gen}(A)$ induces a Real $2$-module
$\mbox{\gls{glTh}}: \tcG \rightarrow \AAlg_{\Fld}^{\fd}$.
\end{Prop}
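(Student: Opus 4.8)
The plan is to exhibit $\Theta_A$ as a morphism of $\ZTwo$-graded $2$-groups $\tcG \to 1\Aut^{\gen}_{\AAlg^{\fd}_{\Fld}}(A)$, which by Section~\ref{sec:Real2Rep} is the same thing as a Real $2$-module on $A$. I would first construct it in the strict small model $\widetilde{\AUT^{\gen}(A)}$ and then push it forward along the $\ZTwo$-graded equivalence $\widetilde{\AUT^{\gen}(A)} \simeq 1\Aut^{\gen}_{\AAlg^{\fd,\rep}_{\Fld}}(A)$ discussed after Lemma~\ref{lem:eq:2AutCategorical} (available because $A$ is separable) followed by the inclusion $1\Aut^{\gen}_{\AAlg^{\fd,\rep}_{\Fld}}(A) \hookrightarrow 1\Aut^{\gen}_{\AAlg^{\fd}_{\Fld}}(A)$. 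Recall that $\widetilde{\AUT^{\gen}(A)}$ has one object, $1$-morphisms the group $\Aut^{\gen}(A)$, and $2\Hom(p,q) = \{b \in A^{\times} \mid \partial(b) p = q\}$, with vertical composition the multiplication of $A^{\times}$ and horizontal composition $b \diamond b' = b \cdot {}^{p}b'$; its $\ZTwo$-grading is induced by $\pi \colon \Aut^{\gen}(A) \to \ZTwo$. Since $A^{\opp}$ is separable too and twisting a finite-dimensional bimodule by an algebra isomorphism preserves finite-dimensionality, every $1$-morphism produced is a finite-dimensional equivalence between separable algebras, so $\Theta_A$ will take values in $\AAlg^{\fd}_{\Fld}$ as required.

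Concretely, I would define $\widetilde{\omega_A} \colon \tcG \to \widetilde{\AUT^{\gen}(A)}$ to be trivial on the object, to send a $1$-morphism $g \in 1\End_{\tcG}(\star) = G_1$ to $\omega_{A,1}(g)$, to have compositor $\omega_{A,1}(g_2)\,\omega_{A,1}(g_1) \Rightarrow \omega_{A,1}(g_2 g_1)$ given by $\omega_{A,3}(g_2,g_1) \in A^{\times}$ --- legitimate precisely because of \eqref{eq:omega1Proj} --- and to send the elementary $2$-morphism $x \in 2\Hom_{\tcG}(e_{G_1}, \partial x)$, i.e.\ $x \in G_2$, to $\omega_{A,2}(x) \in 2\Hom(\id_A, \omega_{A,1}(\partial x))$, which makes sense precisely because of \eqref{eq:NAlgAx1}. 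Every $2$-morphism of $\tcG$ is a horizontal composite of an elementary one with identities, so this determines $\widetilde{\omega_A}$ on all of $\tcG$. Transporting through Lemma~\ref{lem:eq:2AutCategorical} and Lemma~\ref{lem:natTransBij}, $\widetilde{\omega_A}$ becomes the pseudofunctor $\Theta_A$ with $\Theta_A(\star) = A$, with $\Theta_A(g)$ the representable bimodule $A_{\omega_{A,1}(g)}$ --- an equivalence ${}^{\pi(g)}A \to A$, which for $\pi(g) = -1$ is the $A\mhyphen A^{\opp}$-bimodule realizing a generalized automorphism of $A$ --- and with the $2$-cells and compositors the images of the corresponding elements of $A^{\times}$.

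It then remains to verify the pseudofunctor coherence, and the heart of the proof is that every axiom is one of the N-weak $\cG$-algebra axioms: functoriality of $\widetilde{\omega_A}$ on vertical composition of elementary $2$-cells is \eqref{eq:omega2Proj}; associativity of the compositors (the ``$2$-cocycle pentagon'') is the non-abelian $2$-cocycle condition \eqref{eq:omega3Cocycle}; compatibility of the elementary $2$-cells with horizontal composition, which in $\tcG$ involves the $G_1$-action on $G_2$, is the equivariance condition \eqref{eq:NAlgAx2}; and \eqref{eq:NAlgAx1}, together with \eqref{eq:omega1Proj}, is what makes the elementary $2$-cells and the compositors land in the correct Hom-sets. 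Unit coherence follows from the normalizations built into the definition of an N-weak algebra ($\omega_3$ restricting to the identity on the ``axes'', $\omega_1$ pointed, $\omega_2$ unital). Finally $\omega_{A,1}$ is by assumption a $\ZTwo$-graded homomorphism, so $\widetilde{\omega_A}$ strictly intertwines the gradings, and after transport the grading becomes contravariance in the sense of Shulman; hence $\Theta_A$ is a contravariance-preserving pseudofunctor $\tcG \to \AAlg^{\fd}_{\Fld}$, i.e.\ a Real $2$-module.

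I expect the main obstacle to be the interaction of the grading with the weak duality involution in this last step: one has to check carefully that \eqref{eq:NAlgAx2}, which encodes the $\ZTwo$-graded crossed-module-morphism equivariance (with the inversion coming from ${}^{g}x = g(x^{\pi(g)})$), matches the horizontal composition $\diamond$ in $1\Aut^{\gen}_{\AAlg^{\fd}_{\Fld}}(A)$ of Section~\ref{sec:genAut2Grp}, including the $\mu$-insertions governed by $\delta_{\pi(g_2),\pi(g_1),-1}$, the identification of Lemma~\ref{lem:dualBimoduleTwists}, and the evaluation isomorphisms $\ev$ relating $(-)^{\circ\circ}$ to the identity. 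This is a lengthy but essentially mechanical diagram chase; conceptually there is nothing to prove, since --- as remarked just before the statement --- the axioms of an N-weak $\cG$-algebra were reverse-engineered from precisely this compatibility, so the proof \emph{derives} the definition.
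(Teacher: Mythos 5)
Your approach is sound, and it is precisely the ``more conceptual'' description the authors acknowledge in the remark immediately following Proposition~\ref{thm:weakAlgTo2ModBicat}: build a pseudofunctor $\widetilde{\omega}_A \colon \tcG \to \widetilde{\AUT^{\gen}(A)}$ directly from the N-weak data and transport it along $\widetilde{\AUT^{\gen}(A)} \simeq 1\Aut^{\gen}_{\AAlg^{\fd,\rep}_{\Fld}}(A) \hookrightarrow 1\Aut^{\gen}_{\AAlg^{\fd}_{\Fld}}(A)$. The paper's actual proof, however, deliberately avoids this detour and defines $\Theta_A$ directly in $\AAlg^{\fd}_{\Fld}$, setting $\Theta_A(\star)=A$, $\Theta_A(g)=A_{\omega_1(g)}$, $\Theta_A(g,x)=\Upsilon^{\omega_1(\partial(x)g)}_{\omega_1(g)}\bigl((\omega_3(\partial x,g)\omega_2(x))^{-1}\bigr)$ and $\Theta_A(g_2,g_1)=\Upsilon\bigl(\omega_3(g_2,g_1)^{-1}\bigr)$, then verifies vertical/horizontal compatibility and the pentagon by explicit manipulation with \eqref{eq:omega1Proj}--\eqref{eq:NAlgAx2}. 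The reason is that the paper explicitly declines to use the biequivalence between weak crossed module morphisms and pseudofunctors (see the caveat after Lemma~\ref{lem:eq:2AutCategorical}); your route leans on the claim, only asserted there, that $\widetilde{\AUT^{\gen}(A)}$ models $1\Aut^{\gen}_{\AAlg^{\fd,\rep}_{\Fld}}(A)$ as a $\ZTwo$-graded $2$-group, so the direct verification buys independence from that unproved step, while your route is cleaner to state and exposes the N-weak axioms as precisely the coherence data. Two smaller corrections are worth flagging. First, the value of $\Theta_A$ on a whiskered $2$-cell $x \colon g \Rightarrow \partial(x)g$ is not the ``naive'' image of $\omega_2(x)$: it acquires the factor $\omega_3(\partial x,g)$ from compositor naturality under whiskering and an overall inverse from the right-multiplication convention in Lemma~\ref{lem:natTransBij}, which is why the paper's formula is the one displayed above; your proof proposal would need to record this explicitly. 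Second, ``$\omega_{A,1}$ is by assumption a $\ZTwo$-graded homomorphism'' is inaccurate for an N-weak algebra --- $\omega_{A,1}$ is only a $\partial_*\omega_3$-projective homomorphism --- but what you actually need, and do have, is that $\omega_{A,1}$ is a pointed map over $\ZTwo$, which is what makes $\widetilde{\omega}_A$ grading-preserving.
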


\begin{proof}
The proof is similar to that of \cite[Proposition 2.3]{rumynin2018}, which treats the ordinary RW-weak case. The present setting is complicated by the fact that we work in the non-abelian group $A^{\times}$, instead of $Z(A)^{\times}$.
 
Set $\Theta_A(\star) = A$. For a $1$-morphism $g: \star \rightarrow \star$, let $\Theta_A(g)$ be the $A \mhyphen {^{\pi(g)}}A$-bimodule $A_{\omega_1(g)}$. For a $2$-morphism $x: g \Rightarrow \partial(x) g$, set (in the notation of Lemma \ref{lem:natTransBij})
\[
\Theta_A (g,x) = \Upsilon_{\omega_1(g)}^{\omega_1(\partial(x) g)}
\left(
(\omega_3(\partial x, g) \omega_2(x))^{-1}
\right).
\]
Equations \eqref{eq:omega1Proj} and \eqref{eq:NAlgAx1} imply that $\Theta_A (g,x)$ is well-defined. For $g_i \in G_1$, define
\[
\Theta_A(g_2,g_1) : \Theta_A (g_2) \diamond \Theta_A (g_1) \Longrightarrow \Theta_A(g_2 g_1)
\]
to be $\Upsilon_{\omega_1(g_2) \circ \omega_1(g_1)}^{\omega_1(g_2g_1)}(\omega_3(g_2,g_1)^{-1})$. Equation \eqref{eq:omega1Proj} implies that $\Theta_A(g_2,g_1) $ is well-defined.

To see that $\Theta_A$ is compatible with the vertical composition of $2$-morphisms, note that $\Theta_A(\partial(x_1) g_1,x_2) \circ \Theta_A(g_1,x_1)$ is equal to $\Upsilon_{\omega_1(g_1)}^{\omega_1(\partial(x_2 x_1) g_1)}(\clubsuit^{-1})$, where
\begin{eqnarray*}
\clubsuit
&=&
\omega_3(\partial x_2,\partial(x_1) g_1) \underline{\omega_2(x_2)\omega_3(\partial x_1, g_1)} \omega_2(x_1) \\
&\overset{\eqref{eq:NAlgAx1}}{=}&
\underline{\omega_3(\partial x_2 ,\partial(x_1) g_1) \cdot {^{\partial x_2}}\omega_3(\partial x_1, g_1)} \omega_2(x_2) \omega_2(x_1) \\
&\overset{\eqref{eq:omega3Cocycle}}{=}&
\omega_3(\partial(x_2 x_1), g_1) \underline{\omega_3(\partial x_2,\partial x_1) \omega_2(x_2) \omega_2(x_1)}\\
&\overset{\eqref{eq:omega2Proj}}{=}&
\omega_3(\partial(x_2 x_1), g_1) \omega_2(x_2 x_1).
\end{eqnarray*}
By abuse of notation, we write ${^{\partial x_2 }}\omega_3(\partial x_1, g_1)$ in place of ${^{\omega_1(\partial x_2)}}\omega_3(\partial x_1, g_1)$. At each stage of the calculation where we specify $\stackrel{(N)}{=}$, this indicates that equation~($N$) is applied to the underlined part of the previous expression. It follows that $\Theta_A(\partial(x_1) g_1,x_2) \circ \Theta_A(g_1,x_1) = \Theta_A(g_1,x_2 x_1)$.

Consider next the horizontal multiplicativity of $\Theta_A$. Let $g_i \in G_1$ and $x_i \in G_2$. Then\footnote{For notational simplicity, we omit the sub/superscripts of $\Upsilon$ for the remainder of the proof.} $\Theta_A(g_2 g_1, x_2 \cdot {^{g_2}}x_1) \circ \Theta_A(g_2,g_1) = 
\Upsilon (\spadesuit^{-1})$, where
\begin{eqnarray*}
\spadesuit
&=&
\omega_3(\partial(x_2 \cdot {^{g_2}}x_1), g_2 g_1) \underline{\omega_2(x_2 \cdot {^{g_2}}x_1)} \omega_3(g_2,g_1) \\
&\overset{\eqref{eq:omega2Proj}}{=}&
\omega_3(\partial(x_2 \cdot {^{g_2}}x_1), g_2 g_1) \omega_3(\partial x_2, \partial({^{g_2}}x_1)) \omega_2(x_2) \omega_2({^{g_2}}x_1) \omega_3(g_2,g_1).
\end{eqnarray*}
Two applications of the cocycle condition \eqref{eq:omega3Cocycle} give
\begin{multline*}
\omega_3(\partial(x_2) g_2 \partial(x_1) g_2^{-1}, g_2 g_1) \omega_3(\partial x_2, g_2 \partial(x_1) g_2^{-1})
= \\
\omega_3(\partial x_2, g_2 \partial(x_1) g_1) \cdot {^{\partial x_2}}  \left[ \omega_3(g_2 \partial(x_1),g_1)\omega_3(g_2 \partial(x_1) g_2^{-1},g_2) \cdot {^{\partial({^{g_2}} x_1)}}\omega_3(g_2,g_1)^{-1} \right].
\end{multline*}
Equation \eqref{eq:NAlgAx1} implies that
\[
{^{\partial x_2}}\left[{^{\partial({^{g_2}} x_1)}}\omega_3(g_2,g_1)^{-1} \right]
\omega_2(x_2) \omega_2({^{g_2}} x_1) \omega_3(g_2,g_1)
=
\omega_2(x_2) \omega_2({^{g_2}} x_1)
\]
so that
\begin{eqnarray*}
\spadesuit
&=&
\omega_3(\partial x_2, g_2 \partial(x_1) g_1) \cdot \underline{{^{\partial x_2 }} \left[ \omega_3(g_2 \partial x_1, g_1) \omega_3(g_2 \partial(x_1) g_2^{-1},g_2) \right] \omega_2(x_2)} \omega_2({^{g_2}} x_1) \\
&\overset{\eqref{eq:NAlgAx1}}{=}&
\omega_3(\partial x_2, g_2 \partial(x_1) g_1) \omega_2(x_2) \omega_3(g_2 \partial x_1, g_1) \underline{\omega_3(g_2 \partial(x_1) g_2^{-1},g_2)}  \omega_2({^{g_2}} x_1) \\
&\overset{\eqref{eq:omega3Cocycle}}{=}&
\omega_3(\partial x_2, g_2 \partial(x_1) g_1) \omega_2(x_2) \omega_3(g_2 \partial x_1,g_1) \cdot {^{g_2 \partial x_1}} \omega_3(g_2^{-1},g_2) \omega_3(g_2 \partial x_1, g_2^{-1})^{-1} \omega_2({^{g_2}} x_1).
\end{eqnarray*}
Using equation \eqref{eq:NAlgAx2} to rewrite $\omega_2({^{g_2}}x_1)$ in terms of ${^{g_2}}\omega_2(x_1)$, and then using equation \eqref{eq:omega3Cocycle}, we find that
\[
\omega_3(g_2 \partial x_1, g_2^{-1})^{-1} \omega_2({^{g_2}} x_1)
=
\omega_3(g_2, \partial x_1) \cdot {^{g_2}}\omega_2(x_1) \omega_3(g_2,g_2^{-1})^{-1}.
\]
Continuing, we have
\begin{eqnarray*}
\spadesuit
&{=}&
\omega_3(\partial(x_2) g_2, \partial(x_1) g_1) \omega_3(\partial x_2,g_2) \cdot \underline{{^{\partial x_2}}\omega_3(g_2, \partial(x_1) g_1)^{-1} \cdot} \\ &&
\underline{{^{\partial x_2}}\omega_3(g_2 \partial x_1,g_1)} \omega_2(x_2) \cdot {^{g_2 \partial x_1}} \omega_3(g_2^{-1},g_2)
\omega_3(g_2, \partial x_1) \cdot {^{g_2}}\omega_2(x_1) \omega_3(g_2,g_2^{-1})^{-1} \\
&\overset{\eqref{eq:omega3Cocycle}}{=}&
\omega_3(\partial(x_2) g_2, \partial(x_1) g_1)
\underline{\omega_3(\partial x_2 ,g_2) \cdot {^{\partial x_2 }}\left(
{^{g_2}}\omega_3(\partial x_1,g_1)
\right)} \\ &&
\cdot {^{\partial x_2}} \omega_3(g_2, \partial x_1 )^{-1} \omega_2(x_2) \cdot \underline{{^{g_2 \partial x_1}} \omega_3(g_2^{-1},g_2)
\omega_3(g_2, \partial x_1 )} \cdot {^{g_2}}\omega_2(x_1) \omega_3(g_2,g_2^{-1})^{-1} \\
&\overset{\eqref{eq:omega1Proj},\eqref{eq:omega1Proj}}{=}&
\omega_3(\partial(x_2) g_2, \partial(x_1) g_1) \cdot
{^{\partial(x_2) g_2}}\omega_3(\partial x_1,g_1) \omega_3(\partial x_2,g_2) \cdot
\\ && 
\underline{ {^{\partial x_2}} \omega_3(g_2, \partial x_1 )^{-1} \omega_2(x_2)}
\omega_3(g_2, \partial x_1 )
\cdot
    \underline{{}^{g_2}({}^{\partial x_1} \omega_3(g_2^{-1},g_2))
      \cdot {^{g_2}}\omega_2(x_1)} \omega_3(g_2,g_2^{-1})^{-1}
    \\
&\overset{\eqref{eq:NAlgAx1},\eqref{eq:NAlgAx1}}{=}&
\omega_3(\partial(x_2) g_2, \partial(x_1) g_1)
 \cdot {^{\partial(x_2) g_2}}\omega_3(\partial x_1,g_1) \omega_3(\partial x_2,g_2)
      \underline{\omega_2(x_2)}
\\ && 
\omega_3(g_2, \partial x_1 )^{-1}
\omega_3(g_2, \partial x_1 )
\cdot
    \underline{{}^{g_2} (\omega_2(x_1)}\omega_3(g_2^{-1},g_2)) 
\omega_3(g_2,g_2^{-1})^{-1}
\\
&\overset{\eqref{eq:NAlgAx1}}{=}&
\omega_3(\partial(x_2) g_2, \partial(x_1) g_1)
\cdot {^{\partial(x_2) g_2}}\omega_3(\partial x_1,g_1)
\underline{\omega_3(\partial x_2,g_2){}^{\partial x_2} ( {}^{g_2} (\omega_2(x_1)))}
      \\ &&
\omega_2(x_2) \cdot \underline{{}^{g_2} \omega_3(g_2^{-1},g_2) \omega_3(g_2,g_2^{-1})^{-1}}
\\
&\overset{\eqref{eq:NAlgAx1},\eqref{eq:omega3Cocycle}}{=}&
\omega_3(\partial(x_2) g_2, \partial(x_1) g_1) \cdot {^{\partial(x_2) g_2}} \left[
\omega_3(\partial(x_1),g_1) \omega_2(x_1)
\right]
\omega_3(\partial(x_2),g_2) \omega_2(x_2).
\end{eqnarray*}
We need to show that $\Upsilon(\spadesuit^{-1})$ is equal to the composition
\[
\Theta_A(\partial (x_2) g_2, \partial (x_1)  g_1) \circ (\Theta_A(g_2,x_2) \diamond \Theta_A(g_1, x_1)).
\]
Since $\Theta_A(g_2,x_2) \diamond \Theta_A(g_1, x_1)$ is the result of applying $\Upsilon$ to the inverse of
\[
{^{\partial(x_2) g_2}} \left[
\omega_3(\partial x_1,g_1) \omega_2(x_1)
\right]
\omega_3(\partial x_2,g_2) \omega_2(x_2),
\]
the desired equality follows.

The pentagon identity holds by equation \eqref{eq:omega3Cocycle}; details are left to the reader.
\end{proof}

\begin{Prop}
\label{thm:weakAlgTo2ModBicat}
The assignment $A \mapsto \Theta_A$ of Proposition \ref{prop:weakAlgTo2Mod} extends to a locally fully faithful pseudofunctor $\Theta: \cG \mhyphen N\AAlg_{\Fld}^{\fd} \rightarrow \Hom_{\mathsf{Bicat}_{\mathsf{con}}}(\tcG, \AAlg_{\Fld}^{\fd})$.
\end{Prop}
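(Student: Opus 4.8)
The plan is to extend the object assignment $A \mapsto \Theta_A$ of Proposition~\ref{prop:weakAlgTo2Mod} to $1$-morphisms and $2$-morphisms, show the resulting data constitutes a pseudofunctor of bicategories with contravariance, and then verify local full faithfulness. For the action on $1$-morphisms, given a $1$-morphism $(M,\omega_M): A \rightarrow B$ in $\cG \mhyphen N\AAlg_{\Fld}^{\fd}$, I would define the pseudonatural transformation $\Theta_M : \Theta_A \Rightarrow \Theta_B$ whose component at $\star$ is the $B \mhyphen A$-bimodule $M$ itself, and whose naturality $2$-cell at a $1$-morphism $g : \star \to \star$ is built from $\omega_M(g) : {}^{\pi(g)}M \to M$, suitably reinterpreted (using Lemma~\ref{lem:dualBimoduleTwists}) as a bimodule isomorphism $M \otimes_A A_{\omega_{A,1}(g)} \Rightarrow B_{\omega_{B,1}(g)} \otimes_B {}^{\pi(g)}M$ over the appropriate twists. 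Compatibility of this data with composition of $1$-morphisms in $\tcG$ is exactly equation~\eqref{eq:Real1MorAx2}, compatibility with identities is the condition $\omega_M(e_{G_1}) = \id_M$, and the "contravariance-respecting" naturality over $2$-cells $x : g \Rightarrow \partial(x)g$ is equation~\eqref{eq:Real1MorAx3} together with the definitions of $\Theta_A(g,x)$ and $\Theta_B(g,x)$ from the previous proof. For $2$-morphisms, a $1$-morphism $\phi : (M,\omega_M) \to (N,\omega_N)$ in $\cG \mhyphen N\AAlg_{\Fld}^{\fd}$ is sent to the modification $\Theta_\phi$ with component $\phi : M \Rightarrow N$; the modification axiom is precisely the $G_1$-equivariance square defining a $1$-morphism in $\cG \mhyphen N\AAlg_{\Fld}^{\fd}$.

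Next I would supply the coherence data making $\Theta$ a pseudofunctor: the compositor $2$-cells $\Theta_N \otimes \Theta_M \Rightarrow \Theta_{N \otimes_B M}$ and the unitor $\Theta_{{}_AA_A} \Rightarrow \id_{\Theta_A}$ are induced directly from the associativity and unit constraints of $\AAlg_{\Fld}^{\fd}$, since on underlying bimodules $\Theta_{N \otimes_B M}$ and $\Theta_N \otimes \Theta_M$ are both $N \otimes_B M$. One then checks the two pseudofunctor coherence axioms; these reduce to the associativity and unit coherences already available in $\AAlg_{\Fld}^{\fd}$, once one tracks the $\omega_{?,1}$-twists through, and I would remark that the compatibility with contravariance of all this data follows from how $(-)^{\circ}$ interacts with $\otimes_B$, i.e.\ from the pseudofunctoriality of $(-)^{\circ}$ in Proposition~\ref{prop:AlgWDI} together with Lemma~\ref{lem:dualBimoduleTwists}.

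Finally, local full faithfulness: I need to show that for each pair $A, B$ the functor
\[
\Theta_{A,B} : 1\Hom_{\cG \mhyphen N\AAlg_{\Fld}^{\fd}}(A,B) \longrightarrow \Hom_{\mathsf{Bicat}_{\mathsf{con}}}(\tcG,\AAlg_{\Fld}^{\fd})(\Theta_A,\Theta_B)
\]
is an equivalence. Faithfulness is immediate since $\phi \mapsto \phi$ on underlying bimodule maps. For fullness, given a modification $\Theta_M \Rrightarrow \Theta_N$, its single component is a $B \mhyphen A$-bimodule map $M \to N$, and the modification axiom forces it to be $G_1$-equivariant in the required sense, hence a $1$-morphism in $\cG \mhyphen N\AAlg_{\Fld}^{\fd}$. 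For essential surjectivity onto the hom-category image (i.e.\ the "fully" part as stated—"locally fully faithful" here should mean the hom-functors are full and faithful), given an arbitrary pseudonatural transformation $\Theta_A \Rightarrow \Theta_B$ respecting contravariance, I would use Lemma~\ref{lem:natTransBij} to identify its naturality $2$-cells with elements producing data $\omega_M$ satisfying \eqref{eq:Real1MorAx1}--\eqref{eq:Real1MorAx3}, so that it lies in the image of $\Theta_{A,B}$ up to isomorphism.

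The main obstacle I expect is bookkeeping the contravariance twists: tracking how $(-)^{\circ}$, the evaluation isomorphisms $\ev_M$, and the $\omega_{?,1}$-twists of Lemma~\ref{lem:dualBimoduleTwists} thread through the pseudofunctor coherence axioms in the mixed-parity cases $\pi(g_1) = \pi(g_2) = -1$. This is essentially the same sign-and-twist juggling that appears in the proof of Proposition~\ref{prop:weakAlgTo2Mod}, one categorical level up, and most of it can be delegated to that proof plus the established coherence of $\AAlg_{\Fld}^{\fd}$ as a bicategory with weak duality involution; the genuinely new content is the compatibility of $\Theta_M$ with composition, which is exactly~\eqref{eq:Real1MorAx2}.
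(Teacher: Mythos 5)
Your proposal is correct and matches the paper's approach: the paper's own proof simply observes that, after writing out the explicit description of $1$- and $2$-morphisms in $1\Hom_{\mathsf{Bicat}_{\mathsf{con}}}(\tcG, \AAlg_{\Fld}^{\fd})$ and comparing with the definitions of $1$- and $2$-morphisms in $\cG \mhyphen N\AAlg_{\Fld}^{\fd}$ from Section~\ref{sec:moritaBicat}, the statement is clear, and you have spelled out exactly that comparison. Your digression on essential surjectivity of the local hom-functors is extra and not required for local full faithfulness, as you yourself note.
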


\begin{proof}
After writing out the explicit description of $1$- and $2$-morphisms in the bicategory $1\Hom_{\mathsf{Bicat}_{\mathsf{con}}}(\tcG, \AAlg_{\Fld}^{\fd})$ (see Section~\ref{sec:Real2Rep}) and comparing them to the definition of those of $\cG \mhyphen N\AAlg_{\Fld}^{\fd}$, the statement is clear.
\end{proof}

\begin{Rem}
\begin{enumerate}[label=(\roman*)]
\item More conceptually, $\Theta_A$ can be described as the composition of the structure map $\widetilde{\omega}_A: \tcG \rightarrow \widetilde{\AUT^{\gen}(A)}$ (constructed as in the proof of Proposition \ref{prop:weakAlgTo2Mod}) with
\[
\widetilde{\AUT^{\gen}(A)} \simeq 1 \Aut^{\gen}_{\AAlg_{\Fld}^{\fd,\rep}}(A) \hookrightarrow 1 \Aut^{\gen}_{\AAlg_{\Fld}^{\fd}}(A).
\]
\item When $\cG$ is trivially graded, the proof of Proposition \ref{prop:weakAlgTo2Mod} yields a $2$-module $\Theta_A: \tcG \rightarrow \AALG_{\Fld}$ without any separability assumption on $A$. Compare \cite[Proposition 2.3]{rumynin2018}.
\end{enumerate}
\end{Rem}

\subsection{Morita bicategories and \texorpdfstring{$2$}{}-representations on \texorpdfstring{$2\Vect_{\Fld}$}{}}
\label{sec:morita2Reps}

Permutation actions are at the heart of Real $2$-representations on $2\Vect_{\Fld}$, as we now review.

\begin{Prop}
\label{prop:RealizeCocycle_1}
A Real $2$-representation of $\tcG$ on $2 \Vect_{\Fld}$ of dimension $t$ determines a homomorphism $\sigma: G_1 \rightarrow \Sym$ which factors through the quotient $G_1 \rightarrow \pi_1(\cG)$ and a cocycle $\mu \in Z^2(G_1, ({\Fld}^{\times})^t_{\pi})$, where ${\Fld}^t$ is considered as a $G_1$-algebra via $\sigma$.
\end{Prop}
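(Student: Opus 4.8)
The plan is to unwind the definition of a Real $2$-representation of $\tcG$ on $2\Vect_\Fld$ as a morphism of $\ZTwo$-graded $2$-groups $\tcG \to 1\Aut^{\gen}_{2\Vect_\Fld}([t])$, and to extract from it exactly the data named in the statement. First I would record that $\tcG$ has a single object $\star$ with $\Theta(\star) = [t]$, so that each $1$-morphism $g \in G_1$ acts by an equivalence $\rho(g): {^{\pi(g)}}[t] \to [t]$ in $2\Vect_\Fld$, i.e. by an invertible $t \times t$ matrix of vector spaces. Using the semi-skeletal model, an equivalence $[t] \to [t]$ must be a permutation matrix of one-dimensional spaces (its underlying permutation is forced since the matrix must be invertible in the bicategorical sense); since $[t]^{\circ}=[t]$, the grading $\pi$ does not affect the shape. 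This yields, for each $g$, a permutation $\sigma(g) \in \Sym$ together with a choice of line in each nonzero slot; I would fix bases and thereby identify $\rho(g)$ with the pair $(\sigma(g), \text{scalars})$ up to the coherence data $\rho_{g_2,g_1}$.

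Next I would use the composition $2$-isomorphisms $\rho_{g_2,g_1}: \rho(g_2)\otimes {^{\pi(g_2)}}\rho(g_1) \Rightarrow \rho(g_2 g_1)$. At the level of underlying permutations these are isomorphisms between permutation matrices, which forces $\sigma(g_2)\sigma(g_1) = \sigma(g_2 g_1)$; hence $\sigma: G_1 \to \Sym$ is a genuine group homomorphism. To see that it factors through $\pi_1(\cG) = \coker(\partial)$, I would invoke the $2$-morphisms of $\tcG$: an element $x \in G_2$ gives a $2$-isomorphism $g \Rightarrow \partial(x) g$, which $\Theta$ sends to an isomorphism $\rho(g) \cong \rho(\partial(x)g)$; comparing underlying permutations gives $\sigma(\partial(x) g) = \sigma(g)$, so $\sigma(\partial(x)) = e$ and $\sigma$ kills $\im(\partial)$. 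Thus $\sigma$ descends to $\pi_1(\cG)$, and ${\Fld}^t$ becomes a $G_1$-algebra via $\sigma$ composed with the permutation action on coordinates, with the twist by $\pi$ recording that $g$ with $\pi(g)=-1$ acts through $(-)^{\opp}$, i.e. contravariantly — this is precisely the subscript $\pi$ in $({\Fld}^\times)^t_\pi$.

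Finally, to produce the cocycle $\mu$, I would look at the scalar data. Having trivialized the permutation part of $\rho(g)$ by the chosen bases, each $\rho(g)$ becomes the identity matrix of lines twisted by $\sigma(g)$, and then $\rho_{g_2,g_1}$ becomes multiplication by a tuple $\mu(g_2,g_1) \in ({\Fld}^\times)^t$, indexed by the coordinates of the target. The associativity (pentagon) constraint on $\{\rho_{g_2,g_1}\}$ translates directly into the twisted $2$-cocycle identity
\[
\mu(g_3 g_2, g_1)\,\mu(g_3, g_2) = \mu(g_3, g_2 g_1)\cdot {}^{g_3}\mu(g_2, g_1),
\]
where the action ${}^{g_3}(-)$ is the one on $({\Fld}^\times)^t_\pi$ (permutation via $\sigma$, together with inversion when $\pi(g_3) = -1$, the latter arising because the coherence data for $(-)^{\circ}$ on $2\Vect_\Fld$ is built from the evaluation isomorphism, which introduces a sign/inverse in the $\pi(g_2)=\pi(g_1)=-1$ slot exactly as in the formula for $\otimes$ on $1\Aut^{\gen}_{\mathcal V}(V)$ in Section~\ref{sec:genAut2Grp}). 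Normalization $\rho(e)=\id$ can be arranged, so $\mu \in Z^2(G_1, ({\Fld}^\times)^t_\pi)$. I expect the main obstacle to be bookkeeping the $\pi$-twists carefully — tracking exactly where the duality $(-)^{\circ}$ and the evaluation isomorphism $\mu_V$ enter so that the action on $({\Fld}^\times)^t$ comes out with the inversion in the right place; this is the same subtlety already handled in Section~\ref{sec:realizability} and Section~\ref{sec:strictWeakAlg}, and I would cite Section~\ref{sec:strictWeakAlg} (the construction of $[\mu_n]$) as a template, since the present proposition is essentially the $2\Vect_\Fld$ shadow of that analysis specialized to a single matrix block $M_1({\Fld})^{\oplus t}$.
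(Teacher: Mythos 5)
Your proof is correct and follows essentially the same approach as the paper: restrict attention to the action of $G_1$ on $[t]$, extract $\sigma$ from the underlying permutation of the matrix $\rho(g)$ and $\mu$ from the scalar part of the coherence data $\rho_{g_2,g_1}$, and use the $2$-morphisms coming from elements of $G_2$ (giving $\rho(g)\cong\rho(\partial(x)g)$) to see that $\sigma$ kills $\im(\partial)$. The paper compresses the extraction of $\sigma$ and $\mu$ into a citation of Theorem~5.7 of the reference \cite{mbyoung2018c} (remarking that the finiteness of $G_1$ assumed there is not used), whereas you unpack that construction directly, including the correct source of the $\pi$-twist on $(\Fld^{\times})^t_\pi$ from the contravariance of $(-)^{\circ}$ and the evaluation isomorphism.
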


\begin{proof}
Since $G_1$ is a $\ZTwo$-graded crossed submodule of $\cG$, each Real $2$-representation of $\tcG$ restricts to a Real $2$-representation of $G_1$. The construction of $\sigma$ and $\mu$ are then given in the proof of \cite[Theorem 5.7]{mbyoung2018c}. There it is assumed that $G_1$ is finite, but this is not used in this part of the proof. It is straightforward to show that $\sigma$ factors through $G_1 \rightarrow \pi_1(\cG)$.
\end{proof}

\begin{Lem}[{cf. \cite[Lemma 2.4]{rumynin2018}}]
\label{lem:redToGroups}
Let $\cG$ be a $\ZTwo$-graded crossed module and let $\rho$ be a Real $2$-module over $\tcG$ which restricts to a $G_1$-algebra $A$. Then there exists an RW-weak $\cG$-algebra structure extending the $G_1$-structure of $A$ such that $\rho$ and $\Theta_A$ are equivalent.
\end{Lem}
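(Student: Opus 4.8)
## Proof Proposal for Lemma~\ref{lem:redToGroups}

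The plan is to reconstruct the full RW-weak $\cG$-algebra structure on $A$ from the data of the Real $2$-module $\rho$, by reading off the $2$-morphism data of $\rho$ and the coherence constraints it must satisfy. Since $\rho$ restricts to $A$ on the sub-$2$-group $\widetilde{G_1}$, we already have the anti-automorphism twists $\omega_{A,1}\colon G_1\to\Aut^{\gen}(A)$; the task is to produce $\omega_{A,2}\colon G_2\to A^\times$ and $\omega_{A,3}\in Z^2(G_1,Z(A)_\pi^\times)$ so that the resulting RW-weak $\cG$-algebra $A$ has $\Theta_A\simeq\rho$. The guiding principle, exactly as in \cite[Lemma~2.4]{rumynin2018}, is that $\rho$ evaluated on $\star$ is (Morita equivalent to, hence we may replace by) $A$, and then $\rho$ on $1$-morphisms $g\colon\star\to\star$ must be a representable bimodule $A_{\phi(g)}$ for some $\phi(g)\in\Aut^{\gen}(A)$; comparing with the $\widetilde{G_1}$-restriction forces $\phi(g)$ to agree with $\omega_{A,1}(g)$ up to an inner automorphism, and since we are free to rigidify we arrange $\phi(g)=\omega_{A,1}(g)$ on the nose. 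This uses Lemma~\ref{lem:eq:2AutCategorical} (and its generalized version) identifying $\widetilde{\AUT^{\gen}(A)}$ with the representable generalized automorphism $2$-group.

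The key steps, in order, are as follows. First, normalize: replace $\rho$ by an equivalent Real $2$-module whose restriction along $\widetilde{G_1}\hookrightarrow\tcG$ is literally $\Theta_A$ for the given $G_1$-algebra $A$ (possible since $\rho$ already restricts to $A$ as a $G_1$-algebra, and every Real $G_1$-module on a separable algebra is, by the generalized Lemma~\ref{lem:eq:2AutCategorical}, of the form $\Theta_A$ for an RW-weak $G_1$-algebra structure—here with trivial $\omega_3$ since $G_1$ has no $2$-morphisms beyond identities, wait: $\widetilde{G_1}$ is discrete on $2$-cells so the restriction is strict). Second, extract $\omega_{A,2}$: for each $x\in G_2$ the $2$-group $\tcG$ has a $2$-morphism $x\colon e_{G_1}\Rightarrow\partial(x)$, and $\rho$ sends it to a $2$-isomorphism $\rho(e_{G_1})\Rightarrow\rho(\partial(x))$, i.e.\ (using Lemma~\ref{lem:natTransBij}) to right multiplication by a unit; define $\omega_{A,2}(x)$ to be (the inverse of) this unit, suitably twisted, mirroring the formula $\Theta_A(g,x)=\Upsilon^{\omega_1(\partial(x)g)}_{\omega_1(g)}((\omega_3(\partial x,g)\omega_2(x))^{-1})$ from the proof of Proposition~\ref{prop:weakAlgTo2Mod}, specialized to $g=e_{G_1}$ and with $\omega_3$ yet to be determined on the nose but forced to be central. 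Third, extract $\omega_{A,3}$: the composition isomorphisms $\rho_{g_2,g_1}\colon\rho(g_2)\diamond\rho(g_1)\Rightarrow\rho(g_2g_1)$ are $2$-isomorphisms between representable bimodules $A_{\omega_{A,1}(g_2)\omega_{A,1}(g_1)}$ and $A_{\omega_{A,1}(g_2g_1)}$, so again by Lemma~\ref{lem:natTransBij} each is right multiplication by a unit $\omega_{A,3}(g_2,g_1)^{-1}\in A^\times$; the fact that $\rho$ is a genuine pseudofunctor forces $\omega_1(g_2g_1)=\partial(\omega_3(g_2,g_1))\omega_1(g_2)\omega_1(g_1)$ (equation~\eqref{eq:omega1Proj}) and the pentagon/associativity of $\rho$ forces the non-abelian $2$-cocycle identity~\eqref{eq:omega3Cocycle}. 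Fourth, verify that $\omega_{A,3}$ actually takes values in $Z(A)^\times$: this is where we use that $\rho$ is a Real $2$-module over $\tcG$ and not merely some weak structure—one checks that the two natural ways of composing $2$-cells through a square involving $x\in G_2$ and $g\in G_1$ (using the interchange law in $\tcG$, where $x\diamond{}^{g_1}x'$ appears) force $\omega_3$ to commute with everything in the image of $\omega_{A,1}\circ\partial=\partial\circ\omega_{A,2}$, and combined with the faithfulness/separability this pins $\omega_3$ into the center. Fifth, verify the remaining axioms \eqref{eq:omega2Proj}, \eqref{eq:NAlgAx1}, \eqref{eq:NAlgAx2} of an N-weak $\cG$-algebra, each of which is the transcription of vertical composition, whiskering, or the interchange law of $\rho$ applied to the $2$-cell $x$; these are the routine bookkeeping calculations, essentially the proof of Proposition~\ref{prop:weakAlgTo2Mod} run backwards. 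Finally, having produced an RW-weak $\cG$-algebra structure on $A$, observe that by construction $\Theta_A$ was reverse-engineered to reproduce $\rho$ on objects, $1$-morphisms, $2$-morphisms and all coherence data, so $\Theta_A\simeq\rho$ (indeed strictly equal after our normalizations, or equivalent in general).

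The main obstacle I expect is Step four: showing that the extracted $\omega_{A,3}$ lands in the \emph{center} $Z(A)^\times$ rather than just $A^\times$, which is precisely the difference between an RW-weak and a general N-weak structure. In the abelian-target situation of \cite{rumynin2018} this is automatic because one works in $Z(A)^\times$ from the start; here the subtlety, flagged in the proof of Proposition~\ref{prop:weakAlgTo2Mod} ("the present setting is complicated by the fact that we work in the non-abelian group $A^\times$"), is real. The resolution should be that the image of a Real $2$-representation on $2\Vect_{\Fld}$—as opposed to on the full Morita bicategory—is constrained: by Proposition~\ref{prop:RealizeCocycle_1} the underlying permutation data gives $A\cong\Fld^t$ componentwise a matrix algebra, and the composition isomorphisms of a $2$-representation valued in (the semi-skeletal model of) $2\Vect_\Fld$ are, by construction, scalars on each matrix block, hence central. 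So the argument for Step four will invoke the specific form of $\rho$ as a $2\Vect_\Fld$-valued (equivalently $\AAlg^{\fd}_\Fld$-valued, after the biequivalence of Section~\ref{sec:real2Mod}) $2$-representation and the explicit description of its composition constraints, rather than any formal property of arbitrary Real $2$-modules. Once centrality is in hand, everything else is the controlled unwinding of coherence diagrams, dual to Proposition~\ref{prop:weakAlgTo2Mod}, and presents no conceptual difficulty.
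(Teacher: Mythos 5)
Your overall strategy is the same as the paper's: read off $\omega_{A,2}(x) = \Upsilon^{-1}(\rho(e \xRightarrow{x} \partial x))$ and $\omega_{A,3}(f,g) = \Upsilon^{-1}(\rho(f,g))$ via Lemma~\ref{lem:natTransBij}, and then verify the N-weak axioms \eqref{eq:omega2Proj}--\eqref{eq:NAlgAx2} by running the coherence diagrams for $\rho$ in reverse, exactly dualizing the proof of Proposition~\ref{prop:weakAlgTo2Mod}. The paper's actual proof is essentially this one-liner plus a reference to \cite[Lemma~2.4]{rumynin2018}, so the bulk of your plan is on target.

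However, your Step four --- the worry about whether $\omega_{A,3}$ lands in $Z(A)^\times$ --- misdiagnoses the situation, and the proposed resolution (invoking the $2\Vect_\Fld$ model and the claim that composition constraints are ``scalars on each matrix block'') is both unnecessary and not how the paper proceeds. The centrality is entirely formal and does not depend on the target bicategory being $2\Vect_\Fld$. The hypothesis that $\rho$ restricts to a (strict) $G_1$-algebra $A$ means in particular that the $1$-morphism data is representable by a genuine group homomorphism $\omega_{A,1}\colon G_1\to\Aut^{\gen}(A)$. Hence $\rho(g_2,g_1)$ is a $2$-isomorphism between representable bimodules $A_{\omega_{A,1}(g_2)\omega_{A,1}(g_1)}$ and $A_{\omega_{A,1}(g_2g_1)}$ twisted by the \emph{same} automorphism, and Lemma~\ref{lem:natTransBij} then identifies such $2$-isomorphisms with right multiplication by units $b$ satisfying $\partial(b) = \mathrm{id}$, i.e.\ $b\in Z(A)^\times$. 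Equivalently: $\omega_{A,3}$ central is exactly the RW-weakness condition, which is synonymous (as the paper notes after the definition of RW-weak) with $\omega_{A,1}$ being a group homomorphism --- and that is what the hypothesis hands you. So there is no obstacle at Step four, and nothing about the semi-skeletal $2\Vect_\Fld$ model or Proposition~\ref{prop:RealizeCocycle_1} is needed there. Your confusion about whether ``$\widetilde{G_1}$ is discrete on $2$-cells'' forces strictness is a separate, minor slip: the composition constraints $\rho(g_2,g_1)$ are part of the pseudofunctor data and can be nontrivial even when the source $2$-group has only identity $2$-cells, so the restriction to $\widetilde{G_1}$ being a strict $G_1$-algebra is an honest hypothesis, not an automatic normalization.

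Once centrality is settled the right way, the remaining verifications (your Steps two, three and five) are indeed the controlled unwinding of the coherence diagrams for $\rho$, as you say, and match the paper's appeal to extending \cite[Lemma~2.4]{rumynin2018} verbatim to the $\ZTwo$-graded case.
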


\begin{proof}
  In the ordinary case this is \cite[Lemma 2.4]{rumynin2018}. The definition of the RW-weak $\cG$-algebra and the verification of the required axioms extend verbatim to the $\ZTwo$-graded case:
  $$
  \omega_2 (x) \coloneqq \Upsilon^{-1} (\rho ( e \xRightarrow[]{x} \partial x)), \qquad
  \omega_3 (f,g) \coloneqq \Upsilon^{-1} (\rho (f,g)).
  $$
It follows from the definition of $\Theta_A$, given in the proof of Proposition~\ref{prop:weakAlgTo2Mod}, that $\Theta_A$ is equivalent to $\rho$. 
\end{proof}

\begin{Prop}
\label{thm:RealizeCocycle}
Let $\rho$ be a Real $2$-representation of $\tcG$ on $2 \Vect_{\Fld}$ with induced morphism $\sigma: G_1 \rightarrow \Sym$ and cocycle $\mu \in Z^2(G_1, ({\Fld}^{\times})^t_{\pi})$. If $\mu$ is realizable, then there exists a split semisimple RW-weak $\cG$-algebra $A$ such that $\rho$ and $\Theta_A$ are equivalent.
\end{Prop}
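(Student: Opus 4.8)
The plan is to build the desired algebra $A$ directly out of a realization of the cocycle $\mu$, mimicking the strictification argument of Theorem~\ref{thm:strictcGAlg} but one step earlier. First I would restrict $\rho$ along the inclusion of the $\ZTwo$-graded crossed submodule $G_1 \hookrightarrow \cG$ to obtain a Real $2$-representation $\rho|_{G_1}$ of $G_1$ on $2\Vect_{\Fld}$, whose associated data is precisely $(\sigma, \mu)$ as recorded in Proposition~\ref{prop:RealizeCocycle_1}. The realizability hypothesis provides a $\mu$-twisted Real representation $(M, \mathfrak{p}_M)$ of $G_1$ with $M$ faithful over $B \coloneqq {\Fld}^t$ (with $B$ viewed as a $G_1$-algebra via $\sigma$, and $\mathfrak{p}_M$ encoding the equivariant structure maps as in Section~\ref{sec:realizability}).

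\textbf{Key steps.} Second, I would apply Proposition~\ref{prop:2RepFromRep} to $(M,\mathfrak{p}_M)$: this yields a $\ZTwo$-graded group homomorphism $\mathfrak{a}: G_1 \rightarrow \Aut^{\gen}(\End_B(M))$. Setting $A \coloneqq \End_B(M)$, this is a $G_1$-algebra; since $B = {\Fld}^t$ is split semisimple and $M$ is a faithful finitely generated $B$-module, $A = \End_B(M)$ is again split semisimple (a product of matrix algebras over $\Fld$). Third, I would check that the $G_1$-algebra $A$ so obtained, when fed into the $2$-representation $\Theta_A$ of Proposition~\ref{prop:weakAlgTo2Mod} (or rather its $G_1$-restriction), reproduces $\rho|_{G_1}$ up to equivalence — this is where one verifies that the cocycle $\mu$ built from $\mathfrak{a}$ via the procedure preceding Theorem~\ref{thm:strictcGAlg} agrees (cohomologously) with the original $\mu$, which should follow from the compatibility built into Proposition~\ref{prop:2RepFromRep} and the definition of realization. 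Fourth, and crucially, I would invoke Lemma~\ref{lem:redToGroups}: since $\rho$ is a Real $2$-module over $\tcG$ restricting to the $G_1$-algebra $A$, there exists an RW-weak $\cG$-algebra structure on $A$ (extending its $G_1$-structure, with $\omega_2(x) = \Upsilon^{-1}(\rho(e \xRightarrow{x} \partial x))$ and $\omega_3(f,g) = \Upsilon^{-1}(\rho(f,g))$) such that $\Theta_A \simeq \rho$. Since $A$ is split semisimple by construction, this is the required algebra.

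\textbf{Main obstacle.} I expect the delicate point to be the precise matching in the third step — namely verifying that the $G_1$-algebra $A = \End_B(M)$ produced by Proposition~\ref{prop:2RepFromRep} genuinely has $\rho|_{G_1} \simeq \Theta_A|_{G_1}$, rather than merely carrying a $2$-cocycle cohomologous to $\mu$. One must track how the equivalence class of $\Theta_A$ depends on the choice of realization $(M,\mathfrak{p}_M)$ and confirm that the Real $2$-representation recovered is $\rho|_{G_1}$ on the nose up to equivalence; the $\ZTwo$-graded bookkeeping (the appearance of $(-)^{\circ}$, the evaluation isomorphisms $\ev_M$, and the inversion built into $A^{\times}_\pi$) is where sign and variance errors can creep in. Once the $G_1$-level equivalence $\rho|_{G_1} \simeq \Theta_A|_{G_1}$ is secured, the passage to the full $\cG$-level statement is entirely handled by Lemma~\ref{lem:redToGroups}, which does the coherence verifications; so the argument reduces, modulo that matching, to a citation. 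The overall shape is thus: realize $\mu$, build $A$ via Proposition~\ref{prop:2RepFromRep}, observe split semisimplicity, match at the $G_1$-level, then lift via Lemma~\ref{lem:redToGroups}.
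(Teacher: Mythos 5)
Your proposal follows the paper's own proof essentially step for step: realize the cocycle, form $A = \End_{\Fld^t}(M)$ with its Peirce-decomposition split semisimplicity, endow it with a $G_1$-algebra structure via Proposition~\ref{prop:2RepFromRep}, construct the $G_1$-level equivalence $\rho|_{G_1} \simeq \Theta_A|_{G_1}$ (the paper writes this out as an explicit pseudonatural equivalence $F$ with $F(\star) = M$ as an $A\mhyphen R$-bimodule and checks coherence via the diagram you anticipate), and then invoke Lemma~\ref{lem:redToGroups} to pick up $\omega_2,\omega_3$ and lift to $\cG$. The one bookkeeping detail the paper settles that you leave open is the sign: it takes $M$ to be a realization of $\mu^{-1}$ (not $\mu$) so that the twisted bimodule structure and the composition $2$-isomorphisms of $\Theta_A$ line up with those of $\rho$ without a leftover cocycle correction — exactly the kind of sign/variance subtlety you flagged as the delicate point.
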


\begin{proof}
Write $R$ for ${\Fld}^t$, viewed as the target object of the Real $2$-module associated to $\rho$. Let $M$ be a realization of $\mu^{-1}$ with Peirce decomposition $M \simeq \oplus_i M_i$. Then $A = \End_R(M) \simeq \oplus_i \End_{\Fld}(M_i)$ is a split semisimple ${\Fld}$-algebra which, by Proposition \ref{prop:2RepFromRep}, has a $G_1$-algebra structure $\mathfrak{a}$.

We first define an equivalence $F: \rho \Rightarrow \Theta_A$ of Real $2$-modules over $G_1$. Let $F(\star) : R \rightarrow A$ be the $A \mhyphen R$-bimodule $M$, its right $R$-module structure defined via its left $R$-module structure (recall that $R$ is commutative). For each $g \in G_1$, define an $A \mhyphen R$-bimodule intertwiner
\[
F(g) : A_{\mathfrak{a}_g} \otimes_{{^{\pi(g)}}A} ({^{\pi(g)}}M) \Longrightarrow M \otimes_R \rho(g)
\]
as follows. Identify $M \otimes_R \rho(g)$ with $M_{\rho(g)}$ and set $F(g) (a \otimes m) = a(\mathfrak{p}_M(g)(m))$. The map is well-defined because, for each $b \in A$, we have
\begin{eqnarray*}
F(a \mathfrak{a}_g(b) \otimes m)
&=&
a (\mathfrak{a}_g(b)(\mathfrak{p}_M(g)(m))) =
a (\mathfrak{p}_M(g)(b(\mathfrak{p}_M(g)^{-1}(\mathfrak{p}_M(g)(m)))) \\
&=&
a (\mathfrak{p}_M(g)(b(m)))
=
F(a \otimes b (m)).
\end{eqnarray*}
Left $A$-linearity of $F(g)$ is clear. Right $R$-linearity follows from the computation
\[
F(g)((a \otimes m)r)
=
a(\mathfrak{p}_M(g)(r m))
=
(g \cdot r) a(\mathfrak{p}_M(g)(m))
=
F(g)(a \otimes m) \cdot_{\rho(g)} r,
\]
where $-\cdot_{\rho(g)}-$ indicates the $\rho(g)$-twisted right $R$-module structure of $M$.

The coherence of the assignments $g \mapsto F(g)$ amounts to the commutativity of the diagram
\[
\begin{tikzpicture}[baseline= (a).base]
\node[scale=1] (a) at (0,0){
\begin{tikzcd}[column sep=8.0em, row sep=2.0em]
M \otimes (\rho(g_2) \circ {^{\pi(g_2)}}\rho(g_1)) \arrow{r}[above]{\rho_{g_2,g_1}} & M \otimes \rho(g_2 g_1) \\
(M \otimes \rho(g_2)) \circ {^{\pi(g_2)}}\rho(g_1) \arrow{u} & \Theta_A(g_2 g_1) \otimes {^{\pi(g_2 g_1)}}M \arrow{u}[right]{F(g_2g_1)} \\
(\Theta_A(g_2) \otimes {^{\pi(g_2)}}M) \circ {^{\pi(g_2)}}\rho(g_1) \arrow{u}[left]{F(g_2)} & (\Theta_A(g_2) \circ {^{\pi(g_2)}}\Theta_A(g_1)) \otimes {^{\pi(g_2 g_1)}}M \arrow{u}[right]{\Theta_A(g_2,g_1)} \\
\Theta_A(g_2) \circ {^{\pi(g_2)}}(M \otimes \rho(g_1)) \arrow{u} & \Theta_A(g_2) \circ {^{\pi(g_2)}}(\Theta_A(g_1) \otimes {^{\pi(g_1)}}M), \arrow{u} \arrow{l}[below]{{^{\pi(g_2)}}F(g_1)} \arrow[dashed,bend left=5]{luuu}
\end{tikzcd}
};
\end{tikzpicture}
\]
where all unlabelled arrows are associativity isomorphisms. By construction, the cocycle $\mu\in Z^2(G_1,R^{\times}_{\pi})$ is determined by the $2$-isomorphisms $\rho_{g_2,g_1}$. Since $A$ is a strict $G_1$-algebra, the map $\Theta_A(g_2,g_1)$ is induced by the corresponding map in $\AAlg_{\Fld}^{\fd}$ and, in particular, does not involve a cocycle. On the other hand, ``the composition'' of $F(g_2)$ with ${^{\pi(g_2)}}F(g_1)$ (indicated by the dashed arrow in the diagram above) is equal to $\mu(g_2,g_1)^{-1} F(g_2g_1)$.
This gives the required commutativity.

We can now apply Lemma \ref{lem:redToGroups} to conclude that we can extend the $G_1$-algebra structure of $A$ to an RW-weak $\cG$-algebra structure such that $\rho \simeq \Theta_A$ as Real $2$-modules.
\end{proof}

Denote by $\RRep^r_{2 \Vect_{\Fld}}(\tG)$ the full subbicategory of $\RRep_{2 \Vect_{\Fld}}(\tG)$ on realizable Real $2$-representations of $\tG$. It is a monoidal subbicategory.

\begin{Thm}
\label{thm:moritaModel}
Assume that ${\Fld}$ is separably closed. Let $\cG$ be a $\ZTwo$-graded crossed module. Under the identification $\RRep_{2 \Vect_{\Fld}}(\tcG) \simeq \RRep_{\AAlg^{\fd}_{\Fld}}(\tcG)$, the subbicategory $\RRep^r_{2 \Vect_{\Fld}}(\tcG)$ is biequivalent to $\cG \mhyphen \AAlg^{\fd}_{\Fld}$.
\end{Thm}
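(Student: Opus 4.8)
The plan is to establish the biequivalence by assembling the pseudofunctors constructed earlier in the paper into a chain and checking that it restricts appropriately. First I would recall that Proposition~\ref{thm:weakAlgTo2ModBicat} provides a locally fully faithful pseudofunctor $\Theta: \cG \mhyphen N\AAlg_{\Fld}^{\fd} \rightarrow \Hom_{\mathsf{Bicat}_{\mathsf{con}}}(\tcG, \AAlg_{\Fld}^{\fd}) = \RRep_{\AAlg^{\fd}_{\Fld}}(\tcG)$, and that the latter is identified with $\RRep_{2\Vect_{\Fld}}(\tcG)$ via the duality biequivalence of Section~\ref{sec:real2Mod} (here using that $\Fld$ is separably closed). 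Restricting $\Theta$ along the inclusion $\cG \mhyphen \AAlg^{\fd}_{\Fld} \hookrightarrow \cG \mhyphen N\AAlg^{\fd}_{\Fld}$ gives a locally fully faithful pseudofunctor $\cG \mhyphen \AAlg^{\fd}_{\Fld} \rightarrow \RRep_{2\Vect_{\Fld}}(\tcG)$; I would check that its essential image lands in $\RRep^r_{2\Vect_{\Fld}}(\tcG)$ (a strict $\cG$-algebra being split semisimple when $\Fld$ is separably closed, its associated $2$-module has realizable underlying cocycle, indeed a \emph{trivial} one). The remaining two tasks are then: (a) show $\Theta$ restricted to strict algebras is locally \emph{essentially surjective} (full faithfulness on $2$-morphisms already follows from ``locally fully faithful''), i.e. every $1$-morphism of Real $2$-representations between $\Theta_A$ and $\Theta_B$ is, up to $2$-isomorphism, of the form $\Theta$ of an equivariant bimodule; and (b) show $\Theta$ is essentially surjective on objects, i.e. every realizable Real $2$-representation is equivalent to $\Theta_A$ for some strict $\cG$-algebra $A$.

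For (b) I would argue as follows. Given a realizable Real $2$-representation $\rho$ on $2\Vect_{\Fld}$, Proposition~\ref{prop:RealizeCocycle_1} produces $\sigma: G_1 \rightarrow \Sym$ and a realizable cocycle $\mu \in Z^2(G_1,(\Fld^\times)^t_\pi)$, and Proposition~\ref{thm:RealizeCocycle} then yields a split semisimple RW-weak $\cG$-algebra $A$ with $\rho \simeq \Theta_A$. This reduces us to RW-weak algebras. To pass from RW-weak to strict, I invoke the strictification Theorem~\ref{thm:strictcGAlg}: since $A$ is split semisimple, I must check that the cohomology classes $[\mu_n] \in H^2(G_1,(\Fld^\times)^{t_n}_\pi)$ attached to the Wedderburn blocks of $A$ are realizable; but these are precisely (refinements of) the classes controlling $\rho$, which are realizable by hypothesis — more carefully, realizability of $\mu$ for the $\sigma$-action implies realizability of each block class, since a realization of $\mu$ decomposes along Peirce idempotents into realizations of the $\mu_n$. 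Hence Theorem~\ref{thm:strictcGAlg} gives a strict $\cG$-algebra $B$ with $B$ $\cG$-Morita equivalent to $A$, and since $\Theta$ sends $\cG$-Morita equivalences (equivalences in $\cG \mhyphen N\AAlg^{\fd}_\Fld$) to equivalences of $2$-representations, $\Theta_B \simeq \Theta_A \simeq \rho$.

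For (a), essential surjectivity on $1$-morphism categories: a $1$-morphism $\Theta_A \rightarrow \Theta_B$ in $\RRep_{2\Vect_{\Fld}}(\tcG) \simeq \RRep_{\AAlg^{\fd}_\Fld}(\tcG)$ is a pseudonatural transformation respecting contravariance, whose underlying datum is an object of $1\Hom_{\AAlg^{\fd}_\Fld}(A,B)$ together with coherence $2$-cells indexed by $1$-morphisms of $\tcG$ and compatibility with the $2$-morphisms of $\tcG$; unwinding the definition of $\RRep$ and comparing with the definition of $1\Hom_{\cG\mhyphen N\AAlg^{\fd}_\Fld}(A,B)$ (equations \eqref{eq:Real1MorAx1}--\eqref{eq:Real1MorAx3}) shows these are literally the same data — this is essentially the content already observed in the proof of Proposition~\ref{thm:weakAlgTo2ModBicat}, upgraded from ``locally fully faithful'' to ``locally an equivalence'' by noting that on strict algebras no additional cocycle obstructs writing an abstract pseudonatural transformation in bimodule form. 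Combining (a) and (b) with local full faithfulness on $2$-morphisms yields that $\Theta$ restricts to a biequivalence $\cG \mhyphen \AAlg^{\fd}_\Fld \xrightarrow{\sim} \RRep^r_{2\Vect_{\Fld}}(\tcG)$.

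The main obstacle I anticipate is step (b), and specifically the bookkeeping of realizability through the two reductions: one must verify that the realizability hypothesis on the cocycle $\mu$ of $\rho$ (a single cocycle for the $\sigma$-twisted action on $\Fld^t$) is exactly what is needed to apply Theorem~\ref{thm:strictcGAlg} to the RW-weak algebra $A$ produced by Proposition~\ref{thm:RealizeCocycle}, i.e. that the block classes $[\mu_n]$ coincide with (or are implied realizable by) the data of $\rho$. A secondary subtlety is making sure the identification $\RRep_{2\Vect_\Fld}(\tcG) \simeq \RRep_{\AAlg^{\fd}_\Fld}(\tcG)$ is compatible with the weak duality involutions on both sides, so that the contravariance-respecting structures match; this is handled by the explicit duality biequivalence cited in Section~\ref{sec:real2Mod} (\cite[Theorem 6.3]{lorand2019}), but it should be invoked carefully rather than taken for granted.
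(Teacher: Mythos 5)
Your proposal follows essentially the same route as the paper's (very terse, one-sentence) proof: identify the biessential image of $\cG \mhyphen \AAlg^{\fd}_{\Fld}$ under $\Theta$ with $\RRep^r_{2\Vect_\Fld}(\tcG)$ by combining Proposition~\ref{thm:weakAlgTo2ModBicat} (local equivalence on hom-categories), Proposition~\ref{thm:RealizeCocycle} (local essential surjectivity onto realizable Real $2$-representations at the level of RW-weak algebras), and Theorem~\ref{thm:strictcGAlg} (strictification). The paper says only ``By Theorem~\ref{thm:strictcGAlg} and Proposition~\ref{thm:RealizeCocycle}, the biessential image \ldots can be identified with $\RRep^r_{2\Vect_\Fld}(\tcG)$'' and leaves the bookkeeping implicit. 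Your write-up is thus a legitimate, more careful expansion of the same argument, and you are right to flag the realizability bookkeeping as the crux. In particular, the point you isolate in step (b) — that applying Theorem~\ref{thm:strictcGAlg} to the $A$ produced by Proposition~\ref{thm:RealizeCocycle} requires the block classes $[\mu_n]$ to be realizable, and that this follows because the realization $M$ of $\mu^{-1}$ decomposes along the $G_1$-stable blocks of $A=\End_R(M)$ into realizations of each $[\mu_n]^{-1}$ — is precisely the detail the paper's proof glosses over, and your sketch of the fix is the correct one.

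One inaccuracy: you assert that for a strict $\cG$-algebra $A$ the cocycle underlying $\Theta_A$ is ``indeed a \emph{trivial} one.'' That is too strong. Even for strict $A$, the cocycle $\mu$ from Proposition~\ref{prop:RealizeCocycle_1} measures the obstruction to lifting the genuine homomorphism $G_1 \to \Aut^{\gen}(A)$ along $GL_n(\Fld)^{\times t} \rtimes(\Sym\times\ZTwo) \to PGL_n(\Fld)^{\times t}\rtimes(\Sym\times\ZTwo)$, and its class in $H^2(G_1,(\Fld^\times)^t_\pi)$ need not vanish: separably closed is weaker than algebraically closed (so $\Fld^\times$ need not be divisible), and $G_1$ is not assumed finite. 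What is true, and all you need, is that this cocycle is always \emph{realizable}: the Morita bimodule $V = \bigoplus_i \Fld^{n_i}$ exhibiting $A \simeq \Fld^t$ carries a canonical $\mu^{\pm1}$-projective $G_1$-structure lifted from $\omega_{A,1}$, which is a realization. So the containment $\Theta(\cG \mhyphen \AAlg^{\fd}_\Fld) \subseteq \RRep^r_{2\Vect_\Fld}(\tcG)$ holds, but not because the cocycle is a coboundary.
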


\begin{proof}
By Theorem ~\ref{thm:strictcGAlg} and Proposition~\ref{thm:RealizeCocycle}, the biessential image of $\cG \mhyphen \AAlg_{\Fld}^{\fd}$ under the embedding of Proposition~\ref{thm:weakAlgTo2ModBicat} can be identified with $\RRep^r_{2 \Vect_{\Fld}}(\tcG)$. 
\end{proof}

We can now describe equivalence classes of Real $2$-representations of $\tcG$ on $2 \Vect_{\Fld}$ in terms of $\cG$-algebras.

\begin{Cor}
\label{thm:Real2RepIsoClass}
Assume that ${\Fld}$ is separably closed and that $G_1$ is finite. Then there is a  bijection
\[
\pi_0(\RRep_{2 \Vect_{\Fld}}(\tcG)) \simeq \{ \textnormal{separable strict } \cG \mhyphen \textnormal{algebras} \} \slash \cG \mhyphen \textnormal{Morita equivalence}.
\]
\end{Cor}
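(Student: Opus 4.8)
The plan is to obtain the corollary as a direct consequence of Theorem~\ref{thm:moritaModel}, once one observes that the finiteness of $G_1$ removes the realizability hypothesis entirely. So the first thing I would do is reduce $\RRep^r_{2\Vect_{\Fld}}(\tcG)$ to the whole of $\RRep_{2\Vect_{\Fld}}(\tcG)$. Given a Real $2$-representation $\rho$ of $\tcG$ on $2\Vect_{\Fld}$ of dimension $t$, Proposition~\ref{prop:RealizeCocycle_1} produces a homomorphism $\sigma:G_1\rightarrow\Sym$ and a cocycle $\mu\in Z^2(G_1,({\Fld}^{\times})^t_{\pi})$, where ${\Fld}^t$ carries the $G_1$-algebra structure induced by $\sigma$ and the grading $\pi\vert_{G_1}$; this is precisely an RW-weak $G_1$-algebra with $\omega_3=\mu$. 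Since $G_1$ is finite, Lemma~\ref{lem:finiteRealizable} applies and $\mu$ is realizable. Hence every object of $\RRep_{2\Vect_{\Fld}}(\tcG)$ is realizable, i.e. $\RRep^r_{2\Vect_{\Fld}}(\tcG)=\RRep_{2\Vect_{\Fld}}(\tcG)$.

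Next I would invoke Theorem~\ref{thm:moritaModel}, which uses the separable closedness of ${\Fld}$ and provides, under the identification $\RRep_{2\Vect_{\Fld}}(\tcG)\simeq\RRep_{\AAlg^{\fd}_{\Fld}}(\tcG)$ recorded at the end of Section~\ref{sec:real2Mod}, a biequivalence $\RRep^r_{2\Vect_{\Fld}}(\tcG)\simeq\cG\mhyphen\AAlg^{\fd}_{\Fld}$. Combining this with the previous paragraph gives a biequivalence $\RRep_{2\Vect_{\Fld}}(\tcG)\simeq\cG\mhyphen\AAlg^{\fd}_{\Fld}$, and any biequivalence of bicategories induces a bijection on $\pi_0$, i.e. on equivalence classes of objects.

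Finally I would unwind $\pi_0(\cG\mhyphen\AAlg^{\fd}_{\Fld})$ using the definitions of Section~\ref{sec:moritaBicat}: the objects of $\cG\mhyphen\AAlg^{\fd}_{\Fld}$ are exactly the separable strict $\cG$-algebras, and equivalence in this bicategory is, by definition, $\cG$-Morita equivalence. Thus $\pi_0(\cG\mhyphen\AAlg^{\fd}_{\Fld})$ is the set of separable strict $\cG$-algebras modulo $\cG$-Morita equivalence, and the bijection produced above is the asserted one.

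There is no substantial obstacle here beyond the realizability remark: all the work has been done in Theorem~\ref{thm:strictcGAlg}, Proposition~\ref{thm:RealizeCocycle} and Theorem~\ref{thm:moritaModel}. The only point requiring a little care is bookkeeping — checking that the biequivalence of Theorem~\ref{thm:moritaModel} is compatible with the monoidal biequivalence $2\Vect_{\Fld}\simeq\AAlg^{\fd}_{\Fld}$ (valid since ${\Fld}$ is separably closed) used to make sense of the statement over $2\Vect_{\Fld}$ rather than over $\AAlg^{\fd}_{\Fld}$, and noting that the split semisimplicity hypothesis of Theorem~\ref{thm:strictcGAlg} is automatic for the algebras ${\Fld}^t$, and hence for the realizing algebras $\End_{{\Fld}^t}(M)$, arising here.
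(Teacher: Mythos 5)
Your proof is correct and follows the same route as the paper: first use Lemma~\ref{lem:finiteRealizable} (via Proposition~\ref{prop:RealizeCocycle_1}) to conclude that finiteness of $G_1$ forces $\RRep^r_{2\Vect_{\Fld}}(\tcG)=\RRep_{2\Vect_{\Fld}}(\tcG)$, then apply the biequivalence of Theorem~\ref{thm:moritaModel} and pass to $\pi_0$. The extra bookkeeping you add is accurate but not essential to the argument.
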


\begin{proof}
Since $G_1$ is finite, each element of  $Z^2(G_1, ({\Fld}^{\times})^t_{\pi})$ is realizable (see Lemma \ref{lem:finiteRealizable}) and we have $\RRep^r_{2 \Vect_{\Fld}}(\tcG) = \RRep_{2 \Vect_{\Fld}}(\tcG)$. Theorem \ref{thm:moritaModel} then gives the desired bijection. 
\end{proof}

\begin{Rem}
\begin{enumerate}[label=(\roman*)]
\item 
  Without the finiteness assumption on $G_1$, the following version of Corollary~\ref{thm:Real2RepIsoClass} still holds:
  $$
  \pi_0(\RRep^r_{2 \Vect_{\Fld}}(\tcG)) \simeq \{ \textnormal{separable strict } \cG \mhyphen \textnormal{algebras} \} \slash \cG \mhyphen \textnormal{Morita equivalence}.
$$
\item When $\cG$ is trivially graded, we recover the finite dimensional results of \cite[\S 3]{rumynin2018}. It is not clear, however, if the results involving semi-matrix algebras (used to treat non-realizable cocycles) admit a $\ZTwo$-graded generalization.
\end{enumerate}
\end{Rem}

\begin{Ex}
Suppose that $\cG$ is a $\ZTwo$-graded crossed module in which $\partial$ is trivial; this corresponds to so-called split $\ZTwo$-graded $2$-groups, that is, those with trivial Sinh $3$-cocycle. The case of non-trivial $\partial$ can be treated by modifying the construction below to include a non-trivial $3$-cocycle. The set $\pi_0(\cG \mhyphen \AAlg_{\Fld}^{\fd})$ appearing in Corollary~\ref{thm:Real2RepIsoClass} can be described as follows.

First, we note that any split semisimple $\cG$-algebra $A$ with $t$ simple summands is $\cG$-Morita equivalent to an RW-weak $\cG$-algebra whose underlying ${\Fld}$-algebra is $\Fld^t$. This can be seen by using the proof of Theorem \ref{thm:strictcGAlg}: the permutation $\pi$, the cocycle $\mu$ and the function $\gamma$ are used to define $\omega_{\Fld^t,1}$, $\omega_{\Fld^t,2}$ and $\omega_{\Fld^t,3}$, respectively, while the projective Real representation $(U,\eta_U)$ is used to define the between $A \simeq \Fld^t$.

It therefore suffices to restrict attention to RW-weak $\cG$-algebra structures on $A= \Fld^t$. In this case, we have a homomorphism $\omega_{A,1}: G_1 \rightarrow \Sym$, a $G_1$-invariant homomorphism $\omega_{A,2}: G_2 \rightarrow ({\Fld}^{\times})_{\pi}^t$ and a cocycle $\omega_{A,3} \in Z^2(G_1, ({\Fld}^{\times})^t_{\pi})$. However, different triples can $\{\omega_{A,i}\}_i$ define $\cG$-Morita equivalent RW-weak $\cG$-algebras. There are two sources of ambiguity in the data $\{\omega_{A,i}\}_i$, which we now describe. Recall (see Section \ref{sec:strictWeakAlg}) that the generalized automorphism $2$-group of ${\Fld}^t \in \AAlg_{\Fld}^{\fd}$ is modelled by $\AUT^{\gen}({\Fld}^t) = (({\Fld}^{\times})^t \xrightarrow[]{\partial} \Sym \times \ZTwo)$. Conjugating by an arbitrary element $\tau \in \Sym \times \ZTwo$ sends $\omega_A$ to the $\cG$-Morita equivalent RW-weak $\cG$-algebra determined by the triple
\[
(
\tau \omega_{A,1}(-) \tau^{-1},
{^{\tau}}\omega_{A,2}(-),
\tau_* \omega_{A,3}(-)
).
\]
The second source of ambiguity is due to non-trivial $G_1$-module structures on the identity ${\Fld}^t$-bimodule. Given a such a $G_1$-module structure, determined by a map $\mu: G_1 \rightarrow ({\Fld}^{\times})^t$, we obtain an RW-weak $\cG$-algebra $(\omega_{A,1}, \omega_{A,2}, \omega_{A,3} d \mu)$ which is $\cG$-Morita equivalent to $A$. In this way, we obtain a $\ZTwo$-graded generalization of the split case of \cite[Theorem 5.5]{elgueta2007}. See also \cite[\S 5.3]{mbyoung2018c}.
\end{Ex}

We use Theorem \ref{thm:moritaModel} to model direct sums, tensor products and duals of realizable (Real) $2$-modules in terms of the corresponding operations for $\cG$-algebras. We also define various inductions of (Real) $2$-modules in the same way. For example, if $\Theta_A$ is a realizable $2$-module over an non-trivially graded crossed submodule $\cH$ of $\cG$, then the Real $2$-module $\RInd_{\cH}^{\cG} \Theta_A$ is defined to be $\Theta_{\RInd_{\cH}^{\cG} A}$.

\subsection{A structure theorem for Real \texorpdfstring{$2$}{}-modules}

The next result plays an important role in Section \ref{sec:K0RRep}.

\begin{Thm}
\label{thm:ksDecomp}
Let $\cG$ be a $\ZTwo$-graded crossed module, $\rho$ a realizable Real $2$-module over $\tcG$. The following statements hold.
\begin{enumerate}[label=(\roman*)]
\item There exist indecomposable Real $2$-modules $\rho_1, \dots, \rho_n$ over $\tcG$, unique up to equivalence, such that $\rho \simeq \boxplus_{i=1}^n \rho_i$.

\item If $\rho$ is indecomposable, then there exists a subgroup $H$ of $G_1$ such that $\cG_H\coloneqq (G_2 \xrightarrow[]{\partial} H)$ is a crossed submodule of $\cG$ and a one dimensional realizable $2$-module $\nu$ over $\widetilde{\cG}_H$, Real if $H$ is non-trivially graded, such that $\rho \simeq \Ind_{\tcG_H}^{\tcG} \nu$. Moreover, the pair $(H, \nu)$ is unique up to $G_1$-conjugation, $(H, \nu)\mapsto(gHg^{-1}, g\cdot {}^{\pi(g)}\nu)$, $g\in G_1$, and equivalence in $\nu$.
\end{enumerate}
\end{Thm}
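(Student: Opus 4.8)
The plan is to translate the statement into the Morita-theoretic picture supplied by Theorem~\ref{thm:moritaModel} and then prove everything about $\cG$-algebras. Since $\rho$ is realizable, we may fix a separable strict $\cG$-algebra $A$ with $\rho \simeq \Theta_A$, and by Corollary~\ref{thm:Real2RepIsoClass} (or its non-finite variant) equivalence classes of realizable Real $2$-modules correspond bijectively to $\cG$-Morita equivalence classes of such algebras, with $\boxplus$ going to $\boxplus$ and $\Ind_{\tcG_H}^{\tcG}$ going to $\RInd_{\cG_H}^{\cG}$ (or $\HInd$, depending on the grading of $H$) by construction (end of Section~\ref{sec:morita2Reps}). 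So it suffices to prove: (i) every separable strict $\cG$-algebra decomposes, uniquely up to $\cG$-Morita equivalence and reordering, as a $\boxplus$ of $\cG$-indecomposable ones; and (ii) a $\cG$-indecomposable separable strict $\cG$-algebra $A$ is $\cG$-Morita equivalent to $\Ind_{\cG_H}^{\cG} B$ for a subgroup $H \leq G_1$ with $\cG_H = (G_2 \xrightarrow{\partial} H)$ a crossed submodule, where $B$ is a ``one dimensional'' $\cG_H$-algebra (underlying $\Fld$-algebra $\Fld$), Real precisely when $H$ is non-trivially graded, and the pair $(H,B)$ is unique up to $G_1$-conjugation and Morita equivalence.

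For part (i): start from the Artin--Wedderburn decomposition $A \simeq \bigoplus_{n} A_n$ into $\cG_0$-isotypic blocks as in the discussion preceding Theorem~\ref{thm:strictcGAlg}; the grading element(s) in $G_1\setminus G_0$ permute the simple factors of $A$, so $G_1$ acts on the (finite) set of central idempotents of $A$, and $A$ decomposes as $\boxplus$ over the $G_1$-orbits of this action. Since a $\cG$-algebra supported on a single $G_1$-orbit is $\cG$-indecomposable (any $\boxplus$-decomposition of a $\cG$-algebra gives a $G_1$-invariant partition of its central idempotents, hence a partition of each orbit, which must be trivial), this produces the desired decomposition. Uniqueness follows because the central idempotents of $A$, together with the $G_1$-action on them, are a $\cG$-Morita invariant (central idempotents are intrinsic to the algebra up to Morita equivalence, and the $G_1$-action is determined by $\omega_{A,1}$), so any two $\boxplus$-decomposition must refine a common one and both coincide with the orbit decomposition; one phrases this cleanly with a Krull--Schmidt-style argument using that $\cG$-indecomposables are ``connected'' on central idempotents.

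For part (ii), the heart of the matter: given a $\cG$-indecomposable separable strict $\cG$-algebra $A$, its central idempotents form a single transitive $G_1$-set; pick a primitive central idempotent $e$ and let $H = \mathrm{Stab}_{G_1}(e)$. The key claim is that $\partial(G_2) \leq H$ — this holds because equation~\eqref{eq:NAlgAx1} forces $\omega_{A,1}(\partial x) = \partial(\omega_{A,2}(x))$ to be an inner automorphism, which fixes every central idempotent — so $\cG_H = (G_2 \xrightarrow{\partial} H)$ is indeed a crossed submodule, non-trivially graded iff $H \not\subseteq G_0$, iff (by transitivity and $\partial(G_2)\subseteq G_0$) the whole algebra ``sees'' a grading element. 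The block $B' := eAe$ is an $H$-algebra (the strict structure restricts since $H$ stabilizes $e$), and $A$ is $\cG$-Morita equivalent to $\Ind_{\cG_H}^{\cG} B'$; here I would verify the explicit isomorphism using the formulas of Section~\ref{sec:indKAlg} (resp.\ Section~\ref{sec:indRealKAlg}), noting that $\prod_{t\in\sT}$ with $\sT$ a transversal to $H$ exactly reassembles the $G_1$-orbit of blocks with the twisting by $\omega_{A,1}$ along coset representatives. Since $B'$ is a separable simple (or, over the non-trivially graded piece, $\Fld \oplus \Fld^{\opp}$-like) algebra over a separably closed field, Morita theory replaces it by its ``one dimensional'' reduction $B$ with underlying algebra $\Fld$ carrying the induced (Real) $\cG_H$-structure, giving $\rho \simeq \Ind_{\tcG_H}^{\tcG}\nu$ with $\nu = \Theta_B$ one dimensional. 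Finally, for uniqueness of $(H,\nu)$: different choices of primitive central idempotent $e$ lie in a single $G_1$-orbit, so their stabilizers are $G_1$-conjugate, $gHg^{-1} = \mathrm{Stab}(ge)$, and chasing the identifications shows $(H,\nu)$ changes exactly by $(gHg^{-1}, g\cdot {}^{\pi(g)}\nu)$; that any two such presentations of an indecomposable $\rho$ arise this way follows by comparing their central-idempotent data, which is the $\cG$-Morita invariant isolated in part (i).

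The main obstacle I anticipate is the bookkeeping in part~(ii): first, pinning down precisely when the induced structure is ``Real if $H$ is non-trivially graded'' — one must check that $\nu$ factors through $\widetilde{\cG_H}$ as an honest (non-Real) one-dimensional $2$-module when $H \subseteq G_0$ and as a Real one precisely otherwise, which is the content of how $\pi$ restricts to $H$; and second, verifying that the explicit $\cG$-algebra isomorphism $A \simeq \Ind_{\cG_H}^{\cG}(eAe)$ is compatible with \emph{all three} of $\omega_{\cdot,1}, \omega_{\cdot,2}$ and (trivially) $\omega_{\cdot,3}$, in the graded case dealing correctly with the $(-)^{\vee}$ appearing on the factors $[{}^{\pi(t)}(\cdot)]_t$ for $t\in\sT$ with $\pi(t)=-1$, as in Theorem~\ref{thm:2IndReal}. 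Both the existence of the isomorphism and the uniqueness-up-to-conjugation statement reduce, after this bookkeeping, to the observation that a transitive $G_1$-set is, up to isomorphism, $G_1/H$ for $H$ unique up to conjugacy — the categorification merely decorates this with the $\cG_H$-algebra $\nu$ attached to a chosen base point, which is exactly what gets conjugated when the base point moves.
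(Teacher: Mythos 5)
Your proof is correct and follows essentially the same Morita-theoretic route as the paper: reduce to $\cG$-algebras via Theorem~\ref{thm:moritaModel}, decompose $A$ along the $G_1$-orbits on its simple blocks for~(i), and for~(ii) take $H=\mathrm{Stab}_{G_1}(e)$ for a primitive central idempotent $e$ and identify $A \simeq \Ind_{\cG_H}^{\cG}(eAe)$, with your explicit observation that $\partial(G_2)\leq H$ (inner automorphisms fix central idempotents, so $\cG_H$ really is a crossed submodule) being a useful detail the paper leaves implicit. One aside in your write-up is off and worth correcting: $B'=eAe$ is a \emph{single} simple block $M_n(\Fld)$ regardless of whether $H$ is non-trivially graded — the ``$\Fld\oplus\Fld^{\opp}$-like'' shape only arises in the output of $\HInd$, not in the block $eAe$ you induce from — but this is a harmless parenthetical and does not affect the argument.
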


\begin{proof}
By Theorem \ref{thm:moritaModel}, it suffices to prove the statement at the level of $\cG$-algebras.

Let $A$ be a split semisimple $\cG$-algebra. Decompose the underlying ${\Fld}$-algebra into simple factors, $A = \oplus_{i=1}^t A_i$. Then $G_1$ acts on the set $\{1, \dots, t\}$, giving an orbit decomposition $\mathcal{O}_1 \sqcup \cdots \sqcup \mathcal{O}_n$, so that each $A_{\mathcal{O}_i}\coloneqq\oplus_{j \in \mathcal{O}_i} A_j$, $i=1, \dots, n$, is an indecomposable $\cG$-algebra and $A \simeq \boxplus_{i=1}^n A_{\mathcal{O}_i}$. The uniqueness statement is clear.

Turning to the second statement, if $A$ is indecomposable, then it is necessarily of the form $M_n({\Fld})^{\oplus t}$ and the $G_1$-action on $\{1, \dots, t\}$ is transitive. Let $H \leq G_1$ be the stabilizer of $1 \in \{1, \dots, t\}$; it is trivially graded if no element of $G_1 \setminus G_0$ fixes $1$, and is non-trivially graded otherwise. Set $B=M_n({\Fld})$, regarded as the first summand of $A$. Then $B$ inherits from $A$ a $\cG_H$-algebra structure. The resulting (Real) $\tcG_H$-module has dimension one, that is, its underlying $\Fld$-algebra $B$ is simple. It follows immediately from the definitions that $\Ind_{\cG_H}^{\cG} B \simeq A$ as $\cG$-algebras, where $\Ind$ denotes $\HInd$ or $\RInd$ as appropriate. It is clear that we can replace $B$ with any $\cG_H$-Morita equivalent $\cG_H$-algebra. If we consider instead the stabilizer of a point other than $1$, then $H$ is replaced with a $G_1$-conjugate, say $H^{\prime} = g H g^{-1}$, and $B$ is replaced $g \cdot {^{\pi(g)}}B$.
\end{proof}

\begin{Rem}
When $\cG$ is trivially graded, variants of Theorem \ref{thm:ksDecomp}(ii) are well-known; see, for example,
\cite[Theorem 3.2]{ostrik2003}, \cite[Proposition 7.3]{ganter2008} and \cite[Theorem 3.7]{rumynin2018}.
\end{Rem}


\section{The Grothendieck ring of Real \texorpdfstring{$2$}{}-representations}
\label{sec:GrothGrpRRep}

In this section we describe the Grothendieck ring of $\RRep_{2\Vect_{\Fld}}(\tcG)$ in terms of Real generalized Burnside rings.

\subsection{Generalized Burnside rings}

We recall some basic material about generalized Burnside rings. The reader is referred to \cite[\S 1]{gunnells2012} and \cite[\S 4]{rumynin2018} for details.

Given a finite group $G$, let $\mathcal{S}(G)$ be the category whose objects are subgroups of $G$ and whose morphisms are conjugations,
\[
\Hom_{\mathcal{S}(G)}(H_1, H_2) = \{ \gamma_g \mid g \in G, \; g H_1 g^{-1} \leq H_2\}.
\]
Morphisms are composed using the multiplication in $G$. We emphasize that, even if $e \neq g_1g_2^{-1}$ is in the centralizer of $H_1$, the morphisms $\gamma_{g_1}, \gamma_{g_2}: H_1 \rightarrow H_2$ are not identified. This is as in \cite{rumynin2018} and is in contrast to \cite{gunnells2012}.

Suppose now that $G$ is $\ZTwo$-graded. The degree $\pi(H)$ of $H \in \mathcal{S}(G)$ is defined to be $+1$ if $H$ is trivially graded and $-1$ otherwise. There are no morphisms in $\mathcal{S}(G)$ from an object of degree $-1$ to an object of degree $+1$.

Fix a functor $\Phi: \mathcal{S}(G)^{\opp} \rightarrow \mathsf{SGrp}$ to the category of semigroups. A $\Phi$-decorated $G$-set is a $G$-set $X$ together with frills $\mathfrak{f}_{\vx} \in \Phi(\Stab_G(\vx))$, $\vx \in X$, which satisfy $\Phi(\gamma_g) (\mathfrak{f}_{\vx}) = \mathfrak{f}_{g\vx}$ for each $g \in G$. The generalized Burnside ring $\mathbb{B}^{\Phi}(G)$ is defined to be the Grothendieck group of the category of finite $\Phi$-decorated $G$-sets. An explicit description of $\mathbb{B}^{\Phi}(G)$ in our case of interest is given in Section~\ref{sec:K0RRep}. Given a commutative ring $\mathbb{A}$, we write $\mathbb{B}_{\mathbb{A}}^{\Phi}(G)$ for $\mathbb{B}^{\Phi}(G) \otimes_{\mathbb{Z}} \mathbb{A}$.

\subsection{The Grothendieck ring of Real \texorpdfstring{$2$}{}-representations}
\label{sec:K0RRep}

Let $\cK$ be a $\ZTwo$-graded crossed module. Denote by $\pi_0(\RRep_{[1]}(\tcK))$ the abelian group of equivalence classes of Real $2$-representations of $\tcK$ on $[1] \in 2 \Vect_{\Fld}$. If $\cK$ is trivially graded, then this is simply $\pi_0(\Rep_{[1]}(\tcK))$. The group operation is the tensor product $\boxtimes$ of (Real) $2$-representations.

Let $\cG$ be a $\ZTwo$-graded crossed module with $G_1$ finite. The group $\pi_1(\cG)$ inherits a $\ZTwo$-grading from that of $G_1$. Given a subgroup $P \leq \pi_1(\cG)$, denote by $\overline{P}$ its pre-image under the quotient map $G_1 \rightarrow \pi_1(\cG)$. Write $\pi(P)$ for the image of $P$ under the induced map $\pi_1(\cG) \rightarrow \ZTwo$. The crossed module $\cG_P \coloneqq (G_2 \xrightarrow[]{\partial} \overline{P})$ is a $\ZTwo$-graded crossed submodule of $\cG$; the grading trivial if and only if $P \leq \pi_1(\cG)_0$.

Define a functor $\Phi: \mathcal{S}(\pi_1(\cG))^{\opp} \rightarrow \mathsf{Ab}$ to the category of abelian groups as follows. At the level of objects, set
\[
\Phi(P) =
\pi_0(\RRep_{[1]}(\tcG_P)).
\]
Let $\gamma_g: P_1 \rightarrow P_2$ be a morphism in $\mathcal{S}(\pi_1(\cG))$. The choice of a lift $\dot{g} \in G_1$ of $g \in \pi_1(\cG)$ induces a strict $\ZTwo$-graded crossed module homomorphism $\gamma_{\dot{g}}: \cG_{P_1} \rightarrow \cG_{P_2}$ by
\[
\overline{P}_1 \owns p \mapsto \dot{g} p \dot{g}^{-1}, \qquad G_2 \owns x \mapsto {^{\dot{g}}} x.
\]
The associated $\ZTwo$-graded $2$-group homomorphism is $\gamma_{\dot{g}}: \tcG_{P_1} \rightarrow \tcG_{P_2}$. With this notation, define $\Phi(\gamma_g)$ by
\[
\Phi(\gamma_g)(\rho)=
\begin{cases}
{^{\pi(g)}}\rho \circ \gamma_{\dot{g}} & \mbox{if } \pi(P_1) = \pi(P_2), \\
\mathfrak{F}(\rho) \circ \gamma_{\dot{g}} & \mbox{otherwise}.
\end{cases}
\]
Here $\mathfrak{F}: \pi_0(\RRep_{[1]}(\tcG_P)) \rightarrow \pi_0(\Rep_{[1]}(\tcG_{P_0}))$ is the forgetful map. Well-definedness of $\Phi$, that is, independence of the lift $\dot{g}$ of $g$, can be verified as in \cite[Lemma 4.1]{rumynin2018}.

\begin{Def}
The Real Burnside ring of $\cG$ is $\mathbb{B}^{\Phi}_{\mathbb{A}}(\cG) : = \mathbb{B}^{\Phi}_{\mathbb{A}}(\pi_1(\cG))$.
\end{Def}

It follows from the general theory of generalized Burnside rings that $\mathbb{B}^{\Phi}_{\mathbb{A}}(\cG)$ is generated as an $\mathbb{A}$-module by pairs $\langle \rho, P\rangle$, where $P \leq \pi_1(\cG)$ and $\rho \in \pi_0(\RRep_{[1]}(\tcG_P))$. The ring structure of $\mathbb{B}^{\Phi}_{\mathbb{A}}(\cG)$ is determined by the formula
\begin{multline*}
\langle \rho, P \rangle \cdot \langle \theta, Q \rangle = \\
\sum_{P g Q \in P \backslash \pi_1(\cG) \slash Q} \langle \Phi(\gamma_e : P \cap g Q g^{-1} \rightarrow P)(\rho) \boxtimes \Phi(\gamma_{g^{-1}} : P \cap g Q g^{-1} \rightarrow Q)(\theta), P \cap g Q g^{-1} \rangle.
\end{multline*}

\begin{Ex}
We consider three illustrative cases of the product.
\begin{enumerate}[label=(\roman*)]
\item Let $\spcl \in \pi_1(\cG)\setminus\pi_1(\cG)_0$ with lift $\dot{\spcl} \in G_1 \setminus G_0$ and identify $\pi_1(\cG)_0 \backslash \pi_1(\cG) \slash \pi_1(\cG)_0$ with $\{e, \spcl\}$. Then we have
\begin{eqnarray*}
\langle \rho, \pi_1(\cG)_0 \rangle \cdot \langle \theta, \pi_1(\cG)_0 \rangle
&=&
\langle \rho \boxtimes \theta, \pi_1(\cG)_0 \rangle 
+
\langle \rho \boxtimes \dot{\spcl} \cdot \theta^{\vee}, \pi_1(\cG)_0 \rangle,
\end{eqnarray*}
where the notation $\dot{\spcl} \cdot \theta^{\vee}$ is as in Section \ref{sec:indRealKAlg}.

\item Similarly, we can identify $\pi_1(\cG) \backslash \pi_1(\cG) \slash \pi_1(\cG)_0$ with $\{e\}$, so that
\[
\langle \rho, \pi_1(\cG) \rangle \cdot \langle \theta, \pi_1(\cG)_0 \rangle = \langle \Res^{\tcG}_{\tcG_0} (\rho) \boxtimes \theta, \pi_1(\cG)_0 \rangle.
\]

\item Finally, identifying $\pi_1(\cG) \backslash \pi_1(\cG) \slash \pi_1(\cG)$ with $\{e\}$, we have
\[
\langle \rho, \pi_1(\cG) \rangle \cdot \langle \theta, \pi_1(\cG) \rangle = \langle \rho \boxtimes \theta, \pi_1(\cG) \rangle.
\]
\end{enumerate}
\end{Ex}

More generally, we can define the symbol $\langle \rho, P \rangle$ without the assumption that $\rho$ is one dimensional. To do so, set $\langle \rho_1 \boxplus \rho_2, P \rangle \coloneqq \langle \rho_1, P \rangle + \langle \rho_2, P \rangle$ for any two (Real) 2-representations $\rho_1$, $\rho_2$. Set also $\langle \Ind_{\tcG_P}^{\tcG_Q} \rho,  Q \rangle \coloneqq \langle \rho,  P \rangle$ for subgroups $P \leq Q \leq \pi_1(\cG)$. Here $\Ind_{\tcG_P}^{\tcG_Q}$ has one of three meanings, depending on the $\ZTwo$-gradings of $P$ and $Q$. Theorem \ref{thm:ksDecomp} implies that these definitions are unambiguous. 

For any subgroup $P \leq \pi_1(\cG)$ and $g \in \pi_1(\cG)$, the relation
\[
\langle \rho, P \rangle = \langle \Phi(\gamma_g)(\rho), g^{-1} P g \rangle
\]
holds. In particular, when $P$ is trivially graded and $\pi(g)=-1$, this relation becomes that of Lemma \ref{lem:HIndSymmetry}. Note that there is no reason for this relation to hold in $\mathbb{B}_{\mathbb{A}}^{\Phi}(\cG_0)$.

The Grothendieck group $K_0(\mathcal{V})$ of a bicategory $\mathcal{V}$ is defined to be the free abelian group generated by equivalence classes of objects of $\mathcal{V}$. If $\mathcal{V}$ is symmetric monoidal, then $K_0(\mathcal{V})$ has the structure of a commutative ring.

The (ungraded) Burnside ring of $\cG_0$, defined to be $\mathbb{B}^{\Phi}_{\mathbb{A}}(\cG_0) : = \mathbb{B}^{\Phi}_{\mathbb{A}}(\pi_1(\cG_0))$, is shown in \cite[Proposition 4.2]{rumynin2018} to be isomorphic to $ K_0(\Rep_{2\Vect_{\Fld}}(\tcG_0)) \otimes_{\mathbb{Z}} \mathbb{A}$. The next result gives a Real generalization.

\begin{Thm}
\label{thm:grothGroupDescr}
Assume that $G_1$ is finite. Then the assignment $\langle \rho, P \rangle \mapsto \Ind_{\tcG_P}^{\tcG} \rho$ extends to an $\mathbb{A}$-algebra isomorphism $\mathcal{I}: \mathbb{B}^{\Phi}_{\mathbb{A}}(\cG) \xrightarrow[]{\sim} K_0(\RRep_{2 \Vect_{\Fld}}(\tcG)) \otimes_{\mathbb{Z}} \mathbb{A}$.
\end{Thm}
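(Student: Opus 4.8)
The plan is to reduce the statement to the previously established structure theory and then check that the evident map is a well-defined ring homomorphism with an inverse. Recall that by Corollary~\ref{thm:Real2RepIsoClass} (using $G_1$ finite, so all cocycles are realizable by Lemma~\ref{lem:finiteRealizable}) the set $\pi_0(\RRep_{2 \Vect_{\Fld}}(\tcG))$ is in bijection with $\cG$-Morita equivalence classes of separable strict $\cG$-algebras, hence $K_0(\RRep_{2 \Vect_{\Fld}}(\tcG))$ is the free abelian group on these classes with product $\boxtimes$. On the other side, $\mathbb{B}^{\Phi}_{\mathbb{A}}(\cG)$ is generated as an $\mathbb{A}$-module by symbols $\langle \rho, P\rangle$ with $P \leq \pi_1(\cG)$ and $\rho \in \pi_0(\RRep_{[1]}(\tcG_P))$. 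First I would define $\mathcal{I}$ on generators by $\langle \rho, P\rangle \mapsto \Ind_{\tcG_P}^{\tcG} \rho$, where $\Ind$ means $\HInd$ or $\RInd$ according to the grading of $P$, as set up in Section~\ref{sec:indRealKAlg} and transported to $2$-modules via $\Theta$ at the end of Section~\ref{sec:morita2Reps}.

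The key steps are then: (1) \emph{Well-definedness.} Show $\mathcal{I}$ respects the defining relations of $\mathbb{B}^{\Phi}_{\mathbb{A}}(\cG)$: the additivity relation $\langle \rho_1 \boxplus \rho_2, P\rangle = \langle \rho_1,P\rangle + \langle\rho_2,P\rangle$ holds because $\Ind$ commutes with $\boxplus$; the conjugation relation $\langle \rho, P\rangle = \langle \Phi(\gamma_g)(\rho), g^{-1}Pg\rangle$ holds because $\Ind_{\tcG_P}^{\tcG} \rho \simeq \Ind_{\tcG_{g^{-1}Pg}}^{\tcG} \Phi(\gamma_g)(\rho)$, which in the ungraded case is transitivity of induction precomposed with an inner equivalence of $\tcG$, and in the mixed-grading case is exactly Lemma~\ref{lem:HIndSymmetry} transported through $\Theta$. (2) \emph{Surjectivity.} By Theorem~\ref{thm:ksDecomp}, every realizable Real $2$-module over $\tcG$ decomposes as $\boxplus_i \Ind_{\tcG_{H_i}}^{\tcG} \nu_i$ with each $\nu_i$ one-dimensional; since $G_1$ is finite every Real $2$-module is realizable, so each generator of $K_0(\RRep_{2 \Vect_{\Fld}}(\tcG))$ is hit by $\sum_i \langle \nu_i, \overline{H_i}/(\text{image in }\pi_1)\rangle$, more precisely by the corresponding symbols indexed by the images of the $H_i$ in $\pi_1(\cG)$. (3) \emph{Injectivity.} The uniqueness clauses in Theorem~\ref{thm:ksDecomp}(i)--(ii) say precisely that the indecomposable induced pieces, together with their $(H,\nu)$-data up to $G_1$-conjugacy, are invariants; this matches the standard basis of $\mathbb{B}^{\Phi}_{\mathbb{A}}(\cG)$ indexed by conjugacy classes of pairs $(P,\rho)$ with $\rho$ indecomposable in $\pi_0(\RRep_{[1]}(\tcG_P))$, so $\mathcal{I}$ sends a basis to a basis. (4) \emph{Multiplicativity.} Compare the product formula for $\langle \rho,P\rangle \cdot \langle\theta,Q\rangle$, a sum over double cosets $P\backslash \pi_1(\cG)/Q$ of terms $\langle \Phi(\gamma_e)(\rho) \boxtimes \Phi(\gamma_{g^{-1}})(\theta), P \cap gQg^{-1}\rangle$, with a Mackey-type decomposition $\Ind_{\tcG_P}^{\tcG} \rho \boxtimes \Ind_{\tcG_Q}^{\tcG} \theta \simeq \boxplus_g \Ind_{\tcG_{P\cap gQg^{-1}}}^{\tcG}\big(\Res\, \rho \boxtimes \Res(g\cdot{}^{\pi(g)}\theta)\big)$ for $\cG$-algebras.

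I expect step (4), the Mackey decomposition for the three flavours of induction of $\cG$-algebras, to be the main obstacle. In the ungraded/ordinary case it follows from the corresponding fact in \cite[\S 5]{rumynin2018}, and for the $H_2=G_2$ coinduction one can import the classical Mackey formula for (co)induced modules; but in the genuinely $\ZTwo$-graded situation one must track how $\spcl$-twists and the duality $(-)^{\vee}$ interact with the double-coset bookkeeping, using the module-pseudofunctor identity \eqref{eq:hypModFunctor}, namely $B \boxtimes \HInd_{\cG_0}^{\cG} A \simeq \HInd_{\cG_0}^{\cG}(\Res_{\cG_0}^{\cG} B \boxtimes A)$, together with Lemma~\ref{lem:HIndSymmetry}, to reorganize the summands into the correct $G_1$-conjugacy classes of pairs. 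A clean way to organize this is to first prove the Mackey decomposition purely algebraically for $\Ind_{\cH}^{\cG}$ (splitting the transversal $\sT$ along double cosets and recognizing each block as an induced algebra), then specialize to the $\HInd$ and $\RInd$ cases, checking the grading of $P \cap gQg^{-1}$ matches the flavour of $\Ind$ appearing on the right. Once multiplicativity is in hand, the identification of $\mathcal{I}$ as an $\mathbb{A}$-algebra isomorphism is immediate from steps (1)--(3); in particular this recovers and is compatible with the ungraded statement \cite[Proposition 4.2]{rumynin2018} via the restriction pseudofunctor.
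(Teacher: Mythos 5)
Your steps (1)--(3) (well-definedness, surjectivity, injectivity via Theorem~\ref{thm:ksDecomp} together with Lemma~\ref{lem:finiteRealizable} and Lemma~\ref{lem:HIndSymmetry}) match the paper's argument that $\mathcal{I}$ is an $\mathbb{A}$-module isomorphism, so that part is fine. Where you diverge is multiplicativity. The paper does \emph{not} prove a Mackey decomposition for the graded inductions. Instead, it organizes the double-coset sum so that each case reduces to a computation inside the already-known ungraded ring $\mathbb{B}^{\Phi}_{\mathbb{A}}(\cG_0) \simeq K_0(\Rep_{2\Vect_{\Fld}}(\tcG_0)) \otimes_{\mathbb{Z}} \mathbb{A}$ (\cite[Proposition 4.2]{rumynin2018}), and then transports the resulting identity back up via $\HInd_{\tcG_0}^{\tcG}$, using only the module-pseudofunctor identity \eqref{eq:hypModFunctor} and Lemma~\ref{lem:HIndSymmetry}. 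Concretely: for $P,Q$ trivially graded one chooses a transversal of the form $\mathcal{T}\sqcup\mathcal{T}\spcl$, recognizes the two halves of the sum as $\langle\rho,P\rangle\cdot_0\langle\theta,Q\rangle$ and $\langle\rho,P\rangle\cdot_0\langle\dot\spcl\cdot\theta^{\vee},\spcl Q\spcl^{-1}\rangle$ in $\mathbb{B}^{\Phi}_{\mathbb{A}}(\cG_0)$, applies the ungraded isomorphism, and observes the result is $\Res_{\tcG_0}^{\tcG}(\HInd_{\tcG_Q}^{\tcG}\theta)$ paired with $\Ind_{\tcG_P}^{\tcG_0}\rho$, so hyperbolic induction lands on $\HInd_{\tcG_P}^{\tcG}\rho\boxtimes\HInd_{\tcG_Q}^{\tcG}\theta$; the mixed case uses \eqref{eq:hypModFunctor} explicitly.

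Your proposal to prove a direct Mackey formula $\Ind_{\tcG_P}^{\tcG}\rho\boxtimes\Ind_{\tcG_Q}^{\tcG}\theta\simeq\boxplus_g\Ind_{\tcG_{P\cap gQg^{-1}}}^{\tcG}(\ldots)$ for all three flavours of induction of $\cG$-algebras would also yield the result, and is conceptually natural, but it is genuinely more work than the paper does: you would need to establish that formula in the $\ZTwo$-graded setting, tracking the interaction of $\spcl$-twists and $(-)^{\vee}$ across double cosets, which is exactly what you flag as the obstacle and leave unresolved. As written, step (4) is therefore a real gap. The paper's route buys exactly this: by only ever inducing from $\cG_0$ to $\cG$ and leaning on \eqref{eq:hypModFunctor}, it never has to confront the graded Mackey formula at all. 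If you want to keep your approach, the cleanest patch is probably to first reduce, as the paper does, to the ungraded Mackey statement implicit in \cite[Proposition 4.2]{rumynin2018}, rather than proving a new graded one from scratch.
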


\begin{proof}
Since $G_1$ is finite, the finiteness assumptions of Section \ref{sec:indBiadjunct} are satisfied, so that we have (Real) induction $2$-functors on (Real) $2$-representations. Moreover, Lemma \ref{lem:finiteRealizable} applies, so that all Real $2$-modules over $\tcG$ are realizable.

  It follows from Theorem \ref{thm:ksDecomp} that $\mathcal{I}$ is an $\mathbb{A}$-module isomorphism. We need to show that $\mathcal{I}$ is also a map of algebras.

As a first case, suppose that $P, Q \leq \pi_1(\cG)$ are trivially graded. Let $\spcl \in \pi_1(\cG) \setminus \pi_1(\cG)_0$. Fix a complete set $\mathcal{T} \subset \pi_1(\cG)_0$ of representatives of $P \backslash \pi_1(\cG)_0 \slash Q$. Then $\mathcal{T} \sqcup \mathcal{T}\cdot \spcl$ is a complete set of representatives of $P \backslash \pi_1(\cG) \slash Q$ and we can write
\begin{multline*}
\langle \rho, P \rangle \cdot \langle \theta, Q \rangle = \\
\sum_{g \in \mathcal{T}} \langle \Phi(\gamma_e : P \cap g Q g^{-1} \rightarrow P)(\rho) \boxtimes \Phi(\gamma_{g^{-1}} : P \cap g Q g^{-1} \rightarrow Q)(\theta), P \cap g Q g^{-1} \rangle +\\
\sum_{g \in \mathcal{T}} \langle \Phi(\gamma_e : P \cap g \spcl Q (g\spcl)^{-1} \rightarrow P)(\rho) \boxtimes \Phi(\gamma_{(g\spcl)^{-1}} : P \cap g \spcl Q (g\spcl)^{-1} \rightarrow Q)(\theta), P \cap g \spcl Q (g\spcl)^{-1}  \rangle.
\end{multline*}
Let us first interpret the right hand side as an element of the (ungraded) ring $\mathbb{B}_{\mathbb{A}}^{\Phi}(\cG_0)$ with product $- \cdot_0 -$. The first and second lines are then $\langle \rho, P \rangle \cdot_0 \langle \theta, Q \rangle$ and $\langle \rho, P \rangle \cdot_0 \langle \dot{\spcl} \cdot \theta^{\vee}, \spcl Q \spcl^{-1} \rangle$, respectively. Under the isomorphism $\mathbb{B}_{\mathbb{A}}^{\Phi}(\cG_0) \simeq K_0(\Rep_{2\Vect_{\Fld}}(\tcG_0)) \otimes_{\mathbb{Z}} \mathbb{A}$, the right hand side is thus
\[
\big(
\Ind_{\tcG_P}^{\tcG_0} (\rho) \boxtimes \Ind_{\tcG_Q}^{\tcG_0} (\theta)
\big)
\boxplus
\big(
\Ind_{\tcG_P}^{\tcG_0} (\rho) \boxtimes 
\Ind_{\tcG_{\spcl Q \spcl^{-1}}}^{\tcG_0} (\dot{\spcl} \cdot \theta^{\vee})
\big)
\simeq
\Ind_{\tcG_P}^{\tcG_0} (\rho) \boxtimes \Res_{\tcG_0}^{\tcG} \big(
\HInd_{\tcG_Q}^{\tcG} (\theta)
\big).
\]
Applying the map $\mathcal{I}$ then corresponds to applying hyperbolic induction. Doing so gives $\HInd_{\tcG_P}^{\tcG} (\rho) \boxtimes \HInd_{\tcG_Q}^{\tcG} (\theta)$, as required.

If instead $P$ is non-trivially graded and $Q$ is trivially graded, then we can choose a complete set of representatives of $P \backslash \pi_1(\cG) \slash Q$ of the form $\mathcal{T} \subset \pi_1(\cG)_0$ and
\begin{multline*}
\langle \rho, P \rangle \cdot \langle \theta, Q \rangle = \\
\sum_{g \in \mathcal{T}} \langle \Phi(\gamma_e : P \cap g Q g^{-1} \rightarrow P)(\rho) \boxtimes \Phi(\gamma_{g^{-1}} : P \cap g Q g^{-1} \rightarrow Q)(\theta), P \cap g Q g^{-1} \rangle.
\end{multline*}
Interpreted as an element of $\mathbb{B}_{\mathbb{A}}^{\Phi}(\cG_0)$, the right hand side is $\langle \rho, P_0 \rangle \cdot_0 \langle \theta, Q \rangle$, which corresponds to
\[
\Ind_{\tcG_{P_0}}^{\tcG_0} \big(
\Res_{\tcG_{P_0}}^{\tcG_P}(\rho)
\big)
\boxtimes
\Ind_{\tcG_Q}^{\tcG_0} (\theta)
\simeq
\Res_{\tcG_0}^{\tcG} \big(
\RInd_{\tcG_P}^{\tcG} (\rho)
\big)
\boxtimes
\Ind_{\tcG_Q}^{\tcG_0} (\theta)
\]
in $\mathbb{B}_{\mathbb{A}}^{\Phi}(\cG_0)$. Applying $\HInd_{\tcG_0}^{\tcG}$ and using the equivalence \eqref{eq:hypModFunctor} then gives $\RInd_{\tcG_P}^{\tcG} (\rho) \boxtimes \HInd_{\tcG_Q}^{\tcG} (\theta)$, as required.

The case in which both $P$ and $Q$ are non-trivially $\ZTwo$-graded is similar.
\end{proof}


\section{Real categorical character theory}
\label{sec:RealChar}

The categorical character theory of $2$-representations was developed by Bartlett \cite[\S 3]{bartlett2011} and Ganter and Kapranov \cite[\S 4]{ganter2008} in the case of finite groups and extended to essentially finite $2$-groups by Rumynin and Wendland \cite[\S 5]{rumynin2018}. A Real generalization for finite groups was introduced in \cite[\S 5]{mbyoung2018c}. In each of these settings, there are two levels of characters, reflecting to the higher categorical nature of the representations involved. Motivated by work of Willerton \cite{willerton2008}, in this section we give a geometric formulation of this theory, simultaneously generalizing it to Real $2$-representations of essentially finite $2$-groups.

\subsection{Loop spaces of crossed modules}
\label{sec:loopSpaces}

Let $G$ be a finite group. It is well-known (see, for example, \cite{willerton2008}) that the loop groupoid of $B G$ is equivalent to action groupoid $G \git G$, with $G$ acting by conjugation, while the double loop groupoid is equivalent to the action groupoid associated to the simultaneous conjugation of commuting pairs in $G$. In this section we describe crossed module generalizations of these statements.

Let $\cG$ be a crossed module. We do not impose any finiteness conditions on $\cG$. Denote by $\vert \cG \vert$ the geometric realization of $\cG$. The homotopy $2$-type of the free loop space $\textnormal{Maps}(S^1, \vert \cG \vert)$ can be modelled by a crossed module in groupoids, which we denote by
\gls{glLG}.
A result of Brown \cite[Theorem 2.1]{brown2010} gives the following explicit description of $\mathcal{L} \cG$. The base groupoid $\mathcal{L}_{\leq 1} \cG = (\mathcal{L}_1 \cG \righttwoarrow \mathcal{L}_0 \cG)$ has objects $\mathcal{L}_0 \cG =G_1$ and morphisms $g_1 \rightarrow g_2$  given by pairs $(f,x) \in G_1 \times G_2$ which satisfy $g_2 = f \partial(x) g_1 f^{-1}$. Morphisms are composed according to the rule
\[
\big(
g_2\xrightarrow[]{(f_2,x_2)} g_3
\big)
\circ
\big(
g_1 \xrightarrow[]{(f_1,x_1)} g_2
\big)
=
g_1 \xrightarrow[]{(f_2 f_1,{^{f_1^{-1}}}x_2 x_1)} g_3.
\]
The groupoid $\mathcal{L}_2\cG$ is totally disconnected, that is, $\mathcal{L}_2\cG$ is equal to the disjoint union of its connected components. The group $\mathcal{L}_2\cG(g)$ sitting over $g \in \mathcal{L}_0\cG$ is $G_2$ and the restriction of the boundary functor $\partial: \mathcal{L}_2 \cG \rightarrow \mathcal{L}_{\leq 1} \cG$ to $\mathcal{L}_2\cG(g)$ is given on morphisms by
\[
\partial_g(z) =(\partial z, z^{-1}({^{g}}z)), \qquad z \in G_2.
\]
The action of $(g_1 \xrightarrow[]{(f,x)} g_2) \in \mathcal{L}_{\leq 1} \cG$ on $z \in \mathcal{L}_2\cG(g)$ is defined only when $g=g_1$, in which case it is equal to ${^{f}}z \in \mathcal{L}_2\cG(g_2)$.

The fundamental groupoid $\pi_{\leq 1}(\mathcal{L} \cG)$ of $\mathcal{L} \cG$ is defined to be the quotient of $\mathcal{L}_{\leq 1} \cG$ by the totally disconnected normal subgroupoid $\partial(\mathcal{L}_2 \cG)$. Similarly, the loop groupoid $\Lambda \pi_{\leq 1}(\mathcal{L} \cG)$ is defined to be the groupoid of functors $B \mathbb{Z} \rightarrow \pi_{\leq 1}(\mathcal{L} \cG)$. Concretely, objects of $\Lambda \pi_{\leq 1}(\mathcal{L} \cG)$ are pairs $(g, \gamma)$, where $g \in G_1$ and $\gamma \in \End_{\pi_{\leq 1}(\mathcal{L} \cG)}(g) \simeq \End_{\mathcal{L}_{\leq 1} \cG}(g) \slash \im(\partial_g)$. A morphism $\mu: (g_1,\gamma_1) \rightarrow (g_2,\gamma_2)$ is a morphism $\mu: g_1 \rightarrow g_2$ in $\pi_{\leq 1}(\mathcal{L} \cG)$ which satisfies $\mu \gamma_1 = \gamma_2 \mu$.

Reflection of the circle $S^1$ defines a weak involution $\mathfrak{i}: \mathcal{L} \cG \rightarrow \mathcal{L} \cG$. Following the proof of \cite[Theorem 2.1]{brown2010}, we find that $\mathfrak{i}$ is given on $\mathcal{L}_0 \cG$ by $\mathfrak{i}(g) = g^{-1}$, on $\mathcal{L}_1 \cG$ by
\[
\mathfrak{i}(g_1 \xrightarrow[]{(f,x)} g_2) = g^{-1}_1  \xrightarrow[]{(f, {^{g_1^{-1}}} x^{-1})} g^{-1}_2
\]
and on $\mathcal{L}_2\cG$ by $\mathfrak{i}(z) = z$.

Suppose now that we are given a $\ZTwo$-grading $\pi: \cG \rightarrow \ZTwo$. At the level of classifying spaces, $\pi$ determines an equivalence class of double covers of $\cG$ which, as is easy to verify, is represented by the canonical map $\cG_0 \rightarrow \cG$. By choosing an element $\spcl \in G_1 \setminus G_0$, the non-trivial deck transformation of $\cG_0$ can be realized as
\[
G_0 \owns g \mapsto {\spcl}g \spcl^{-1}, \qquad G_2 \owns x \mapsto {^{\spcl}}x.
\]
Note that this squares to the inner automorphism determined by ${\spcl}^2 \in G_0$ and, hence, is homotopic to the identity.

Similarly, $\pi$ induces a grading $\mathcal{L} \cG \rightarrow \ZTwo$. The associated double cover is realized by the canonical map $\mathcal{L} (\cG_0) \rightarrow \mathcal{L} \cG$ together with the non-trivial deck transformation $\sigma_{{\spcl}} : \mathcal{L} (\cG_0) \rightarrow \mathcal{L} (\cG_0)$ given on $\mathcal{L}_0 (\cG_0)$ by $\sigma_{{\spcl}}(g) = {\spcl} g {\spcl}^{-1}$, on $\mathcal{L}_1 (\cG_0)$ by
\[
\sigma_{{\spcl}}(g_1 \xrightarrow[]{(f,x)} g_2) = {\spcl} g_1 {\spcl}^{-1} \xrightarrow[]{({\spcl} f {\spcl}^{-1}, {^{{\spcl}}} x)} {\spcl} g_2 {\spcl}^{-1}
\]
and on $\mathcal{L}_2(\cG_0)$ by $\sigma_{{\spcl}}(z) = {^{{\spcl}}}z$.

In the setting of Real representation theory, an unoriented version
\gls{glLGr}
of the free loop space $\mathcal{L} (\cG_0)$ arises in a natural way; the superscript ``ref'' stands for reflection. This is now a $\ZTwo$-graded crossed module in groupoids. The base groupoid $\mathcal{L}^{\refl}_{\leq 1} \cG$ has objects $\mathcal{L}^{\refl}_0 \cG =G_0$ and morphisms $g_1 \rightarrow g_2$ given by pairs $(f,x) \in G_1 \times G_2$ which satisfy $g_2 = f \partial(x) g_1^{\pi(f)} f^{-1}$. The composition law is $(f_2,x_2) \circ (f_1,x_1) = (f_2 f_1,({^{f_1^{-1}}}x_2) \tilde{x}_1)$, where
\[
\tilde{x}_1
=
\begin{cases}
x_1 & \mbox{ if } \pi(f_2)=+1, \\
{^{(g_1^{-\pi(f_1)})}}x^{-1}_1 & \mbox{ if } \pi(f_2)=-1.
\end{cases}
\]
Each group $\mathcal{L}^{\refl}_2\cG(g)$ is again $G_2$ and $\partial_g(z) =(\partial z, z^{-1}({^{g}}z))$. The action of $\mathcal{L}^{\refl}_{\leq 1} \cG$ on $\mathcal{L}^{\refl}_2 \cG$ is as for $\mathcal{L} \cG$. There is an induced $\ZTwo$-grading $\pi: \mathcal{L}^{\refl} \cG \rightarrow B\ZTwo$ which records the degree of morphisms in $\mathcal{L}_{\leq 1}^{\refl} \cG$. The associated double cover is the canonical morphism $\mathcal{L} (\cG_0) \rightarrow \mathcal{L}^{\refl} \cG$ with the non-trivial deck transformation $\mathfrak{i}_{{\spcl}} \coloneqq \mathfrak{i} \circ \sigma_{{\spcl}} = \sigma_{{\spcl}} \circ \mathfrak{i}$, that is, the diagonal $\ZTwo$-action induced by loop reflection and the deck transformation of $\cG_0$.

For later convenience, if $\cG$ is trivially graded, we take $\mathcal{L}^{\refl} \cG$ to mean $\mathcal{L} \cG$.

\subsection{Real categorical characters and \texorpdfstring{$2$}{}-characters}
\label{sec:RealCatChar}

Let $\cG$ be a $\ZTwo$-graded crossed module. Let $\rho$ be a Real $2$-representation of $\tcG$ on an object $V$ of a ${\Fld}$-linear bicategory $\mathcal{V}$ with weak duality involution $(-)^\circ$. The Real categorical character
\gls{glTr}
of $\rho$ is defined as follows. For each $g \in G_0$, define a vector space 
\[
\Tr_{\rho}(g) \coloneqq 2 \Hom_{\mathcal{V}}(\id_V,\rho(g)).
\]
We assume that $\Tr_{\rho}(g)$ is finite dimensional over $\Fld$. We do not assign a value of $\Tr_{\rho}$ to elements of $G_1 \setminus G_0$. For each $(f,x) \in G_1 \times G_2$, define a linear map
\[
\Tr_{\rho}(g; f,x) : \Tr_{\rho}(g) \rightarrow \Tr_{\rho}(f \partial(x) (g^{\pi(f)})  f^{-1})
\]
so that its value on $( \id_V \xRightarrow[]{u} \rho(g) ) \in \Tr_{\rho}(g)$ is the composition
\[
\id_V \Rightarrow \rho(f) \circ \rho(f^{-1}) \xRightarrow[]{u} \rho(f) \circ \rho(g) \circ \rho(f^{-1}) \xRightarrow[]{\rho(x)} \rho(f) \circ \rho(\partial(x) g) \circ \rho(f^{-1}) \Rightarrow \rho(f \partial(x) g f^{-1})
\]
when $\pi(f)=+1$, as in \cite{ganter2008,rumynin2018}, and
\begin{multline*}
\id_V \Rightarrow \rho(f) \circ \rho(f^{-1})^{\circ} \Rightarrow \rho(f) \circ \rho(g)^{\circ} \circ \rho(g^{-1})^{\circ} \circ \rho(f^{-1})^{\circ} \xRightarrow[]{u^{\circ}} \\
\rho(f) \circ \rho(g^{-1})^{\circ} \circ \rho(f^{-1})^{\circ} \xRightarrow[]{(\rho(x)^{-1})^{\circ}} \rho(f) \circ \rho(\partial(x) g^{-1})^{\circ} \circ \rho(f^{-1})^{\circ} \Rightarrow \rho(f \partial(x) g^{-1} f^{-1})
\end{multline*}
when $\pi(f)=-1$. The unlabelled maps appearing in the previous compositions are $2$-isomorphisms constructed from the composition $2$-isomorphisms of $\rho$. In the second composition $\rho(x)$ is a $2$-morphism $\rho(g^{-1}) \Rightarrow \rho(\partial(x) g^{-1})$, so that $\rho(x)^{\circ}: \rho(\partial(x)g^{-1})^{\circ} \Rightarrow \rho(g^{-1})^{\circ}$.

\begin{Prop}
\label{prop:RealCatChar}
The Real categorical character of $\rho$ is a functor $\Tr_{\rho} : \mathcal{L}^{\refl} \cG \rightarrow \Vect_{\Fld}$.
\end{Prop}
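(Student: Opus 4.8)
The statement asserts that $\Tr_\rho$ assembles into a functor out of the crossed module in groupoids $\mathcal{L}^{\refl}\cG$. Since $\Vect_\Fld$ is an ordinary category (viewed as a crossed module in groupoids with trivial top part), a functor $\mathcal{L}^{\refl}\cG \to \Vect_\Fld$ is the same thing as a functor from the base groupoid $\mathcal{L}^{\refl}_{\leq 1}\cG$ that sends every morphism in the image of $\partial_g: \mathcal{L}^{\refl}_2\cG(g) \to \mathcal{L}^{\refl}_{\leq 1}\cG$ to an identity (equivalently, a functor from the fundamental groupoid $\pi_{\leq 1}(\mathcal{L}^{\refl}\cG)$). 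So the plan is: (1) check $\Tr_\rho$ is well-defined on objects and morphisms and lands in $\Vect_\Fld$; (2) check functoriality, i.e. $\Tr_\rho(g; f_2,x_2)\circ\Tr_\rho(g;f_1,x_1)$ agrees with $\Tr_\rho(g; (f_2,x_2)\circ(f_1,x_1))$ using the composition law $(f_2,x_2)\circ(f_1,x_1)=(f_2 f_1,({}^{f_1^{-1}}x_2)\tilde x_1)$ of Section~\ref{sec:loopSpaces}, treating the four cases according to the signs $\pi(f_1),\pi(f_2)\in\ZTwo$; (3) check that $\Tr_\rho(g;\partial_g(z)) = \Tr_\rho(g; \partial z, z^{-1}({}^g z))$ is the identity on $\Tr_\rho(g)$ for $z\in G_2$, which is what kills the top part of $\mathcal{L}^{\refl}\cG$.

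\textbf{Carrying it out.} For step (1): on objects, $\Tr_\rho(g)=2\Hom_\mathcal{V}(\id_V,\rho(g))$ is a finite dimensional $\Fld$-vector space by hypothesis. On a morphism $(f,x): g_1\to g_2$ with $g_2 = f\partial(x)g_1^{\pi(f)}f^{-1}$, the codomain of $\Tr_\rho(g_1;f,x)$ is exactly $\Tr_\rho(f\partial(x)g_1^{\pi(f)}f^{-1})=\Tr_\rho(g_2)$, so the target matches; $\Fld$-linearity is immediate since all the constituent operations (whiskering with $2$-isomorphisms, post-composition with $\rho(x)$ or $(\rho(x)^{-1})^\circ$, and applying $(-)^\circ$) are $\Fld$-linear. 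The identity morphism $(e,e_{G_2}): g\to g$ is sent to the identity because all the intervening $2$-isomorphisms are unit/associativity constraints of $\rho$ and $\rho(e_{G_2})$ is the identity $2$-morphism; this follows from the unitality normalization of $\rho$. For step (2), the orientation-preserving case ($\pi(f_1)=\pi(f_2)=+1$) is exactly the computation in \cite[\S 4]{ganter2008} and \cite[\S 5]{rumynin2018}, so it can be cited. The three cases involving a reflection require tracking the $(-)^\circ$ functor and the weak-duality coherence data $\mu$; here one uses the pseudonaturality of the composition constraints of $\rho$ (as a contravariance-preserving pseudofunctor) together with the coherence of $\mu: 1_\mathcal{V}\Rightarrow(-)^\circ\circ((-)^\circ)^{\co}$ to match the two sides. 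The sign bookkeeping in the composition law of $\mathcal{L}^{\refl}_{\leq 1}\cG$ (the definition of $\tilde x_1$) is designed precisely so that, when $\pi(f_2)=-1$, the inner loop $x_1$ gets conjugated and inverted, which is what the $(-)^\circ$ in the second displayed composition produces. For step (3), one substitutes $f=\partial z$, $x=z^{-1}({}^g z)$ into the definition; the two cases $\pi(\partial z)=+1$ always holds since $\im(\partial)\le\ker(\pi)$, so only the orientation-preserving formula is needed, and the claim $\Tr_\rho(g;\partial z, z^{-1}({}^g z))=\id$ is the $2$-group analogue of the observation in \cite[\S 5]{rumynin2018} that inner automorphisms act trivially on the categorical character — it follows by expanding $\rho(\partial z) \cong \partial(\rho(x))$ type relations encoded in the crossed-module structure of $\AUT^{\gen}$ and using that $\rho(z^{-1}({}^g z))$ conjugates back.

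\textbf{The main obstacle.} The routine part is checking linearity and well-definedness of targets; the genuine work is step (2) in the reflection cases, where one must verify that the long composite of $2$-isomorphisms defining $\Tr_\rho(g;f_2,x_2)\circ\Tr_\rho(g;f_1,x_1)$ equals the one defining $\Tr_\rho(g; f_2f_1, ({}^{f_1^{-1}}x_2)\tilde x_1)$. This is a diagram chase in the bicategory $\mathcal{V}$ that mixes the associator and unitors of $\rho$, the pseudofunctoriality data of $(-)^\circ$, and the coherence $2$-cells of the weak duality involution $\mu$; the bookkeeping of which object is a $(-)^\circ$ of which, and when an evaluation $\ev_V:\mu_V$-type cell is needed, is exactly where an error could slip in — it is the same delicate coherence that governs the definition of $1\Aut^{\gen}_\mathcal{V}(V)$ in Section~\ref{sec:genAut2Grp} via the correction terms $\mu_V^{\delta_{\pi(f_2),\pi(f_1),-1}}$. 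I would organize this by first reducing to the universal case $\mathcal{V}=1\Aut^{\gen}_\mathcal{V}(V)$ (so $\rho$ is the tautological Real $2$-representation), then doing the chase once there, and invoking functoriality of the whole construction in $\rho$ to conclude in general; I expect the bulk of the verification to be left to the reader, as in Proposition~\ref{prop:weakAlgTo2Mod}, with the orientation-preserving case cited and one reflection case written out as the illustrative example.
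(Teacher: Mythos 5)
Your overall plan matches the paper's proof exactly: reduce a functor $\mathcal{L}^{\refl}\cG \to \Vect_\Fld$ to a functor on the fundamental groupoid $\pi_{\leq 1}(\mathcal{L}^{\refl}\cG)$ (since $\Vect_\Fld$ is a $1$-category), verify functoriality on the base groupoid $\mathcal{L}^{\refl}_{\leq 1}\cG$, and then show that each $\partial_g(z) = (\partial z, z^{-1}({}^g z))$ is sent to the identity. The one place where your treatment diverges is the allocation of effort, and this is worth flagging. The paper declares step (2) ``straightforward'' and cites \cite[Proposition~4.10]{ganter2008} and \cite[\S 5]{rumynin2018}; you correctly point out that these references only cover the orientation-preserving case and that the reflection cases require tracking $(-)^\circ$ and the weak-duality coherence $\mu$. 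That is a fair criticism of the paper's brevity. But you then treat step (3) cursorily, saying it ``follows by expanding $\rho(\partial z) \cong \partial(\rho(x))$ type relations encoded in the crossed-module structure of $\AUT^{\gen}$'' --- this is not quite the right tool: $\AUT^{\gen}(A)$ is specific to the Morita setting, whereas $\rho$ here is a Real $2$-representation valued in an \emph{arbitrary} bicategory $\mathcal{V}$ with weak duality involution. The paper's actual argument for step (3) works intrinsically in $\tcG$: it applies the horizontal composition law of the $2$-group to rewrite $\rho(z^{-1}({}^g z))$ as the whiskered composite of $\rho(z^{-1})$, $\id_{\rho(g)}$ and $\rho(z)$, and then uses the compositor $2$-cells $\rho_{\partial z, e, \partial z^{-1}}$ and the normalization of $\rho$ to collapse the outer diagram to the identity $2$-morphism of $\id_V$. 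Your observation that $\pi(\partial z) = +1$ always (so only the orientation-preserving formula enters) is correct and is implicitly used by the paper. So the proposal is right in outline, but step (3) --- the one step the paper actually proves --- is the part of your sketch that most needs tightening, and the argument should be phrased in terms of horizontal composition in $\tcG$ rather than $\AUT^{\gen}$.
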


\begin{proof}
Because $\Vect_{\Fld}$ is a 1-category, this statement is equivalent to $\Tr_{\rho}$ defining a functor $\pi_{\leq 1}(\mathcal{L}^{\refl} \cG) \rightarrow \Vect_{\Fld}$. It is straightforward to verify that, as defined above, the structure maps assemble to a functor $\Tr_{\rho}: \mathcal{L}_{\leq 1}^{\refl} \cG \rightarrow \Vect_{\Fld}$; see \cite[Proposition 4.10]{ganter2008}, \cite[\S 5]{rumynin2018}. We need to show that this functor factors through $\pi_{\leq 1}(\mathcal{L}^{\refl} \cG)$, that is, for each $z \in G_2$, the linear maps
\[
\Tr_{\rho}(g; f,x) \; , \; \Tr_{\rho}(g; (f,x) \circ \partial_g(z)) : \Tr_{\rho}(g) \rightarrow \Tr_{\rho}(f \partial(x) (g^{\pi(f)}) f^{-1})
\]
are equal. To do so, it suffices to show that $\Tr_{\rho}(g; \partial z, z^{-1} ({^{g}}z))$ is the identity endomorphism of $\Tr_{\rho}(g)$. This endomorphism is given by the diagram
\begin{equation}
\label{diag:idenComp}
\begin{gathered}
\begin{tikzcd}[column sep=3.5cm,row sep=0.80cm,scale=0.8]
V \arrow[r,bend right=20, "\rho(\partial z)"{name=A, below}]{} \arrow[r,bend right=-20, "\rho(\partial z)"{name=B, above}]{} \arrow[rrr,bend right=30, "\id_{V}"{name=X, below}]{} \arrow[rrr,bend right=-30, "\rho(g)"{name=Y, above}]{}
&
V \arrow[r, "\rho(g)"{name=D, above}]{} \arrow[r,bend right=40, "\id_{V}"{name=C, below}]{} \arrow[r,bend right=-40, "\rho(\partial(x) g)"{name=E, above}]{} &
V
\arrow[r,bend right=20, "\rho(\partial z^{-1})"{name=F, below}]{} \arrow[r,bend right=-20, "\rho(\partial z^{-1})"{name=G, above}]{} & V.
\arrow[shorten <=3pt,shorten >=3pt,Rightarrow,to path={(A) -- node[label=right:\footnotesize \scriptsize $\id$] {} (B)}]{}
\arrow[shorten <=3pt,shorten >=3pt,Rightarrow,to path={(C) -- node[label=right:\footnotesize \scriptsize $u$] {} (D)}]{}
\arrow[shorten <=3pt,shorten >=3pt,Rightarrow,to path={(D) -- node[label=right:\footnotesize \scriptsize $\rho(x)$] {} (E)}]{}
\arrow[shorten <=3pt,shorten >=3pt,Rightarrow,to path={(F) -- node[label=right:\footnotesize \scriptsize $\id$] {} (G)}]{}
\arrow[shorten <=3pt,shorten >=3pt,Rightarrow,to path={(X) -- node[label=right:\footnotesize \scriptsize $\rho^{-1}_{\partial z,e,\partial z^{-1}}$] {} (C)}]{}
\arrow[shorten <=3pt,shorten >=3pt,Rightarrow,to path={(E) -- node[label=right:\footnotesize \scriptsize $\rho_{\partial z,\partial(x) g,\partial z^{-1}}$] {} (Y)}]{}
\end{tikzcd}
\end{gathered}
\end{equation}
The composition rules in $\tcG$ give
\[
\begin{tikzpicture}[baseline= (a).base]
\node[scale=0.9] (a) at (0,0){
\begin{tikzcd}[column sep=6.5em,scale=0.8]
V
  \arrow[bend left=25]{r}[name=U,below]{}{\rho(\partial(x)g)} 
  \arrow[bend right=25]{r}[name=D]{}[swap]{\rho(g)}
& 
V
     \arrow[Rightarrow,to path={(D) -- node[label=right:\footnotesize \scriptsize $\rho(x)$] {} (U)}]{}
\end{tikzcd}
};
\end{tikzpicture}
=
\begin{tikzpicture}[baseline= (a).base]
\node[scale=0.9] (a) at (0,0){
\begin{tikzcd}[column sep=6.5em,scale=0.8]
V
  \arrow[bend left=25]{r}[name=U,below]{}{\rho(\partial z^{-1})} 
  \arrow[bend right=25]{r}[name=D]{}[swap]{\id_{V}}
&
V
 \arrow[Rightarrow,to path={(D) -- node[label=right:\footnotesize \scriptsize $\rho(z^{-1})$] {} (U)}]{}
  \arrow[bend left=25]{r}[name=U,below]{}{\rho(g)} 
  \arrow[bend right=25]{r}[name=D]{}[swap]{\rho(g)}
& 
V
  \arrow[bend left=25]{r}[name=U2,below]{}{\partial z} 
   \arrow[bend right=25]{r}[name=D2]{}[swap]{\id_{V}}
&
V
     \arrow[Rightarrow,to path={(D) -- node[label=right:\footnotesize \scriptsize $\id$] {} (U)}]{}
      \arrow[Rightarrow,to path={(D2) -- node[label=right:\footnotesize \scriptsize $\rho(z)$] {} (U2)}]{}
\end{tikzcd}
};
\end{tikzpicture}
\]
so that the diagram \eqref{diag:idenComp} can be rewritten as
\[
\begin{gathered}
\begin{tikzcd}[column sep=3.5cm,row sep=0.75cm,scale=0.8]
V \arrow[r,bend right=15, "\rho(\partial z)"{name=A, below}]{} \arrow[r,bend right=-15, "\id_{V}"{name=B, above}]{} \arrow[rrr,bend right=25, "\id_{V}"{name=X, below}]{} \arrow[rrr,bend right=-25, "\rho(g)"{name=Y, above}]{} \arrow[shorten <=3pt,shorten >=3pt,Rightarrow,to path={(A) -- node[label=right:\footnotesize \scriptsize $\rho(z^{-1})$] {} (B)}]{} &
V
\arrow[r,bend right=15, "\id_{V}"{name=C, below}]{}  \arrow[r,bend right=-15, "\rho(g)"{name=D, above}]{} &
V
\arrow[shorten <=3pt,shorten >=3pt,Rightarrow,to path={(C) -- node[label=right:\footnotesize \scriptsize $u$] {} (D)}]{}
  \arrow[r,bend right=15, "\rho(\partial z^{-1})"{name=E, below}]{} \arrow[r,bend right=-15, "\id_{V}"{name=F, above}]{} \arrow[shorten <=3pt,shorten >=3pt,Rightarrow,to path={(E) -- node[label=right:\footnotesize \scriptsize $\rho(z)$] {} (F)}]{} &
V.
\arrow[shorten <=3pt,shorten >=3pt,Rightarrow,to path={(X) -- node[label=right:\footnotesize \scriptsize $\rho^{-1}_{\partial z,e,\partial z^{-1}}$] {} (C)}]{}
\arrow[shorten <=3pt,shorten >=3pt,Rightarrow,to path={(D) -- node[label=right:\footnotesize \scriptsize $\rho_{e,g,e}$] {} (Y)}]{}
\end{tikzcd}
\end{gathered}
\]
Since the diagram
\[
\begin{gathered}
\begin{tikzcd}[column sep=3.5cm,row sep=1.25cm]
V
\arrow[r,bend right=15, "\rho(\partial z)"{name=A, below}]{} \arrow[r,bend right=-15, "\id_{V}"{name=B, above}]{} \arrow[rrr,bend right=20, "\id_{V}"{name=X, below}]{} \arrow[shorten <=3pt,shorten >=3pt,Rightarrow,to path={(A) -- node[label=right:\footnotesize \scriptsize $\rho(z^{-1})$] {} (B)}]{} &
V
\arrow[r, "\id_{V}"{name=D, above}]{} &
V
\arrow[r,bend right=15, "\rho(\partial z^{-1})"{name=E, below}]{} \arrow[r,bend right=-15, "\id_{V}"{name=F, above}]{} \arrow[shorten <=3pt,shorten >=3pt,Rightarrow,to path={(E) -- node[label=right:\footnotesize \scriptsize $\rho(z)$] {} (F)}]{} &
V
\arrow[shorten <=5pt,shorten >=5pt,Rightarrow,to path={(X) -- node[label=right:\footnotesize \scriptsize $\rho^{-1}_{\partial z,e,\partial z^{-1}}$] {} (D)}]{}
\end{tikzcd}
\end{gathered}
\]
is the identity $2$-morphism of $\id_{V}$, we conclude that \eqref{diag:idenComp} is equal to $u: \id_{V} \Rightarrow \rho(g)$.
\end{proof}

Working in this geometric set-up, we define the Real $2$-character
\gls{glChi}
of $\rho$ to be the holonomy of $\Tr_{\rho} : \mathcal{L}^{\refl} \cG \rightarrow \Vect_{\Fld}$. It is therefore a locally constant function $\chi_{\rho}: \mathcal{L}\mathcal{L}^{\refl} \cG \rightarrow {\Fld}$, or equivalently, $\chi_{\rho}:\Lambda \pi_{\leq 1}(\mathcal{L}^{\refl} \cG) \rightarrow {\Fld}$. Using either of these interpretations, we find that $\chi_{\rho}$ is function on the set
\[
\mathbb{G} = \{(g;f,x) \in G_0 \times G_1 \times G_2 \mid f \partial(x) (g^{\pi(f)}) f^{-1} = g \}
\]
which is invariant under the action of $\im(\partial_g)$, $g \in G_0$ and the conjugation action of morphisms in $\pi_{\leq 1} (\mathcal{L} \cG)$. When $G_1$ is trivially graded this gives a more precise description of the symmetries of $2$-characters than previously available. For example, it refines the $G_0$-conjugation invariance of $\chi_{\rho}$ proved in \cite[Proposition 5.1]{rumynin2018}.

The space of locally constant functions $\mathcal{L}\mathcal{L}^{\refl} \cG \rightarrow \Fld$, that is, Real $2$-class functions for $\cG$, is a direct sum of elliptic $2$-class functions, which are supported on $\mathbb{G} \cap (G_0 \times G_0 \times G_2)$ and Klein $2$-class functions, which are supported on $\mathbb{G} \cap (G_0 \times (G_1 \backslash G_0) \times G_2)$. The nomenclature stems from the appearance, modulo $\im(\partial)$, of the relations for the fundamental groups of the $2$-torus and Klein bottle in the definition of $\mathbb{G}$. This leads to an interpretation of $\mathcal{L}\mathcal{L}^{\refl} \cG$ in terms of moduli spaces of principal $\tcG$-bundles over the torus or Klein bottle, where in the latter case a compatibility condition with the orientation double cover is imposed, cf. \cite[Section 3.2]{mbyoung2019}. The $2$-character of the underlying $2$-representation of $\rho$ is in the elliptic sector. The Real nature of $\rho$ is reflected in the Klein sector of $\chi_{\rho}$ as well as the invariance of the elliptic sector of $\chi_{\rho}$ under the conjugation action of odd elements of $\pi_{\leq 1} (\mathcal{L} \cG)$.

\subsection{Induced categorical characters}
\label{sec:indChar}

In this section we determine the form of induced Real categorical and $2$-characters.

\begin{Thm}
\label{thm:inducedCatChar}
Let $\cG$ be a $\ZTwo$-graded crossed module with crossed submodule $\cH$. Assume that $G_1$ is finite and $H_2 = G_2$. Let $\rho$ be a $2$-representation of $\tcH$ on $2\Vect_{\Fld}$, which is Real if $\cH$ is non-trivially graded. Then there is a canonical isomorphism
\[
\Tr_{\Ind_{\tcH}^{\tcG} \rho} \simeq \Ind_{\mathcal{L}^{\refl} \cH}^{\mathcal{L}^{\refl} \cG} \Tr_{\rho}
\]
of vector bundles $\mathcal{L}^{\refl}\cG \rightarrow \Vect_{\Fld}$.
\end{Thm}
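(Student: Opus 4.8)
The plan is to reduce the statement to an explicit computation on the skeletal model of $\mathcal{L}^{\refl}$, using the Morita-theoretic description of induction from Section~\ref{sec:indKAlg} and the fact, established in Proposition~\ref{prop:RealCatChar}, that both sides are genuine functors $\pi_{\leq 1}(\mathcal{L}^{\refl}\cG) \rightarrow \Vect_{\Fld}$. By Theorem~\ref{thm:moritaModel} we may replace $\rho$ by a strict $\cH$-algebra $A$ (a split semisimple one, since every Real $2$-module over $\tcH$ on $2\Vect_{\Fld}$ is realizable when $G_1$ is finite), so that $\Ind_{\tcH}^{\tcG}\rho \simeq \Theta_{\widetilde{A}}$ with $\widetilde{A} = \Ind_{\cH}^{\cG} A$ the $\cG$-algebra of Theorem~\ref{thm:IndFunctor}; since $H_2 = G_2$, we are in the simple case of Example~\ref{thm:maschke}-type where $\widetilde A \simeq \prod_{t\in\sT}{}^{\pi(t)}A_t$ as a vector space, indexed by a left transversal $\sT$ to $H_1$ in $G_1$. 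The first step is to make the fibres of the left-hand side concrete: for $g\in G_0$, Proposition~\ref{prop:weakAlgTo2Mod} and Lemma~\ref{lem:natTransBij} identify $\Tr_{\Theta_B}(g) = 2\Hom(\id_B, \Theta_B(g))$ with the space of elements $b\in B^\times$ (in fact $b\in B$ once one drops invertibility on the $2$-morphism level, as in \cite{ganter2008, rumynin2018}) such that $\omega_{B,1}(g) = \partial(b)$, i.e.\ with $\{b\in B : ba' = \omega_{B,1}(g)(a')b\ \forall a'\}$, the ``$\omega_{B,1}(g)$-twisted centraliser'' of $B$.

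The second step is the key identification: decomposing $\widetilde A = \prod_{t\in\sT}{}^{\pi(t)}A_t$ and using the explicit action formula $\omega_{\widetilde A,1}(g)([x\otimes a]_t) = [{}^{g}x\otimes{}^{\pi(t')}\omega_{A,1}(h)(a)]_{t'}$ with $gt = t'h$, one sees that a twisted-central element of $\widetilde A$ for $\omega_{\widetilde A,1}(g)$ is determined by its components in those factors $t$ with $gt\in tH_1$, i.e.\ $t^{-1}gt\in H_1$; and the component in the factor $t$ is then precisely a twisted-central element of ${}^{\pi(t)}A$ for $\omega_{A,1}(t^{-1}gt)$ (an ${}^{\pi(t)}(-)$-twist is needed when $\pi(t) = -1$). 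This is exactly the combinatorial content of the induced bundle $\Ind_{\mathcal{L}^{\refl}\cH}^{\mathcal{L}^{\refl}\cG}\Tr_\rho$, whose fibre over $g$ is $\bigoplus_{t\in\sT,\ t^{-1}gt\in\mathcal{L}^{\refl}_0\cH}\Tr_\rho(t^{-1}g^{\pi(t)}t)$ (the condition $t^{-1}gt\in H_0$ versus lying in $H_1\setminus H_0$ is governed by $\pi$, matching the Real/hyperbolic dichotomy). So the isomorphism on fibres is the obvious ``collect the surviving components'' map $\Upsilon^{-1}\colon \Tr_{\Theta_{\widetilde A}}(g)\xrightarrow{\sim}\bigoplus_t \Tr_{\Theta_{A_t}}(t^{-1}g^{\pi(t)}t)$.

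The third step is to check this family of fibrewise isomorphisms is natural, i.e.\ intertwines the structure maps $\Tr_{-}(g;f,x)$ on both sides over morphisms $(f,x)$ of $\mathcal{L}_{\leq 1}^{\refl}\cG$. On the induced side the structure map permutes the transversal components (via $ft = t'h$, together with a further rewriting when $\pi(f) = -1$ that uses the $(-)^\vee$-twist and the evaluation isomorphism) and acts on each by the corresponding $\mathcal{L}^{\refl}\cH$-structure map; on the $\Theta_{\widetilde A}$ side it is the single map of Section~\ref{sec:RealCatChar} built from $\omega_{\widetilde A,1}$, $\omega_{\widetilde A,2}$ and the associators of $\widetilde A$. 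Matching these is a direct but somewhat lengthy bookkeeping exercise: one unwinds $\Theta_{\widetilde A}(f,x)$ using $\Theta_{\widetilde A}(g,x) = \Upsilon(\cdots\omega_3\cdots\omega_2(x)\cdots)$ from the proof of Proposition~\ref{prop:weakAlgTo2Mod} (here $\omega_3$ is trivial since $\widetilde A$ is strict), inserts the formula for $\omega_{\widetilde A,2}(x) = ([x\otimes 1]_t)_{t\in\sT}$, and compares component-by-component, invoking the $\cH$-equivariance axiom \eqref{eq:NAlgAx2} exactly where the $\mathcal{L}^{\refl}$-composition law introduces the factor ${}^{f_1^{-1}}x_2$ and the $\pi$-twist $\tilde x_1$. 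Finally, since both functors descend to $\pi_{\leq 1}(\mathcal{L}^{\refl}\cG)$ by Proposition~\ref{prop:RealCatChar}, it suffices to verify naturality on $\mathcal{L}^{\refl}_{\leq 1}\cG$, and well-definedness on the quotient is automatic. I expect the main obstacle to be the third step in the case $\pi(f) = -1$: there the $\Theta$-side involves the duality pseudofunctor $(-)^\circ$ of Proposition~\ref{prop:AlgWDI} applied to representable bimodules and the coherence isomorphisms of Lemma~\ref{lem:dualBimoduleTwists}, so one must track the transpose/inverse-transpose appearing in $\Aut^{\gen}(M_n(\Fld)^{\oplus t})$ and the $\ev_M$ insertions carefully to confirm they exactly reproduce the $g\mapsto g^{-1}$ in $\mathcal{L}^{\refl}$ and the $\cG_0$-deck transformation $\sigma_\spcl$ built into the definition of $\widetilde A$ via the $\spcl\cdot A^\vee$ summand; the ungraded part of the argument ($\pi(f) = +1$) is essentially \cite[Proposition 4.10]{ganter2008} combined with the Frobenius-reciprocity bookkeeping of \cite[\S 5]{rumynin2018}.
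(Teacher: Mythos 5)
Your approach is valid in spirit and genuinely different from the paper's, but the organization of the hard part differs in a way worth noting.

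The paper's proof does \emph{not} pass through the Morita model $\Theta_A$ or the description of $\Tr_{\Theta_B}(g)$ as a twisted centraliser. Instead it works entirely in the groupoid-representation framework: choosing representatives of connected components gives an equivalence $\mathcal{L}_{\leq 1}^{\refl}\cG \simeq \bigsqcup_{g} BZ_{\cG}(g)$, where $Z_{\cG}(g) \leq G_2\rtimes G_1$ is the stabiliser of $g$. This reduces the naturality check (your step 3) to checking the $Z_{\cG}(g)$-module structure on a single fibre per component. The paper then decomposes $[g]_{\cG}\cap H_1$ into $\cH$-orbits $[h_i]_\cH$, and the hypothesis $H_2=G_2$ is used precisely to produce the canonical bijection $Z_{\cG}(h_i)/Z_{\cH}(h_i)\simeq Z_{G_1}(h_i)/Z_{H_1}(h_i)$, which allows choosing an adapted transversal (following Ganter--Kapranov and \cite{mbyoung2018c}) and thereby reduces the whole verification to the previously handled case $G_2 = \{e\}$. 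In your write-up the role of $H_2=G_2$ shows up only in the simplification $\widetilde A \simeq \prod_t {}^{\pi(t)}A$; you never exploit it to reduce to the group case, which is exactly what makes the paper's proof essentially a citation rather than a direct computation.

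Your approach is nonetheless a reasonable alternative: passing through the strictification (Theorem~\ref{thm:strictcGAlg}, Proposition~\ref{thm:RealizeCocycle}, both applicable since $G_1$ is finite so all cocycles are realizable --- invoking Theorem~\ref{thm:moritaModel} by name is slightly imprecise, since it carries a separably-closed hypothesis that the theorem does not, but the ingredients you need do not require it), and identifying $\Tr_{\Theta_B}(g)$ with the $\omega_{B,1}(g)$-twisted centraliser $\{b \in B : a'b = b\,\omega_{B,1}(g)(a')\ \forall a'\}$ (note the side of the twist --- you have it reversed, but this is a matter of left/right conventions). However, your ``third step'' is where the actual mathematical content sits, and you acknowledge you have not carried it out, particularly in the $\pi(f)=-1$ case. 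The paper sidesteps this by the orbit/stabiliser reduction; if you do not do the same, you are committing to unwinding $\Theta_{\widetilde A}(f,x)$, the $(-)^\circ$-pseudofunctor on representable bimodules, Lemma~\ref{lem:dualBimoduleTwists}, and the $\ev_M$ insertions by hand --- doable, but strictly more work than the route the paper takes. One small technical confusion to flag: for $g\in G_0$, $\pi(t^{-1}gt) = \pi(g) = +1$ regardless of $\pi(t)$, so $t^{-1}gt \in H_1$ already forces $t^{-1}gt\in H_0$; the parenthetical about ``$t^{-1}gt\in H_0$ versus lying in $H_1\setminus H_0$'' does not arise. The Real/hyperbolic dichotomy is governed by whether $\cH$ itself is trivially or non-trivially graded, not by where $t^{-1}gt$ lands.
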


\begin{proof}
The proof is similar to those of \cite[Theorem 7.5]{ganter2008}, \cite[Theorems 7.3, 7.7]{mbyoung2018c}, which treat the cases in which $\cG$ is a group, possibly $\ZTwo$-graded. We focus on the differences.

Recall that $\Tr_{\rho}$ can be understood as a representation of (or vector bundle over) the groupoid $\pi_{\leq 1} (\mathcal{L}^{\refl} \cH)$; see the comments at the beginning proof of Proposition \ref{prop:RealCatChar}. Then $\Ind_{\mathcal{L}^{\refl} \cH}^{\mathcal{L}^{\refl} \cG} \Tr_{\rho}$ is the induced representation of $\pi_{\leq 1} (\mathcal{L}^{\refl} \cG)$. For required background on induction of groupoid representations, the reader is referred to \cite[\S 6.2]{ganter2008}.

Arguing as the beginning of the proof of Proposition \ref{prop:RealCatChar}, it suffices to prove the statement at the level of vector bundles $\mathcal{L}^{\refl}_{\leq 1} \cG \rightarrow \Vect_{\Fld}$. A choice of representatives of the connected components of $\mathcal{L}_{\leq 1}^{\refl} \cG$ induces an equivalence of groupoids
\[
\mathcal{L}_{\leq 1}^{\refl} \cG \simeq \bigsqcup_{g \in \pi_0(\mathcal{L}_{\leq 1}^{\refl} \cG)} BZ_{\cG}(g),
\]
where $Z_{\cG}(g) \leq G_2 \rtimes G_1$ is the stabilizer $\{ (f,x) \in G_1 \times G_2 \mid f \partial(x) (g^{\pi(f)}) f^{-1} = g\}$. Fix $g \in G_0$. We have a vector space isomorphism
\[
\Tr_{\Ind_{\cH}^{\cG} \rho}(g) \simeq \bigoplus_{\substack{t \in \sT \\ t^{-1} g t \in H_1}} \Tr_{\rho}(t^{-1} g t),
\]
where $\sT\subseteq G_1$ is a left transversal to $H_1$. 
We need to describe the action of $Z_{\cG}(g)$ on $\Tr_{\Ind_{\cH}^{\cG} \rho}(g)$. Write
\[
[g]_{\cG} \cap H_1 = \bigsqcup_{i=1}^n [h_i]_{\cH},
\]
where $[g]_{\cG}$ denotes the $G_2 \rtimes G_1$-orbit of $g$, and similarly for $[h_i]_{\cH}$. Let $\sT_i = \{t \in \sT \mid t^{-1} g t \in [h_i]_{\cH}\}$. For each $i \in \{1, \dots, n\}$, fix an element $t_i \in \sT_i$ and put $h_i = t_i^{-1} g t_i$. The assumption $H_2 = G_2$ implies that there is a canonical bijection
\[
Z_{\cG}(h_i) \slash Z_{\cH}(h_i) \simeq Z_{G_1}(h_i) \slash Z_{H_1}(h_i).
\]
We can therefore choose adapted representatives of $\sT_i \subset \sT$, as in \cite[Lemma 7.7]{ganter2008} (see also \cite[Lemma 7.4]{mbyoung2018c}). Consider the composition
\[
\phi_i: Z_{\cH}(h_i) \hookrightarrow Z_{\cG}(h_i) \xrightarrow[]{(f,x) \mapsto (t_i f t_i^{-1}, {^{t_i}}x)} Z_{\cH}(g).
\]
Then we can verify that $\Tr_{\Ind_{\cH}^{\cG} \rho}(g)$ is isomorphic to $\bigoplus_{i=1}^n \Ind_{\phi_i} \Tr_{\rho}(h_i)$, exactly as in the case $G_2 =\{e\}$.
\end{proof}

\begin{Cor}
\label{thm:induced2Char}
In the setting of Theorem \ref{thm:inducedCatChar}, the Real $2$-character of $\Ind_{\cH}^{\cG}\rho$ is
\[
\chi_{\Ind_{\tcH}^{\tcG} \rho} (g; f, x) = \frac{1}{c\vert H_1 \vert} \sum_{\substack{t \in G_1 \\ t^{-1}(g,f) t \in H_0 \times H_1}} \chi_{\rho}(t^{-1} g^{\pi(t)} t ; t^{-1} f t, {^{t^{-1}}}x),
\]
where $c=1$ if $\cH$ is trivially graded and $c=2$ otherwise.
\end{Cor}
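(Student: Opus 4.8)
## Proof proposal for Corollary~\ref{thm:induced2Char}

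\textbf{Overall strategy.} The plan is to deduce the $2$-character formula from the vector bundle isomorphism $\Tr_{\Ind_{\tcH}^{\tcG} \rho} \simeq \Ind_{\mathcal{L}^{\refl} \cH}^{\mathcal{L}^{\refl} \cG} \Tr_{\rho}$ of Theorem~\ref{thm:inducedCatChar} by taking holonomies. Recall from Section~\ref{sec:RealCatChar} that the Real $2$-character $\chi_{\rho}$ is by definition the holonomy of the vector bundle $\Tr_{\rho}$ over $\mathcal{L}^{\refl}\cG$, i.e.\ for a loop $(g; f, x) \in \mathbb{G}$ it is the trace of the automorphism of the fibre $\Tr_{\rho}(g)$ induced by the structure map $\Tr_{\rho}(g; f, x)$. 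So the content of the corollary is a character formula for induced groupoid representations, applied to the explicit description of the induction worked out in the proof of Theorem~\ref{thm:inducedCatChar}.

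\textbf{Key steps.} First I would fix a loop $(g; f, x) \in \mathbb{G}$ and invoke the equivalence $\mathcal{L}^{\refl}_{\leq 1}\cG \simeq \bigsqcup_{g} BZ_{\cG}(g)$ from the proof of Theorem~\ref{thm:inducedCatChar}, so that $\chi_{\Ind \rho}(g; f, x)$ is the trace of the action of the element $(f, x) \in Z_{\cG}(g)$ on the fibre $\Tr_{\Ind_{\cH}^{\cG}\rho}(g) \simeq \bigoplus_{i=1}^n \Ind_{\phi_i}\Tr_{\rho}(h_i)$. Second, I would apply the standard character formula for an induced representation of a finite group: for $\Ind_{\phi_i}\Tr_{\rho}(h_i)$ with $\phi_i : Z_{\cH}(h_i) \to Z_{\cG}(g)$, the trace of $(f,x)$ equals
\[
\frac{1}{|Z_{\cH}(h_i)|}\sum_{\substack{s \in Z_{\cG}(g)\\ s^{-1}(f,x)s \in \phi_i(Z_{\cH}(h_i))}} \chi_{\Tr_\rho(h_i)}\big(\phi_i^{-1}(s^{-1}(f,x)s)\big),
\]
with $\chi_{\Tr_\rho(h_i)}$ the ordinary character of the representation $\Tr_\rho(h_i)$ of $Z_{\cH}(h_i)$, which by definition of the $2$-character is $\chi_\rho(h_i; -)$. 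Third, and this is the combinatorial heart, I would reorganize the double sum over $i$ and over $s \in Z_{\cG}(g)$ into a single sum over $t \in G_1$ with $t^{-1}(g,f)t \in H_0 \times H_1$. The point is that the adapted transversal elements $t_i$ together with the $\phi_i$ package precisely the bijection between $\{t \in \sT : t^{-1}g t \in H_1\}$ and the cosets appearing, and conjugating $(f,x)$ by $t_i s t_i^{-1}$ (running $s$ over a transversal in $Z_{\cG}(g)$) realizes exactly the conjugates $t^{-1}(g,f)t$ landing in $H_0 \times H_1$. Summing over all such $t \in G_1$ rather than transversal representatives introduces the normalizing factor $|Z_{H_1}(h_i)|$, which combines with $|Z_{\cH}(h_i)|^{-1}$ to produce $|H_1|^{-1}$ (using $H_2 = G_2$, so $|Z_{\cH}(h_i)| = |Z_{H_1}(h_i)| \cdot |\text{something depending only on }G_2|$ that cancels uniformly). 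Fourth, I would track the effect of $\pi(t) = -1$: in the Real (non-trivially graded $\cH$) case, conjugation by an odd element sends $g$ to $g^{-1}$ up to the $G_2$-part, which accounts for the appearance of $g^{\pi(t)}$ in the argument and the twist ${}^{t^{-1}}x$; this also doubles the count, giving the factor $c=2$, whereas $c=1$ when $\cH$ is trivially graded since then no odd $t$ contributes. Finally I would check that the $\im(\partial_g)$-invariance and $\pi_{\leq 1}(\mathcal{L}\cG)$-conjugation invariance of $\chi_\rho$ (Proposition~\ref{prop:RealCatChar}) make the right-hand side well-defined, i.e.\ independent of the transversal choices.

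\textbf{Main obstacle.} The routine part is invoking Theorem~\ref{thm:inducedCatChar} and the classical induced-character formula; the genuinely delicate step is the bookkeeping in step three and four — correctly passing from the $\phi_i$-indexed sum over stabilizer transversals to the clean uniform sum over $t \in G_1$, and in particular verifying that the various stabilizer orders telescope to the single factor $\frac{1}{c|H_1|}$ uniformly across the orbit decomposition $[g]_{\cG} \cap H_1 = \bigsqcup_i [h_i]_{\cH}$. The assumption $H_2 = G_2$ is exactly what makes the $G_2$-contributions to the stabilizers match on both sides (via the bijection $Z_{\cG}(h_i)/Z_{\cH}(h_i) \simeq Z_{G_1}(h_i)/Z_{H_1}(h_i)$ already used in Theorem~\ref{thm:inducedCatChar}), so the orbit-counting reduces to the group-level statement, which is precisely the computation carried out in \cite[Theorem 7.5]{ganter2008} and \cite[Theorems 7.3, 7.7]{mbyoung2018c}; I would cite those for the analogous bookkeeping and only spell out the points where the crossed-module ${}^{t^{-1}}x$ twist and the $g^{\pi(t)}$ reflection enter.
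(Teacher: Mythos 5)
Your approach matches the paper's: both reduce via Theorem~\ref{thm:inducedCatChar} to computing the character of the induced groupoid representation $\Ind_{\mathcal{L}^{\refl} \cH}^{\mathcal{L}^{\refl} \cG}\Tr_\rho$, and the paper delegates exactly that computation to \cite[Proposition~6.11]{ganter2008}, which is the groupoid-level induced-character formula that you re-derive by unfolding the classical formula over the stabilizers $Z_\cH(h_i)$ and reorganizing into a uniform sum over $t\in G_1$. One small caveat: your explanation of the constant $c$ is not quite right --- it is not true that ``no odd $t$ contributes'' when $\cH$ is trivially graded; odd $t$ still enter the sum (via the $g^{\pi(t)}$ reflection) as long as $t^{-1}ft\in H_1\subseteq G_0$. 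The factor $c$ instead comes from the relation between $\vert Z_\cH(h_i)\vert$ and $\vert H_1\vert$ when passing from stabilizer transversals to the sum over all of $G_1$, and this ratio picks up an extra factor of $2$ precisely when $Z_\cH(h_i)$ contains odd elements. Since this is bookkeeping already absorbed into the cited proposition, it does not affect the soundness of the approach, but it should be stated correctly if you spell the argument out.
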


\begin{proof}
By Theorem \ref{thm:inducedCatChar}, it suffices to compute the character of $\Ind_{\mathcal{L}^{\refl} \cH}^{\mathcal{L}^{\refl} \cG} \Tr_{\rho}$. This can be done using \cite[Proposition 6.11]{ganter2008}, giving the claimed result. 
\end{proof}

Considered as locally constant functions, Real $2$-characters are multiplicative:
\[
\chi_{\rho_1 \boxtimes \rho_2} = \chi_{\rho_1} \chi_{\rho_2}.
\]
This can be proved in the same way as \cite[Proposition 5.1]{rumynin2018}. See also \cite[Corollary 4.2]{ganter2015}. In particular, using the identification of Theorem \ref{thm:grothGroupDescr}, we find that each tuple $(g; f,x) \in \mathbb{G}$ defines a ring homomorphism
\[
\chi(g; f,x) : \mathbb{B}^{\Phi}_{\mathbb{Z}}(\cG) \rightarrow {\Fld}, \qquad \langle \rho, P \rangle \mapsto \chi_{\Ind_{\tcG_P}^{\tcG} \rho}(g; f,x).
\]
Motivated by \cite[Theorem 5.2]{rumynin2018}, we would like to understand this homomorphism in terms of marks. This can be done as follows.

Let $R$ be a ${\Fld}$-algebra. Assume that the cardinality of $\pi_1(\cG)$ is invertible in ${\Fld}$. Given $Q \in \mathcal{S}(G)$ and a semigroup homomorphism $\alpha: \Phi(Q) \rightarrow R^{\times}$, the associated mark homomorphism is the ${\Fld}$-linear map $f_Q^{\alpha} : \mathbb{B}_{{\Fld}}^{\Phi}(\cG) \rightarrow R$ defined by
\begin{equation}
\label{eq:markHomo}
f_Q^{\alpha} (\langle \rho, P \rangle) = \frac{1}{\vert P \vert} \sum_{\substack{ g \in G_1 \\ g Q g^{-1} \subset P}} \alpha(\Phi(\gamma_g: Q \rightarrow P)(\rho)).
\end{equation}
This is in fact a morphism of $R$-algebras \cite[\S 1]{gunnells2012}.

\begin{Cor}
\label{cor:Real2Char}
Assume that the cardinality of $\pi_1(\cG)$ is invertible in ${\Fld}$. Let $(g; f,x) \in \mathbb{G}$ and let $P$ be the subgroup of $\pi_1(\cG)$ generated by the images of $g$ and $f$. Then $\chi(g; f,x)$ is the mark homomorphism $\mathbb{B}^{\Phi}_{\mathbb{Z}}(\cG) \rightarrow {\Fld}$ determined via equation \eqref{eq:markHomo} by the restriction $\chi(g; f,x)_{\vert P} : \Phi(P) \rightarrow {\Fld}^{\times}$.
\end{Cor}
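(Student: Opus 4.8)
The plan is to reduce the statement to a computation comparing two explicit formulas: the definition \eqref{eq:markHomo} of the mark homomorphism $f_P^\alpha$ with $\alpha = \chi(g;f,x)_{\vert P}$, and the formula for $\chi(g;f,x)(\langle\rho,Q\rangle) = \chi_{\Ind_{\tcG_Q}^{\tcG}\rho}(g;f,x)$ furnished by Corollary~\ref{thm:induced2Char}. First I would observe that both sides of the claimed identity are $\mathbb{Z}$-linear (indeed $\Fld$-linear after extending scalars) in their argument and that $\mathbb{B}^{\Phi}_{\mathbb{Z}}(\cG)$ is generated by the symbols $\langle\rho,Q\rangle$ with $Q\leq\pi_1(\cG)$ and $\rho\in\pi_0(\RRep_{[1]}(\tcG_Q))$; so it suffices to check the two homomorphisms agree on each such generator. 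Here $P$ is the subgroup generated by the images of $g$ and $f$ in $\pi_1(\cG)$, and $\chi(g;f,x)$ restricted to $\Phi(P) = \pi_0(\RRep_{[1]}(\tcG_P))$ is a ring homomorphism to $\Fld^\times$ by the multiplicativity of $2$-characters, so the data $(P,\alpha)$ with $\alpha = \chi(g;f,x)_{\vert P}$ is a legitimate input to \eqref{eq:markHomo}.

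Next I would unpack Corollary~\ref{thm:induced2Char} applied to the crossed submodule $\cG_Q\hookrightarrow\cG$ (noting $(G)_2 = (\cG_Q)_2$, so the hypothesis $H_2 = G_2$ of Theorem~\ref{thm:inducedCatChar} is automatic for these subgroups): it gives
\[
\chi_{\Ind_{\tcG_Q}^{\tcG}\rho}(g;f,x) = \frac{1}{c\,\vert\overline{Q}\vert}\sum_{\substack{t\in G_1\\ t^{-1}(g,f)t\in \overline{Q}_0\times\overline{Q}}} \chi_\rho\big(t^{-1}g^{\pi(t)}t;\, t^{-1}ft,\, {}^{t^{-1}}x\big),
\]
where $\overline{Q}$ is the preimage of $Q$ in $G_1$ and $c\in\{1,2\}$ according to the grading of $\overline{Q}$. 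The summand is nonzero precisely when the conjugate tuple $t^{-1}(g;f,x)t$ lies in $\mathbb{G}$ for $\cG_Q$, i.e.\ when $t^{-1}(g,f)t\in\overline{Q}_0\times\overline{Q}$, and in that case the value $\chi_\rho(t^{-1}g^{\pi(t)}t; t^{-1}ft, {}^{t^{-1}}x)$ equals $\alpha$ evaluated at the class of $\rho$ under the conjugation functor $\gamma_{\bar t}:\tcG_Q\to\tcG_{t^{-1}Qt}$, up to the bookkeeping relating $\overline{Q}$ to $Q$. I would then organize the sum over $t\in G_1$ according to the coset of $t$ modulo $\overline{Q}$ (respectively modulo $\overline{Q}_0$ when adjusting for $c$): the $\im(\partial)$-invariance and conjugation-invariance of $\chi_\rho$ established in Proposition~\ref{prop:RealCatChar} and the discussion after it show the summand depends only on $t\,\overline{Q}$ when $\pi(t)=+1$, and the factor $c$ and the use of $g^{\pi(t)}$ exactly account for the grading-mixing cosets; passing to the quotient $\pi_1(\cG)$ converts the sum $\frac{1}{c\vert\overline{Q}\vert}\sum_{t\in G_1}$ into $\frac{1}{\vert Q\vert}\sum_{g'\in\pi_1(\cG)}$ with the condition $g'Qg'^{-1}\subset P$ (since $g$ and $f$ generate $P$, the conjugate tuple lands in $\tcG_Q$ iff $g'^{-1}Pg'\supseteq\langle$images$\rangle$, i.e.\ $g' Q g'^{-1}\subset P$ after reindexing). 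Matching term by term with \eqref{eq:markHomo} then yields $\chi(g;f,x) = f_P^\alpha$.

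The main obstacle I anticipate is the careful bookkeeping of the $\ZTwo$-grading throughout this coset reorganization: the factor $c$, the appearance of $g^{\pi(t)}$ rather than $g$, the distinction between summing over $G_1$ versus $G_0$, and the precise way the Klein-sector invariance (invariance under conjugation by odd elements of $\pi_{\leq 1}(\mathcal{L}\cG)$) interacts with the definition of $\Phi(\gamma_g)$ on graded versus ungraded subgroups — all of these must be tracked simultaneously to see that the overcounting factors collapse correctly. A secondary subtlety is verifying that the restriction $\chi(g;f,x)_{\vert P}$ indeed takes values in $\Fld^\times$ (not merely $\Fld$): this follows because a one-dimensional Real $2$-representation has invertible $2$-character, a fact implicit in the multiplicativity $\chi_{\rho_1\boxtimes\rho_2} = \chi_{\rho_1}\chi_{\rho_2}$ together with the fact that the unit object pairs to $1$. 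Once the grading bookkeeping is pinned down, the remaining verification that $f_P^\alpha$ is a ring homomorphism is already supplied by \cite[\S 1]{gunnells2012}, and compatibility with multiplication on $\mathbb{B}^{\Phi}_{\mathbb{Z}}(\cG)$ follows from the multiplicativity of $\chi$ together with Theorem~\ref{thm:grothGroupDescr}.
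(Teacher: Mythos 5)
Your overall strategy is the same as the paper's one-line proof: compare the explicit formula for $\chi_{\Ind_{\tcG_Q}^{\tcG}\rho}(g;f,x)$ from Corollary~\ref{thm:induced2Char} with the defining formula \eqref{eq:markHomo} of the mark homomorphism, reducing to generators $\langle\rho,Q\rangle$ of $\mathbb{B}^{\Phi}_{\mathbb{Z}}(\cG)$. Your observations about $\mathbb{Z}$-linearity, the role of the grading bookkeeping, and the need to confirm that $\chi(g;f,x)_{\vert P}$ is $\Fld^\times$-valued are all correctly placed, and the last of these is indeed a consequence of one-dimensionality of the Real $2$-representations in $\Phi(P)$.

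However, there is a genuine error in the direction of the summation condition, which as written would make the comparison fail. When \eqref{eq:markHomo} is applied with mark parameter $P$ and argument $\langle\rho,Q\rangle$, the sum is over those $g'$ with $g'Pg'^{-1}\subset Q$ (so that $\gamma_{g'}\colon P\to Q$ is a morphism in $\mathcal{S}(\pi_1(\cG))$ and $\Phi(\gamma_{g'})(\rho)\in\Phi(P)$ can be fed into $\alpha$), not $g'Qg'^{-1}\subset P$ as you wrote. Your parenthetical justification also has the containment inverted: since the images of $g$ and $f$ generate $P$, the condition that $t^{-1}(g,f)t$ lies in $\overline{Q}_0\times\overline{Q}$ is equivalent to $\bar{t}^{-1}P\bar{t}\subseteq Q$ (not $\supseteq$), which after setting $g'=\bar{t}^{-1}$ is exactly $g'Pg'^{-1}\subseteq Q$. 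With this correction the term-by-term matching with \eqref{eq:markHomo} goes through as intended; as stated, the two sums would be indexed over different sets and no comparison is possible.
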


\begin{proof}
This follows by comparing equation \eqref{eq:markHomo} with the result of  Corollary~\ref{thm:induced2Char}.
\end{proof}

The analogy between $2$-character theory and Hopkins--Kuhn--Ravenel character theory \cite{hopkins2000} of Borel equivariant elliptic cohomology, as developed by Ganter and Kapranov \cite{ganter2008}, suggests the following problem.

\begin{Prob} \label{prob:HKR}
Interpret the results of Sections ~\ref{sec:RealCatChar} and ~\ref{sec:indChar} in terms of (transchromatic) Hopkins--Kuhn--Ravenel character theory \cite{hopkins2000, stapleton2013, lurie2019} of $2$-equivariant elliptic cohomology. In particular, relate Theorem~\ref{thm:inducedCatChar} and Corollary~\ref{thm:induced2Char} to the relevant transfer maps.
\end{Prob}

\begin{Rem}
A $3$-cocycle $\alpha \in Z^3(B \tcG, {\Fld}^{\times}_{\pi})$ on the classifying space determines a $\ZTwo$-graded central extension ${^{\alpha}}\tcG$ of $\tcG$ by ${\Fld}^{\times}$. An $\alpha$-twisted Real $2$-representation of $\tcG$ is then by definition a Real $2$-representation of ${^{\alpha}}\tcG$ whose restriction to ${\Fld}^{\times}$ is scalar multiplication. When $\cG$ is a finite group, the character theory of twisted representations is studied in \cite{ganter2016} and \cite{mbyoung2018c}. Together with the results of this section, this suggests an obvious candidate for the categorical character theory of twisted Real $2$-representations of finite $2$-groups in terms of vector bundles over $\mathcal{L}^{\refl} \cG$ which are twisted by the gerbe represented by the loop transgression of $\alpha$.
\end{Rem}


\setcounter{secnumdepth}{0}


\footnotesize

\bibliographystyle{plain}
\bibliography{mybib}

\printglossary[title={List of Symbols}]

\end{document}